\crefname{theorem}{Theorem}{Theorems}
\crefname{thm}{Theorem}{Theorems}
\crefname{lemma}{Lemma}{Lemmas}
\crefname{lemmae}{Lemma}{Lemmas}
\crefname{lem}{Lemma}{Lemmas}
\crefname{remark}{Remark}{Remarks}
\crefname{prop}{Proposition}{Propositions}
\crefname{defn}{Definition}{Definitions}
\crefname{corollary}{Corollary}{Corollaries}
\crefname{conjecture}{Conjecture}{Conjectures}
\crefname{question}{Question}{Questions}
\crefname{chapter}{Chapter}{Chapters}
\crefname{section}{Section}{Sections}
\crefname{figure}{Figure}{Figures}
\theoremstyle{plain}
\newtheorem{thm}{Theorem}[section]
\newtheorem{lemma}[thm]{Lemma}
\newtheorem{lem}[thm]{Lemma}
\newtheorem{corollary}[thm]{Corollary}
\newtheorem{prop}[thm]{Proposition}
\newtheorem{conjecture}[thm]{Conjecture}
\newtheorem{question}[thm]{Question}
\theoremstyle{definition}
\theoremstyle{remark}
\newtheorem{remark}[thm]{Remark}
\newtheorem{lemmae}[thm]{Lemma}
\numberwithin{equation}{section}
\renewcommand{\P}{\mathbb P}
\newcommand{\E}{\mathbb E}
\newcommand{\R}{\mathbb R}
\newcommand{\Z}{\mathbb Z}
\newcommand{\N}{\mathbb N}
\newcommand{\cE}{\mathcal E}
\newcommand{\cG}{\mathcal G}
\newcommand{\cK}{\mathcal K}
\newcommand{\cO}{\mathcal O}
\newcommand{\sA}{\mathscr A}
\newcommand{\sB}{\mathscr B}
\newcommand{\sE}{\mathscr E}
\newcommand{\sP}{\mathscr P}
\newcommand{\sT}{\mathscr T}
\newcommand{\eps}{\varepsilon}
\newcommand{\Aut}{\operatorname{Aut}}
\newcommand{\bP}{\mathbf P}
\newcommand{\bE}{\mathbf E}
\newcommand{\Peak}{\operatorname{peak}}
\newcommand{\peak}{\operatorname{peak}}
\newcommand{\stab}{\operatorname{Stab}}
\newcommand{\myfrac}[3][0pt]{\genfrac{}{}{}{}{\raisebox{#1}{$#2$}}{\raisebox{-#1}{$#3$}}}
\tikzset{nomorepostaction/.code=\let\tikz@postactions\pgfutil@empty}
\newcommand\nxleftrightarrow[2][]{%
  \mathrel{\tikz[baseline=-.7ex] \path node[slash underlined,draw,<->,anchor=south] {\(\scriptstyle #2\)} node[anchor=north] {\(\scriptstyle #1\)};}}
\newcommand{\pcl}[1]{p_c({#1})}
\author{Tom Hutchcroft\footnote{Statslab, DPMMS, University of Cambridge}}
 \title{
\textsc{Non-uniqueness
 and mean-field criticality
  for percolation on nonunimodular transitive graphs}
 }
\newcommand{\nexp}{\overline{ \exp \mkern-1mu}\mkern1mu }
\newcommand{\nlog}{\operatorname{l\overline{og\mkern-1mu}\mkern1mu}}
\renewcommand{\chi}{\mathcal{X}}
\begin{document}
\maketitle

\setstretch{1.1}
\begin{abstract}
We study Bernoulli bond percolation on nonunimodular quasi-transitive graphs, and more generally graphs whose automorphism group has a  nonunimodular quasi-transitive subgroup. 
We prove that  percolation on any such graph has a non-empty phase in which there are infinite \emph{light} clusters, which implies the existence of a non-empty phase in which there are \emph{infinitely many} infinite clusters. That is, we show that $p_c<p_h \leq p_u$ for any such graph. This answers a question of H\"aggstr\"om, Peres, and Schonmann (1999), and verifies the nonunimodular case of a well-known conjecture of Benjamini and Schramm (1996).
We also prove that the triangle condition holds at criticality
 on any such graph, 
which implies that various critical exponents exist and take their mean-field values.

All our results apply, for example, to the product $T_k\times \Z^d$ of a $k$-regular tree with $\Z^d$ for $k\geq 3$ and $d \geq 1$, for which these results were previously known only for large~$k$.
Furthermore, our methods also enable us to establish the basic topological features of the phase diagram for \emph{anisotropic} percolation on such products, in which tree edges and $\Z^d$ edges are given different retention probabilities. These features had  only previously been established for $d=1$, $k$ large.
\end{abstract}
\setstretch{1.1}

\newgeometry{margin=1.2in}

\newpage

\tableofcontents

\newgeometry{margin=0.8in}

\newpage

\section{Introduction}
\label{sec:Intro}

In \textbf{Bernoulli bond percolation}, the edges of a connected, locally finite graph $G$ are each either deleted or retained independently at random, with retention probability $p$, to obtain a random subgraph $G[p]$ of $G$. Retained edges are referred to as \textbf{open} and deleted edges are referred to as \textbf{closed}. The connected components of $G[p]$ are referred to as \textbf{clusters}. When $G$ is infinite, the \textbf{critical parameter} is defined to be
\[
p_c=p_c(G) = \inf\{p : G[p] \text{ contains an infinite cluster almost surely}\}
\]
and the \textbf{uniqueness threshold} is defined to be
\[
p_u=p_u(G) = \inf\{ p: G[p]
\text{ contains a unique infinite cluster almost surely}
\}.
\]
A principal question concerns the equality or inequality of these two critical parameters. For Euclidean lattices such as the hypercubic lattices $\Z^d$, this question has now been well-understood for thirty years: 
Aizenman, Kesten, and Newman \cite{MR901151} proved in 1987 that $\Z^d[p]$ has at most one infinite cluster almost surely for every $d\geq 1$ and $p\in[0,1]$, so that in particular $p_c(\Z^d)=p_u(\Z^d)$ for every $d\geq 1$. A beautiful alternative proof of the same result was obtained by Burton and Keane~\cite{burton1989density} in 1989.
In their influential paper \cite{bperc96}, Benjamini and Schramm proposed a systematic study of percolation on general \textbf{quasi-transitive} graphs, that is, graphs whose automorphism group has only finitely many orbits. They made the following conjecture.

\begin{conjecture}[Benjamini and Schramm 1996]
\label{conj:pcpu}
Let $G$ be a connected, locally finite, quasi-transitive graph. Then $p_c(G)<p_u(G)$ if and only if $G$ is nonamenable. 
\end{conjecture}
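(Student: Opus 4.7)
The plan is to split the biconditional into two implications, and within the harder ``if'' direction, to further split on whether $\Aut(G)$ admits a nonunimodular quasi-transitive subgroup.

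For the ``only if'' direction (amenable $\Rightarrow p_c = p_u$), I would adapt the trifurcation argument of Burton--Keane \cite{burton1989density} to the quasi-transitive amenable setting. A F\o lner exhaustion combined with insertion tolerance forces the existence of three or more infinite clusters to produce a positive density of trifurcation points inside F\o lner sets, contradicting the boundary-to-volume ratio. If $\Aut(G)$ is nonunimodular, vertices must be weighted by the modular function for the counting to close. Together with the monotonicity of the uniqueness phase due to H\"aggstr\"om--Peres--Schonmann, this yields $p_c = p_u$ throughout the amenable regime.

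For the hard direction (nonamenable $\Rightarrow p_c < p_u$), first suppose $\Aut(G)$ contains a nonunimodular quasi-transitive subgroup $\Gamma$ with modular function $\Delta$. Following the strategy of this paper, I would call an infinite cluster \emph{light} if the total $\Delta$-weight of its vertices is finite and \emph{heavy} otherwise, and let $p_h$ denote the infimum of $p$ at which an infinite light cluster exists with positive probability. The easier inequality $p_h \le p_u$ would come from the nonunimodular mass-transport principle: uniqueness of the infinite cluster would force it to absorb infinitely much heavy mass, contradicting lightness. The harder inequality $p_c < p_h$ should follow from a sharpness-type differential inequality for the $\Delta$-weighted two-point function, showing that immediately above $p_c$ the infinite cluster is heavy, so that $p$ must be raised strictly before light infinite clusters can appear. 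If instead $\Aut(G)$ is unimodular, one is in the classical open part of the Benjamini--Schramm conjecture; a unimodular mass-transport or indistinguishability argument producing a ``trifurcation phase'' strictly below $p_u$ would suffice, but no general proof is known, and one typically proceeds case by case (planar, hyperbolic, products, Cayley graphs of property (T) groups, and so on).

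The main obstacle is clearly the unimodular nonamenable case, which lies entirely outside the scope of this paper's techniques. Within the nonunimodular case, the crux is the strict inequality $p_c < p_h$: controlling the light/heavy dichotomy of the infinite cluster in a sharp right-neighbourhood of $p_c$ is the genuinely new ingredient and requires a quantitative sharpness argument tailored to the modular structure, in the spirit of Aizenman--Barsky and Duminil-Copin--Tassion.
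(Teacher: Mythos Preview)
The statement is labelled as a conjecture, and the paper does not claim to prove it in full; it proves only the nonunimodular case (Theorem~1.2) and says explicitly that the unimodular nonamenable direction remains open. You correctly identify this and acknowledge that the unimodular case lies outside the paper's methods, so there is no complete proof to compare against.

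However, your sketch of the nonunimodular case has the light/heavy roles inverted, and this inverts the entire logic. In the paper, $p_h$ is the infimum of $p$ at which a \emph{heavy} cluster appears, not a light one. The inequality $p_h \le p_u$ is then trivial: a unique infinite cluster is necessarily heavy (it meets every level), so heaviness occurs no later than uniqueness. The substantial work is proving $p_c < p_h$, i.e., that just above $p_c$ all infinite clusters are \emph{light}, whence there must be infinitely many of them. Your version asserts the opposite---that just above $p_c$ the infinite cluster is heavy---which would not give $p_c < p_u$ at all, since a heavy infinite cluster can perfectly well be unique. With your definition of $p_h$ (first appearance of a light infinite cluster), the correct statement would be $p_h = p_c$, not $p_c < p_h$.

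The paper also does not establish $p_c < p_h$ by a single sharpness differential inequality in the way you describe. It introduces an intermediate threshold $p_t \le p_h$ via the tilted susceptibility $\chi_{p,\lambda}$ and proves $p_c < p_t$ by playing two estimates against each other: a tilted Aizenman--Barsky argument showing that $|K_v|_{v,\lambda}$ cannot have a finite $(1+\eps)/2$ moment at $p_c(\lambda)$, and a separate Fekete/bootstrapping analysis of slab-crossing probabilities showing that $|K_v|_{v,\lambda}$ \emph{does} have finite $1-\eps$ moments at $p_c$. The contradiction forces $p_c < p_c(\lambda) \le p_t$. This is substantially more intricate than a sharpness argument for the weighted two-point function.

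One minor point on the ``only if'' direction: amenable quasi-transitive graphs always have unimodular automorphism group (Soardi--Woess, Salvatori), so the nonunimodular subcase you mention there does not arise.
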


Gandolfi, Keane, and Newman \cite{gandolfi1992uniqueness} showed that Burton and Keane's proof
 generalizes to all amenable quasi-transitive graphs, so that only the `if' direction of \cref{conj:pcpu} is open. 
H\"aggstr\"om, Peres, and Schonmann \cite{MR1676835,MR1676831,HPS99} proved that $G[p]$ has a unique infinite cluster almost surely whenever $G$ is quasi-transitive and $p>p_u$ (see also \cite{LS99}). For results on the complementary question of which graphs have $p_u<1$, see \cite{MR1622785,MR2286517}. For further background on percolation, we refer the reader to \cite{grimmett2010percolation,1707.00520,heydenreich2015progress} for the Euclidean case, to \cite{LP:book,MR2884871} for more general graphs, and to \cite{MR2280297} for a survey of work related specifically to \cref{conj:pcpu}.

Most progress made on \cref{conj:pcpu} so far has taken a \emph{perturbative} approach. In these works, bounds on $p_c$ and $p_u$ are obtained via combinatorial methods, and these bounds are then shown to separate $p_c$ and $p_u$ when suitable parameters associated to a graph are made large or small as appropriate.
  In particular, it is known that $p_c(G)<p_u(G)$ if $G$ is a transitive nonamenable graph  with large Cheeger constant \cite{MR1833805}, with small spectral radius \cite{bperc96,MR1756965}, or with large girth \cite{MR3005730}. 
The criterion concerning the spectral radius was used by Pak and Smirnova-Nagnibeda~\cite{MR1756965} to deduce that every nonamenable discrete group has at least one Cayley graph for which $p_c<p_u$.

The only previous \emph{non-perturbative} results on \cref{conj:pcpu} we are aware of (other than the infinitely ended case, which is trivial) are due to Benjamini and Schramm \cite{BS00}, who verified the conjecture for nonamenable planar quasi-transitive graphs (see also the earlier work of Lalley \cite{MR1614583}), and Gaboriau and Lyons \cite{MR1757952,MR3009109,MR2221157}, who verified the conjecture for all quasi-transitive graphs admitting non-constant harmonic Dirichlet functions (i.e., having positive first $\ell^2$-Betti number). 
 These are all quite strong assumptions, and \cref{conj:pcpu} remains open for most examples. In particular, neither of these classes include any graph of the form $G \times H$ where $G$ and $H$ are both infinite. Let us also note that, in joint work with Angel \cite{1710.03003}, we constructed a counterexample demonstrating that the natural generalisation of \cref{conj:pcpu} to \emph{unimodular random rooted graphs} is \emph{false}. 
%

Historically, the first example of a transitive graph with $p_c<p_u<1$ was given by Grimmett and Newman \cite{MR1064560}, who studied percolation on $T_k \times \Z^d$, the Cartesian  product (a.k.a.\ box product)  of a $k$-regular tree $T_k$ with the hypercubic lattice $\Z^d$. They proved in particular  that
 $p_c(T_k\times \Z^d)<p_u(T_k \times \Z^d)$ when $k$ is sufficiently large.
%
In the special case of $d=1$, they also applied the well-understood theory of percolation on $\Z^2$ to prove a strong form of this result concerning \emph{anisotropic} percolation on $T_k\times \Z$, again under the assumption that $k$ is large. 
Since then, refinements of the perturbative methods have lead to smaller values of $k$ being treatable (in the isotropic case). In particular, recent work of 
Yamamoto \cite{yamamoto2017upper} has shown that $p_c(T_k \times \Z)<p_u(T_k\times \Z)$ for all $k\geq 4$.

In this paper, we verify \cref{conj:pcpu} for every  graph whose automorphism group has a \emph{quasi-transitive nonunimodular subgroup}. Our proof makes direct use of the recent result  that critical percolation on any such graph does not have any infinite clusters almost surely \cite{Hutchcroft2016944}, which built upon previous work of Benjamini, Lyons, Peres, and Schramm \cite{peres2006critical} and Tim\'ar \cite{timar2006percolation}.

\begin{thm}
\label{thm:pcpu}
Let $G$ be a connected, locally finite graph, and suppose that $\Aut(G)$ has a quasi-transitive nonunimodular subgroup. Then $p_c(G)<p_u(G)$.
\end{thm}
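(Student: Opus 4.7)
The plan is to introduce an auxiliary threshold $p_h$ capturing the transition from infinite \emph{light} clusters to infinite \emph{heavy} clusters, and to prove $p_c < p_h \leq p_u$, from which the theorem follows. Let $\Gamma \leq \Aut(G)$ be a quasi-transitive nonunimodular subgroup with modular function $\Delta$, and assign each vertex $v$ a positive weight $m(v)$ derived from $\Delta$ so that the mass-transport principle on $\Gamma$ takes the weighted form $m(o)\sum_y \E F(o,y) = \sum_y m(y)\, \E F(y,o)$ for diagonally $\Gamma$-invariant $F \geq 0$. Call a cluster $K$ \emph{heavy} if $\sum_{v\in K} m(v) = \infty$ and \emph{light} otherwise, and set $p_h := \inf\{p : \P_p(\text{some cluster is heavy}) > 0\}$.

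The easier direction $p_h \leq p_u$ is a weighted mass-transport argument: transporting from each $x$ the mass $m(y)/\sum_{z\in K_x} m(z)$ to each $y \in K_x$ (well-defined on light clusters) forces every light infinite cluster to be one of infinitely many, because otherwise the weighted inflow into a vertex of an isolated light infinite cluster would be infinite while the outflow is bounded. Hence for $p \in (p_c, p_h)$ all infinite clusters are light and there are almost surely infinitely many of them, so $p < p_u$; taking the infimum gives $p_h \leq p_u$.

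The main content is the strict inequality $p_c < p_h$, which I plan to derive from the theorem $\theta(p_c) = 0$ of \cite{Hutchcroft2016944}. The key intermediate quantity is the $m$-weighted susceptibility $\chi_m(p) := \E_p\bigl[\sum_{v} m(v)\, \mathbbm{1}(v \in K_o)\bigr]$. If $\chi_m(p) < \infty$ then $K_o$ has finite $m$-mass almost surely, and a short mass-transport bookkeeping argument promotes this into the statement that almost surely every cluster is light, so $p \leq p_h$. To prove $\chi_m(p_c) < \infty$ I expect to use the mean-field (triangle) condition at criticality --- also established in this paper --- together with tree-graph or BK-type inequalities that express $\chi_m$ in terms of the two-point function and a bounded triangle diagram. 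A Russo-type differential inequality would then extend finiteness of $\chi_m$ slightly past $p_c$, delivering the desired gap.

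The principal obstacle is this last block of steps: upgrading the qualitative fact $\theta(p_c)=0$ into a quantitative finite-susceptibility bound that persists into a right-neighborhood of $p_c$. By comparison, the mass-transport reductions --- both $p_h \leq p_u$ and the passage from $\chi_m<\infty$ to the absence of heavy clusters --- are technically routine.
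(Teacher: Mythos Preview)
Your overall architecture — introduce $p_h$, prove $p_h\le p_u$ by mass-transport, then show a weighted susceptibility is finite at and just above $p_c$ — is exactly the paper's plan. But the specific execution you sketch has a genuine gap.

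First, the weight $m$ you choose is forced: the identity $m(o)\sum_y F(o,y)=\sum_y m(y)F(y,o)$ is the tilted mass-transport principle, and it holds precisely for $m(v)\propto\Delta(o,v)$. With that choice your weighted susceptibility is
\[
\chi_m(p)=\E_p\Bigl[\sum_{v\in K_o}\Delta(o,v)\Bigr]=\chi_{p,1}(o),
\]
which by the $\lambda\mapsto 1-\lambda$ symmetry of the paper (eq.~\eqref{eq:chisymmetry}) equals the \emph{ordinary} susceptibility $\chi_{p,0}(o)=\chi_p(o)$. By Aizenman--Barsky this is infinite at $p_c$, so the plan ``show $\chi_m(p_c)<\infty$'' cannot succeed as written. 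The paper's key idea is to replace $\Delta$ by $\Delta^\lambda$ for $\lambda\in(0,1)$: finiteness of $\chi_{p,\lambda}$ for \emph{any} such $\lambda$ still forces all clusters to be light (via Tim\'ar's slab characterisation of heaviness), but now $\chi_{p,\lambda}$ has its own critical value $p_c(\lambda)$, and the whole point is to prove $p_c<p_c(\lambda)$. This is \cref{thm:pcpt}, and it is the main technical content of the paper.

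Second, your proposed route to that finiteness is circular. You write that you will ``use the mean-field (triangle) condition at criticality --- also established in this paper''. But in this paper the triangle condition (\cref{thm:triangle}) is \emph{deduced from} \cref{thm:pcpt} via \cref{lemma:diamondimpliestriangle}, which shows $\nabla_p\le(\sup_u\chi_{p,\lambda}(u))^3$; it is not available as an independent input. Even granting the triangle condition, there is no known mechanism to extract $\chi_{p_c,\lambda}<\infty$ from it. The actual proof of $p_c<p_c(\lambda)$ is substantially more involved than ``a Russo-type differential inequality'': it combines a tilted Aizenman--Barsky argument (\cref{prop:tiledAizBar}) showing that $|K_v|_{v,\lambda}$ cannot have a finite $(1+\varepsilon)/2$ moment at $p_c(\lambda)$, with a delicate bootstrapping procedure (Sections~\ref{sec:Fekete}--\ref{sec:pc}) using Fekete-type supermultiplicativity, a peak-comparison estimate proved via Reimer's inequality, and fractional-moment versions of the tree-graph inequalities, to show that $|K_v|_{v,\lambda}$ \emph{does} have a finite $1-\varepsilon$ moment at $p_c$. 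The contradiction between these two facts gives $p_c<p_c(\lambda)$.
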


In particular, \cref{thm:pcpu} allows for a complete analysis of products with trees. 
Being non-perturbative, our methods can easily be adapted to analyze anisotropic bond percolation, enabling us to extend the full analysis of Grimmett and Newman \cite{MR1064560} to all $k\geq 3$ and $d\geq 1$. See \cref{subsec:inhomog} for details. 

\begin{corollary}
\label{cor:TxZ}
Let $T_k$ be the $k$-regular tree with $k\geq 3$ and let $G$ be a connected, locally finite,  quasi-transitive graph. Then $p_c(T_k\times G) < p_u(T_k \times G)$.
\end{corollary}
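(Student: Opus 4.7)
The plan is to deduce this from \cref{thm:pcpu} by exhibiting a quasi-transitive nonunimodular subgroup of $\Aut(T_k \times G)$. Since the hypothesis of \cref{thm:pcpu} is a group-theoretic condition on the graph, and since the product of automorphisms of each factor is an automorphism of the Cartesian product, the natural candidate for such a subgroup is $H \times \Aut(G)$, where $H \leq \Aut(T_k)$ is chosen to be a nonunimodular transitive subgroup of $\Aut(T_k)$.

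To construct $H$, I would fix an end $\xi$ of $T_k$ and let $H = \{h \in \Aut(T_k) : h\cdot\xi = \xi\}$ be the stabilizer of $\xi$. Since $k \geq 3$, this is a classical construction: one checks that $H$ acts transitively on the vertex set of $T_k$ (any two vertices can be mapped to one another by an automorphism that preserves the geodesic ray to $\xi$ past some common ancestor), and that $H$ is nonunimodular with modular function $\Delta_H(h) = (k-1)^{\kappa(h)}$, where $\kappa(h) \in \Z$ is the common displacement of horocycle heights along $\xi$ induced by $h$. In particular, $\Delta_H$ is non-trivial because $k-1 \geq 2$.

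Next I would verify that $\Gamma := H \times \Aut(G)$, viewed as a subgroup of $\Aut(T_k \times G)$ via the coordinatewise action $(h,g)\cdot(x,y) = (hx, gy)$, is quasi-transitive and nonunimodular. Quasi-transitivity is immediate: $H$ is transitive on $V(T_k)$ and $\Aut(G)$ has only finitely many orbits on $V(G)$ by assumption, so $\Gamma$ has only finitely many orbits on $V(T_k) \times V(G)$. Nonunimodularity follows because the modular function of the product group factorizes as $\Delta_\Gamma(h,g) = \Delta_H(h)\Delta_{\Aut(G)}(g)$, and $\Delta_H$ is already non-trivial.

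Given these observations the corollary follows immediately from \cref{thm:pcpu}. There is no serious obstacle; the content is entirely in the main theorem, and the remaining work is the routine verification of the group-theoretic hypothesis for products. The same short argument would apply to any Cartesian product $T_k \times G$ in which $G$ is replaced by a graph admitting a quasi-transitive automorphism group, regardless of amenability or unimodularity of the second factor.
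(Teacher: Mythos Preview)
Your proposal is correct and matches the paper's own argument essentially verbatim: the paper explicitly notes that $\Gamma_\xi \times \Aut(G)$ is a quasi-transitive nonunimodular subgroup of $\Aut(T_k \times G)$, so that \cref{cor:TxZ} follows from \cref{thm:pcpu}. The only minor remark is that your factorization $\Delta_\Gamma(h,g)=\Delta_H(h)\Delta_{\Aut(G)}(g)$ is stated for the group-theoretic modular function rather than the vertex-pair version used in the paper, but the needed conclusion (nonunimodularity of $\Gamma$) follows directly by comparing stabilizer-orbit sizes for pairs $(x,y),(x',y)$ with $x,x'$ witnessing nonunimodularity of $H$.
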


In light of the work of Lyons, Peres, and Schramm \cite{LPS06}, \cref{thm:pcpu} also has the following corollary. See that paper and \cite[Chapter 11]{LP:book} for background on minimal spanning forests.

\begin{corollary}
\label{cor:MSF}
Let $G$ be a connected, locally finite graph, and suppose that $\Aut(G)$ has a quasi-transitive nonunimodular subgroup. Then the free and wired minimal spanning forests of $G$ are distinct.
\end{corollary}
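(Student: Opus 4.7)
The plan is to combine \cref{thm:pcpu} with the characterization of the equality $\FMSF(G)=\WMSF(G)$ in terms of the percolation phase diagram, due to Lyons, Peres, and Schramm \cite{LPS06} (see also \cite[Chapter 11]{LP:book}). The central result I would invoke is: for a connected, locally finite, quasi-transitive graph $G$, we have $\FMSF(G)=\WMSF(G)$ almost surely if and only if, for Lebesgue-almost every $p\in[0,1]$, Bernoulli-$p$ bond percolation on $G$ produces at most one infinite cluster almost surely. Thus, to establish $\FMSF(G)\neq\WMSF(G)$ it suffices to exhibit a Borel set of parameters $p$ with positive Lebesgue measure for which $G[p]$ has infinitely many infinite clusters a.s.

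First I would note that $G$ is itself quasi-transitive, since the quasi-transitive nonunimodular subgroup of $\Aut(G)$ witnesses this. By \cref{thm:pcpu}, we have $p_c(G)<p_u(G)$, so the open interval $(p_c(G),p_u(G))$ has positive Lebesgue measure. For every $p$ in this interval, $G[p]$ contains an infinite cluster almost surely (by definition of $p_c$) but does not contain a unique infinite cluster almost surely (by definition of $p_u$). Since Bernoulli bond percolation is insertion-tolerant and its law is invariant under the quasi-transitive action of $\Aut(G)$, the Newman--Schulman trichotomy applies: the number of infinite clusters is a.s.\ in $\{0,1,\infty\}$. Therefore, for every $p\in(p_c(G),p_u(G))$, $G[p]$ has infinitely many infinite clusters almost surely.

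Applying the Lyons--Peres--Schramm criterion then gives $\FMSF(G)\neq\WMSF(G)$, completing the proof.

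The only step that requires any thought is confirming that the LPS criterion is available in the generality needed here: their proof is formulated for transitive (or quasi-transitive) graphs, and the argument proceeds by coupling all Bernoulli percolations via i.i.d.\ uniform $[0,1]$ edge-labels and comparing the resulting free and wired minimal spanning forests; it does \emph{not} require unimodularity, so it applies directly to our setting. Everything else is a direct quotation of \cref{thm:pcpu} and the standard cluster-counting trichotomy.
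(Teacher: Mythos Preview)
Your proposal is correct and matches the paper's approach exactly: the paper does not give a separate proof of \cref{cor:MSF}, but simply states that it follows from \cref{thm:pcpu} in light of the Lyons--Peres--Schramm result \cite{LPS06}. You have merely spelled out the details of that deduction (the $\FMSF=\WMSF$ criterion and the Newman--Schulman trichotomy on $(p_c,p_u)$), which is precisely what the paper intends.
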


Here, a subgroup $\Gamma \subseteq \Aut(G)$ of the group of automorphisms of a connected, locally finite graph $G$ is said to be \textbf{transitive} if its action on $V$ is transitive, i.e.\ has exactly one orbit, and is said to be \textbf{quasi-transitive} if its action on $G$ has only finitely many orbits. $\Gamma$ is said to be \textbf{unimodular} if 
\[|\stab_v u|=|\stab_{u} v|\]
for every $u,v\in V$ in the same orbit of $\Gamma$, where $\stab_x$ is the stabilizer of $x$ in $\Gamma$ and $\stab_x y$ 
 is the orbit of $y$ under $\stab_x$. Otherwise $\Gamma$ is said to be \textbf{nonunimodular}.

A prototypical example of a graph with a transitive nonunimodular subgroup is the $d$-regular tree $T$ for $d\geq 3$, together with the subgroup of $\Gamma_\xi$ of $\Aut(T)$ fixing some specified end $\xi$ of $T$. This example allows us to build many others, including examples where the full automorphism group is nonunimodular such as the grandparent graph \cite{MR811571} and the Diestel-Leader graphs \cite{MR1856226}; see e.g.\ \cite{timar2006percolation} for further examples. 
Moreover, if $G$ has a quasi-transitive nonunimodular subgroup $\Gamma \subseteq \Aut(G)$ and $H$ is quasi-transitive, then the product $G\times H$ has a quasi-transitive nonunimodular subgroup isomorphic to $\Gamma \times \Aut(H)$. 
In particular, if $T_k$ is a $k$-regular tree with $k\geq 3$, $\Gamma_\xi$ is the group of automorphisms of $T$ fixing some specified end $\xi$ of $T$, and $G$ is an arbitrary quasi-transitive graph, then the Cartesian  product $T \times G$ has a quasi-transitive nonunimodular subgroup isomorphic to the direct product $\Gamma_\xi \times \Aut(G)$, so that \cref{cor:TxZ} does indeed follow from \cref{thm:pcpu}. 
Similar statements hold for the free product of $G$ and $H$, the wreath product of $G$ and $H$, and so on, so that \cref{thm:pcpu} also applies, for example, to the lamplighter on the tree (with lamps taking values in an arbitrary transitive graph) or the lamplighter on an arbitrary quasi-transitive graph with lamps taking values in the tree. 

Let us note however that having a quasi-transitive nonunimodular subgroup of automorphisms is not a very robust property, and \cref{thm:pcpu} does \emph{not} apply to every Cayley graph of the direct product of the free Abelian group $\Z^d$ with a non-Abelian free group. Indeed, there are even Cayley graphs of the free group on two generators whose automorphism groups are discrete and therefore do not have any nonunimodular subgroups. Moreover, it follows from the work of De La Salle and Tessera \cite{de2015characterizing} that for every infinite finitely generated group $\Gamma$, there is a Cayley graph of a finite extension of $\Gamma$ whose automorphism group is discrete and therefore does not have any nonunimodular subgroups.





In previous work on percolation in the nonamenable setting, it has often been required to treat the unimodular and nonunimodular cases separately.  Thus, it is likely that this paper will be a component of any eventual solution to \cref{conj:pcpu}, with the unimodular case being treated separately. Indeed, since it first appeared, the results of the present paper have been used as part of a case-analysis in several further works concerning percolation on nonamenable graphs \cite{Hutchcroft2019Hyperbolic,hutchcroft2018locality,1904.10448}, see also \cite{tang2018heavy}. It is worth noting, however, that in the past it has been the \emph{unimodular} case that has been solved first, with the nonunimodular case requiring greater effort. 
The basic reason for this disparity is that the mass-transport principle is a much more powerful tool in the unimodular case than in the nonunimodular case \emph{when it
 comes to obtaining proofs by contradiction.}
%
However, we shall see that the nonunimodular (a.k.a.\ tilted) mass-transport principle remains a powerful tool \emph{for performing calculations}. Moreover, the presence of a quasi-transitive nonunimodular subgroup $\Gamma \subseteq \Aut(G)$ endows our graph with an invariantly defined decomposition into `expanding layers' that is foundational to the entire strategy used to prove \cref{thm:pcpu}.


In \cite{1709.10515}, we apply methods  similar to (but substantially simpler than) those used in this paper to analyze self-avoiding walk on the same class of graphs.

\subsection{The heaviness transition}
\label{subsec:intro_heavy}

If $G=(V,E)$ is a connected, locally finite graph and $\Gamma\subseteq \Aut(G)$ is transitive, the \textbf{modular function} of $(G,\Gamma)$ is the function $\Delta=\Delta_\Gamma :V^2 \to (0,\infty)$ defined by
\[\Delta(x,y) = \frac{|\stab_y x|}{|\stab_x y|},\]
so that $\Gamma$ is unimodular if and only if $\Delta(x,y) \equiv 1$. The \textbf{tilted mass-transport principle} states that if $F:V^2\to[0,\infty]$ is invariant under the diagonal action of $\Gamma$, meaning that that $F(x,y)=F(\gamma x, \gamma y)$ for every $\gamma \in \Gamma$, then
\[\sum_{v\in V} F(x,v) = \sum_{v\in V} F(v,x) \Delta(x,v)\]
for every $x\in V$.
See \cref{subsec:MTP} for definitions of the modular function and the tilted mass-transport principle in the general quasi-transitive case. 


We say that a set of vertices $K \subseteq V$ is \textbf{heavy} 
if $\sum_{y\in K} \Delta(x,y) =\infty$ for some $x \in V$ (and hence every $x\in V$ by \cref{lem:modularsymmetries}, a.k.a.\ the cocycle identity), saying that $K$ is \textbf{light} otherwise. 
%
%
We define the \textbf{heaviness transition} to be
\begin{align*}
p_h=p_h(G,\Gamma) &= \inf\{ p \in [0,1] : G[p] \text{ contains a heavy cluster almost surely}\}.
\end{align*}
The heaviness transition was introduced by H\"aggstr\"om, Peres, and Schonmann \cite{HPS99} in the context of their work on indistinguishability.

Tim\'ar \cite[Lemma 5.2]{timar2006percolation} showed that for percolation clusters (but not for arbitrary sets), being heavy is almost surely equivalent to having \textbf{unbounded height}, meaning that $\sup_{y\in K}\Delta(x,y)=\infty$, and is also almost surely equivalent to having infinite intersection with some set of the form $\{ u \in V : e^s \leq \Delta(v,u) \leq e^t \}$ for some $v\in V$ and $s<t$, which we call a \textbf{slab}. Thus, we can also write $p_h$ in either of the following equivalent forms.
\begin{align}
p_h=p_h(G,\Gamma)&=\inf\left\{\hspace{.21em} p \in [0,1] :\hspace{.4455em} G[p] \text{ contains a cluster of unbounded height almost surely}\right\}
\nonumber
\\
&= \inf\left\{ p \in [0,1] : \begin{array}{l} G[p] \text{ contains a cluster that has infinite}\\ \text{intersection with some slab almost surely}\end{array}\right\}.
\label{eq:Timarph}
\end{align}

It is clear that if a unique infinite cluster exists then it must be heavy, and hence that $p_h(G,\Gamma) \leq p_u(G)$ for every $G$ and $\Gamma$.
Thus, in order to prove \cref{thm:pcpu}, it suffices to prove the following, stronger result, 
which answers positively a question of H\"aggstr\"om, Peres, and Schonmann \cite{HPS99}. 

\begin{thm}
\label{thm:pcph}
Let $G$ be a connected, locally finite graph and suppose that $\Gamma \subseteq \operatorname{Aut}(G)$ is transitive and nonunimodular. Then
$p_c(G) < p_h(G,\Gamma)$.
\end{thm}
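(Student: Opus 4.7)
The plan is to upgrade the qualitative non-percolation statement of \cite{Hutchcroft2016944} at $p_c$ into a quantitative estimate on a suitable tilted susceptibility, and then deduce the strict inequality by a continuity argument. For each $\alpha \in [0,1]$, I would introduce the $\alpha$-tilted susceptibility
\[
\chi_\alpha(p) = \sum_{y \in V} \Delta(o,y)^\alpha \, \P_p(o \leftrightarrow y),
\]
and prove two implications. First, \textbf{finiteness of $\chi_\alpha$ precludes heavy clusters.} Using Tim\'ar's reformulation \eqref{eq:Timarph} of heaviness as having infinite intersection with a slab $S = \{u : e^s \leq \Delta(o,u) \leq e^t\}$, a Markov/Borel--Cantelli argument on the weights $\Delta(o,y)^\alpha$ for $y \in K_o \cap S$ (which, because $\alpha>0$, are bounded below by a positive constant inside the slab) shows that if $\chi_\alpha(p) < \infty$ for some $\alpha \in (0,1)$, then a.s.\ $|K_o \cap S| < \infty$ for every slab $S$, hence $K_o$ is a.s.\ light and no heavy cluster exists, i.e.\ $p \le p_h$.

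Second, \textbf{the set $\{p : \chi_\alpha(p)<\infty\}$ is open.} This can be established by a Simon--Lieb / Aizenman--Barsky--Menshikov-type differential inequality, using Russo's formula on $\chi_\alpha$ together with the tilted mass-transport principle to convert the pivotal contributions $\sum_{e=\{u,v\}} \Delta(o,y)^\alpha \P_p(o\leftrightarrow u)\P_p(v\leftrightarrow y)$ into products of $\chi_\alpha$-factors (the tilting exponent $\alpha$ distributes correctly between the two factors using $\Delta(o,y)^\alpha = \Delta(o,u)^\alpha\Delta(u,y)^\alpha$ and the cocycle identity, modulo an edge weight $\Delta(u,v)^\alpha$ which is uniformly bounded since $u,v$ are neighbors). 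Alternatively, a softer finite-energy argument suffices to show that finiteness is preserved under small perturbations of $p$.

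The crux, and the main obstacle, is to verify $\chi_\alpha(p_c) < \infty$ for some $\alpha \in (0,1)$. The qualitative input is that $\P_{p_c}(|K_o|=\infty) = 0$, but what is needed is a quantitative decay statement for the two-point function along the modular direction. The strategy is to exploit the ``expanding layer'' geometry: the nonunimodularity partitions $V$ into level sets of $\Delta(o,\cdot)$ whose sizes grow exponentially on one side and decay exponentially on the other. Following the mass-transport arguments of \cite{peres2006critical,timar2006percolation,Hutchcroft2016944}, one should be able to obtain a bound of the form $\sum_{y : \Delta(o,y) = e^{-n}} \P_{p_c}(o\leftrightarrow y) \lesssim e^{-c n}$ on the ``light side'', while the tilted MTP controls the heavy side by symmetry; summing geometrically against $\Delta^\alpha$ for $\alpha$ strictly between the relevant exponents yields a finite total. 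Combining the three steps, $\chi_\alpha(p_c)<\infty$ and the openness of finiteness give $\chi_\alpha(p)<\infty$ for some $p>p_c$, and the first implication then gives $p \leq p_h$, proving $p_c < p_h$.
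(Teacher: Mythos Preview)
Your outline correctly identifies the tilted susceptibility and the overall strategy of showing $\chi_\alpha(p_c)<\infty$ for some $\alpha\in(0,1)$; your first two steps (finiteness of $\chi_\alpha$ precludes heavy clusters, and openness of $\{p:\chi_\alpha(p)<\infty\}$) are correct and correspond respectively to the paper's observation $p_t\le p_h$ and to \cref{prop:tiltedmeanfieldlowerbound}. The genuine gap is in the crux step. Your claimed bound $\sum_{y:\Delta(o,y)=e^{-n}}\P_{p_c}(o\leftrightarrow y)\lesssim e^{-cn}$ with $c>0$ is in fact \emph{false}: combined with the tilted mass-transport relation $\E_{p_c}[X_n]\asymp \nexp(-n)\,\E_{p_c}[X_{-n}]$ it would force exponential decay of $\E_{p_c}[X_n]$ in $|n|$ on both sides, hence $\chi_{p_c}=\chi_{p_c,0}<\infty$, contradicting sharpness of the phase transition. (Once the theorem is proven, \cref{prop:tamedecay} gives $\beta_{p_c}=1$ and $\E_{p_c}[X_{-n}]\asymp 1$: bounded, not decaying.) The references you cite establish only that clusters are a.s.\ finite at $p_c$; none of them give quantitative control on the expectations $\E_{p_c}[X_{\pm n}]$.

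What is available a priori is only the \emph{probability} bound $\P_{p_c}(\rho\leftrightarrow L_n)\preceq \nexp(-n)$ for $n\ge 0$ (\cref{lem:alphapc}, via Fekete and left-continuity of $\alpha_p$), and converting this into an expectation bound is precisely the difficulty. The paper does \emph{not} prove $\chi_\alpha(p_c)<\infty$ directly. It proves only the fractional-moment estimate of \cref{lem:bootstrap}, namely $\E_{p_c}\bigl[(X_k^{-\infty,\infty})^{1-\eps}\bigr]\preceq_\eps \nexp(-k+o_\eps(k))$ for $k\ge 0$ and $\preceq_\eps \nexp(o_\eps(k))$ for $k\le 0$, via a substantial bootstrapping argument built on a peak-comparison estimate and Reimer's inequality (\cref{sec:pc}). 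This is then played against a separate tilted Aizenman--Barsky argument (\cref{prop:tiledAizBar}) showing that $\E_{p_c(\lambda)}\bigl[|K_v|_{v,\lambda}^{(1+\eps)/2}\bigr]=\infty$ for $\lambda\in[0,1/2)$, which forces $p_c\ne p_c(\lambda)$. Your sketch supplies no substitute for either of these two ingredients.
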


\subsection{Critical exponents and the triangle condition}
\label{subsec:intro_criticalexponents}

The proof of \cref{thm:pcpu,thm:pcph} also yields a great deal of information about \emph{critical} percolation. In particular, it allows us to prove that many critical exponents associated to percolation on graphs with nonunimodular quasi-transitive subgroups of automorphisms exist and take their mean-field values. We refer the reader to \cite[Chapters 9 and 10]{grimmett2010percolation} for detailed background on critical exponents in percolation. We write $\bP_p$ and $\bE_p$ for the law of $G[p]$ and the associated expectation operator, respectively. We also write $\asymp$ for an equality that holds up to positive multiplicative constants.


\begin{thm}[Mean-field critical exponents]
\label{thm:criticalexponents}
Let $G$ be a connected, locally finite graph, and suppose that $\Aut(G)$ has a quasi-transitive nonunimodular subgroup. Then the following hold for each $v\in V$.
\begingroup
\addtolength{\jot}{0.5em}
\begin{align}
\chi_p(v) &\asymp (p_c-p)^{-1}  &p &\nearrow p_c
\label{exponent:susceptibility}
\\
\chi_{p}^{(k+1)}(v)/\chi_{p}^{(k)}(v) &\asymp_k (p_c-p)^{-2} &k\geq 1,\, p &\nearrow p_c
\label{exponent:gap}
\\
\bP_{p}\left(|K_v|=\infty\right) &\asymp p-p_c &p &\searrow p_c
\label{exponent:theta}
\\
\bP_{p_c}\left( |K_v| \geq n\right) & \asymp n^{-1/2} &n&\nearrow \infty 
\label{exponent:volume}
\\
\bP_{p_c}\left( \operatorname{rad}(K_v) \geq n\right) & \asymp n^{-1} &n&\nearrow \infty
\label{exponent:radius}
\\
\bP_{p_c}\left( \operatorname{rad}_\mathrm{int}(K_v) \geq n\right) & \asymp n^{-1} &n&\nearrow \infty.
\label{exponent:intradius}
\end{align}
\endgroup
\end{thm}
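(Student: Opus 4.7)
The plan is to reduce everything to a single technical input, namely the triangle condition at $p_c$ in a nonunimodular-adapted form that uses the modular function as a weight. This triangle condition is the principal analytic result of the paper and is itself a consequence of the sharp separation $p_c<p_h$ provided by \cref{thm:pcph}: once infinite light clusters are ruled out at criticality, one obtains quantitative decay of the two-point function $\tau_{p_c}(v,\cdot)$ along the level sets of $\Delta$, which, plugged into the triangle diagram and reorganized using the tilted mass-transport principle, yields a finite sum.

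Assuming the triangle condition, the upper bounds in \eqref{exponent:susceptibility}--\eqref{exponent:volume} follow by the standard differential-inequality machinery, adapted to use the tilted mass-transport principle in place of ordinary mass transport. The Aizenman--Newman inequality $d\chi_p/dp\leq C\chi_p^2$ and its $k$-point generalization due to Nguyen give the upper bounds in \eqref{exponent:susceptibility} and \eqref{exponent:gap}, and the matching lower bounds come from integrating these inequalities from $p_c$ downwards, using $\chi_{p_c}(v)=\infty$. The Barsky--Aizenman inequality, also fed by the triangle condition, gives the upper bounds in \eqref{exponent:theta} and \eqref{exponent:volume}; the lower bound in \eqref{exponent:theta} is Duminil-Copin--Tassion's universal estimate (valid without the triangle condition) and the lower bound in \eqref{exponent:volume} is the Aizenman--Barsky lower bound on the critical volume tail.

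The main obstacle is the pair of radius bounds \eqref{exponent:radius} and \eqref{exponent:intradius}. The intrinsic-radius upper bound is the nonunimodular analogue of a theorem of Kozma--Nachmias; I would adapt their exploration-martingale argument on the growing sequence of intrinsic balls, using the tilted mass-transport principle to track the distribution of $\Delta$ on the boundaries of these balls and thereby to control the probability that a ball of intrinsic radius $n$ is small. The extrinsic-radius upper bound I expect to deduce by combining the intrinsic bound with a random-walk-type estimate on how the cluster drifts through the expanding layers indexed by $\Delta$. The matching lower bounds are softer: the intrinsic lower bound $\bP_{p_c}(\operatorname{rad}_\mathrm{int}(K_v)\geq n)\geq c/n$ follows from a classical supermartingale argument essentially due to Aizenman--Barsky--Menshikov, and the extrinsic lower bound then follows by a short geometric argument showing that intrinsic balls of radius $n$ typically meet extrinsic balls of comparable radius.
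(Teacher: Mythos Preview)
Your outline has two genuine gaps, both of which the paper explicitly flags as places where the classical $\Z^d$ proofs fail and must be replaced.

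\textbf{Susceptibility upper bound and gap lower bound.} You write the Aizenman--Newman inequality as $d\chi_p/dp\leq C\chi_p^2$, but this is the wrong direction: integrating an \emph{upper} bound on $\chi_p'$ from $p_c$ downward does not control $\chi_p$ from above. What you need is the \emph{lower} bound $d\chi_p/dp\geq c\chi_p^2$, which gives $-(1/\chi_p)'\geq c$ and hence $\chi_p\leq C(p_c-p)^{-1}$. More seriously, the paper points out that Aizenman and Newman's derivation of precisely this lower bound does not transfer: it uses the unimodular mass-transport principle (their eq.\ 6.4) and a topological lemma (their Lemma 6.3) that fails even on trees. The paper therefore abandons the differential-inequality route entirely and proves the susceptibility upper bound directly: from $p_c<p_t$ one gets $\alpha_{p_c}=\beta_{p_c}=1$, hence $\tau_{p_c}(x,y)\preceq \Delta(x,y)\wedge\Delta(y,x)$, and then the monotonicity estimate $\log\tau_{p'}\leq(\log p'/\log p)\log\tau_p$ yields $\E_{p_c-\eps}[X_k^{-\infty,\infty}]\preceq\nexp[-(1+c\eps)|k|]$, from which the bound is read off. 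The gap lower bound is then obtained from the Durrett--Nguyen inequality $\chi_p'\preceq(\chi_p\chi_p^{(2)})^{1/2}$ combined with the now-established $\chi_p\asymp(p_c-p)^{-1}$ and the mean value theorem.

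\textbf{Radius lower bounds.} Your plan is to prove the intrinsic lower bound first and deduce the extrinsic one by a geometric argument, but the implication runs the other way. Since $\operatorname{rad}_{\mathrm{int}}(K_v)\geq\operatorname{rad}(K_v)$ always, one has $\bP_{p_c}(\operatorname{rad}_{\mathrm{int}}\geq n)\geq\bP_{p_c}(\operatorname{rad}\geq n)$, so the extrinsic lower bound is the harder statement and the intrinsic one is free once it is known. The paper proves the extrinsic lower bound by a second-moment argument specific to the layer structure: with $\beta_{p_c}=1$, \cref{prop:tamedecay,prop:tamesecondmoment} give $\E_{p_c}[X_{-n}^{-\infty,\infty}]\asymp 1$ and $\E_{p_c}[(X_{-n}^{-\infty,\infty})^2]\preceq n$, so Cauchy--Schwarz yields $\P_{p_c}(X_{-n}^{-\infty,\infty}>0)\succeq 1/n$, and any connection to $L_{-n}$ forces $\operatorname{rad}(K_\rho)\geq n$. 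No supermartingale or Kozma--Nachmias-type adaptation is used here.
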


Here, the \textbf{susceptibility} $\chi_p(v)$ is defined to be the expected volume of the cluster at $v$, and $\chi_p^{(k)}(v)$ is defined to be the $k$th moment of the volume of the cluster at $v$. The implicit constants in \eqref{exponent:gap} depend on $k$. We denote the cluster at $v$ by $K_v$, writing $|K_v|$ for its volume, rad$(K_v)$ for its radius (i.e., the maximum distance in $G$ between $v$ and another point in $K_v$) and rad$_\mathrm{int}(K_v)$ for its \textbf{intrinsic radius} (also known as the \textbf{chemical radius}, i.e., the maximum distance in $G[p]$ between $v$ and and another point in $K_v$). The exponent described by \eqref{exponent:gap} is known as the \textbf{gap exponent}. In the traditional notation for percolation critical exponents \cite[Chapter 9]{grimmett2010percolation}, \cref{thm:criticalexponents} states that $\gamma=1$, $\Delta=2$, $\beta=1$, $\delta=2$, and $\rho=1$. We also remark that \cref{thm:criticalexponents}  conclusively resolves \cite[Question 3.3]{MR1833805}. 
%
%

The \emph{lower bounds} of \eqref{exponent:susceptibility}, \eqref{exponent:theta}, and \eqref{exponent:volume} were shown to hold for \emph{all} transitive graphs by Aizenman and Barsky \cite{aizenman1987sharpness}, whose proof was generalised to the quasi-transitive case by Antunovi\'c and Veseli\'c~\cite{antunovic2008sharpness}. Beautiful new proofs of  these results in the transitive case have recently been obtained by Duminil-Copin and Tassion \cite{duminil2015new}.


The remaining bounds of \cref{thm:criticalexponents} are intimately related to the \emph{triangle condition}, which is a well-known  signifier of mean-field behaviour.
Let $G=(V,E)$ be a connected, locally finite, quasi-transitive graph, and for each $u,v\in V$ and $p\in [0,1]$ let $\tau_p(u,v)$ be the probability that $u$ and $v$ are connected in $G[p]$. This is known as the \textbf{two-point function}. For each $p\in[0,1]$ and $v\in V$, the \textbf{triangle diagram} is defined to be
\[
\nabla_p(v) =\sum_{x,y\in V}\tau_{p}(v,x)\tau_{p}(x,y)\tau_{p}(y,v). 
\]
We say that $G$ satisfies the \textbf{triangle condition at $p$} if $\nabla_p(v)<\infty$ for every $v\in V$.

The triangle condition was introduced by Aizenman and Newman \cite{MR762034}, who showed that if $G=\Z^d$ and the triangle condition holds at $p_c$ then the upper bounds of \eqref{exponent:susceptibility} and \eqref{exponent:gap} hold. Subsequently, and in the same setting, 
Barsky and Aizenman  established the upper bounds of \eqref{exponent:theta} and \eqref{exponent:volume} (see also \cite{MR912497}), and Nguyen \cite{MR923855}  established the lower bound of \eqref{exponent:gap} by combining the results of \cite{MR762034} with a differential inequality of Durrett and Nguyen \cite{durrett1985thermodynamic}. More recently, Kozma and Nachmias \cite{MR2551766,MR2748397} have established both the upper and lower bounds of \eqref{exponent:intradius} for $\Z^d$ when $d$ is sufficiently large, as well as the appropriate analogue of \eqref{exponent:radius} in the same setting. 

The triangle condition was established for critical percolation on $\Z^d$ for $d\geq 19$ (as well as spread-out models for $d>6$) in the landmark work of Hara and Slade \cite{MR1043524,MR1283177} using a technique known as the \emph{lace expansion}; their techniques have recently been refined by Fitzner and van der Hofstadt \cite{fitzner2015nearest} to prove that the triangle condition holds for critical percolation on $\Z^d$ for any $d\geq 11$. 
It is believed that the triangle condition should hold for critical percolation on $\Z^d$ for every $d > 6$. 

It is also conjectured that the triangle condition holds at $p_c$ for every nonamenable quasi-transitive graph, and it is plausible that it holds for every quasi-transitive graph that has strictly larger than sextic volume growth.
Similarly to \cref{conj:pcpu}, most previous results in this direction have been under perturbative hypotheses: for small spectral radius by Schonmann \cite{MR1833805} and for nonamenable graphs of large girth by Nachmias and Peres \cite{MR3005730}.
The only non-trivial example for which the triangle condition has previously been established via a non-perturbative method is due to Kozma \cite{kozma2011percolation}, who proved that it holds for critical percolation on the product of two $3$-regular trees. (In \cite{1712.04911} we show that this example admits a very short analysis using the methods of this paper.)
Schonmann \cite{MR1888869} has also shown, without using the triangle condition, that several mean-field exponents hold 
on every transitive nonamenable planar graph and every infinitely-ended, unimodular, transitive  graph. 

Our next theorem verifies this conjecture in the nonunimodular setting. In particular, it applies to the product of a $k$-regular tree with $\Z^d$ (or any other quasi-transitive graph), for which the result was only previously known for large $k$.

\begin{thm}
\label{thm:triangle}
Let $G$ be a connected, locally finite graph, and suppose that $\Aut(G)$ has a quasi-transitive nonunimodular subgroup. Then $G$ satisfies the triangle condition at $p_c$. 
\end{thm}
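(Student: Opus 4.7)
The plan is to exploit the strict inequality $p_c(G)<p_h(G,\Gamma)$ established in \cref{thm:pcph}: since $\tau_p$ is monotone increasing in $p$, so is $\nabla_p(v)$, and it therefore suffices to establish $\nabla_p(v)<\infty$ for some $p\in (p_c,p_h)$, after which $\nabla_{p_c}(v)\leq \nabla_p(v)<\infty$ is immediate.

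The central tool will be the family of \emph{tilted susceptibilities}
\[
\chi_p^\lambda(v)\;=\;\sum_{x\in V}\tau_p(v,x)\,\Delta(v,x)^\lambda\;=\;\bE_p\!\left[\,\sum_{y\in K_v}\Delta(v,y)^{\lambda}\right],\qquad \lambda\in\R.
\]
The diagonal invariance of $\Delta$, the tilted mass-transport principle, and the symmetry of $\tau_p$ together yield the self-duality $\chi_p^{\lambda}(v)=\chi_p^{1-\lambda}(v)$, singling out $\lambda=\tfrac12$ as the natural weight---geometrically, the one that balances the ``heavy'' and ``light'' directions along the slab/horocycle decomposition emphasized in the introduction. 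The first step of the proof will be to show that $\chi_p^{1/2}(v)<\infty$ for every $p<p_h$. For such $p$, the cluster $K_v$ is almost surely either finite or light, so the sum $\sum_{y\in K_v}\Delta(v,y)^{1/2}$ is almost surely finite; the real work lies in upgrading this almost-sure finiteness to an $L^1$ bound, which I expect to follow via a sprinkling or Russo-type differential inequality of the kind that drives the proof of \cref{thm:pcph}, applied to the weighted susceptibility itself.

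With the weighted susceptibility bound in hand, the second step will be to control $\nabla_p(v)$. The cocycle identity $\Delta(v,x)\Delta(x,y)\Delta(y,v)=1$ allows the self-dual weights to be inserted for free, rewriting
\[
\nabla_p(v) \;=\; \sum_{x,y\in V}\bigl(\tau_p(v,x)\Delta(v,x)^{1/2}\bigr)\bigl(\tau_p(x,y)\Delta(x,y)^{1/2}\bigr)\bigl(\tau_p(y,v)\Delta(y,v)^{1/2}\bigr),
\]
after which a convolution-style estimate---combining tilted mass-transport, Cauchy--Schwarz, and the pointwise bound $\tau_p\leq 1$---should reduce the triangle sum to a bounded power of $\chi_p^{1/2}(v)$, which is finite by the first step. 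Quasi-transitivity of $\Gamma$ then extends the conclusion from a single $v$ to every $v\in V$. The main obstacle I anticipate is the first step: turning the almost-sure finiteness of the weighted cluster sum (a soft consequence of lightness) into a genuine expectation bound is the heart of the argument, and I expect it to require a genuinely quantitative use of the ``expanding layers'' decomposition afforded by the nonunimodular subgroup, rather than just the qualitative fact $p<p_h$.
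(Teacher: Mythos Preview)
Your Step 2 is essentially the paper's route: \cref{lemma:diamondimpliestriangle} uses the cocycle identity exactly as you describe, obtaining $\nabla_p(v)\le\bigl(\sup_u\chi_{p,\lambda}(u)\bigr)^3$ for every $\lambda$. The clean way to execute it is not Cauchy--Schwarz on the closed triangle, but to \emph{open} the triangle into a three-step walk: bound $\nabla_p(v)\le\sum_{x,y,z}\tau_p(v,x)\tau_p(x,y)\tau_p(y,z)\Delta^\lambda(v,z)$ (the diagonal $z=v$ recovers $\nabla_p$), then factor via $\Delta^\lambda(v,z)=\Delta^\lambda(v,x)\Delta^\lambda(x,y)\Delta^\lambda(y,z)$. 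Your closed-triangle rewriting is correct but awkward to bound directly.

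Step 1, however, contains a genuine error. Lightness of $K_v$ does \emph{not} imply that $\sum_{y\in K_v}\Delta(v,y)^{1/2}$ is a.s.\ finite: lightness only says the \emph{unweighted} sum of $\Delta(v,y)$ converges (equivalently, by Tim\'ar, that the height is bounded above), and an infinite light cluster typically has exponentially many vertices at each level below $v$, which can make the $1/2$-tilted sum diverge. Concretely, on the $d$-regular tree with $\Gamma$ the stabilizer of an end, one computes $p_t=(d-1)^{-1/2}$ while $p_h=1$; for $p_t<p<p_h$ every infinite cluster is light but has $|K_v|_{v,1/2}=\infty$ almost surely, and indeed $\chi_{p,1/2}=\infty$. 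So the statement ``$\chi_p^{1/2}(v)<\infty$ for every $p<p_h$'' is false in general---it is equivalent to $p_t=p_h$, which can fail.

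What the paper actually proves is the strictly stronger \cref{thm:pcpt}: $p_c<p_t$, i.e.\ $\chi_{p_c,1/2}<\infty$ (and in fact $\chi_{p,1/2}<\infty$ for $p$ slightly above $p_c$). This is the real content, occupying \cref{sec:meanfieldsusceptibility,sec:AizBar,sec:Fekete,sec:pc}, and it does \emph{not} proceed by upgrading an a.s.\ statement to $L^1$. Rather, one shows (\cref{prop:tiledAizBar}) that $|K_v|_{v,\lambda}$ cannot have a finite $(1+\eps)/2$ moment at $p_c(\lambda)$, and separately (\cref{lem:bootstrap}, via a delicate bootstrap using the layer decomposition) that $|K_v|_{v,\lambda}$ \emph{does} have a finite $1-\eps$ moment at $p_c$ for every $\lambda\in(0,1/2]$; combining these forces $p_c<p_c(\lambda)\le p_t$. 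Your instinct that the layer decomposition must be used quantitatively is right, but the mechanism is quite different from what you sketch, and \cref{thm:pcph} alone is not strong enough input.
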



As observed by Schonmann \cite{MR1833805}, most aspects of the proofs of \cite{MR762034,MR1127713,MR923855,MR2551766} can easily be generalised to quasi-transitive graphs that satisfy the triangle condition at $p_c$. 
 There are, however, several exceptions to this requiring more serious attention that must be addressed in order to deduce \cref{thm:criticalexponents} from \cref{thm:triangle}. In fact, while it is certainly possible to adapt the original proofs, we are instead able to use the technology developed in the rest of the paper to give alternative, simpler proofs of several of the estimates of \cref{thm:criticalexponents} that are specific to the nonunimodular setting.

 We refer the reader to  \cite{grimmett2010percolation,MR2239599,heydenreich2015progress} for further background on the triangle condition and its applications, as well as for related work on other models.

%




\subsection{The tilted susceptibility and the tiltability transition}
\label{subsec:intro_tilted}
A central contribution of this
%
paper is 
the
introduction of \emph{tilted} versions of several classical thermodynamic quantities associated to percolation, such as the susceptibility and magnetization. 
 These quantities have an additional parameter, which we call $\lambda$, and differ from their classical analogues (which correspond to $\lambda=0$) in that they are weighted in some sense by the modular function to the power $\lambda$. 
 We find that these tilted quantities 
 can often be analysed by similar methods to their classical counterparts but, crucially, have different critical values associated to them.
 This methodology is also central to our analysis of self-avoiding walk in \cite{1709.10515}, and we expect that it will be useful for the analysis of other models in future. 

The most important such quantity we introduce is the \emph{tilted susceptibility}. Given $v\in V$ and $\lambda \in \R$, we define the 
\textbf{tilted volume} of a set $W \subseteq V$ to be
\[|W|_{v,\lambda} = \sum_{u\in W} \Delta^\lambda(v,u) \]
and define the
\textbf{tilted susceptibility} $\chi_{p,\lambda}(v)$ to be the expected tilted volume of the cluster at $v$, that is,
\[\chi_{p,\lambda}(v) = \bE_p \left[ |K_v|_{v,\lambda} \right] = \sum_{u\in V} \tau_p(v,u) \Delta^\lambda(v,u).\]
For each $\lambda \in \R$ we also define the associated critical value
\[
\pcl{\lambda} = p_c(G,\Gamma,\lambda) = \sup\big\{ p \in [0,1] : \chi_{p,\lambda}(v) < \infty \big\}.
\]
We also define the \textbf{tiltability threshold}
 \[
p_t =p_t(G,\Gamma) = \sup \left\{ p \in [0,1] : \chi_{p,\lambda}(v) < \infty \text{ for some $\lambda \in \R$}\right\},
\]
and call the set $\{ p \in [0,1] : \chi_{p,\lambda}(v) < \infty \text{ for some $\lambda \in \R$}\}$ the \textbf{tiltable phase}. 
It is easily seen that these definitions do not depend on the choice of $v$. In \cref{sec:meanfieldsusceptibility} we observe that $\pcl{\lambda}=\pcl{1-\lambda}$ for every $\lambda\in \R$ and that $p_t=\pcl{1/2}$. Both statements are easy consequences of the tilted mass-transport principle.

In  \cref{sec:Fekete} we show that the tiltable phase can be analyzed rather straightforwardly using techniques that are traditionally used to analyze subcritical percolation, including in particular variants of the \emph{tree-graph inequality} method of Aizenman and Newman \cite{MR762034}. This allows us to develop a detailed picture of percolation in the tiltable phase, in particular the first and second moment estimates of \cref{prop:tamesecondmoment}, which is then used in the proofs of the main theorems.

It follows from the sharpness of the phase transition that $\pcl{0}=\pcl{1}=p_c$ and hence that $p_c \leq p_t$.
Moreover, it follows from Tim\'ar's characterisation of heaviness \eqref{eq:Timarph} that $p_t\leq p_h$.  Thus, \cref{thm:pcph,thm:pcpu} are immediate consequences of the following stronger result. In \cref{subsec:examplestree} we show that both equality and strict inequality between $p_t$ and $p_h$ are possible.

\begin{thm}
\label{thm:pcpt}
Let $G$ be a connected, locally finite graph and suppose that $\Gamma \subseteq \operatorname{Aut}(G)$ is transitive and nonunimodular. Then
$p_c(G) < p_c(G,\Gamma,\lambda) \leq p_t(G,\Gamma)$ for every $\lambda \in (0,1)$. 
\end{thm}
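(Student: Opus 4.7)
The right-hand inequality $\pcl\lambda \le p_t$ is immediate from the definition of $p_t$, so the content lies in $p_c < \pcl\lambda$ for every $\lambda\in(0,1)$. I first record three preliminary observations, each promised in the surrounding text. Applying the tilted mass-transport principle to $F(u,v) = \tau_p(u,v)\Delta(u,v)^\lambda$ yields $\chi_{p,\lambda}(v) = \chi_{p,1-\lambda}(v)$, and hence $\pcl\lambda = \pcl{1-\lambda}$ and $p_t = \pcl{1/2}$. Aizenman--Barsky and Antunovi\'c--Veseli\'c sharpness of the phase transition gives $\pcl 0 = \pcl 1 = p_c$. Finally, viewing $\chi_{p,\lambda}(v)$ as the moment generating function in $\lambda$ of the random variable $\log\Delta(v,U)$ under the (not necessarily probability) measure $\tau_p(v,\cdot)$, H\"older's inequality shows that $\lambda \mapsto \log\chi_{p,\lambda}(v)$ is convex on $\mathbb{R}$, so that $I_p := \{\lambda : \chi_{p,\lambda}(v) < \infty\}$ is a convex subset of $\mathbb{R}$ that is symmetric about $1/2$.

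My plan is then to reduce the theorem to proving the stronger statement that $\chi_{p_c,\lambda}(v) < \infty$ for every $\lambda\in(0,1)$. A Russo--BK computation imitating the Aizenman--Newman tree-graph inequality, but carrying the factor $\Delta(v,u)^\lambda$ through the decomposition and using that the neighbour-sum $\sum_{y\sim x_0}\Delta(x_0,y)^\lambda$ is a finite constant $C_\lambda$ by transitivity and local finiteness, should yield the differential inequality
\[
\frac{d}{dp}\chi_{p,\lambda}(v) \;\le\; C_\lambda\,\chi_{p,\lambda}(v)^2,
\]
initially on finite-volume exhaustions and then via monotone convergence on $G$. If $\chi_{p_c,\lambda}(v)<\infty$, integrating this inequality from $p_c$ upwards shows that $\chi_{p,\lambda}(v)<\infty$ on $[p_c,p_c+\eta)$ for some $\eta>0$ depending on $\chi_{p_c,\lambda}(v)$, and hence $\pcl\lambda > p_c$.

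For the main step, showing $\chi_{p_c,\lambda}(v) < \infty$ for each $\lambda\in(0,1)$, I would argue by contradiction, combining the tilted second-moment estimates from \cref{prop:tamesecondmoment} with the theorem of \cite{Hutchcroft2016944} that $\theta(p_c)=0$ on any graph with a nonunimodular quasi-transitive subgroup of automorphisms. Assume $\chi_{p_c,\lambda}(v)=\infty$ for some $\lambda\in(0,1)$. Monotone convergence then gives $\chi_{p,\lambda}(v)\to\infty$ as $p\nearrow p_c$, while for all such $p$ we are already in the tiltable phase and hence have a second-moment bound $\mathbf E_p[|K_v|_{v,\lambda}^2]\le C\,\chi_{p,\lambda}(v)^2$ from \cref{prop:tamesecondmoment}, with $C$ independent of $p$. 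A Paley--Zygmund estimate then produces $\mathbf P_p(|K_v|_{v,\lambda}\ge \tfrac{1}{2}\chi_{p,\lambda}(v))\ge 1/(4C)$ uniformly in $p$ close to $p_c$. Coupling through i.i.d.\ uniform edge labels, exploiting monotonicity of $|K_v|_{v,\lambda}$ in $p$ along the coupling, and letting $p\nearrow p_c$ yield $\mathbf P_{p_c}(|K_v|_{v,\lambda} = \infty) \ge 1/(4C) > 0$; but $|K_v|_{v,\lambda}=\infty$ forces $|K_v|=\infty$, contradicting $\theta(p_c)=0$.

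The hardest part of this plan is extracting from \cref{prop:tamesecondmoment} a uniform second-moment bound $\mathbf E_p[|K_v|_{v,\lambda}^2] \le C\,\chi_{p,\lambda}(v)^2$ with $C$ independent of $p\in(0,p_c)$. This is precisely the subcritical-style estimate that the tilted tree-graph technology of \cref{sec:Fekete} is designed to produce, but the nonunimodular setting requires tracking modular weights through every diagrammatic expansion; the Fekete subadditivity developed in that section is what I would expect to permit taking the constant uniform as $p\nearrow p_c$.
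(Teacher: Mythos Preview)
Your preliminary observations and your reduction are both correct and match the paper: the $\lambda\mapsto 1-\lambda$ symmetry, convexity, and $\pcl0=\pcl1=p_c$ are exactly what is recorded at the start of \cref{sec:meanfieldsusceptibility}, and your differential inequality $\frac{d}{dp}\chi_{p,\lambda}\le C_\lambda\chi_{p,\lambda}^2$ is equivalent to \cref{prop:tiltedmeanfieldlowerbound}, which indeed shows that $\chi_{p_c,\lambda}(v)<\infty$ implies $p_c<\pcl\lambda$. So the entire problem does reduce, as you say, to proving $\chi_{p_c,\lambda}(v)<\infty$ for $\lambda\in(0,1)$.

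The gap is in your proposed proof of this last statement. The tree-graph bound for the tilted second moment does \emph{not} give $\bE_p[|K_v|_{v,\lambda}^2]\le C\,\chi_{p,\lambda}(v)^2$ with $C$ independent of $p$. Working it out directly via BK and the cocycle identity, one gets
\[
\bE_p\bigl[|K_v|_{v,\lambda}^2\bigr]\;\le\;\sum_{z}\tau_p(v,z)\,\Delta(v,z)^{2\lambda}\,\chi_{p,\lambda}(z)^2\;=\;\chi_{p,\lambda}^{*2}\cdot\chi_{p,2\lambda}(v),
\]
so the Paley--Zygmund lower bound is $\asymp 1/\chi_{p,2\lambda}(v)$. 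Under your contradiction hypothesis that $\pcl\lambda=p_c$ for some $\lambda\in(0,1/2)$, monotonicity of $\mu\mapsto\pcl\mu$ on $[0,1/2]$ forces $\pcl\mu=p_c$ for \emph{all} $\mu\in[0,\lambda]$, and in particular nothing stops $\chi_{p,2\lambda}(v)\to\infty$ as $p\nearrow p_c$; the argument becomes circular. The same problem appears if you try to use \cref{prop:tamesecondmoment} directly: the implicit constants there are explicitly $p$-dependent (the paper writes $\preceq_p$), tracing back to the bound $h_p(n)=O_p(1)$ in \cref{prop:tamedecay}, and nothing in \cref{sec:Fekete} makes these uniform as $\beta_p\searrow 1$. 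Indeed the paper's ``About constants'' paragraph flags exactly this: the proof is \emph{ineffective}, and uniform-in-$p$ moment control near $p_c$ is not available.

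The paper circumvents this by proving something strictly weaker than $\chi_{p_c,\lambda}<\infty$ and then showing that weaker statement already suffices. Concretely, \cref{lem:bootstrap} establishes only the fractional-moment bound $\bE_{p_c}\bigl[|K_v|_{v,\lambda}^{1-\eps}\bigr]<\infty$ for every $\eps>0$; this is obtained through a fairly intricate bootstrapping of slab estimates (\cref{lem:peakcomparison,lem:peaksurvival,lem:startingpoint} and the fractional-moment toolkit of \cref{subsec:toolkit}) that never requires a uniform constant. To turn a mere $(1-\eps)$-moment bound at $p_c$ into a contradiction with $p_c=\pcl\lambda$, the paper needs the tilted Aizenman--Barsky result \cref{prop:tiledAizBar}, which says $\bE_{\pcl\lambda}\bigl[|K_v|_{v,\lambda}^{(1+\eps)/2}\bigr]=\infty$. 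Your proposal bypasses \cref{sec:AizBar} entirely, and that is precisely the missing ingredient: without it, a fractional-moment bound is not enough, and a full first-moment bound at $p_c$ is not obtainable by the second-moment route you sketch.
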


As well as implying \cref{thm:pcph,thm:pcpu}, \cref{thm:pcpt} also implies \cref{thm:triangle}.
This deduction follows by a very short argument (\cref{lemma:diamondimpliestriangle}), which in the transitive case yields that $\nabla_p \leq \chi_{p,\lambda}^3$ for every $p\in[0,1]$ and $\lambda \in \R$ and hence that $\nabla_p<\infty$ for all $p<p_t$.
Thus, \cref{thm:pcpt} can be viewed as a `master theorem' that easily implies our other main theorems once proven. Further consequences of \cref{thm:pcpt} are explored in \cite[Theorem 2.9]{Hutchcroft2019Hyperbolic} and \cite{1901.10363}.

\subsection{Anisotropic percolation}
\label{subsec:inhomog}

Let $G=(V,E)$ be a connected, locally finite graph, let $\Gamma$ be a quasi-transitive subgroup of $\Aut(G)$, and let $O_1,\ldots,O_k$ be the orbits of the action of $\Gamma$ on the edge set of $G$.  We define \textbf{anisotropic bond percolation} on $G$ by taking a vector of probabilities $\mathbf{p}=(p_1,\ldots,p_k)$, and then letting every edge $e$ in $O_i$ be open with probability $p_i$, independently of all other edges, to obtain a random subgraph $G[\mathbf{p}]$. 
The following theorem extends \cref{thm:pcpu,thm:pcph,thm:triangle,thm:pcpt} to the anisotropic context. 

\begin{thm}
\label{thm:anisotropic}
Let $G$ be a connected, locally finite graph, let $\Gamma \subseteq \Aut(G)$ be quasi-transitive and nonunimodular with edge-orbits $O_1,\ldots,O_k$, and consider anisotropic bond percolation on $G$. Suppose that $\mathbf{p}(t):[0,1]\to [0,1]^k$ is continuous and increasing with $\mathbf{p}(0)=(0,0,\ldots,0)$, $\mathbf{p}(1)=(1,1,\ldots,1)$, and with $\mathbf{p}(t)\in(0,1)^k$ for every $0<t<1$. Then there must exist a positive-length interval $I \subseteq (0,1)$ such that
\begin{enumerate}
	\item
$G[\mathbf{p}(t)]$ has an infinite cluster almost surely for each $t\in I$, 
and 
\item $\chi_{\mathbf{p}(t),1/2}<\infty$ for every $t\in I$.
\end{enumerate}
In particular, $G[\mathbf{p}(t)]$ has infinitely many light infinite clusters almost surely for each $t\in I$.
\end{thm}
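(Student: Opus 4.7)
The plan is to adapt the proof of the isotropic master inequality $p_c < p_t$ (\cref{thm:pcpt}) to the one-parameter family $\mathbf{p}(t)$, and then read off both conclusions on the same open interval. Fix a vertex $v$ and define
\[
t_c := \inf\bigl\{t \in [0,1] : G[\mathbf{p}(t)] \text{ has an infinite cluster a.s.}\bigr\}, \qquad t_t := \sup\bigl\{t \in [0,1] : \chi_{\mathbf{p}(t),\,1/2}(v) < \infty\bigr\}.
\]
Coordinatewise monotonicity of $\mathbf{p}(t)$ together with the standard monotone coupling of Bernoulli percolations makes $\{t : G[\mathbf{p}(t)] \text{ has an infinite cluster a.s.}\}$ a final segment of $[0,1]$ with left endpoint $t_c$, while $\{t : \chi_{\mathbf{p}(t),\,1/2}(v) < \infty\}$ is an initial segment with right endpoint $t_t$; the endpoint conditions $\mathbf{p}(0) = 0$ and $\mathbf{p}(1) = 1$ force $0 < t_c$ and $t_t < 1$ (the former from subcriticality for small parameters, the latter because at $\mathbf{p}=(1,\ldots,1)$ the cluster of $v$ is all of $V$ and its tilted volume is typically infinite). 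Thus it suffices to establish $t_c < t_t$ and take $I := (t_c, t_t)$.

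To prove $t_c < t_t$ I would repeat the argument underlying \cref{thm:pcpt}, but indexed by $t$ rather than by the isotropic parameter $p$. That proof rests on the tree-graph methodology and the first- and second-moment estimates of \cref{prop:tamesecondmoment} developed in \cref{sec:Fekete}, together with sharpness of the susceptibility transition. These ingredients are statements about arbitrary $\Gamma$-invariant Bernoulli percolations and their connection probabilities, and they are insensitive to whether the retention-probability vector is scalar or vector-valued. Running the same arguments along the path $\mathbf{p}(t)$, with monotone comparisons between levels $\mathbf{p}(s) \leq \mathbf{p}(t)$ in place of scalar comparisons $p \leq p'$, should yield $t_c < t_t$ via essentially the same chain of inequalities. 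The main obstacle is verifying that every step really does transfer intact: any invocation of right-continuity, of Russo's formula, or of the specific fact that the isotropic parametrization is linear must be restated as a comparison along the path $\mathbf{p}(t)$. I expect this to be tedious bookkeeping rather than a genuinely new difficulty.

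Given $I$, the final clause follows from a standard dichotomy. For every $t \in I$ and every slab $S = \{u \in V : e^s \leq \Delta(v,u) \leq e^{s+1}\}$ with $s \in \Z$, Markov's inequality and the pointwise bound $\Delta(v,u)^{-1/2} \leq e^{-s/2}$ on $S$ give
\[
\bE_{\mathbf{p}(t)}\bigl[\, |K_v \cap S|\, \bigr] = \sum_{u \in S} \tau_{\mathbf{p}(t)}(v,u) \leq e^{-s/2}\, \chi_{\mathbf{p}(t),\,1/2}(v) < \infty,
\]
so $|K_v \cap S| < \infty$ almost surely. Taking a countable union over $s \in \Z$ and over representatives of the finitely many $\Gamma$-orbits on $V$ shows that a.s.\ every cluster of $G[\mathbf{p}(t)]$ has finite intersection with every slab; by Tim\'ar's characterisation~\eqref{eq:Timarph} every cluster is therefore light. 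On the other hand, any unique infinite cluster in a nonunimodular quasi-transitive graph must be heavy, by the standard mass-transport argument that the probability a given vertex lies in the unique infinite cluster is $\Gamma$-invariant and positive. Hence condition~(i) combined with the absence of heavy clusters forces the number of infinite clusters to be infinite, as claimed.
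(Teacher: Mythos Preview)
Your approach is essentially the same as the paper's: define the critical parameters $t_c$ and $t_t$ along the path, reduce to proving $t_c < t_t$, and obtain this by rerunning the entire machinery behind \cref{thm:pcpt} with the scalar $p$ replaced by the vector $\mathbf{p}(t)$. The paper says exactly this---that the changes ``are merely notational'' and that one should replace $p$ by $\mathbf{p}$ throughout---and does not give further details. Your identification of the potential friction points (left-continuity of $\alpha$, the monotone couplings used in \cref{lem:peaksurvival}, the mean-field lower bound) is accurate, and these do all transfer; the anisotropic form of the mean-field bound is even recorded explicitly as \eqref{eq:inhomomeanfield}.

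One small correction: your claim that $t_t < 1$ is not obviously justified (it amounts to $\sum_{u} \Delta(v,u)^{1/2} = \infty$, which is true but requires an argument), and in any case it is unnecessary. Since $t_t \leq 1$ by definition and you establish $0 < t_c < t_t$, the interval $I = (t_c, t_t)$ already lies in $(0,1)$ and has positive length. Your derivation of the final clause (all clusters light via Markov on each slab, then the $0/1/\infty$ dichotomy together with the fact that a unique infinite cluster is heavy) is correct and more explicit than what the paper provides.
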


\cref{thm:anisotropic} establishes the basic topological features of the phase diagram of anisotropic percolation on $T_k \times \Z^d$, see \cref{fig:phasediagram}. The features of this phase diagram were first suggested by Grimmett and Newman \cite{MR1064560}, who proved that they hold for $T_k\times \Z$ when $k$ is large. They had not previously been established for $T_k\times \Z^d$ for any pair $(k,d)$ with $d\geq 2$. We remark that it is also possible to extend \cref{thm:criticalexponents} to the anisotropic case in a straightforward way. (Note however that the constants that appear will not in general be uniformly bounded along the critical curve.)

\begin{figure}[t!]
\centering
\includegraphics{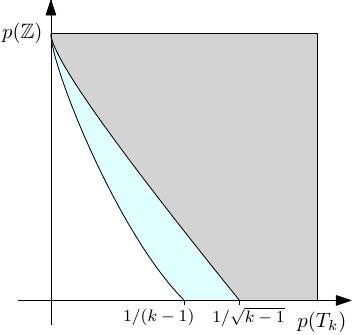}
\hspace{1.5cm}
\includegraphics{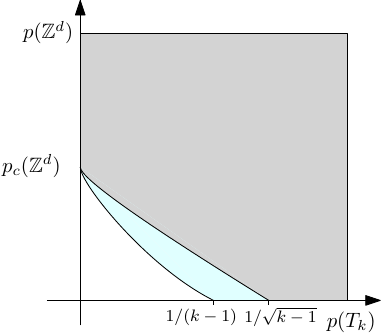}
\caption{
The phase diagrams of anisotropic bond percolation on $T_k \times \Z$ and $T_k\times \Z^d$, in which edges of $T_k$ and edges of $\Z^d$ are given different retention probabilities.   In the white region there is no infinite cluster, in the blue region there are infinitely many infinite clusters, and in the grey region there is a unique infinite cluster. The diagrams should be interpreted at a topological level only; we make no claims concerning the shape of the critical curves. It is also known that there is no infinite cluster on the interior of the boundary between the white and blue regions \cite{BLPS99b}, and that there are infinitely many infinite clusters on the interior of the boundary between the blue and grey regions \cite{MR1770624}. The fact that the boundaries separating the phases are curves follows by uniqueness monotonicity \cite{MR1676835}. The fact that the boundary between the non-uniqueness phase and the uniqueness phase meets the horizontal axis at $1/\sqrt{k-1}$ was proven by Grimmett and Newman~\cite{MR1064560}. (Note that $1/\sqrt{k-1}$ is exactly $p_t(T_k,\Gamma)$ when $\Gamma$ is the group of automorphisms fixing an end of $T_k$.)
}
\label{fig:phasediagram}
\end{figure}



The changes to the proof of \cref{thm:pcpt} required to prove \cref{thm:anisotropic} are merely notational, and in order to keep the paper readable, we do not include a proof. If desired, the diligent reader will have little trouble obtaining such a proof by replacing all the probabilities $p$ appearing in the remainder of the text with vectors of probabilities $\mathbf{p}$.

The ability to handle the entire phase diagram of anisotropic percolation is a major advantage of our non-perturbative approach. Indeed, the perturbative methods of \cite{bperc96,MR1756965,MR1833805} all rely on the graphs in question being highly nonamenable in some sense. These methods can all be extended to anisotropic percolation, but the relevant criteria now require that the associated anisotropic random walks on the graph are highly nonamenable in the same sense. If we consider, say, anisotropic percolation on $T_k \times \Z^d$ in which $T_k$ edges and $\Z^d$ edges have retention probabilities $p_1$ and $p_2$ respectively, then the associated anisotropic random walks get less and less nonamenable as $p_1/p_2 \to 0$ (e.g.\ in the sense that their spectral radii tend to $1$). Thus, any particular method similar to those of \cite{bperc96,MR1756965,MR1833805} cannot hope to apply to the entire phase diagram, and in particular will fail to show that for every $p_1\in (0,1/(k-1))$ there exists $p_2\in (0,1)$ such that the associated anisotropic percolation model has infinitely many infinite clusters. Similar obstructions apply to e.g.\ the methods of \cite{MR3005730}, in which the girth of the graph is required to be larger than some constant depending on the spectral radius.

\subsection{Organisation and overview}
\begin{itemize}
	\item
In \cref{sec:background} we review the basic background and tools that are used in the remainder of the paper. 

\item
In \cref{sec:meanfieldsusceptibility} we prove the mean-field lower bound on the tilted susceptibility, \cref{prop:tiltedmeanfieldlowerbound}. This result is applied in the derivation of the mean-field lower bound for the tilted magnetization in the following section.
\item
In \cref{sec:AizBar}, we introduce tilted versions of the \emph{ghost field} and of the \emph{magnetization}. 
  We then adapt the methods of Aizenman and Barsky \cite{aizenman1987sharpness}, applying these concepts to show that the tilted volume of a cluster $|K_v|_{v,\lambda}$ cannot have a finite $1/2+\eps$ moment at $\pcl{\lambda}$ whenever $0\leq \lambda <1/2$. This result is later used in the proof of the main theorems in \cref{sec:pc}.
\item
 In \cref{sec:Fekete}, we develop several estimates concerning probabilities of connecting to slabs and expected intersections with slabs, some of which hold for all $p$ and some for all $p<p_t$. In particular, we obtain very precise control of the subcritical regime $p<p_c$. 
\item 
  In \cref{sec:pc}, we apply the analysis of subcritical percolation from \cref{sec:Fekete} to prove that $|K_v|_{v,\lambda}$ has a $1-\eps$ moment at $p_c$ for every $\lambda \in (0,1/2]$ and $\eps>0$ via a bootstrapping procedure.
Together with the result of \cref{sec:AizBar}, this implies that $p_c<\pcl{\lambda}\leq p_t$ for every $\lambda \in (0,1/2]$, completing the proofs of \cref{thm:pcpt,thm:pcph,thm:pcpu}. This part of the paper is both the most technical and the least reliant on classical techniques.
   An important input to this bootstrapping procedure is an \emph{a priori} estimate on connection probabilities that is obtained via Fekete's Lemma, similar to the method used in \cite{Hutchcroft2016944}.  
\item
   In \cref{sec:exponents} we prove our results concerning critical exponents and the triangle condition, namely \cref{thm:triangle,thm:criticalexponents}. 

   \item We conclude with examples, remarks, and open problems in \cref{sec:closing}. 

   \item A glossary of recurring notation is given at the end of the paper. Our use of asymptotic notation is described in detail at the end of \cref{sec:background}.
\end{itemize}



\paragraph{About constants.}
Let us remark that the proofs of our main theorems are \emph{ineffective}, meaning that they cannot be used, even in principle, to obtain explicit bounds on the constants that appear in e.g.\ \cref{thm:criticalexponents} or to lower bound $|p_u-p_c|$. 
 The principal (but not exclusive) source of this ineffectivity is the proof of \cref{lem:peaksurvival}, which does not give any quantitative estimate on the $o(k)$ error term appearing there.  This contrasts our analysis of self-avoiding walk in \cite{1709.10515}, which is effective.

\section{Background, definitions, and basic tools}
\label{sec:background}

\subsection{The tilted mass-transport principle}
\label{subsec:MTP}

In this section, we define the modular function and prove the tilted mass-transport principle for general quasi-transitive graphs. This does not seem to have previously appeared in the literature, or at least not in the modern form involving a random root. The unimodular quasi-transitive and nonunimodular transitive cases can both be found in \cite{LP:book}, and related material can be found in \cite{BC2011}.

	 Let $G$ be a connected, locally finite graph, and let $\Gamma \subseteq \Aut(G)$ be quasi-transitive. We will always assume that $G$ is infinite. 
	For each vertex $v$, we write $[v]$ for the orbit of $v$ under $\Gamma$. Similarly, for each ordered pair of vertices $(u,v)$, we write $[u,v]$ for the orbit of $(u,v)$ under the diagonal action of $\Gamma$ on $V^2$. 
	Let $\cO \subseteq V$ be an arbitrary set of orbit representatives of the action of $\Gamma$ on $V$, meaning that for each $v\in V$ there is a unique $o\in \cO$ such that $[o]=[v]$. We identify $\cO$ with the set of orbits of $\Gamma$.
	Observe that if $\langle X_n \rangle_{n\geq0}$ is the lazy random walk on $G$, started at some vertex $v$, then the process $\langle [X_n] \rangle_{n\geq0}$ is a Markov chain taking values in the finite state space $\cO$, which we call the \textbf{lazy orbit chain}. The lazy orbit chain has  transition probabilities
	\[
	P\left([u],[v]\right) = \frac{1}{2\deg(u)}\left|\left\{e \in E_{u}^\rightarrow : [e^+]=[v] \right\}\right| + \frac{1}{2}\mathbbm{1}\left([u]=[v]\right), 
	\]
	where $E^\rightarrow_{u}$ is the set of oriented edges of $G$ emanating from $u$. (Although our graphs are undirected, it is useful to think of each unoriented edge as corresponding to a pair of oriented edges.) Note that this expression does not depend on the choice of representatives $[u]$ and $[v]$.  
	We also remark that if $\Gamma$ is unimodular then the lazy orbit chain is necessarily reversible, while if $\Gamma$ is nonunimodular then the lazy orbit chain can either be reversible or nonreversible \cite[Exercise 8.33]{LP:book}. 

	Since $G$ is connected, the orbit chain is irreducible. 
	Let $\tilde \mu=\tilde \mu_{G,\Gamma}$ be the unique stationary measure for the lazy orbit chain, and let
	$\mu=\mu_{G,\Gamma}$ be the $\deg([v])^{-1}$-biased measure
	\[
	\mu([v]) = \frac{\tilde \mu([v])\deg([v])^{-1}}{\sum_{o\in \cO} \tilde \mu([o])\deg([o])^{-1}}.
	\]
	 We define the \textbf{modular function} $\Delta:V^2\to (0,\infty)$ by setting
	 \[
\Delta(u,v) = \frac{\mu([v])|\stab_vu|}{\mu([u])|\stab_uv|} = \frac{\tilde \mu([v])\deg(u)|\stab_vu|}{\tilde \mu([u]) \deg(v)|\stab_uv|}.
	 \]
	 Note that this definition clearly agrees with that given in the introduction when $\Gamma$ is transitive, and that $\Gamma$ is unimodular if and only if $\Delta\equiv 1$.

	\begin{lem}
	\label{lem:modularwalk}
	Let $G$ be a connected, locally finite graph with at least one edge, and let $\Gamma \subseteq \Aut(G)$ be quasi-transitive. Let $\rho \in \cO$ be sampled from $\tilde \mu$, let $X=\langle X_n \rangle_{n\geq0}$ be a lazy random walk on $G$ with $X_0=\rho$, and let $\tilde \P$ denote the law of $\rho$ and $X$. Then we have that
	\[
	\Delta(u,v)
	=
	\frac{\tilde \P\left([X_0,X_n] = [v,u]\right)}{\tilde \P\left([X_0,X_n] = [u,v]\right)}
	\]
	for every $u,v \in V$ and every $n\geq d(u,v)$.
	\end{lem}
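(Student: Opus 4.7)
The plan is to compute both probabilities $\tilde\P([X_0,X_n]=[u,v])$ and $\tilde\P([X_0,X_n]=[v,u])$ explicitly in terms of the stationary measure $\tilde\mu$, the orbit sizes under stabilizers, and the $n$-step transition kernel $p_n$ of lazy random walk on $G$, and then apply reversibility of that walk (weighted by degrees) to match the ratio against the defining formula for $\Delta$.

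First, I will unpack $\tilde\P([X_0,X_n]=[u,v])$. Since $\rho$ takes values in $\cO$, only the orbit representative $o_{[u]} \in \cO$ with $[o_{[u]}]=[u]$ contributes, giving
\[
\tilde\P\bigl([X_0,X_n]=[u,v]\bigr) = \tilde\mu([u]) \sum_{v' :\, (o_{[u]},v')\in [u,v]} p_n(o_{[u]},v').
\]
The key observation is that the indexing set $\{v' : (o_{[u]},v')\in[u,v]\}$ is exactly the orbit $\stab_{o_{[u]}} v'_0$ for any fixed $v'_0$ in the set, since $\gamma \in \Gamma$ with $\gamma o_{[u]}=o_{[u]}$ and $\gamma v'_0 = v'$ is precisely the condition for $v' \in \stab_{o_{[u]}} v'_0$. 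Moreover, because any such $\gamma$ is an automorphism of $G$ fixing $o_{[u]}$, it preserves the lazy walk and hence $p_n(o_{[u]},v')=p_n(o_{[u]},v'_0)$ for all such $v'$. Thus
\[
\tilde\P\bigl([X_0,X_n]=[u,v]\bigr) = \tilde\mu([u]) \cdot \bigl|\stab_{o_{[u]}} v'_0\bigr| \cdot p_n(o_{[u]},v'_0),
\]
and the size of the stabilizer-orbit depends only on the orbit $[u,v]$ of the pair (automorphism conjugacy), so $|\stab_{o_{[u]}} v'_0| = |\stab_u v|$. The analogous identity holds for $[v,u]$: choose $\gamma \in \Gamma$ with $\gamma v'_0 = o_{[v]}$ and set $u'_0 := \gamma o_{[u]}$, so that $(o_{[v]},u'_0) \in [v,u]$ and
\[
\tilde\P\bigl([X_0,X_n]=[v,u]\bigr) = \tilde\mu([v]) \cdot \bigl|\stab_v u\bigr| \cdot p_n(o_{[v]},u'_0).
\]

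Next I will relate the two transition probabilities. By $\Gamma$-invariance of $p_n$, $p_n(o_{[u]},v'_0) = p_n(\gamma o_{[u]},\gamma v'_0) = p_n(u'_0, o_{[v]})$, and by the reversibility of lazy random walk with respect to the degree measure, $\deg(u'_0)\, p_n(u'_0,o_{[v]}) = \deg(o_{[v]})\, p_n(o_{[v]},u'_0)$. Since degrees are $\Gamma$-invariant, $\deg(u'_0)=\deg(u)$ and $\deg(o_{[v]})=\deg(v)$, giving
\[
p_n(o_{[u]},v'_0) = \frac{\deg(v)}{\deg(u)}\, p_n(o_{[v]},u'_0).
\]
The condition $n\geq d(u,v)$ ensures $p_n(o_{[u]},v'_0)>0$, so taking the ratio of the two probability expressions and substituting the reversibility identity yields
\[
\frac{\tilde\P([X_0,X_n]=[v,u])}{\tilde\P([X_0,X_n]=[u,v])} = \frac{\tilde\mu([v])\,\deg(u)\,|\stab_v u|}{\tilde\mu([u])\,\deg(v)\,|\stab_u v|},
\]
which is exactly the definition of $\Delta(u,v)$.

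The main obstacle is purely bookkeeping: keeping careful track of which quantities depend on individual vertex representatives and which depend only on orbits. The core algebraic content is entirely contained in two facts — that the set of endpoints in a given orbit from a fixed starting vertex forms a single stabilizer-orbit with constant transition probability, and that lazy random walk on $G$ is reversible with respect to degree. Everything else is matching conventions in the definition of $\Delta$.
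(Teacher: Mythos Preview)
Your proof is correct and follows essentially the same approach as the paper: both compute $\tilde\P([X_0,X_n]=[u,v])$ as $\tilde\mu([u])\,|\stab_u v|\,p_n(u,v)$ by identifying the relevant endpoints as a single stabilizer orbit with constant transition probability, then apply the degree-weighted reversibility $\deg(u)p_n(u,v)=\deg(v)p_n(v,u)$ to obtain the ratio. Your version is somewhat more explicit about the role of the orbit representative $o_{[u]}\in\cO$ and the $\Gamma$-invariance of $p_n$, whereas the paper compresses this into a single line, but the argument is the same.
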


\begin{proof}
For each pair of vertices $v,u$, there are $|\stab_v u|$ vertices $w$ such that $[v,u]=[v,w]$. This leads to the expression
\[
\tilde \P([X_0,X_n]=[v,u]) = \tilde\mu([v])p_n(v,u)|\stab_v u|
\]
for every $n\geq0$, where $p_n(v,u)$ is the probability that the lazy random walk on $G$ started at $v$ is at $u$ after $n$ steps. If $n\geq d(u,v)$ then $p_n(u,v)>0$ and we obtain that
\[
\frac{\tilde \P\left([X_0,X_n] = [v,u]\right)}{\tilde \P\left([X_0,X_n] = [u,v]\right)} = \frac{\tilde\mu([v])p_n(v,u)|\stab_v u|}{\tilde\mu([u])p_n(u,v)|\stab_u v|}.
\]
Using the time-reversal identity $\deg(u) p_n(u,v) = \deg(v) p_n(v,u)$ yields the claimed identity.
\end{proof}

Let $\cO_2$ be a set of orbit representatives of the diagonal action of $\Gamma$ on $V^2$.
It follows from \cref{lem:modularwalk} that for any non-negative $\Gamma$-diagonally invariant function $F:V^2\to[0,\infty]$, we have that
\begin{align}\nonumber\tilde \E \left[ F(X_n,X_0) \right] &= \sum_{[u,v]\in \cO_2 } \tilde \P([X_n,X_0]=[u,v])F(u,v) \\&= \sum_{[u,v]\in \cO_2 } \tilde \P([X_0,X_n]=[u,v])F(u,v)\Delta(u,v) = \tilde \E \left[ F(X_0,X_n) \Delta(X_0,X_n) \right].
\label{eq:Radon-Nikodym}
\end{align}
In other words, $\Delta$ is the Radon-Nikodym derivative of the law of $[X_n,X_0]$ under $\tilde \P$ with respect to the law of $[X_0,X_n]$ under $\tilde \P$ for every $n\geq 0$. 
This can in fact be taken as the \emph{definition} of the modular function, and can then be extended in a natural way to stationary random rooted graphs, see \cite{BC2011,hutchcroft2018locality}. 

\begin{prop}[The tilted mass-transport principle]
Let $G$ be a connected, locally finite graph, let $\Gamma \subseteq \Aut(G)$ be quasi-transitive, and let $\rho\in \cO$ be sampled from $\mu$. Then for every $\Gamma$-diagonally invariant function $F:V^2 \to [0,\infty]$, we have that
\[
\E\left[ \sum_{v\in V} F(\rho,v) \right] = \E\left[ \sum_{v\in V} F(v,\rho)\Delta(\rho,v)\right].
\]
\end{prop}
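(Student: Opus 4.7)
The plan is to rewrite both sides as explicit sums indexed by $\cO_2$, the set of $\Gamma$-orbits on $V \times V$, and then to match coefficients term by term. First I would record that $\Delta$ is itself $\Gamma$-diagonally invariant: the ingredients $\tilde{\mu}([\cdot])$ and $\deg(\cdot)$ are orbit-invariant, and $|\stab_u v|$ is $\Gamma$-diagonally invariant, so the defining formula for $\Delta$ yields $\Delta(\gamma u, \gamma v) = \Delta(u,v)$ for every $\gamma \in \Gamma$. Consequently both integrands $(u,v) \mapsto F(u,v)$ and $(u,v) \mapsto F(v,u)\Delta(u,v)$ are $\Gamma$-diagonally invariant non-negative functions, so applying the same reorganization to each side is legitimate.

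For each $\omega \in \cO_2$ I fix a representative $(u_\omega, v_\omega) \in \omega$ whose first coordinate $u_\omega$ lies in $\cO$. The key combinatorial observation is that for a fixed $o \in \cO$, the set of $v \in V$ with $(o,v) \in \omega$ is empty when $u_\omega \neq o$ and equals the $\stab_o$-orbit of $v_\omega$ when $u_\omega = o$, so has cardinality $|\stab_o v_\omega|$ in the latter case. Using $\Gamma$-invariance of $F$ to pull it out of each such orbit and then using the definition of $\mu$, this yields
\[
\E\!\left[\sum_{v \in V} F(\rho,v)\right] = Z^{-1} \sum_{\omega \in \cO_2} c(\omega)\, F(u_\omega, v_\omega),
\]
where $c(\omega) := \tilde{\mu}([u_\omega])\,|\stab_{u_\omega} v_\omega|/\deg(u_\omega)$ and $Z = \sum_{o\in\cO}\tilde{\mu}([o])/\deg(o)$. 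The identical reorganization of the right-hand side gives
\[
\E\!\left[\sum_{v \in V} F(v,\rho)\Delta(\rho,v)\right] = Z^{-1} \sum_{\omega \in \cO_2} c(\omega)\, F(v_\omega, u_\omega)\, \Delta(u_\omega, v_\omega).
\]

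To finish, let $\omega^R$ denote the $\Gamma$-orbit of $(v_\omega, u_\omega)$; the map $\omega \mapsto \omega^R$ is an involution on $\cO_2$. A direct substitution into the defining formula for $\Delta$, together with the identities $[u_{\omega^R}] = [v_\omega]$, $\deg(u_{\omega^R}) = \deg(v_\omega)$, and $|\stab_{u_{\omega^R}} v_{\omega^R}| = |\stab_{v_\omega} u_\omega|$ (all from $\Gamma$-invariance), yields
\[
c(\omega)\Delta(u_\omega, v_\omega) = \frac{\tilde{\mu}([v_\omega])\,|\stab_{v_\omega} u_\omega|}{\deg(v_\omega)} = c(\omega^R).
\]
Combining this with $F(v_\omega, u_\omega) = F(u_{\omega^R}, v_{\omega^R})$ by $\Gamma$-invariance of $F$, relabelling $\omega \mapsto \omega^R$ in the second display converts it into the first display, establishing the identity.

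I do not expect any serious obstacle: the argument is a direct combinatorial unpacking. The only step that requires care is the bookkeeping around the stabilizer-orbit count and the consistency of the weight $c$ under the involution $\omega \mapsto \omega^R$, but the definition of $\Delta$ is engineered precisely so that these match. Note that \cref{lem:modularwalk} and \eqref{eq:Radon-Nikodym} are not needed for this argument; they serve only to justify that the formula used here for $\Delta$ coincides with the Radon--Nikodym characterisation via the random walk, after which the mass-transport identity is a purely combinatorial consequence of the formula.
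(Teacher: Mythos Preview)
Your proof is correct and takes a genuinely different route from the paper's. The paper proves the identity probabilistically via the lazy random walk: it reduces to $F$ supported on pairs at fixed distance $k$, rewrites $\sum_v F(\rho,v)$ as $\E[F(\rho,X_k)/p_k(\rho,X_k)]$, passes to the degree-biased root measure, applies the time-reversal identity $\deg(u)p_k(u,v)=\deg(v)p_k(v,u)$, and then invokes \eqref{eq:Radon-Nikodym} (the Radon--Nikodym characterisation of $\Delta$ furnished by \cref{lem:modularwalk}) to flip $(\rho,X_k)$ to $(X_k,\rho)$ at the cost of a factor $\Delta$. Your argument is instead a direct combinatorial unpacking: expand both sides over $\cO_2$ using the stabilizer-orbit count, and verify from the defining formula for $\Delta$ that the weight $c(\omega)\Delta(u_\omega,v_\omega)$ equals $c(\omega^R)$, so the involution $\omega\mapsto\omega^R$ matches the two sums term by term.

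Your approach is more elementary and self-contained---it recovers the identity straight from the definition of $\Delta$ without passing through the random walk, and is essentially the standard proof of the (tilted) mass-transport principle in the transitive case, carried over with the correct bookkeeping for orbit weights. The paper's approach, by contrast, makes the Radon--Nikodym interpretation of $\Delta$ do the work; this is a little less direct here but has the advantage of explaining \emph{why} this particular formula for $\Delta$ is the right one and of pointing the way to the generalisation to stationary random rooted graphs mentioned after \eqref{eq:Radon-Nikodym}. Your final remark that \cref{lem:modularwalk} and \eqref{eq:Radon-Nikodym} are not needed is accurate for your argument, though note that \cref{lem:modularwalk} is still used elsewhere in the paper (in the proof of item 3 of \cref{lem:modularsymmetries}).
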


\begin{proof} Let $X$ be a lazy random walk started at $\rho$, and let $\tilde \E$ denote the expectation with respect to $\rho$ and $X$ when $\rho$ is sampled from the degree-biased measure $\tilde \mu$.  It suffices to consider the case that $F(u,v)$ is supported on pairs $u,v$ with $d(u,v)=k$ for some $k\geq 0$, since every $\Gamma$-diagonally invariant $F$ can be written as a sum of $\Gamma$-diagonally invariant functions of this form. In this case, we can write
	\begin{align*}
	\E \left[\sum_{v\in V} F(\rho,v)\right] &= \E \left[ \sum_{v\in V} \mathbbm{1}\left[d(\rho,v) = k\right] \frac{p_k(\rho,v)}{p_k(\rho,v)} F(\rho,v)\right] = \E\left[ \frac{F(\rho,X_k)}{p_k(\rho,X_k)}\right]\\
	& = \left(\tilde \E \left[\deg(\rho)^{-1} \right]\right)^{-1}\tilde \E\left[ \frac{F(\rho,X_k)}{\deg(\rho)p_k(\rho,X_k)} \right],\end{align*}
	and by time-reversal we have that
	\begin{align*}
	\E \left[\sum_{v\in V} F(\rho,v)\right]
	&= \left(\tilde \E \left[\deg(\rho)^{-1} \right]\right)^{-1}\tilde \E\left[ \frac{F(\rho,X_k)}{\deg(X_k)p_k(X_k,\rho)} \right].
	\end{align*}
	Applying \eqref{eq:Radon-Nikodym} yields that
	\begin{align*}
	\E \left[\sum_{v\in V} F(\rho,v)\right] &= \left(\tilde \E \left[\deg(\rho)^{-1} \right]\right)^{-1}\tilde \E\left[ \frac{F(X_k,\rho)}{\deg(\rho)p_k(\rho,X_k)} \Delta(\rho,X_k) \right],
	\end{align*}
	and applying the manipulations above in reverse yields that
	\begin{align*}
	\E \left[\sum_{v\in V} F(\rho,v)\right] &=  \E\left[ \frac{F(X_k,\rho)}{p_k(\rho,X_k)} \Delta(\rho,X_k) \right] = \E\left[ \sum_{v\in V} F(v,\rho)\Delta(\rho,v)\right]. \qedhere
	\end{align*}
\end{proof}

Finally, we establish the basic symmetries of the modular function.

\begin{lemma}[Symmetries of the modular function]
\label{lem:modularsymmetries}
Let $G$ be a connected, locally finite graph, and let $\Gamma \subseteq \Aut(G)$ be quasi-transitive. Then the modular function $\Delta=\Delta_\Gamma:V^2\to(0,\infty)$ has the following properties.
\begin{enumerate}
\item $\Delta$ is $\Gamma$-diagonally invariant. 
\item $\Delta$ satisfies the \textbf{cocycle identity}
\[\Delta(u,v)\Delta(v,w) = \Delta(u,w)\]
for every $u,v,w \in V$. In particular, $\Delta(u,u)=1$ and $\Delta(u,v)=\Delta(v,u)^{-1}$ for every $u,v\in V$.
\item $\Delta(x,y)$ is a harmonic function of $y$ when $x$ is fixed. That is,
\[
\Delta(x,y) = \frac{1}{\deg(y)}\sum_{z \sim y} \Delta(x,z)
\]
for every $x,y \in V$, where the sum on the right hand side is taken with multiplicity if there are multiple edges between $y$ and $z$.
\end{enumerate}

\end{lemma}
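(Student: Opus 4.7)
The plan is to establish the three properties in order, each building on its predecessors and each requiring only a short computation once the right identity is identified. Property 1 is nearly immediate from the formula $\Delta(u,v) = \mu([v])|\stab_v u|/(\mu([u])|\stab_u v|)$: $\mu$ is defined on orbits so $\mu([\gamma v]) = \mu([v])$, and the conjugation identity $\stab_{\gamma v} = \gamma\stab_v\gamma^{-1}$ gives $|\stab_{\gamma v}(\gamma u)| = |\gamma(\stab_v u)| = |\stab_v u|$ (and similarly for the denominator), so $\Delta(\gamma u,\gamma v) = \Delta(u,v)$.

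For property 2, substituting the definition into $\Delta(u,v)\Delta(v,w)$ cancels all the $\mu$ factors and reduces the cocycle identity to the purely combinatorial statement
\[
|\stab_v u|\,|\stab_w v|\,|\stab_u w| \;=\; |\stab_u v|\,|\stab_v w|\,|\stab_w u|.
\]
Since $G$ is locally finite, each orbit $\stab_x y$ sits inside the finite sphere $\{z : d(x,z) = d(x,y)\}$, so by orbit-stabilizer $|\stab_x y| = [\stab_x : \stab_x \cap \stab_y]$ is a finite index. Introducing the triple intersection $\stab_{u,v,w} := \stab_u \cap \stab_v \cap \stab_w$ (which also has finite index in every pairwise intersection by the same reasoning) and applying the tower law $[\stab_x : \stab_{x,y}]\,[\stab_{x,y} : \stab_{u,v,w}] = [\stab_x : \stab_{u,v,w}]$ to each of the six factors, both sides simplify to the common expression
\[
\frac{[\stab_u : \stab_{u,v,w}]\,[\stab_v : \stab_{u,v,w}]\,[\stab_w : \stab_{u,v,w}]}{[\stab_{u,v} : \stab_{u,v,w}]\,[\stab_{v,w} : \stab_{u,v,w}]\,[\stab_{u,w} : \stab_{u,v,w}]}.
\]
The corollaries $\Delta(u,u)=1$ and $\Delta(u,v)\Delta(v,u)=1$ then follow by specializing $u=v=w$ and $w=u$ respectively.

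For property 3, the cocycle identity gives $\Delta(x,z) = \Delta(x,y)\Delta(y,z)$ for each $z \sim y$, so harmonicity of $\Delta(x,\cdot)$ at $y$ reduces to
\[
\sum_{z \sim y}\Delta(y,z) = \deg(y),
\]
where the sum is taken with edge multiplicity. This quantity is $\Gamma$-invariant in $y$ by property 1, so it suffices to check it when $y$ is the representative $y_O$ of any orbit $O$. I would then apply the tilted mass-transport principle to the $\Gamma$-diagonally invariant function $F(u,v) = \mathbbm{1}[u \sim v]\,\mathbbm{1}[v \in O]$, which produces
\[
\sum_{o \in \cO}\mu(o)\,|\{v \sim o : v \in O\}| \;=\; \mu(O)\sum_{v \sim y_O}\Delta(y_O,v).
\]
The left-hand side equals $\mu(O)\deg(O)$: substituting $\mu(o) = \tilde\mu(o)/(Z\deg(o))$ turns it into exactly the stationarity equation for $\tilde\mu$ under the non-lazy orbit chain, which is equivalent to stationarity under the lazy chain $P=(I+Q)/2$ used to define $\tilde\mu$. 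Dividing by $\mu(O)$ yields the required identity. The main subtle point throughout is verifying that the triple stabilizer indices in property 2 are all finite; this is immediate from local finiteness and quasi-transitivity, but should be stated explicitly to justify the tower-law manipulations.
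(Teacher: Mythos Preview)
Your proof is correct. For item~1 you do exactly what the paper does (``immediate from the definition''). For item~2 the paper simply cites \cite[Theorem 8.10]{LP:book}, whereas you supply the standard direct argument: reduce to the symmetric identity among orbit sizes, rewrite each $|\stab_x y|$ as the index $[\stab_x:\stab_{x,y}]$, and use the tower law through the triple stabilizer. This is precisely the proof behind the cited reference, so you are filling in what the paper outsources. For item~3 both you and the paper reduce, via the cocycle identity, to showing $\sum_{z\sim y}\Delta(y,z)=\deg(y)$, and both ultimately appeal to stationarity of $\tilde\mu$ for the orbit chain. The paper does this in one line by applying the Radon--Nikodym identity \eqref{eq:Radon-Nikodym} with $n=1$ to the indicator $\mathbbm{1}([X_0]=[v])$; you instead route through the tilted mass-transport principle with a nearest-neighbour test function and then unwind the stationarity equation. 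These are the same computation in slightly different packaging, with the paper's version being marginally more direct. One small point: to handle multigraphs cleanly in your item~3, take $F(u,v)$ to be the number of edges between $u$ and $v$ times $\mathbbm{1}[v\in O]$ rather than the $\{0,1\}$-valued adjacency indicator, so that the edge multiplicities match those in the harmonicity statement.
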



\begin{proof}
Item $1$ is immediate from the definition. Item $2$ follows from \cite[Theorem 8.10]{LP:book}. (The reader may find it an illuminating exercise to prove the cocycle identity probabilistically using \cref{lem:modularwalk}.)
For item 3, observe that for every $v\in V$,
\begin{multline*}
\tilde \P([X_0]=[v])=\tilde \P([X_1]=[v])=\tilde \E \left[\mathbbm{1}([X_0]=[v]) \Delta(X_0,X_1) \right]\\= \left(\frac{1}{2} + \frac{1}{2\deg(v)} \sum_{u \sim v} \Delta(v,u)\right) \tilde \P([X_0]=[v]),
\end{multline*}
where \eqref{eq:Radon-Nikodym} is used in the second equality, and hence that
\[\frac{1}{\deg(v)}\sum_{u\sim v} \Delta(v,u) = 1\]
for every $v\in V$. The claimed harmonicity then follows from the cocycle identity (item 2). 
\end{proof}

Throughout the paper, we will use $\P_p$ and $\E_p$ to denote probabilities and expectations taken with respect to the joint law of the Bernoulli-$p$ bond percolation configuration $G[p]$ and the random root $\rho$ and, later on, the uniform separating layers decomposition.  (On the other hand, we will continue to use $\bP_p$ and $\bE_p$ for  probabilities and expectations taken with respect to the law of $G[p]$ only. This distinction is not very important.)

\subsection{The Harris-FKG and BK inequalities}
\label{subsec:background_BK}
We now briefly recall the main correlation inequalities for Bernoulli percolation, referring the reader to \cite{grimmett2010percolation} for further background.
Let $G$ be a graph, and let $\bP_p$ be the law of Bernoulli bond percolation on $G$.
Given $\omega,\omega' \in \{0,1\}^E$. we write $\omega \leq \omega'$ if $\omega(e) \leq \omega'(e)$ for every $e \in E$. 
A function $f: \{0,1\}^E\to\R$ is \textbf{increasing} (resp.\ decreasing) if $f(\omega')\geq f(\omega)$ for every $\omega',\omega \in \{0,1\}^E$ with $\omega' \geq \omega$ (resp.\ $\omega' \leq \omega$). 
 We say that an \emph{event} $A \subseteq \{0,1\}^E$ is increasing (resp.\ decreasing) if its indicator function is increasing (resp.\ decreasing). The \textbf{Harris-FKG} inequality \cite{harris1960lower} states that
\[
\bP_p(A \cap B) \geq \bP_p(A)\cdot \bP_p(B)
\]
for every $p\in [0,1]$ and every two events $A$ and $B$ such that either $A$ and $B$ are both increasing or $A$ and $B$ are both decreasing. 

Suppose that $A \subseteq \{0,1\}^E$ is an event and $\omega\in A$. We say that a \emph{finite} set $W \subseteq E$ is a \textbf{witness} for the occurrence of $A$ on $\omega$ if an independent percolation configuration $\omega'$ lies in $A$ almost surely given that $\omega'(e)=\omega(e)$ for every $e\in W$.
The \textbf{disjoint occurrence} $A \circ B$ of $A$ and $B$ is defined to be the set of $\omega \in A \cap B$ 
for which there exist finite witnesses $W_A$ and $W_B$ for the occurrence of $A$ on $\omega$ and $B$ on $\omega$ respectively such that $W_A$ and $W_B$ are disjoint. 
The \textbf{van den Berg and Kesten inequality} (or \textbf{BK inequality}) states that
\[
\bP_p(A \circ B) \leq \bP_p(A) \cdot \bP_p(B)
\]
for every $p\in [0,1]$ and every two increasing events $A$ and $B$. \textbf{Reimer's inequality} \cite{MR1751301} states that the same inequality holds for \emph{arbitrary} events $A$ and $B$.
 Both inequalities are usually stated for events depending on at most finitely many edges, but the finite statement was shown to imply the infinite statement in \cite[Theorem 9]{arratia2015van}. (It is also possible to relax the condition that the witnesses must be finite in various ways, but we shall not need this.)
We shall use Reimer's inequality only in the special case that $A$ and $B$ can each be written as the intersection of an increasing event and a decreasing event, which has a simpler and earlier proof due to van den Berg and Fiebig \cite{MR877608}.  


\noindent\hrulefill\\
\noindent
\emph{\textbf{Notation:} For the duration of \cref{sec:meanfieldsusceptibility,sec:AizBar,sec:Fekete,sec:pc}, we fix a connected, locally finite graph $G$ and a quasi-transitive nonunimodular subgroup $\Gamma \subseteq \Aut(G)$. The symbols $\preceq,\succeq$ and $\asymp$ denote inequalities or equalities that hold to within positive multiplicative constants depending only on $G$ and $\Gamma$. For example, \emph{``$f(n) \asymp g(n)$ for every $n\geq 1$"} means that there exist positive constants $c$ and $C$ such that $cg(n) \leq f(n) \leq Cg(n)$ for every $n\geq 1$. Inequalities or equalities that hold to within positive multiplicative constants depending also on some additional data (such as the choice of $p$) will be denoted using subscripts, e.g. $\preceq_p$. 
Note in particular, that, by quasi-transitivity, the statements  \emph{``$f(v,n) \preceq g(n)$ for every $v\in V$ and $n\geq 1$"} and \emph{``$\sup_v f(v,n) \preceq g(n)$ for every $n\geq 1$"} are equivalent whenever $f$ is $\Gamma$-invariant in the sense that $f(\gamma v,n)=f(v,n)$ for every $v\in V$ and $\gamma \in \Gamma$; We will however use both formulations for the sake of emphasis.
Similar conventions apply to our use of Landau's asymptotic notation, so that $f(n) = O(g(n))$ if and only if $|f(n)| \preceq |g(n)|$. For example, we write $f(n)=o_p(n)$ to mean that $|f(n)| \leq h_p(n)$ for some $h_p(n)$ satisfying $h_p(n)/n\to 0$ as $n\to\infty$, where the function $h_p$ can depend on $G$, $\Gamma$, and $p$ but not any other parameters. Given a random variable $X$ and an event $\mathscr{A}$, we write $\E[X ; \sA]$ for the restricted expectation $\E[X \mathbbm{1}(\sA)]$.
}
\hrulefill

\section{The mean-field lower bound for the tilted susceptibility}
\label{sec:meanfieldsusceptibility}

Recall from the introduction that we define the \textbf{$\Delta$-tilted susceptibility} to be 
\[
\chi_{p,\lambda}(v)=\sum_{x\in V}\tau_p(v,x)\Delta(v,x)^\lambda = \bE_p\left[\sum_{x \in K(v)} \Delta(v,x)^\lambda \right].
\]
We also define
\[\chi_{p,\lambda}=\E\left[ \chi_{p,\lambda}(\rho)\right]=\sum_{v\in \cO} \mu([v]) \chi_{p,\lambda}(v) \qquad \text{ and } \qquad \chi_{p,\lambda}^* = \sup_{v\in V} \chi_{p,\lambda}(v) \]
where $\mu$ is as in \cref{subsec:MTP}. 
Since $\tau_p(u,v)=\tau_p(v,u)$ and $\Delta(u,v)=\Delta(v,u)^{-1}$ for every $u,v \in V$, the tilted mass-transport principle and the cocycle identity imply that
\begin{equation}
\label{eq:chisymmetry}
\chi_{p,\lambda} = \E\left[\sum_{v\in V} \tau_p(\rho,v)\Delta^{\lambda}(\rho,v)\right] =   \E\left[\sum_{v\in V} \tau_p(\rho,v)\Delta^{1-\lambda}(\rho,v)\right]=\chi_{p,1-\lambda}
\end{equation}
for every $p\in [0,1]$ and $\lambda \in \R$, and hence that $\pcl{\lambda}=\pcl{1-\lambda}$ for every $\lambda \in \R$. Furthermore, being a sum of exponentials, $\chi_{p,\lambda}$ is a convex function of $\lambda$ for fixed $p$. Together with the $\lambda \mapsto 1-\lambda$ symmetry \eqref{eq:chisymmetry}, this implies that $\chi_{p,\lambda}$ is a decreasing function of $\lambda$ on $(-\infty,1/2]$ and an increasing function of $\lambda$ on $[1/2,\infty)$. This in turn implies that $\pcl{\lambda}$ is increasing  on $(-\infty,1/2]$ and decreasing on $[1/2,\infty)$, and in particular that $p_t=\pcl{1/2}$. 



\medskip

The main purpose of this section is to show that 
  Aizenman and Newman's \cite{MR762034} proof of the mean-field lower bound for the susceptibility 
 goes through \emph{mutatis mutandis} for the tilted susceptibility. This yields the following proposition, which is both an essential part of the proof of our main theorems and an interesting result in its own right. A complementary upper bound is proven in \cref{thm:tiltedsusceptibilityexponent} under the assumption that $p_c(\lambda)<p_t$.

\begin{prop}[Mean-field lower bound for the tilted susceptibility]
\label{prop:tiltedmeanfieldlowerbound}
For each $\lambda \in \R$ there exists a positive constant $c_\lambda$ such that
\[
\chi_{\pcl{\lambda}-\eps,\lambda}(v) \geq c_\lambda \eps^{-1}
\]
for every $v\in V$ and $0 <\eps < \pcl{\lambda}$. In particular, $\chi_{\pcl{\lambda},\lambda}(v)=\infty$ for every $\lambda \in \R$ and $v\in V$.
\end{prop}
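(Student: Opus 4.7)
The plan is to adapt the classical differential-inequality argument of Aizenman and Newman to the tilted susceptibility, exploiting two clean features of the modular function: the cocycle identity $\Delta(u,v)=\Delta(u,x)\Delta(x,y)\Delta(y,v)$, and the fact that $\{\Delta(x,y) : \{x,y\}\in E\}$ is a \emph{finite} set (since $\Gamma$ is quasi-transitive, it has finitely many orbits on $E$, and $\Delta$ is $\Gamma$-diagonally invariant). Together these let the $\Delta^\lambda$-weighting be absorbed harmlessly into the standard Russo-BK edge sum.

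First I would apply Russo's formula together with the BK inequality to the two-point function, obtaining
\[
\frac{d}{dp}\tau_p(u,v) \leq \frac{1}{p}\sum_{\{x,y\}\in E}\bigl[\tau_p(u,x)\tau_p(y,v)+\tau_p(u,y)\tau_p(x,v)\bigr],
\]
where the factor $1/p$ comes from the fact that each pivotal edge must be open. Multiplying by $\Delta(u,v)^\lambda$, summing over $v$, and splitting $\Delta(u,v)^\lambda=\Delta(u,x)^\lambda\Delta(x,y)^\lambda\Delta(y,v)^\lambda$ via the cocycle identity, the factor $\Delta(x,y)^\lambda$ is uniformly bounded over edges, so the inner sum over $v$ collapses to $\chi_{p,\lambda}(y)$ up to a constant. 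Carrying out the edge sum using local finiteness gives
\[
\frac{d}{dp}\chi_{p,\lambda}(u) \leq \frac{C_\lambda}{p}\,\chi_{p,\lambda}(u)\,\chi_{p,\lambda}^*
\]
for some constant $C_\lambda$ depending only on $G,\Gamma,\lambda$.

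Next, since $\chi_{p,\lambda}(\cdot)$ is $\Gamma$-invariant (as both $\tau_p$ and $\Delta$ are) and $\Gamma$ has finitely many orbits, $\chi_{p,\lambda}^*$ is the maximum of finitely many power series with nonnegative coefficients, hence monotone and absolutely continuous on compact subintervals of $(0,\pcl{\lambda})$. Taking the supremum over $u$ and dividing yields $\frac{d}{dp}(1/\chi_{p,\lambda}^*)\geq -C_\lambda/p$ almost everywhere on $(0,\pcl{\lambda})$. The definition of $\pcl{\lambda}$ combined with monotone convergence forces $\chi_{\pcl{\lambda},\lambda}^*=\infty$: if not, finiteness could be propagated past $\pcl{\lambda}$ by integrating the inequality forward, contradicting the definition. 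Integrating from $p$ up to $\pcl{\lambda}$ then gives $1/\chi_{p,\lambda}^* \leq C_\lambda \log(\pcl{\lambda}/p) \leq C'_\lambda(\pcl{\lambda}-p)$ for $p$ close to $\pcl{\lambda}$, i.e.\ $\chi_{p,\lambda}^* \geq c_\lambda/(\pcl{\lambda}-p)$.

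Finally, I pass from $\chi_{p,\lambda}^*$ to $\chi_{p,\lambda}(v)$ for each fixed $v$: by Harris-FKG applied to a fixed path from $v$ to an orbit-representative $u$ of length $d=d(u,v)$, together with the cocycle identity, we get $\chi_{p,\lambda}(v) \geq p^d\Delta(v,u)^\lambda \chi_{p,\lambda}(u)$, and since there are only finitely many orbits this comparison is uniform on $[c,1]$ for any $c>0$. Combined with the trivial bound $\chi_{p,\lambda}(v)\geq 1$ from the $w=v$ term (which handles $\eps$ bounded away from $0$), this yields the stated lower bound uniformly in $v$ after shrinking $c_\lambda$, and letting $\eps\to 0$ gives the ``in particular'' clause. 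The main obstacle is the minor technical one of legitimising the differential inequality on $\chi_{p,\lambda}^*$ at the (at most countably many) points where the maximum switches between orbit-representatives; this is standard and can be handled via Dini derivatives or by running the differential inequality per orbit and then using the orbit-to-orbit FKG comparison above.
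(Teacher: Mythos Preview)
Your argument is essentially the same Aizenman--Newman approach the paper takes: both use the cocycle identity to absorb the $\Delta^\lambda$ weight into the BK path-decomposition (with the edge factor $\Delta(x,y)^\lambda$ bounded by quasi-transitivity), and both finish by an FKG comparison between orbits to pass from $\chi_{p,\lambda}^*$ to $\chi_{p,\lambda}(v)$. The only substantive difference is packaging: the paper works with the \emph{integrated} form, colouring edges open/blue and summing a geometric series to obtain directly
\[
\chi_{p+\eps,\lambda}^* \leq \frac{(1-p)\chi_{p,\lambda}^*}{1-p-\eps C_\lambda \chi_{p,\lambda}^*}\quad\text{whenever }\chi_{p,\lambda}^*<\infty,
\]
whereas you derive the equivalent \emph{differential} inequality via Russo and then integrate. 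The paper's formulation makes the propagation step (``if $\chi_{\pcl{\lambda},\lambda}^*<\infty$ then finiteness persists past $\pcl{\lambda}$'') immediate, while in your version this step is a little quick: the inequality $(1/\chi^*)'\geq -C_\lambda/p$ is only established where $\chi^*<\infty$, so ``integrating forward'' needs a short justification, e.g.\ by running the differential inequality on the truncations $\hat\chi_N(u)=\sum_v\bP_p(u\leftrightarrow v\text{ by a path of length}\leq N)\Delta(u,v)^\lambda$, which does close as $(\hat\chi_N^*)'\leq C_\lambda(\hat\chi_N^*)^2$ uniformly in $N$, and then passing to the limit. Two cosmetic remarks: the factor $1/p$ in your Russo--BK bound is unnecessary (pivotality already gives disjoint connections without conditioning on $e$ open), and the ``switching between orbit representatives'' issue you flag is indeed minor since $\chi_{p,\lambda}^*$ is a finite maximum of increasing analytic functions on $(0,\pcl{\lambda})$.
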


\begin{proof}[Proof of \cref{prop:tiltedmeanfieldlowerbound}]
The FKG inequality and the cocycle identity imply that 
\[\chi_{p,\lambda}(u) \geq \tau_p(u,v)\Delta^\lambda(u,v) \chi_{p,\lambda}(v)\]
for every $u,v\in V$, $p\in [0,1]$ and $\lambda\in \R$, 
and since $\Gamma$ is quasi-transitive it therefore suffices to prove that for each $\lambda\in \R$ there exists a constant $c_\lambda'$ such that
\[
\chi^*_{\pcl{\lambda}-\eps,\lambda} \geq c_\lambda' \eps^{-1}
\]
for every $0<\eps<\pcl{\lambda} $.

Let $0\leq p<1$ and let $0< \eps< 1-p$. 
Suppose that each edge of $G$ is \textbf{open} with probability $p$ and, independently, \textbf{blue} with probability $\eps/(1-p)$. The subgraph spanned by the open-or-blue edges has the same distribution as $G[p+\eps]$.
  Let $\tilde \tau_i(v,u)$ be the probability of the event $\sT_i(u,v)$ that $u$ and $v$ are connected by a simple open-or-blue path containing exactly $i$ blue edges,
  and let 
\[\tilde \chi_i(v) =  \sum_{u\in V} \tilde \tau_i(v,u) \Delta^{\lambda}(v,u)\]
so that 
\[\tau_{p+\eps}(v,u) \leq \sum_{i \geq 0} \tilde \tau_i(v,u) \quad \text{ and hence } \quad 
 \chi_{p+\eps,\lambda}(v) \leq \sum_{i \geq 0} \tilde \chi_i(v).\]
Let $E^\rightarrow_w$ be the set of oriented edges of $G$ emanating from the vertex $w$. Considering the possible locations for the $(i+1)$th blue edge and applying the BK inequality (which holds for any product measure)  yields that
\begin{align*} \tilde \tau_{i+1}(v,u) &\leq 
\sum_{w \in V} \sum_{e \in E^\rightarrow_w} \bP_p\left(\sT_i(v,w) \circ \{\text{$e$ blue}\}\circ \sT_0(e^+,u)\right)\leq \frac{\eps}{1-p} \sum_{w\in V} \tilde \tau_i(v,w) \sum_{e \in E^\rightarrow_w} \tau_p(e^+,u). \end{align*}
Applying the cocycle identity (\cref{lem:modularsymmetries}), it follows that
\begin{align*}\tilde \chi_{i+1}(v) &\leq \frac{\eps}{1-p} \sum_{w \in V} \tilde \tau_i(v,w) \sum_{e \in E^\rightarrow_w} \sum_{u \in V} \tau_p(e^+,u) \Delta^{\lambda}(v,u)\\
&=  \frac{\eps}{1-p} \sum_{w\in V} \tilde \tau_i(v,w) \Delta^{\lambda}(v,w) \sum_{e \in E^\rightarrow_w}\Delta^\lambda(w,e^+) \chi_{p,\lambda}(e^+),
\end{align*}
and hence by induction that 
\[\tilde \chi_i(v) \leq \left(\frac{C_\lambda \eps}{1-p}\right)^i  \left(\chi^*_{p,\lambda}\right)^{i+1}\]
for every $i \geq 0$, where $C_\lambda = \max_{v\in V}\sum_{e \in E^\rightarrow_v} \Delta^\lambda(v,e^+)$.
Summing over $i$, we obtain that if $\chi^*_{p,\lambda}<\infty$ then
\begin{align}
\label{eq:meanfieldproof}
\chi_{p+\eps,\lambda}(v) \leq \frac{(1-p)\chi^*_{p,\lambda}}{1-p - \eps C_\lambda\chi^*_{p,\lambda}} < \infty
 \quad \text{ for all } \quad
\eps^{-1} > (1-p)^{-1} C_\lambda \chi^*_{p,\lambda}.
\end{align}
This immediately implies that $\chi^*_{\pcl{\lambda},\lambda}=\infty$. By rearranging the inequality for $\eps$ on the right hand side of \eqref{eq:meanfieldproof} we obtain that
\begin{equation}
\label{eq:meanfieldproof2}
\chi^*_{\pcl{\lambda}-\eps,\lambda} \geq \frac{(1-\pcl{\lambda}+\eps)}{C_\lambda}\eps^{-1}
\end{equation}
for every $\lambda\in \R$ and $0<\eps\leq \pcl{\lambda}$. This clearly implies the claim. \qedhere
\end{proof}


\section{The tilted magnetization and tilted ghost field}

\label{sec:AizBar}

The goal of this section is to prove the following proposition. In \cref{sec:pc}, this proposition will be used to prove via contradiction that $p_c<p_t$.

\begin{prop}
\label{prop:tiledAizBar}
Let $\lambda \in [0,1/2)$. Then 
\[
\bE_{\pcl{\lambda}}\left[ |K_v|_{v,\lambda}^{(1+\eps)/2} \right] = \infty
\]
for every $\eps>0$ and $v\in V$. 
\end{prop}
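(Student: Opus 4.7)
The plan is to adapt the Aizenman--Barsky magnetization/ghost-field framework to the tilted setting, the final output being a power-law lower bound on a \emph{tilted magnetization} at $(\pcl{\lambda},h)$ which, via a Markov-type argument, forces the $(1+\eps)/2$-moment to be infinite. To set up, fix $v\in V$ and parameters $p\in[0,1]$, $\lambda\in[0,1/2)$, and $h>0$. Couple the percolation configuration with an independent \emph{tilted ghost field}: declare each vertex $u\in V$ to be a \emph{ghost} with probability $1-\exp(-h\,\Delta(v,u)^{\lambda})$, independently over $u$. The \textbf{tilted magnetization} is then
\[
M_{p,\lambda}(v,h) \;=\; \bP_{p,h}\bigl(v\leftrightarrow\text{ghost}\bigr) \;=\; \bE_p\!\Bigl[\,1-\exp\!\bigl(-h\,|K_v|_{v,\lambda}\bigr)\,\Bigr],
\]
so that $\partial_h M_{p,\lambda}(v,h)\big|_{h=0}=\chi_{p,\lambda}(v)$ recovers the tilted susceptibility.

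The substantive step is to derive tilted analogues of the Aizenman--Barsky differential inequalities. The elementary one, $M-h\,\partial_h M\ge 0$, is immediate from $1-e^{-x}\ge xe^{-x}$. The ``master'' inequality, which in the translation-invariant case schematically takes the form
\[
M \;\le\; h\,\partial_h M \,+\, M^{2} \,+\, M\cdot\beta\,\partial_\beta M,
\]
is established by the classical method of conditioning on $K_v$, applying Russo's formula in both $p$ and $h$, and splitting via the BK inequality along pivotal edges. The one genuine novelty is that every appeal to translation invariance in \cite{aizenman1987sharpness} must be replaced by an application of the tilted mass-transport principle of \cref{subsec:MTP}; the $\Delta(v,\cdot)^{\lambda}$ weights propagate cleanly through such rearrangements because the cocycle identity gives
\[
\sum_{u\in V}\Delta(v,u)^{\lambda}\,\tau_{p}(w,u) \;=\; \Delta(v,w)^{\lambda}\,\chi_{p,\lambda}(w),
\]
so convolutions with $\tau_p$ preserve the tilted form. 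The restriction $\lambda<1/2$ is expected to enter when bounding ``cross'' sums involving two factors of $\Delta^{\lambda}$: convexity of $\lambda\mapsto\chi_{p,\lambda}$ together with the symmetry $\chi_{p,\lambda}=\chi_{p,1-\lambda}$ is needed to keep these controlled by $\chi_{p,\lambda}$ itself, and this degenerates at $\lambda=1/2$.

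With the differential inequalities in hand, I would integrate along the critical curve using the mean-field lower bound $\chi_{p,\lambda}(v)\succeq(\pcl{\lambda}-p)^{-1}$ from \cref{prop:tiltedmeanfieldlowerbound}, proceeding exactly as in \cite{aizenman1987sharpness}, to conclude
\[
M_{\pcl{\lambda},\lambda}(v,h)\;\succeq\;h^{1/2}\qquad\text{as }h\searrow 0.
\]
To convert this into the claimed moment statement, I would use the elementary inequality $1-e^{-x}\le x^{t}$ valid for all $x\ge 0$ and $t\in[0,1]$ to obtain
\[
M_{\pcl{\lambda},\lambda}(v,h)\;\le\;h^{t}\,\bE_{\pcl{\lambda}}\!\bigl[|K_v|_{v,\lambda}^{\,t}\bigr].
\]
Choosing $t=(1+\eps)/2\in(1/2,1]$ and letting $h\searrow 0$, the lower bound $h^{1/2}$ and the upper bound $h^{(1+\eps)/2}\cdot\bE_{\pcl{\lambda}}[|K_v|_{v,\lambda}^{(1+\eps)/2}]$ are compatible only if the moment on the right is infinite, which is the proposition.

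The main obstacle I anticipate is the verification of the tilted master differential inequality: although conceptually the classical Aizenman--Barsky argument transfers, one must carefully track the $\Delta^{\lambda}$ weights through every combinatorial manipulation (Russo--type differentiation, partition over pivotal edges, BK splits, conditioning on $K_v$ and its external boundary) and confirm that the tilted mass-transport principle cleanly replaces translation invariance at each stage. In particular, the precise form of the ``cross'' error terms, and the corresponding reason why the argument breaks at $\lambda=1/2$, will require care; everything else in the outline is a fairly mechanical application of known technology combined with \cref{prop:tiltedmeanfieldlowerbound}.
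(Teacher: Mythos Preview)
Your framework is right and the final Laplace-transform step is exactly what the paper uses, but there is a genuine gap in the middle: the ``clean'' tilted master inequality you propose does not hold, and the paper's argument is structured quite differently from classical Aizenman--Barsky as a result.

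The difficulty is precisely the one you flag but underestimate. When you decompose at a pivotal edge with endpoints $u,w$ and look at the off-$K_v'$ cluster of $w$, that cluster sees the ghost field with intensities $h\Delta^\lambda(v,\cdot)$, not $h\Delta^\lambda(w,\cdot)$. By the cocycle identity this is the $w$-centred field at shifted parameter $\Delta^\lambda(v,w)h$, so the term you need to control is $M_{p,\lambda,\Delta^\lambda(v,w)h}(w)^2$, and this can be arbitrarily large compared to $M_{p,\lambda,h}(v)^2$ when $\Delta(v,w)\gg 1$. The cocycle identity helps with \emph{linear} sums like $\sum_u \Delta^\lambda(v,u)\tau_p(w,u)$, but $M$ depends nonlinearly on $h$, so there is no way to reduce $M_{p,\lambda,\Delta^\lambda(v,w)h}(w)$ to $M_{p,\lambda,h}(v)$.

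The paper's workaround is to argue by contradiction and accept an error term. Assuming the $(1+\eps)/2$-moment is finite, one has $M_{p,\lambda,h'}(w)^2 \leq C (h')^{1+\delta}$ for small $\delta$, which converts the problematic term into $C h^{1+\delta}\sum_u \tau_p(v,u)\Delta^{(1+\delta)\lambda}(v,u)\,\mathbbm{1}(u\nleftrightarrow g_v)$. This sum is handled by splitting on $\Delta(v,u)\lessgtr 1$ and applying the tilted mass-transport principle; the constraint $\lambda<1/2$ enters here as the requirement $(1+\delta)\lambda \leq 1-\lambda$, i.e.\ $\delta\leq (1-2\lambda)/\lambda$, which ensures the reflected exponent stays on the convergent side. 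The resulting inequality is
\[
M \;\leq\; C_\delta M^2 \,+\, h\,\chi \,+\, C_\delta h^{1+\delta}\chi
\]
at $p=\pcl{\lambda}$ only --- no $p$-derivative appears, and no integration in $p$ is performed. Because of the $h^{1+\delta}\chi$ error term, one cannot conclude $M\succeq h^{1/2}$ directly; instead the paper runs a bootstrap on the inverse function $\psi=M^{-1}$, iteratively improving $\psi(t)\preceq t^{2^{k/n}}$ until reaching $\psi(t)\preceq t^2$, which contradicts the lower bound $\psi(t)\succeq t^{2/(1+\eps)}$ coming from the finite-moment hypothesis.
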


\begin{remark}
This proposition is a tilted version of Aizenman and Barsky's \cite{aizenman1987sharpness} mean-field lower bound
\begin{equation}
\label{eq:classicalAizBar}
\bP_{p_c}(|K_v|\geq n) \succeq n^{-1/2}\end{equation}
(i.e., the lower bound of \eqref{exponent:volume}), which holds on any infinite, connected, locally finite, quasi-transitive graph.
 However, compared with the mean-field lower bound on the susceptibility (\cref{prop:tiltedmeanfieldlowerbound}), a rather more substantial modification to the classical proof is required to prove \cref{prop:tiledAizBar}. Moreover, the result we obtain is weaker than that available in the case $\lambda=0$. Finally, while \eqref{eq:classicalAizBar} is sharp, we do not expect \cref{prop:tiledAizBar} to be sharp in any case other than $\lambda=0$. Indeed, if $\lambda \in (0,1/2)$ and $\pcl{\lambda} <p_t$ (which we conjecture always holds), then it follows from the proof of \cref{cor:clustergrowth} that $\bP_{\pcl{\lambda}}(|K_v|_{v,\lambda}=\infty)>0$.
 \end{remark}


\medskip

We now begin working towards \cref{prop:tiledAizBar}. 
 We begin by introducing a family of random sets $\{g_v : v \in V\}$ that can be thought of as a tilted version of the \emph{ghost field} from \cite{aizenman1987sharpness}. 
  Let 
\[\left\{\left(N_v(t)\right)_{t\geq 0}: v \in V\right\}\] be a collection of independent, intensity $1$ Poisson processes indexed by $V$ and independent of $G[p]$. 
For each $\lambda \in \R$, $h>0$, and each vertex $v\in V$, let $\mathcal{G}_v=\mathcal{G}_{v,\lambda,h}: V \to \N$ be the random function
\[
\mathcal{G}_v(u) = N_u\left(h\Delta^\lambda(v,u)\right).
\]
Thus,  for each $v\in V$, $\mathcal{G}_v:V\to \N$ is a Poisson point process on $V$ with intensity
$h \Delta^\lambda(v,u)$ 
at each vertex $u\in V$.
We say that $u$ is $v$\textbf{-green} if $\cG_v(u) \geq 1$, and write $g_v=g_{v,\lambda,h}$ for the set of $v$-green vertices. 
We write $\bP_{p,\lambda,h}$ for the joint law of $G[p]$ and $\{\mathcal{G}_v=\cG_{v,\lambda,h}:v\in V\}$, and write $\P_{p,\lambda,h}$ for the joint law of $G[p]$, $\{\cG_v=\cG_{v,\lambda,h}:v\in V\},$ and the random root vertex $\rho$.


   For each $v\in V$, $p\in [0,1]$, $\lambda \in \R$ and $h>0$ we define the \textbf{tilted magnetization} to be
\[
M_{p,\lambda,h}(v) = \bP_{p,\lambda,h}(v \leftrightarrow g_v) = \bE_p\left[1- 
\exp\left( -h |K_v|_{v,\lambda} \right)
\right].
\]
Note that, as a function of $h$, $1-M_{p,\lambda,h}(v)$ is just the Laplace transform of the law of $|K_{v}|_{v,\lambda}$ under $\bP_{p}$. 
We also define
\[
\chi_{p,\lambda,h}(v) = \bE_{p,\lambda,h} \left[ |K_v|_{v,\lambda} \,;\, v \nleftrightarrow g_v \right] =
\bE_p\left[ 
|K_v|_{v,\lambda} \exp\left( -h |K_v|_{v,\lambda} \right)
\right],
\]
and write  $M_{p,\lambda,h}$ and $\chi_{p,\lambda,h}$ for the averaged quantities 
\[ M_{p,\lambda,h}=\E\left[M_{p,\lambda,h}(\rho)\right] \qquad \text{ and } \qquad  \chi_{p,\lambda,h}=\E\left[\chi_{p,\lambda,h}(\rho) \right].\]
Note also that the trivial inequalities
\begin{equation}
\label{eq:trivial}
M_{p,\lambda,h}(v) \geq 1-e^{-h} \qquad \text{ and } \qquad \chi_{p,\lambda,h}(v) \leq \max_{x \geq 0} xe^{-hx} = \frac{1}{eh}
\end{equation}
hold for every $v\in V$, $p\in [0,1]$, $\lambda \in \R$, and $h >0$.

We also define the \textbf{truncated tilted susceptibility}
\begin{align}
\chi^f_{p,\lambda}(v) =  \bE_p\left[ |K_v|_{v,\lambda} \,;\, |K_v|_{v,\lambda} < \infty \right] \qquad &\text{ and } \qquad  \chi^f_{p,\lambda} =  \E\left[ \chi^f_{p,\lambda}(\rho) \right]\end{align}
and observe that, by monotone convergence,
\begin{align}
\lim_{h\downarrow 0} \chi_{p,\lambda,h}(v) = \chi^f_{p,\lambda}(v) \qquad &\text{ and } \qquad \lim_{h\downarrow 0} \chi_{p,\lambda,h} = \chi^f_{p,\lambda}
\end{align}
for every $v\in V$, $p\in [0,1]$ and $\lambda \in \R$.


 It follows by dominated convergence that, for fixed values of $p\in[0,1]$ and $\lambda\in \R$, the magnetization $M_h =M_{p,\lambda,h}$ is a differentiable function of $h$ for $h>0$ and that
\begin{equation}
\label{eq:Mdiff}
\frac{\partial}{\partial h} M_{p,\lambda,h} = \chi_{p,\lambda,h}
\end{equation}
for every $h>0$. This allows us to interpret the following lemma as a differential inequality.

\begin{lemma}
\label{lem:diffineq}
Let $0 \leq \lambda<1/2$, and suppose that $\bE_{\pcl{\lambda}}\left[ |K_v|_{v,\lambda}^{(1+\eps)/2} \right] <\infty$
for some  $\eps>0$ and some (and hence every) $v\in V$. Then for every $0<\delta \leq \min\{\eps,(1-2\lambda)/\lambda\}$ there exists 
a constant $C_\delta$ such that
\begin{equation}
\label{eq:differentialineq}
M_{\pcl{\lambda},\lambda,h} \leq C_\delta M^2_{\pcl{\lambda},\lambda,h} + h \chi_{\pcl{\lambda},\lambda,h} + C_\delta h^{1+\delta} \chi_{\pcl{\lambda},\lambda,h}
\end{equation}
for every $h>0$.
\end{lemma}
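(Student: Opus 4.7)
The plan is to derive the stated inequality by adapting Aizenman and Barsky's technique for controlling the magnetization \cite{aizenman1987sharpness} to the tilted setting, with the moment assumption providing the quantitative input that replaces their classical $\partial_p$-based manipulations. Writing $Z = |K_v|_{v,\lambda}$, $M_h = \bE_{\pcl{\lambda}}[1 - e^{-hZ}]$, and $\chi_h = \bE_{\pcl{\lambda}}[Z e^{-hZ}]$, the starting point is the elementary inequality
\[
M_h \leq e h \chi_h + \bP_{\pcl{\lambda}}(Z > 1/h),
\]
obtained by splitting $\bE_{\pcl{\lambda}}[1 - e^{-hZ}]$ according to whether $Z \leq 1/h$ and using $1 - e^{-hZ} \leq hZ \leq e h Z e^{-hZ}$ on that event. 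This reduces the problem to bounding $\bP_{\pcl{\lambda}}(Z > 1/h)$ by $C_\delta M_h^2 + C_\delta h^{1+\delta}\chi_h$.

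For this I would use an Aizenman--Barsky-style two-path decomposition for the tilted ghost field. Let $N$ be the number of $v$-green vertices contained in $K_v$; since $N \mid K_v$ is Poisson of mean $hZ$, one has $\bP_{\pcl{\lambda}}(Z > 2/h) \preceq \bP(N \geq 2)$. The event $\{N \geq 2\}$ forces $v$ to be connected to two distinct green vertices, and the BK inequality (applied after conditioning on the ghost field $g_v$) extracts a common meeting point $z$ with three edge-disjoint open paths. Using the cocycle identity $\Delta^\lambda(v, u) = \Delta^\lambda(v, z) \Delta^\lambda(z, u)$ to re-root the tilted Poisson intensities at $z$ then yields the key estimate
\[
\bP(N \geq 2) \leq \sum_{z \in V} \tau_{\pcl{\lambda}}(v, z) \cdot M_{\pcl{\lambda}, \lambda, h\Delta^\lambda(v, z)}(z)^2.
\]

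The remaining work is to estimate this spatial sum by splitting according to the size of $\Delta^\lambda(v, z)$. In the central regime, where $\Delta^\lambda(v, z)$ is bounded above and below by fixed constants, the monotonicity and concavity of $h' \mapsto M_{h'}$ together with quasi-transitivity contribute a term of order $M_h^2$, producing the $C_\delta M_h^2$ term. In the extremal regimes the moment assumption produces (via Markov) the uniform bound $M_{\pcl{\lambda}, \lambda, h'}(z) \preceq (h')^{(1+\delta)/2}$ valid for every $0 \leq \delta \leq \eps$, and summing gives a contribution proportional to $h^{1+\delta} \chi_{\pcl{\lambda}, (1+\delta)\lambda}(v)$. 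This tilted susceptibility is finite precisely when $\pcl{(1+\delta)\lambda} > \pcl{\lambda}$, which by the $\lambda \leftrightarrow 1 - \lambda$ symmetry of $\pcl{\cdot}$ translates to $(1+\delta)\lambda < 1 - \lambda$, i.e.\ $\delta < (1 - 2\lambda)/\lambda$; this is the origin of the stated constraint on $\delta$ and forces $C_\delta$ to blow up as the threshold is approached.

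The main obstacle is the delicate interpolation in the central regime, where the moment bound on $M_{h'}$ becomes weak as $h'$ approaches $h$ and one has to combine it with the monotonicity of $M_{h'}$ to extract an $M_h^2$ term rather than a weaker power. A related technical point is showing that the restricted sum of connection probabilities over $z$ with $\Delta^\lambda(v, z)$ in a fixed bounded interval remains effectively controlled at $\pcl{\lambda}$ under the moment assumption, which is intertwined with the slab-intersection estimates developed elsewhere in the paper.
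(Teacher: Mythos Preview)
Your tree-graph decomposition
\[
\bP(N \geq 2) \leq \sum_{z \in V} \tau_{\pcl{\lambda}}(v,z)\, M_{\pcl{\lambda},\lambda,h\Delta^\lambda(v,z)}(z)^2
\]
is correct, but it cannot be controlled at $p=\pcl{\lambda}$ in the way you propose, and this is the essential gap.

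In your extremal regime you arrive at a term $h^{1+\delta}\chi_{\pcl{\lambda},(1+\delta)\lambda}$ and claim this is finite because $(1+\delta)\lambda<1-\lambda$. But the $\lambda\leftrightarrow 1-\lambda$ symmetry only gives that $\pcl{\cdot}$ is non-decreasing on $(-\infty,1/2]$; strict monotonicity is precisely what the main theorem establishes, so you cannot assume $\pcl{(1+\delta)\lambda}>\pcl{\lambda}$ here. If equality holds, \cref{prop:tiltedmeanfieldlowerbound} forces $\chi_{\pcl{\lambda},(1+\delta)\lambda}=\infty$ and your bound is vacuous. In your central regime the problem is the slab sum $\sum_{z:\Delta^\lambda(v,z)\in[c,C]}\tau_{\pcl{\lambda}}(v,z)$: nothing in the hypotheses of the lemma guarantees this is finite (finiteness would require $\pcl{\lambda}\in I_h$, which again is not available a priori). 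You correctly flag both points as obstacles at the end, but they are not technicalities to be patched---they are exactly what the paper's argument is designed to avoid.

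The paper's proof sidesteps both issues by a different decomposition of $\bP(N\geq 2)$. First, the disjoint-occurrence event $\{v\leftrightarrow g_v\}\circ\{v\leftrightarrow g_v\}$ is bounded directly by $M^2$ via a BK-type inequality, with \emph{no} spatial sum. Second, on the complementary event Menger's theorem produces a pivotal edge whose removal disconnects $v$ from $g_v$; this introduces the restriction $\{v\leftrightarrow u,\,v\nleftrightarrow g_v\}$ into the summand, so that after bounding $M^2_{h\Delta^\lambda(v,w)}(w)\preceq h^{1+\delta}\Delta^{(1+\delta)\lambda}(v,u)$ via the moment hypothesis, the remaining sum is $\E_{p,\lambda,h}[\sum_u \mathbbm{1}(\rho\leftrightarrow u,\rho\nleftrightarrow g_\rho)\Delta^{(1+\delta)\lambda}(\rho,u)]$. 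Splitting according to the sign of $\log\Delta(\rho,u)$ and applying the tilted mass-transport principle (this is where the constraint $\delta\leq(1-2\lambda)/\lambda$ enters), each piece is bounded by the \emph{ghost-truncated} susceptibility $\chi_{p,\lambda,h}$, which is trivially finite for every $h>0$. This is also why the final inequality has $h^{1+\delta}\chi_{\pcl{\lambda},\lambda,h}$ rather than a bare $h^{1+\delta}$ times a possibly-infinite constant.

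A smaller issue: your opening bound $M_h\leq e\,h\chi_h+\bP(Z>1/h)$ carries a constant $e$ in front of $h\chi_h$, whereas the lemma requires the coefficient to be exactly $1$ (and the application in the proof of \cref{prop:tiledAizBar} genuinely uses this). The paper gets coefficient $1$ by the exact identity $M_h=\bP(N=1)+\bP(N\geq 2)=h\chi_h+\bP(N\geq 2)$.
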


Before proving \cref{lem:diffineq}, let us use it to prove \cref{prop:tiledAizBar}.
Before we start, let us recall that if $X$ is a non-negative random variable and $0<a\leq 1$ is such that $\E X^a <\infty$, then, since $1-e^{-x} \leq \min\{x,1\}$ for every $x\geq 0$, we have that
\begin{equation}
\label{eq:LaplaceBound}
\E \left[1- e^{-hX}\right] \leq \E\left[ \min\{ h X, 1\} \right]  \leq 
\E\left[ \min\{ h X, 1\}^a \right] \leq h^a \E X^a
\end{equation}
for every $h>0$.

\begin{proof}[Proof of \cref{prop:tiledAizBar} given \cref{lem:diffineq}]
Let $0<\lambda<1/2$, suppose for contradiction that 
\[ \E_{\pcl{\lambda}}\left[ |K_\rho|_{\rho,\lambda}^{(1+\eps)/2}\right]<\infty\]
for some $\eps>0$, and let $n$ be such that $2^{1/n}< 1+\min\{\eps,(1-2\lambda)/\lambda\}$. 
In particular, this implies that 
\begin{equation}
\label{eq:AizBarprooftheta0}\lim_{h \downarrow 0} M_{\pcl{\lambda},\lambda,h}(v)=1-\bP_{\pcl{\lambda}}(|K_v|_{v,\lambda}<\infty)=0\end{equation}
for every $v\in V$, and hence that 
\begin{equation}
\label{eq:chifisinfinite}
\chi_{\pcl{\lambda},\lambda}^f(v)=\chi_{\pcl{\lambda},\lambda}(v)=\infty
\end{equation}
 for every $v\in V$ by \cref{prop:tiltedmeanfieldlowerbound}. 
Write $\phi(h)=M_{\pcl{\lambda},\lambda,h}$ and let $\psi(t)$ be the inverse of $\phi$. 
 The trivial magnetization lower bound \eqref{eq:trivial} yields that
\begin{equation}
\label{eq:psiupper}
\psi(t) \leq -\log(1-t) \leq (2\log 2) t \qquad \text{ for every $0<t \leq 1/2$}.
\end{equation}
More is true, however: 
Since $\phi$ is differentiable and $\phi(0)=0$, we have by the mean value theorem that for every $0<h<1$ there exists $0<s<h$ such that
\begin{equation}
\frac{1}{h}\phi(h) = \phi'(s) = \chi_{\pcl{\lambda},\lambda,s},
\end{equation}
and we deduce that
\begin{equation}
\label{eq:psiovert}
\limsup_{t\downarrow 0} \frac{1}{t} \psi(t) = \limsup_{h\downarrow 0} \frac{h}{\phi(h)}  \leq \limsup_{h\downarrow 0} \frac{1}{\chi_{\pcl{\lambda},\lambda,h}}=\frac{1}{\chi^f_{\pcl{\lambda},\lambda}}=0, \end{equation}
where we have applied \eqref{eq:chifisinfinite} in the final equality.
On the other hand, the inequality \eqref{eq:LaplaceBound} implies that 
\begin{equation}\phi(h) \leq h^{(1+\eps)/2}\E_{\pcl{\lambda}}\left[ |K_\rho|_{\rho,\lambda}^{(1+\eps)/2}\right]\end{equation}
for every $h>0$ and hence that
\begin{equation}
\label{eq:psilower}
\psi(t) \geq \E_{\pcl{\lambda}}\left[ |K_\rho|_{\rho,\lambda}^{(1+\eps)/2}\right]^{-2/(1+\eps)} t^{2/(1+\eps)} =: c_1 t^{2/(1+\eps)} \geq  c_1 t^{2^{1-1/n}} \qquad \text{for every $t\in(0,1)$}.
\end{equation}

 By \cref{lem:diffineq}, there exists $C_2<\infty$ such that
\begin{equation}
\label{eq:differentialineq}
\phi(h) \leq C_2 \phi^2(h) + h \phi'(h) + C_2 h^{2^{1/n}} \phi'(h).
\end{equation}
for every $h>0$, which is equivalent to the inequality
\begin{align}
t \leq C_2 t^2 + \frac{\psi(t)}{\psi'(t)} + \frac{C_2 \psi^{2^{1/n}}(t)}{\psi'(t)}.
\end{align}
Multiplying both sides by $\psi'(t)/t^2$ and rearranging, we obtain that
%
%
%
\begin{equation}
\left(\frac{1}{t}\psi(t)\right)' = \frac{1}{t}\psi'(t)-\frac{1}{t^2}\psi(t) \leq C_2\psi'(t) +\frac{C_2}{t^2}\psi^{2^{1/n}}(t).
\end{equation}
for every $t\in(0,1)$. Since $\psi'(t)=1/\chi_{\pcl{\lambda},\lambda,\psi(t)}$ is an increasing function of $t$, it follows that there exists a constant $C_3<\infty$ such that
\begin{equation}
\label{eq:diffineqrearranged}
\left(\frac{1}{t}\psi(t)\right)' \leq C_3\left(1 +\frac{1}{t^2}\psi^{2^{1/n}}(t)\right) \qquad \text{for every $0<t\leq 1/2$.}
\end{equation}

To proceed, we will show that the four statements
 \eqref{eq:psiupper}, \eqref{eq:psiovert}, \eqref{eq:psilower}, and \eqref{eq:diffineqrearranged} cannot all hold.  
To do this, we will apply  \eqref{eq:psiupper}, \eqref{eq:psiovert}, and \eqref{eq:diffineqrearranged} to prove by induction on $k$ that for each $0 \leq k \leq n$ there exists a constant $C'_k$ such that
\begin{equation}
\label{eq:psiinduction}
\psi(t) \leq C'_k t^{2^{k/n}}
\end{equation}
for every $0<t\leq 1/2$: The case $k=n$ will then contradict the lower bound of \eqref{eq:psilower}.

The base case $k=0$ follows from \eqref{eq:psiupper}. 
Suppose that $0\leq k< n$  and that \eqref{eq:psiinduction} holds for $k$. 
Applying \eqref{eq:psiovert}, substituting the induction hypothesis into the right hand side of the differential inequality \eqref{eq:diffineqrearranged} and integrating 
  yields that
\begin{multline}
\frac{1}{t} \psi(t) = \frac{1}{t} \psi(t) - \lim_{s\downarrow 0}\frac{1}{s} \psi(s)  
= \int_0^t \left(\frac{1}{s}\psi(s)\right)' \dif s
\leq C_3 t +  C_3 \int_0^t s^{-2} (C'_k s^{2^{k/n}})^{2^{1/n}} \dif s
\\
\leq C_3 t + \frac{C_3(C'_k)^{2^{1/n}} }{2^{(k+1)/n}-1} t^{2^{(k+1)/n}-1}
\end{multline}
for every $0 < t \leq 1/2$, 
and hence that
\begin{equation}\psi(t)\leq C'_{k+1} t^{2^{(k+1)/n}} \quad \text{ where }  \quad C'_{k+1} = C_3\left(1+\frac{(C'_{k})^{2^{1/n}}}{2^{(k+1)/n}-1}\right)\end{equation}
for every $0<t\leq 1/2$.
This completes the induction, which in turn yields the desired contradiction. 
\end{proof}

It remains only to prove \cref{lem:diffineq}. 
We begin with a preliminary lemma concerning disjoint occurrences with respect to both the percolation configuration and the ghost field.

Given $u,v \in V$, we write $\{u \leftrightarrow g_v \} \circ \{u \leftrightarrow g_v \}$ for the event that either
\begin{enumerate}
\item There exist two distinct $v$-green vertices $w,z \in g_v$ and two edge-disjoint open paths connecting $u$ to $w$ and $u$ to $z$ (including the case that one of $w,z$ is equal to $u$ and the open path from $u$ to this vertex is the empty path), or 
\item There exists a vertex $w$ with $\mathcal{G}_v(w) \geq 2$, and two edge-disjoint open paths connecting $u$ to $w$ (including the case that $w=u$ and both paths are the empty path).
	\end{enumerate}
	That is, we require there to be two (necessarily finite) witnesses for $\{u \leftrightarrow g_v \}$ that are disjoint with respect to both the percolation configuration \emph{and} the ghost field. 

	\begin{lemma}
	\label{lem:GreenBK}
	 The estimate
	\[\bP_{p,\lambda,h}\left(\{u \leftrightarrow g_v \} \circ \{u \leftrightarrow g_v \}\right) \leq \bP_{p,\lambda,h}(u \leftrightarrow g_v)^2\]
	holds for all $u,v\in V$, $p\in [0,1]$, $\lambda \in \R$ and $h>0$.
	\end{lemma}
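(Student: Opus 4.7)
The plan is to reduce the desired inequality to the classical van den Berg--Kesten (BK) inequality on a pure Bernoulli product space by applying Poisson splitting to the ghost field. For each integer $N \geq 1$, write $\cG_v = \sum_{i=1}^N \cG_v^{(i,N)}$, where $\cG_v^{(1,N)},\ldots,\cG_v^{(N,N)}$ are independent Poisson processes on $V$ of intensity $h\Delta^\lambda(v,\cdot)/N$; equivalently, one assigns to each atom of $\cG_v$ an independent uniform \emph{slot index} in $\{1,\ldots,N\}$. Declare a slot $(w,i)$ to be \emph{green} if $\cG_v^{(i,N)}(w) \geq 1$, and observe that the indicators $\hat\xi^{(N)}_{w,i} := \mathbbm{1}(\cG_v^{(i,N)}(w) \geq 1)$ form an independent Bernoulli family, giving rise to a Bernoulli product measure on the edges of $G$ together with the slot indicators. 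In this enriched space, the event $\hat A_N$ that $u$ is connected by an open path to some green slot coincides with $\{u \leftrightarrow g_v\}$, since $w \in g_v$ if and only if at least one slot $(w,i)$ is green; in particular, $\bP_{p,\lambda,h}(\hat A_N) = \bP_{p,\lambda,h}(u \leftrightarrow g_v)$.

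Applying the classical BK inequality for Bernoulli product measures to the increasing event $\hat A_N$ immediately yields
\[
\bP_{p,\lambda,h}(\hat A_N \circ \hat A_N) \leq \bP_{p,\lambda,h}(u \leftrightarrow g_v)^2,
\]
where $\hat A_N \circ \hat A_N$ denotes disjoint occurrence with respect to the enriched Bernoulli product, that is, the existence of two edge-disjoint open paths from $u$ to two \emph{distinct} green slot pairs $(w_1,i_1) \neq (w_2,i_2)$.

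It remains to show that $\bP_{p,\lambda,h}(\hat A_N \circ \hat A_N)$ converges to $\bP_{p,\lambda,h}(\{u \leftrightarrow g_v\} \circ \{u \leftrightarrow g_v\})$ as $N \to \infty$. One inclusion is deterministic in the coupling: two disjoint witnesses in the discrete model either visit two distinct green vertices (matching clause (1) of the definition in the excerpt) or two different slots at a single vertex $w$, which forces $\cG_v(w) \geq 2$ (matching clause (2)); hence $\hat A_N \circ \hat A_N \subseteq \{u \leftrightarrow g_v\} \circ \{u \leftrightarrow g_v\}$ pointwise. Conversely, given a realization of the percolation configuration and $\cG_v$ on which the Poisson disjoint-occurrence event holds via two specified atoms, the uniform slot assignment places those two atoms in different slots with probability at least $(N-1)/N$, which tends to $1$. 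A dominated-convergence argument exploiting the almost-sure local finiteness of the ghost field then yields the desired convergence, after which letting $N \to \infty$ in the displayed inequality completes the proof. The main (minor) obstacle is making this convergence fully rigorous: one needs to express $\{u \leftrightarrow g_v\} \circ \{u \leftrightarrow g_v\}$ as a countable union over pairs of finite witness sets, which is standard given the finite-witness formulation of disjoint occurrence together with the countability of $V$ and the almost-sure finiteness of $\cG_v$ on finite sets.
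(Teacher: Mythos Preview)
Your proposal is correct and is essentially the same approach the paper takes: the paper's one-line proof reads ``This follows from the standard BK inequality by approximating the Poisson random variables $\mathcal{G}_v(u)$ by Binomial random variables,'' and your Poisson-splitting argument is precisely a rigorous implementation of that approximation. Your write-up supplies the details the paper leaves implicit (the slot coupling, the two inclusions, and the $N\to\infty$ limit), and these are all handled correctly.
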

\begin{proof}
This follows from the standard BK inequality by approximating the Poisson random variables $\mathcal{G}_v(u)$ by Binomial random variables. 
\end{proof}

\begin{proof}[Proof of \cref{lem:diffineq}] Let $\mathcal{G}_v(K_v)=\sum_{u\in K_v} \mathcal{G}_v(u)$. Write
\begin{equation}
M_{p,\lambda,h}(v) = \bP_{p,\lambda,h}\left(\mathcal{G}_v(K_v)=1\right) + \bP_{p,\lambda,h}\left(\mathcal{G}_v(K_v) \geq 2\right)
\end{equation}
and
\begin{multline}
\bP_{p,\lambda,h}\left(\mathcal{G}_v(K_v) \geq 2\right) =\\ \bP_{p,\lambda,h}\left(\{v \leftrightarrow g_v\}\circ \{v\leftrightarrow g_v\}\right) + 
\bP_{p,\lambda,h}\left( \left\{\mathcal{G}_v(K_v) \geq 2 \right\} \setminus\left[\{v \leftrightarrow g_v\}\circ \{v\leftrightarrow g_v\}\right]\right).
\end{multline}

Conditional on $K_v$, the random variable $\mathcal{G}_v(K_v)$ has a Poisson distribution with parameter $h |K_v|_{v,\lambda}$. It follows that $\bP_{p,\lambda,h}\left( \mathcal{G}_v(K_v) =1\right) = h \chi_{p,\lambda,h}(v)$ for every $v\in V$ and hence that
\begin{equation}
\P_{p,\lambda,h}\left( \mathcal{G}_\rho(K_\rho) =1\right) = h \chi_{p,\lambda,h}.
\end{equation}
 Meanwhile, \cref{lem:GreenBK} implies that
\begin{equation}
\bP_{p,\lambda,h}\left(\{v\leftrightarrow g_v\}\circ\{v\leftrightarrow g_v\}\right) \leq M_{p,\lambda,h}^2(v)
\end{equation}
and hence that
\begin{equation}
\P_{p,\lambda,h}\left(\{\rho\leftrightarrow g_\rho\}\circ\{v\leftrightarrow g_\rho\}\right) \leq  \left[\inf_{v\in V} \P([\rho]=[v])\right]^{-1} M_{p,\lambda,h}^2.
\end{equation}
Thus, to prove \cref{lem:diffineq} it remains to show only that there exists a constant $C$ such that
\begin{equation}
\P_{p,\lambda,h}\left( \left\{\mathcal{G}_\rho(K_\rho) \geq 2 \right\} \setminus\left[\{\rho \leftrightarrow g_\rho\}\circ \{\rho\leftrightarrow g_\rho\}\right]\right) \leq C h^{1+\delta} \chi_{p,\lambda,h}.
\label{eq:desiredghost}
\end{equation}

If the event $\{\mathcal{G}_v(K_v) \geq 2\}$ occurs but $\{v \leftrightarrow g_v\}\circ \{v\leftrightarrow g_v\}$ does not, then 
it follows from Menger's Theorem that 
there exist vertices $u$ and $w$ of $G$ and an edge $e$ of $G$ with endpoints $u$ and $w$ such that the following hold:
\begin{enumerate}
	\item $e$ is open,
	\item if $e$ is made to be closed then $v$ remains connected to $u$ but is no longer connected to $g_v$, and
	\item the event $\{w \leftrightarrow g_v\} \circ \{w \leftrightarrow g_v\}$ occurs. 
\end{enumerate}
Let $\sA(v,u,w,e)$ be the event that these three conditions hold. Fix $v,u,w,e$, and let $G[p]_e$ be obtained from $G[p]$ by making the edge $e$ closed. (In particular if $e$ is closed in $G[p]$ then $G[p]_e=G[p]$.)
Let $K'_v$ be the connected component of $v$ in $G[p]_e$. Then we have that
\begin{multline}
\bP_{p,\lambda,h}\bigl(\sA(v,u,w,e) \mid K'_v\bigr) =\\ p\, \mathbbm{1}\bigl(u \in K'_v,\, w \notin K'_v\bigr) \cdot \bP_{p,\lambda,h}\bigl(K'_v\cap g_v = \emptyset \mid K'_v\bigr) \cdot \bP_{p,\lambda,h}\left(\{ w \leftrightarrow g_v \} \circ \{w \leftrightarrow g_v\} \text{ off } K'_v \mid K'_v\right)
\end{multline}
and hence that
\begin{multline}
\bP_{p,\lambda,h}(\sA(v,u,w,e) \mid K'_v) \leq \\ p\, \mathbbm{1}\bigl(u \in K'_v\bigr) \cdot \bP_{p,\lambda,h}\bigl(K'_v\cap g_v = \emptyset \mid K'_v\bigr) \cdot \bP_{p,\lambda,h}\left(\{ w \leftrightarrow g_v \} \circ \{w \leftrightarrow g_v\} \right).
\end{multline}
Thus, taking expectations over $K'_v$ we obtain that
\begin{equation}
\bP_{p,\lambda,h}(\sA(v,u,w,e))\leq p \, \bP_{p,\lambda,h} (u \in K'_v \text{ and } K'_v \cap g_v =\emptyset) \cdot \bP_{p,\lambda,h}\left(\{ w \leftrightarrow g_v \} \circ \{w \leftrightarrow g_v\} \right).
\end{equation}
On the other hand,
\begin{equation}
\bP_{p,\lambda,h}(v \leftrightarrow u, v\nleftrightarrow g_v) \geq (1-p)\,\bP_{p,\lambda,h} (u \in K'_v \text{ and } K'_v \cap g_v =\emptyset)
\end{equation}
and so we obtain that
\begin{align}
\bP_{p,\lambda,h}(\sA(v,u,w,e))&\leq \frac{p}{1-p} \bP_{p,\lambda,h} (v \leftrightarrow u, v \nleftrightarrow g_v)  \cdot \bP_{p,\lambda,h}\left(\{ w \leftrightarrow g_v \} \circ \{w \leftrightarrow g_v\} \right)
\nonumber
\\
& \leq \frac{p}{1-p} \bP_{p,\lambda,h} (v \leftrightarrow u, v \nleftrightarrow g_v) \cdot M^2_{p,\lambda,\Delta^\lambda(v,w) h}(w).
\end{align}
Applying \eqref{eq:LaplaceBound} to control the magnetization appearing here, we obtain that there exists a constant $C$ such that
\begin{align}
\bP_{p,\lambda,h}(\sA(v,u,w,e))
\leq \frac{Cp}{1-p} h^{1+\delta}\, \bP_{p,\lambda,h} (v \leftrightarrow u, v \nleftrightarrow g_v) \Delta^{(1+\delta)\lambda}(v,u).
\end{align}
Taking $v=\rho$, summing over the possible choices of $u,w,$ and $e$, and taking the expectation over $\rho$ yields that
\begin{multline}
\P_{p,\lambda,h}\left( \left\{\mathcal{G}_\rho(K_\rho) \geq 2 \right\} \setminus\left[\{\rho \leftrightarrow g_\rho\}\circ \{\rho\leftrightarrow g_\rho\}\right]\right) \\
\leq \frac{Cp}{1-p} h^{1+\delta} \, \E_{p,\lambda,h} \left[ \sum_{u\in V} \mathbbm{1}\left(\rho \leftrightarrow u, \rho \nleftrightarrow g_\rho\right) \Delta^{(1+\delta)\lambda}(\rho,u) \right]
\label{eq:ghost1}
\end{multline}
We break the sum on the right hand side of \eqref{eq:ghost1} into two pieces according to whether $\Delta(\rho,u)\leq 1$ or $\Delta(\rho,u)>1$, and claim that the expectation of each such piece is bounded by $\chi_{p,\lambda,h}$. The first is easily handled by observing that, trivially,
\begin{multline}
 \E_{p,\lambda,h} \left[ \sum_{u\in V, \Delta(\rho,u) \leq 1} \mathbbm{1}\left(\rho \leftrightarrow u, \rho \nleftrightarrow g_\rho\right) \Delta^{(1+\delta)\lambda}(\rho,u) \right]\\ \leq \E_{p,\lambda,h} \left[ \sum_{u\in V, \Delta(\rho,u) \leq 1} \mathbbm{1}\left(\rho \leftrightarrow u, \rho \nleftrightarrow g_\rho\right) \Delta^{\lambda}(\rho,u) \right]  \leq \chi_{p,\lambda,h}.
\label{eq:ghost2}
\end{multline}
For the second, we apply the tilted mass-transport principle to obtain that
\begin{multline}
\E_{p,\lambda,h} \left[ \sum_{u\in V, \Delta(\rho,u) > 1} \mathbbm{1}\left(\rho \leftrightarrow u, \rho \nleftrightarrow g_\rho\right) \Delta^{(1+\delta)\lambda}(\rho,u) \right]\\
=
\E_{p,\lambda,h} \left[ \sum_{u\in V, \Delta(\rho,u) < 1} \mathbbm{1}\left(\rho \leftrightarrow u, \rho \nleftrightarrow g_u\right) \Delta^{1-(1+\delta)\lambda}(\rho,u) \right].
\end{multline}
If $\Delta(\rho,u)<1$ then $g_u$ stochastically dominates $g_\rho$, so that $\P_{p,\lambda,h}(\rho \leftrightarrow u , \rho \nleftrightarrow g_u ) \leq \P_{p,\lambda,h}(\rho \leftrightarrow u , \rho \nleftrightarrow g_\rho )$. Meanwhile, our choice of $\delta$ ensures that $1-(1+\delta)\lambda \geq \lambda$, and so we have that
\begin{multline}
\E_{p,\lambda,h} \left[ \sum_{u\in V, \Delta(\rho,u) > 1} \mathbbm{1}\left(\rho \leftrightarrow u, \rho \nleftrightarrow g_\rho\right) \Delta^{(1+\delta)\lambda}(\rho,u) \right]\\
\leq 
\E_{p,\lambda,h} \left[ \sum_{u\in V, \Delta(\rho,u) < 1} \mathbbm{1}\left(\rho \leftrightarrow u, \rho \nleftrightarrow g_\rho \right) \Delta^{\lambda}(\rho,u) \right]
\leq \chi_{p,\lambda,h}.
\label{eq:ghost3}
\end{multline}
Combining \eqref{eq:ghost1}, \eqref{eq:ghost2}, and \eqref{eq:ghost3} yields the desired inequality \eqref{eq:desiredghost}. \qedhere
%

\end{proof}

\section{Analysis of the tiltable and subcritical phases}
\label{sec:Fekete}

In this section we study percolation in the tiltable ($0<p<p_t$) and subcritical ($0<p<p_c$) phases. We begin by introducing Tim\'ar's \emph{uniform separating layer decomposition} in \cref{subsec:layersdef}. We then give an overview of the results of the section in \cref{subsec:tiltable_overview}. These results are then stated in detail and proven in the following subsections.

\subsection{The uniform separating layer decomposition}
\label{subsec:layersdef}
 Recall that we have fixed a connected, locally finite graph $G$ and a nonunimodular, quasi-transitive subgroup $\Gamma \subseteq \Aut(G)$. 
For each $-\infty \leq s \leq t \leq \infty$ and $v \in V$ we define the \textbf{slab} 
\[
S_{s,t}(v)=\{u \in V : s \leq \log \Delta(v,u) \leq t\}.
\]
We also define
\[t_0 = \sup \left\{\log \Delta(u,v) : u,v \in V, u \sim v\right\},\]
so that for every $t\in \R$, every path in $G$ that starts at a vertex in the slab $S_{-\infty,t}(v)$ and ends at a vertex in the slab $S_{t,\infty}(v)$ must pass through the slab $S_{t,t+t_0}(v)$. It will be convenient to write
\[
\nexp(x) = \exp( t_0 x)  \qquad  \text{ and } \qquad  \nlog x = \frac{1}{t_0}\log x
\]
for the appropriately normalized exponential and logarithm.



The following construction, due to Tim\'ar \cite{timar2006percolation}, will be very useful.
Let $G[p]$ be a Bernoulli bond percolation on $G$, and let $\rho \in \cO$ be a random variable with law $\mu$, defined in the previous subsection, independent of $G[p]$. 
Let $v_0$ be an arbitrary vertex of $G$, let $U_{v_0}$ be a uniform $[0,1]$ random variable independent of $\rho$ and $G[p]$, and let 
\[
U_v = U_{v_0} - \nlog \Delta(v_0,v) \mod 1
\]
for every other $v\in V$.  The law of the collection of random variables $U=\{ U_v : v\in V\}$ does not depend on the choice of $v_0$. From now on, we write $\P_p$ and $\E_p$ for probabilities and expectations taken with respect to the joint law of $G[p]$, $U$, and $\rho$. 
Given $U$, we define the \textbf{separating layers}
\[
L_n(v) =  
\Bigl\{ x \in V :  (n+U_v-1) \leq \nlog \Delta(v,x) \leq (n+U_v)\Bigr\}
= S_{(n+U_v-1)t_0,(n+U_v)t_0}(v).
\]
for each $n \in \Z$ and $v\in V$, and 
define \[L_{m,n}(v)=\bigcup_{k=m}^n L_k(v)\] for every $v\in V$ and $-\infty \leq m\leq n \leq \infty$. Note that $L_n(v)$ and $L_m(v)$ are almost surely disjoint if $n \neq m$ and that if $u \in L_k(v)$ then $L_m(u)=L_{m+k}(v)$. We will refer to sets of the form $L_{n,\infty}(v)$ and $L_{-\infty,n}(v)$ as upper and lower \textbf{half-spaces} respectively\footnote{Note however that, in hyperbolic space, $L_{n,\infty}(v)$ is analogous to a horoball rather than a half-space.}. Intuitively, we think of $\nlog \Delta$ as being a sort of `normalized height function', and think of $L_n(v)$ and $L_{n,m}(v)$ as unit layers and slabs of integer normalized height, each with a random offset.

\medskip

The group $\Gamma$ acts on $\{0,1\}^E$ and $[0,1]^V$ by $\gamma \omega(e)=\omega(\gamma^{-1} e)$ and $\gamma \omega(v) = \omega(\gamma^{-1}v)$ respectively. We define the diagonal action of $\Gamma$ on
$V^2\times\{0,1\}^E \times [0,1]^V$ by setting 
\[\gamma(u,v,\omega_1,\omega_2) = (\gamma u, \gamma v , \gamma \omega_1, \gamma \omega_2) \]
for each $\gamma \in \Gamma$ and $(u,v,\omega_1,\omega_2)\in V^2\times\{0,1\}^E\times[0,1]^V$. 
If 
$f:V^2\times\{0,1\}^E\times [0,1]^V \to [0,\infty]$ is invariant under the diagonal action of $\Gamma$, then applying the tilted mass-transport principle to the function
\[
F(u,v)=\E_p\left[f\!\left(u,v,G[p],U\right)\right]
\]
yields that
\begin{equation}
\label{eq:fullMTP}
\E_p\left[ \sum_{v\in V} f\!\left(\rho,v,G[p],U\right) \right] =\E_p\left[ \sum_{v\in V} f\!\left(v,\rho,G[p],U\right)\Delta(\rho,v)\right].
\end{equation}
We refer to this equality simply as the tilted mass-transport principle also.
In particular, if $k\in \Z$ and $f:V^2\times\{0,1\}^E\times[0,1]^V\to[0,\infty]$ is supported on pairs $u,v$ with $v \in L_k(u)$, then we have the approximate equality
\begin{equation}
\label{eq:roughMTP}
\E_p\left[ \sum_{v\in V} f\!\left(\rho,v,G[p],U\right) \right] \asymp\nexp(-k)\E_p\left[ \sum_{v\in V} f\!\left(v,\rho,G[p],U\right)\right].
\end{equation}
Indeed, the equality is exact up to a factor of $e^{\pm t_0}$.

\medskip

Let us now draw attention to a special case in which our proofs can often be substantially simplified. 
 It is a consequence of quasi-transitivity and the cocycle identity that there exists $k\geq 1$ and a collection $\lambda_1,\ldots,\lambda_k \in (1,\infty)$ such that
\[
\{\Delta(u,v) : u,v \in V \} = \{\lambda_1^{n_1} \lambda_2^{n_2} \cdots \lambda_k^{n_k} : n_1,\ldots,n_k \in \Z\}.
\]
We say that $(G,\Gamma)$ has \textbf{discrete layers} if there exists a (necessarily unique) $\lambda_0 \in (1,\infty)$ such that
\[
\{\Delta(u,v) : u,v \in V \} = \{\lambda_0^n : n \in \Z\}.
\]
We say furthermore that $(G,\Gamma)$ has \textbf{simple layers} if it has discrete layers with constant $\lambda_0=e^{t_0}$. This assumption holds in particular 
when $G=T_k \times H$ for $T_k$ a $k$-regular tree for $k\geq 3$, $H$ is transitive and unimodular, and $\Gamma=\Gamma_\xi\times \Aut(H)$ is the product of the group of automorphisms of $T_k$ fixing an end with the full automorphism group of $H$. Observe that if $(G,\Gamma)$ has simple layers then we almost surely have that
$L_n(v) = \bigl\{ u \in V : \nlog \Delta(v,u) = n \bigr\}$  for every $n \in \Z$ and $v \in V$.

\subsection{Overview of results}
\label{subsec:tiltable_overview}

 Consider the triple of random variables $(G[p],\rho,U)$ as in \cref{subsec:layersdef}. 
For each $v\in V$, $-\infty\leq m \leq n \leq \infty$ and $m \leq k \leq n$ we define
\[
X_{k}^{m,n}(v) = \big|\big\{ x \in L_k(v) : v \xleftrightarrow{L_{m,n}(v)} x \big\}\big|
\]
to be the number of points in $L_k(v)$ that are connected to $v$ by an open path in the subgraph of $G[p]$ induced by $L_{m,n}(v)$. We also define $X^{m,n}_k=X^{m,n}_k(\rho)$. 
The goal of the remainder of this section is to study the distribution of these random variables, primarily in the tiltable phase $p<p_t$.

The results obtained in this section can be summarised as follows. (The precise results we prove will in some cases be a little stronger and more technical.)
\begin{enumerate}
	\item (\cref{lem:alpha}) For every $p\in (0,1)$, there exists $\alpha_p \geq 0$ such that
	\begin{equation}
	\label{eq:alphaoverview}
\P_p\left( X^{0,n}_n > 0 \right) = \P_p\left( X^{0,\infty}_n > 0 \right) = \nexp\bigl[-\alpha_p  n + o_p(n)\bigr] \preceq_p \nexp\bigl[-\alpha_p n\bigr]
	\end{equation}
	as $n\to +\infty$. This is proven using Fekete's Lemma.
	\item (\cref{lem:beta}) For every $0<p<p_t$, there exists $\beta_p \geq 0$ such that
	\begin{equation}
	\label{eq:betaoverview} \E_p\left[ X^{-\infty,n}_n \right] = \nexp\bigl[-\beta_p n + o_p(n)\bigr] \succeq_p \nexp\bigl[-\beta_p n\bigr]
	\end{equation}
	as $n\to +\infty$. This is also proven using Fekete's Lemma.
	\item (\cref{lem:alphacontinuity}) $\alpha_p$ is left-continuous in $p$ and satisfies $\alpha_{
\pcl{\lambda}}\geq \max\{\lambda,1-\lambda\}$ for every $\lambda\in \R$. In particular, $\alpha_{p_c}\geq 1$. 
	\item (\cref{lemma:gammabeta,prop:tamedecay,prop:tamesecondmoment})  $p<p_t$ if and only if $\alpha_p \geq \beta_p >1/2$, and in this case  $\alpha_p=\beta_p$ and the $o_p(n)$ corrections in \eqref{eq:alphaoverview} and \eqref{eq:betaoverview} are in fact $O_p(1)$. In particular, $\alpha_{\pcl{\lambda}} = \beta_{\pcl{\lambda}}=\max\{\lambda,1-\lambda\}$ for every $\lambda \in \R$ with $\pcl{\lambda} < p_t$. Moreover, for $\beta_p>2/3$ the expectations of the products $\E_p\left[ X_n^{-\infty,\infty} X_m^{-\infty,\infty} \right]$ also admit similar descriptions up to constant factors.
	\item (\cref{lem:subcritpeaksurvival}) If $p<p_c$, then 
	\begin{equation}\P_p\left( X^{-n,0}_{-n} > 0 \right) \asymp_p \nexp\bigl[-(\alpha_p-1)n\bigr]. \end{equation}
	Moreover, the same estimate holds conditional on the event that $\rho$ is the unique highest point of its cluster.
\end{enumerate}

Roughly speaking, for $0<p<p_t$, the above results show that $(X^{-\infty,\infty}_n)_{n\geq 0}$ behaves similarly to a subcritical branching process, whereas $(X^{-\infty,\infty}_{-n})_{n\geq 0}$ behaves similarly to a branching process that is either subcritical (if $p<p_c$), critical (if $p=p_c$), or supercritical (if $p_c<p<p_t$).


In terms of their application to the proofs of the  main theorems, the most important estimates obtained from these considerations are
\begin{equation}
\P_{p_c}\left( X_n^{0,n} >0 \right) \preceq \nexp\left(-\alpha_{p_c}  n\right) \preceq \nexp(-n),
\end{equation}
which follows from 1, 2, and 3 above, and
\begin{equation}
\P_{p_c}\left( X_{-n}^{-n,0} >0 \mid \text{ $\rho$ is the unique highest point of its cluster}\right) \geq \nexp\bigl[-(\alpha_{p_c}-1) n + o(n)\bigr],
\end{equation}
which is proven in \cref{lem:peaksurvival} of \cref{sec:pc} using the estimates from items 4 and 5 above.
 Intuitively, these estimates imply that, at criticality, crossing a large slab from bottom to top is much more difficult than crossing from top to bottom.

\begin{remark}
The proofs in this and the following section can be simplified substantially if one assumes that $\Gamma$ is transitive and that 
$(G,\Gamma)$ has simple layers. 
For these graphs many tedious technicalities resulting from the inhomogeneity of the uniform separating layers decomposition do not arise; For example, \cref{lem:probinfsup,lem:bestworst} are used specifically to deal with this inhomogeneity and are not needed in the above special case. The reader may find it an illuminating exercise to simplify the proofs in this case. 
\end{remark}

\begin{remark}
It is a consequence of \cite[Theorem 2.38]{grimmett2010percolation} that $\alpha_p$ and $\beta_p$ are both strictly decreasing when they are positive. A further straightforward fact is that $\beta_p$ is right-continuous on $(0,p_t)$. Together with item 3 above this implies that $\alpha_p=\beta_p$ is continuous on $(0,p_t)$. Since these facts will not be used in the proofs of the main theorems, their proofs are omitted. 
\end{remark}

\subsection{Probability decay}
\label{subsec:probdecay}

We begin by studying the probability of connecting from the bottom to the top of a thick slab.

\begin{lemma}[Probability decay]
\label{lem:alpha}
The limit
\begin{equation}
\label{eq:probsupmult}
\alpha_p := -\lim_{n\to\infty}\frac{1}{n}\nlog \P_p\Big(v \xleftrightarrow{L_{0,\infty}(v)} L_n(v) \mid U_v =x \Big) \in [0,\infty)
\end{equation}
exists for every $p\in (0,1]$, and does not depend on $v\in V$ or $x\in [0,1]$. Furthermore,
we have that
\begin{equation}
\label{eq:alphapupperbound}
\P_p\Big(v \xleftrightarrow{L_{0,\infty}(v)} L_n(v) \mid U_v=x \Big) \preceq_p   \nexp(-\alpha_p n)
\end{equation}
for every $p\in (0,1]$, $v\in V$, $x\in [0,1]$, and $n\geq0$.
\end{lemma}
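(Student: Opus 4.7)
The plan is to apply Fekete's lemma to the sequence $\{p_n(v,x)\}_{n \geq 0}$, where $p_n(v,x) := \P_p(v \xleftrightarrow{L_{0,\infty}(v)} L_n(v) \mid U_v = x)$. First I observe that any such open path may be truncated at its first entrance to $L_n(v)$, giving
\[
p_n(v,x) = \P_p\!\left(v \xleftrightarrow{L_{0,n}(v)} L_n(v) \,\Big|\, U_v = x\right),
\]
an event depending only on edges with both endpoints in $L_{0,n}(v)$. The main step is to establish a supermultiplicative inequality $P_{n+m} \geq c_p P_n P_m$ for the scalar sequence $P_n := \sup_{v \in V,\, x \in [0,1]} p_n(v,x)$ and some constant $c_p > 0$ independent of $n,m$, together with a uniform comparison $p_n(v,x) \asymp_p P_n$.

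The supermultiplicativity rests on the following pointwise FKG bound: for any $u \in L_n(v)$, the events $\{v \xleftrightarrow{L_{0,n}(v)} u\}$ and $\{u \xleftrightarrow{L_{n,n+m}(v)} L_{n+m}(v)\}$ are both increasing, and their conjunction implies $\{v \xleftrightarrow{L_{0,n+m}(v)} L_{n+m}(v)\}$. Since $L_{n,n+m}(v) = L_{0,m}(u)$ and $L_{n+m}(v) = L_m(u)$, the Harris--FKG inequality yields
\[
p_{n+m}(v, x) \geq \P_p\!\left(v \xleftrightarrow{L_{0,n}(v)} u \,\Big|\, U_v = x\right) \cdot p_m(u, U_u)
\]
for every such $u$. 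To extract both the desired supermultiplicativity and the uniform comparison, I combine this bound with a bridging argument: between any two base points and offsets $(v,x),(v',x')$ a deterministic path of length $O(1)$ can be prepended or appended to an existing open path, at a multiplicative cost of some $p^{C}$ via FKG, with the bounded length relying on quasi-transitivity and the consequent finiteness of the local geometry. This bridging simultaneously (i) transfers probabilities between different starting orbits and $x$-values, yielding $p_n(v,x) \asymp_p P_n$, and (ii) designates some $u \in L_n(v)$ for which the first factor in the displayed FKG bound is at least $c_p p_n(v,x)$ while $U_u$ can be prescribed up to a constant-factor loss, yielding $p_{n+m}(v,x) \geq c_p p_n(v,x) \cdot Q_m$, where $Q_m := \inf_{o \in \cO,\, y \in [0,1]} p_m(o,y) \asymp_p P_m$.

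With supermultiplicativity and uniform comparison in hand, Fekete's lemma applied to the sequence $-\nlog P_n$ (which is subadditive up to an additive constant from the factor $c_p$) produces the limit $\alpha_p := \lim_n (-\nlog P_n)/n = \inf_n (-\nlog P_n)/n$, and the same subadditive inequality directly gives $P_n \preceq_p \nexp(-\alpha_p n)$. Finiteness of $\alpha_p$ follows from the trivial lower bound $P_n \geq p^{Cn}$ obtained by opening a specific path of length $O(n)$ from $v$ to $L_n(v)$. The comparison $p_n(v,x) \asymp_p P_n$ then transfers both the convergence of the limit and the upper bound to every $p_n(v,x)$, establishing that the limit does not depend on $v$ or $x$. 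The main obstacle is the bridging argument itself: it is straightforward when $\Gamma$ is transitive and $(G,\Gamma)$ has simple layers, but under mere quasi-transitivity with a non-discrete modular spectrum one must exhibit bounded-length paths joining orbit representatives and adjusting the height $\nlog \Delta$ by only $O(1)$, using the finiteness of the orbit set together with the structure of the uniform separating layer decomposition.
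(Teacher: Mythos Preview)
Your supermultiplicativity step has a genuine gap. The FKG bound you display,
\[
p_{n+m}(v,x) \;\geq\; \P_p\!\left(v \xleftrightarrow{L_{0,n}(v)} u \,\Big|\, U_v = x\right) \cdot p_m(u, U_u),
\]
is correct, but the first factor is a \emph{point-to-point} connection probability, not the point-to-layer probability $p_n(v,x)$. Your bridging argument then asserts that for some $u \in L_n(v)$ this first factor is at least $c_p\, p_n(v,x)$. This is neither justified nor true in general: the layer $L_n(v)$ typically contains exponentially many vertices in $n$, and the probability of connecting to any fixed one of them can be exponentially smaller than the probability of reaching the layer at all. Quasi-transitivity and bounded-length bridging only let you adjust the starting and ending configurations by $O(1)$ steps; they cannot force a single target vertex to carry a constant fraction of the layer probability, since the loss is in the bulk of the path, not at its endpoints.

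The paper sidesteps this by replacing FKG with an \emph{independence} (exploration) argument and by taking the \emph{infimum} rather than the supremum. One sets $P_p(t) = \inf_{v} \bP_p\bigl(v \xleftrightarrow{S_{0,\infty}(v)} S_{t,\infty}(v)\bigr)$ for the deterministic slabs $S$, explores the cluster of $v$ using only edges that do not have both endpoints in $S_{t,\infty}(v)$, and notes that the resulting boundary set $Y_t(v) \subseteq S_{t,\infty}(v)$ is independent of the edges with both endpoints in $S_{t,\infty}(v)$. Conditionally on $Y_t(v)\neq\emptyset$, pick any $u\in Y_t(v)$; the (fresh) continuation from $u$ to $S_{s+t,\infty}(v)\supseteq S_{s,\infty}(u)$ inside $S_{0,\infty}(u)$ has probability at least $P_p(s)$, and here the use of the infimum is what makes this uniform over the random $u$. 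This gives exact supermultiplicativity $P_p(s+t)\geq P_p(s)P_p(t)$ with no constant, and Fekete yields $\alpha_p$. Your bridging argument (the content of \cref{lem:probinfsup}) is then used only at the very end, to transfer the limit and the bound from the infimum $P_p(t_0 n)$ to every individual $p_n(v,x)$.
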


We stress that \eqref{eq:probsupmult} \emph{defines} the quantity $\alpha_p$ for each $p\in(0,1]$.
%
%
%
%
%
The proof will use the following lemma, which allows us to compare infimal and supremal choices of $v$ and $U_v$. The additional parameter $r$ will not be used in the proof of \cref{lem:alpha}, but is included for later use in \cref{sec:pc}.

\begin{lemma}
\label{lem:probinfsup}
There exist positive constants $r_0$ and $C$ such that
\[
\inf_{v\in V, x\in [0,1]} \P_p\Big(v \xleftrightarrow{L_{0,\infty}(v)} L_{n}(v) \mid U_v=x \Big) \geq p^{Cr+Cr_0} \sup_{v\in V, x\in [0,1]} \P_p\Big(v \xleftrightarrow{L_{-r,\infty}(v)} L_{n}(v) \mid U_v=x \Big)
\]
for every $n\geq 1$ and $r\geq 0$.
\end{lemma}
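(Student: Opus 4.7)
The plan is to exploit $\Gamma$-invariance, quasi-transitivity, and nonunimodularity to build a short open ``bridge'' between any minimizing configuration for the left-hand side and any near-maximizing configuration for the right-hand side, paying $p^{Cr+Cr_0}$ for the bridge and then appealing to Harris--FKG and a monotonicity-in-layer argument. To begin, by the $\Gamma$-invariance of the joint law of $(G[p], U, \rho)$, both conditional probabilities depend on $v$ only through its orbit $[v] \in \cO$; since $\cO$ is finite by quasi-transitivity, the inf and sup effectively reduce to a finite collection of orbits (with $x$ still ranging over $[0,1]$), which is how the constants $r_0$ and $C$ acquire their dependence on $G$ and $\Gamma$.

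Fix configurations $(v_1, x_1)$ on the left and $(v_2, x_2)$ on the right. The key construction is a vertex $w \in [v_2]$ at normalized height exactly $j := r + r_0$ above $v_1$, meaning $\nlog \Delta(v_1, w) \in [j + x_1 - 1, j + x_1)$; the existence of such a $w$, for a sufficiently large constant $r_0$ depending only on $G$ and $\Gamma$, follows from the cocycle identity together with the fact that $\nlog \Delta(v_1, \cdot)$ restricted to $[v_2]$ is an unbounded subset of $\R$ with gaps bounded by a graph-dependent constant (a consequence of nonunimodularity). The height choice $j \geq r$ ensures $L_{-r, \infty}(w) \subseteq L_{0, \infty}(v_1)$, and a direct calculation using $U_w = U_{v_1} - \nlog\Delta(v_1,w) \mod 1$ shows that, conditionally on $U_{v_1} = x_1$, the value of $U_w$ equals some $\delta \in [0,1)$ determined by the exact height of $w$, and that the layers themselves align exactly as $L_k(w) = L_{j+k}(v_1)$ for every $k \in \Z$. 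By connectedness, local finiteness, and quasi-transitivity, one can produce a deterministic path $\eta$ in $G$ from $v_1$ to $w$ of length $|\eta| \leq C_1(r + r_0)$ contained in $L_{0, \infty}(v_1)$, by ascending one layer at a time.

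Now suppose $n \geq r + r_0$; the complementary case $n < r + r_0$ is handled trivially by taking a direct ascending path of length at most $n$ from $v_1$ to $L_n(v_1)$ in $L_{0,\infty}(v_1)$, which is open with probability at least $p^n \geq p^{r + r_0}$, easily dominating the right-hand side. In the main case, on the event $\{w \xleftrightarrow{L_{-r, \infty}(w)} L_n(w)\}$, since each edge changes layer index by at most one, the connecting path must visit every intermediate layer $L_k(w) = L_{j+k}(v_1)$ for $0 \leq k \leq n$, including (at $k = n - j$) a vertex in $L_n(v_1)$. Combining this with an open realization of $\eta$ yields $v_1 \xleftrightarrow{L_{0, \infty}(v_1)} L_n(v_1)$. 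Applying Harris--FKG to the independent increasing events ``$\eta$ is open'' and ``the connection at $w$ occurs'' and invoking $\Gamma$-invariance to identify the latter with an event at $v_2$, we obtain
\[
\P_p\Big(v_1 \xleftrightarrow{L_{0,\infty}(v_1)} L_n(v_1) \,\Big|\, U_{v_1} = x_1\Big) \geq p^{C_1(r + r_0)} \cdot \P_p\Big(v_2 \xleftrightarrow{L_{-r,\infty}(v_2)} L_n(v_2) \,\Big|\, U_{v_2} = \delta\Big).
\]

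The main obstacle, which I expect to require the most care, is to replace $\delta$ on the right by the desired $x_2$ up to a uniform $p^{O(r_0)}$ factor. This is handled by a parallel short-bridge comparison: applying the same construction with $v_1$ replaced by $v_2$ (and $r = 0$), one shows that $\P_p(v_2 \xleftrightarrow{L_{-r,\infty}(v_2)} L_n(v_2) \mid U_{v_2} = y)$ is comparable for different values of $y$ by using a constant-length graph detour to shift between $U$-values on the orbit $[v_2]$. In the generic case where the set of achievable values of $\nlog\Delta$ on $[v_2]$ is discrete, there are only finitely many residues mod $1$ to consider, and the comparison reduces to a uniform constant; the dense case is handled by approximation. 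Absorbing all resulting multiplicative constants into the exponent yields constants $r_0$ and $C$ valid for all $n \geq 1$ and $r \geq 0$.
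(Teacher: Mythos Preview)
Your overall strategy—build a short ascending bridge into the correct orbit, then apply Harris–FKG—is the same as the paper's. However, you have missed the key observation that makes the $x$-dependence trivial and instead replaced it with a step (your step 5) that is both unnecessary and not clearly correct.

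The paper notes that for fixed $v$ and $n$, both
\[
x \mapsto \P_p\Big(v \xleftrightarrow{L_{0,\infty}(v)} L_{n}(v) \mid U_v=x \Big)
\quad\text{and}\quad
x \mapsto \P_p\Big(v \xleftrightarrow{L_{-r,\infty}(v)} L_{n}(v) \mid U_v=x \Big)
\]
are \emph{decreasing} in $x\in[0,1]$. This is immediate once you observe that as $x$ increases the half-space $L_{-r,\infty}(v)$ shrinks and the target $L_{n,\infty}(v)$ (which is equivalent to $L_n(v)$ as a target, by the separating property) also shrinks. Consequently the infimum on the left is realized at $x=1$ and the supremum on the right at $x=0$, and a single bridge comparison between these two fixed boundary values suffices. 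There is no need to match an arbitrary $\delta$ to an arbitrary $x_2$.

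Your step 5, by contrast, is vague and problematic. The ``parallel short-bridge'' you propose only lets you move between $U$-values in the coset $x_1 + \{\nlog\Delta(v,u): u\in[v_2]\}\bmod 1$, which need not contain or approximate an arbitrary $x_2$ via a path of \emph{bounded} length; in the dense case the path length required to get within $\varepsilon$ of $x_2$ may blow up, so the cost $p^{C\cdot(\text{path length})}$ is not controlled. Your appeal to ``approximation'' therefore implicitly needs either continuity or monotonicity of the probability in $x$—and once you have monotonicity, the whole detour is superfluous. Replace step 5 by the monotonicity observation and the proof goes through cleanly.
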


\begin{proof}
Quasi-transitivity of $\Gamma$ and the maximum principle applied to $\Delta$ implies that there exists $r_0$ such that for any two vertices $u,v\in V$, there exists a path  $u=u_0,u_1,\ldots,u_{k}$ in $G$ such that $k\leq r_0(1+r)$, $u_{k}$ is in the same orbit as $v$, $(r+1) \leq \nlog \Delta(u,u_{k})$, and $\nlog \Delta(u,u_i)\geq 0$ for every $1\leq i \leq k$.
Given such a path, it follows by the Harris-FKG inequality that
\begin{align}
\P_p\Bigl(u \xleftrightarrow{L_{0,\infty}(u)} L_n(u) \mid U_u=1\Bigr) &\geq p^{r_0(1+r)} \P_p\Bigl(u_k \xleftrightarrow{L_{-r-1,\infty}(u_k)} L_{n-1}(u_k) \mid U_u=1\Bigr).
\end{align}
Considering the definitions of $L_{-r-1,\infty}(u_k)$ and $L_{n-1}(u_k)$ yields that
\begin{align}
\P_p\Bigl(u \xleftrightarrow{L_{0,\infty}(u)} L_n(u) \mid U_u=1\Bigr)&\geq \ p^{r_0(1+r)} \P_p\Bigl(v \xleftrightarrow{L_{-r,\infty}(v)} L_{n}(v) \mid U_v=0\Bigr),
\end{align}
and the estimate \eqref{eq:alphapupperbound} follows by  observing that $\P_p\Bigl(v \xleftrightarrow{L_{-r,\infty}(v)} L_n(v) \mid U_v=x\Bigr)$ and $\P_p\Bigl(u \xleftrightarrow{L_{0,\infty}(u)} L_n(u) \mid U_u=x\Bigr)$ are both decreasing functions of $x\in [0,1]$.
\end{proof}

The proof of \cref{lem:alpha} will also apply \textbf{Fekete's Lemma} \cite[Appendix II]{grimmett2010percolation}, one form of which is as follows: Suppose that $a(n)$ is a sequence of real numbers satisfying the 
 subadditive estimate
$a(n+m) \leq a(n)+a(m)$
for 
  every $n,m\geq 0$. 
Then we have that
\begin{equation}
\label{eq:Fekete}
 \lim_{n\to\infty} \frac{a(n)}{n} = \inf_{n\geq 1} \frac{a(n)}{n} \in [-\infty,\infty).
\end{equation}
In particular, the limit on the left hand side exists.

\begin{proof}[Proof of \cref{lem:alpha}]
 For each $t\geq 0$ let
$P_p(t) = 
\inf_{v\in V} \bP_p\bigl(v \xleftrightarrow{S_{0,\infty}(v)} S_{t,\infty}(v) \bigr)$.
We claim that the supermultiplicative estimate
\begin{equation}
P_p(s+t) \geq  P_p(s)P_p(t)
\label{eq:probsupermult}
\end{equation}
holds for every $p\in [0,1]$ and $s,t \geq 0$. Indeed, fix $v \in V$ and $s,t\geq 0$, and let $Y_t(v)$ be the set of vertices in $S_{t,\infty}(v)$ that are connected to $v$ by an open path in $S_{0,t}(v)$ none of whose edges have both endpoints in $S_{t,\infty}(v)$.
Thus, the event $v \xleftrightarrow{S_{0,\infty}(v)} S_{t,\infty}(v)$ occurs if and only if $Y_t(v)\neq \emptyset$, and $Y_t(v)$ is independent of the status of every edge that has both endpoints in $S_{t,\infty}(v)$.
Condition on $Y_t(v)$ and the event that $Y_t(v) \neq \emptyset$, and let $u$ be chosen arbitrarily from $Y_t(v)$. Then the aforementioned independence property implies that the conditional probability that $u$ is connected to $S_{s+t,\infty}(v) \supseteq S_{s,\infty}(u)$ by an open path in $S_{0,\infty}(u)$ is at least $P_p(s)$. Thus, we have that
\begin{equation}
\bP_p\Big(v \xleftrightarrow{S_{0,\infty}(v)} S_{s+t,\infty}(v) \Big) \geq \bP_p\Big(v \xleftrightarrow{S_{0,\infty}(v)} S_{t,\infty}(v) \Big) P_p(s),
\end{equation}
 and the claim follows by taking an infimum over $v$ on both sides.

Now observe that
\begin{equation}
\inf_{x\in [0,1]}\P_p\Big(v \xleftrightarrow{L_{0,\infty}(v)} L_n(v) \mid U_v =x \Big)
=
\bP_p\Big(v \xleftrightarrow{S_{0,\infty}(v)} S_{t_0n,\infty}(v) \Big)
\end{equation}
for every $n \geq 1$ and $v\in V$.
Thus, applying Fekete's lemma to the sequence $\langle -\nlog P_p(t_0n) \rangle_{n\geq1}$, we obtain from \eqref{eq:probsupermult} that the limit
\begin{equation}
\label{eq:alphadefinproof}
\alpha_p=-\lim_{n\to\infty}\frac{1}{n} \nlog P_p(t_0n) = \inf_{n\geq 1} \frac{1}{n}\Bigl(- \nlog P_p(t_0 n)\Bigr)
\end{equation}
exists for every $p\in [0,1]$. 
The uniform upper bound \eqref{eq:alphapupperbound}, and the fact that the limit in \eqref{eq:probsupmult} exists and is equal to $\alpha_p$ for all $v\in V$ and $x\in [0,1]$, follows from 
 \eqref{eq:alphadefinproof} together with \cref{lem:probinfsup}.
\end{proof}

\begin{lemma}
\label{lem:alphacontinuity}
 $\alpha_p$ is left-continuous on $(0,1]$.
\end{lemma}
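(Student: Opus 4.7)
The plan is to deduce left-continuity from the Fekete-type formula $\alpha_p = \inf_{n\geq 1} f_n(p)$ established in \eqref{eq:alphadefinproof}, where $f_n(p) := -\frac{1}{n}\nlog P_p(t_0 n)$, by first checking left-continuity of each $f_n$ and then using the monotonicity of $\alpha_p$ in $p$. The first observation I would make is that the infimum defining $P_p(t_0 n)$ is really a minimum over the finitely many $\Gamma$-orbits on $V$: since $\Delta$ is $\Gamma$-diagonally invariant we have $\gamma S_{s,t}(v) = S_{s,t}(\gamma v)$ for every $\gamma \in \Gamma$, and since $\bP_p$ is $\Gamma$-invariant the probability $\bP_p(v\xleftrightarrow{S_{0,\infty}(v)}S_{t_0n,\infty}(v))$ depends only on the orbit $[v]$.

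Next, for each fixed orbit representative $v$ and each $n\geq 1$, I would verify that this probability is left-continuous in $p$ by the standard monotone-coupling argument: realize all Bernoulli percolations simultaneously via i.i.d.\ $[0,1]$-uniforms $(U_e)_{e\in E}$ with $e$ open at level $q$ iff $U_e\leq q$; the event is an increasing countable union of finite-path cylinder events, and almost surely $U_e\neq p$ for every $e$, so whenever the event occurs at level $p$ via some finite open path $\gamma$ we have $\max_{e\in\gamma}U_e<p$ and $\gamma$ remains open at every $p'$ in a (random) left neighborhood of $p$; dominated convergence then yields the claimed left-continuity. Combined with the orbit reduction this shows that $P_p(t_0n)$ is a minimum of finitely many left-continuous functions of $p$, hence left-continuous. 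Furthermore $P_p(t_0 n)>0$ for every $p>0$: the harmonicity of $\Delta(v,\cdot)$ from \cref{lem:modularsymmetries} together with nonunimodularity of $\Gamma$ forces, at every vertex, the existence of a neighbor $u$ with $\Delta(v,u)>1$, and iterating such ``upward'' steps produces a finite path from $v$ into $S_{t_0 n,\infty}(v)$ staying in $S_{0,\infty}(v)$. Hence each $f_n$ is finite and left-continuous on $(0,1]$.

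Finally, I would upgrade this to left-continuity of $\alpha_p = \inf_n f_n(p)$. Since $P_p(t_0 n)$ is monotone increasing in $p$, each $f_n$, and therefore also $\alpha_p$, is decreasing in $p$, so monotonicity alone gives $\liminf_{p'\uparrow p}\alpha_{p'}\geq \alpha_p$. For the reverse inequality, given $\eps>0$ I pick $n$ with $f_n(p)\leq \alpha_p+\eps$ and combine the bound $\alpha_{p'}\leq f_n(p')$ with left-continuity of $f_n$ to obtain $\limsup_{p'\uparrow p}\alpha_{p'}\leq f_n(p)\leq \alpha_p+\eps$; sending $\eps\to 0$ completes the proof. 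The only conceptually delicate point is that an infimum of left-continuous functions need not be left-continuous in general, but the monotonicity of each $f_n$ in $p$ is exactly what rescues the argument here.
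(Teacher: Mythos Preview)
Your proof is correct and follows essentially the same route as the paper: both use the Fekete representation $\alpha_p=\inf_{n\geq 1} f_n(p)$ from \eqref{eq:alphadefinproof}, reduce the infimum over $v$ to a minimum over finitely many $\Gamma$-orbits by quasi-transitivity, establish left-continuity of each $f_n$ via finite-path approximations (the paper phrases this as lower semi-continuity of a supremum of continuous increasing functions, you via the monotone coupling), and then combine this with the monotonicity of $p\mapsto\alpha_p$. Your extra check that $P_p(t_0 n)>0$ is a nice point that the paper leaves implicit.
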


\begin{proof} Recall that left-continuity is equivalent to lower semi-continuity for increasing functions and is equivalent to upper semi-continuity for decreasing functions. Moreover, lower semi-continuity is preserved by taking minima over finite collections of functions and by taking suprema over arbitrary collections of functions. 
For each $v\in V$, $t\geq 0$, and $x\in [0,1]$, the probability
\[  \bP_p\Big(v \xleftrightarrow{S_{0,\infty}(v)} S_{t,\infty}(v) \Big)\]
can be written as the supremum of the continuous increasing functions
\[\bP_p\Big(v \xleftrightarrow{S_{0,\infty}(v)} S_{t,\infty}(v) \text{ by an open path of length at most $r$} \Big),\]
and is therefore lower semi-continuous. Since $\Gamma$ is quasi-transitive, $P_p(t_0n)$ can be written as a minimum of finitely many such functions, and thus is lower semi-continuous itself. Using the expression \eqref{eq:alphadefinproof}, we see that $-\alpha_p$ can be written as a supremum of lower semi-continuous functions and is therefore lower semi-continuous itself. Since $-\alpha_p$ is increasing in $p$ the result follows.
\end{proof}

\subsection{Path-decomposition inequalities}
\label{subsec:decomps}

We now gather and prove several related inequalities that will be used in the following subsection, each of which follows by a standard application of the BK inequality.
We define
\begin{equation}
\label{eq:Edef}
 E_p^{m,n}(k) = \sup_{v\in V, x\in [0,1]} \E_p\left[ X^{m,n}_k(v) \mid U_v =x \right] 
\end{equation}
for every $-\infty \leq m \leq k \leq n \leq \infty$.

\begin{lemmae}[Path-decomposition inequalities]
\label{lem:pathdecomposition}
The following inequalities hold for every $p\in [0,1]$.
\begin{enumerate}[leftmargin=*]
\item (First visit decomposition.) For every $-\infty \leq m \leq \ell \leq k \leq n \leq \infty$  with $\ell \geq 0$ we have that
\begin{align}
\label{eq:firstvisitdecompup}
	\E_p\left[ X_k^{m,n}(v) \mid U_v =x \right] &\leq \E_p\left[ X_\ell^{m,\ell}(v) \mid U_v =x \right]E_p^{m-\ell,n-\ell}(k-\ell) 
\intertext{for every $v\in V$ and $x\in [0,1]$.
Similarly, for every $-\infty \leq m \leq k \leq \ell \leq n \leq \infty$  with $\ell \leq 0$ we have that
}
	\E_p\left[ X_k^{m,n}(v) \mid U_v =x \right] &\leq \E_p\left[ X_\ell^{\ell,n}(v) \mid U_v =x \right]E_p^{m-\ell,n-\ell}(k-\ell)
\label{eq:firstvisitdecompdown}
\end{align}
for every $v\in V$ and $x\in [0,1]$.
\item (Last visit decomposition.)
For every $-\infty \leq m \leq \ell \leq k \leq n \leq \infty$  with $\ell \geq 0$ we have that
\begin{align}
\label{eq:lastvisitdecompup}
\E_p\left[ X_k^{m,n}(v) \mid U_v =x \right] &\leq \E_p\left[ X_\ell^{m,n}(v) \mid U_v =x \right]E_p^{0,n-\ell}(k-\ell) \intertext{for every $v\in V$ and $x\in [0,1]$.
Similarly, for every $-\infty \leq m \leq k \leq \ell \leq n \leq \infty$  with $\ell \leq 0$ we have that
}
\E_p\left[ X_k^{m,n}(v) \mid U_v =x \right] &\leq \E_p\left[ X_\ell^{m,n}(v) \mid U_v =x \right]E_p^{m-\ell,0}(k-\ell)
\label{eq:lastvisitdecompdown}
\end{align}
for every $v\in V$ and $x\in [0,1]$.
\item (Extreme point decompositions.) For every $-\infty \leq m \leq k \leq n \leq \infty$  we have that
\begin{align}
\label{eq:extremedecompup}
\E_p\left[ X_k^{m,n}(v) \mid U_v =x \right] &\leq \sum_{\ell = k \vee 0}^n \E_p\left[ X_\ell^{m,\ell}(v) \mid U_v =x \right]E_p^{m-\ell,0}(k-\ell) \intertext{and similarly that
}
\E_p\left[ X_k^{m,n}(v) \mid U_v =x \right] &\leq \sum_{\ell = m}^{k \wedge 0} \E_p\left[ X_\ell^{\ell,n}(v) \mid U_v =x \right]E_p^{0,n-\ell}(k-\ell)
\label{eq:extremedecompdown}
\end{align}
for every $v\in V$ and $x\in [0,1]$.
\end{enumerate}
\end{lemmae}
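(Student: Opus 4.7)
The plan is to prove all six inequalities by the same three-step template: (i) show that any open path witnessing $v \leftrightarrow x \in L_k(v)$ inside $L_{m,n}(v)$ admits a canonical factorisation through a distinguished vertex $y$ in some intermediate layer $L_\ell(v)$; (ii) recognise this factorisation as a disjoint occurrence of two increasing connection events and apply the BK inequality; (iii) reinterpret the second factor, after translating the basepoint from $v$ to $y$, as a quantity bounded by the appropriate supremum $E_p^{\cdot,\cdot}(\cdot)$. The essential geometric input is that every edge of $G$ changes $\nlog\Delta(v,\cdot)$ by at most $1$ across its endpoints (by the definition of $t_0$), so any open path crossing from $L_{-\infty,j-1}(v)$ to $L_{j+1,\infty}(v)$ must visit $L_j(v)$.

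First I would fix $v$ and condition on $U_v=x$, which freezes the layer decomposition $(L_j(v))_{j\in\Z}$ while leaving the percolation independent. For the first-visit decomposition \eqref{eq:firstvisitdecompup} with $0\le\ell\le k$, I let $y$ be the first vertex in $L_\ell(v)$ visited by the path, so that the initial segment stays in $L_{m,\ell}(v)$, the terminal segment stays in $L_{m,n}(v)$, and they share no edges, giving the containment
\[
\big\{v \leftrightarrow x \text{ in } L_{m,n}(v)\big\} \subseteq \bigcup_{y \in L_\ell(v)} \big\{v \leftrightarrow y \text{ in } L_{m,\ell}(v)\big\} \circ \big\{y \leftrightarrow x \text{ in } L_{m,n}(v)\big\}.
\]
The other five decompositions are analogous: the last-visit bound \eqref{eq:lastvisitdecompup} uses the last visit to $L_\ell(v)$, after which the terminal segment is forced into $L_{\ell,n}(v)$ by the $t_0$ barrier; the extreme-point bound \eqref{eq:extremedecompup} takes $y$ in the highest layer $\ell$ reached by the path, so that both segments are confined to $L_{m,\ell}(v)$; and the three downward variants are obtained by reflecting the arguments.

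To conclude I would apply the BK inequality to each disjoint occurrence (conditionally on $U_v = x$, where it is legitimate since the slabs are then deterministic and the events increasing) and sum over the locations of $y$ and $x$. The crucial reinterpretation step relies on the identity $L_{m,n}(v) = L_{m-\ell,n-\ell}(y)$ and $L_k(v) = L_{k-\ell}(y)$ for $y \in L_\ell(v)$, which follows from the cocycle identity $\nlog\Delta(y,\cdot) = \nlog\Delta(v,\cdot) - \nlog\Delta(v,y)$ combined with $U_y = U_v - \nlog\Delta(v,y) \bmod 1$. This identity turns the sum $\sum_{x\in L_k(v)} \bP_p(y \leftrightarrow x \text{ in } L_{m,n}(v) \mid U_v=x)$ into $\E_p[X_{k-\ell}^{m-\ell,n-\ell}(y) \mid U_y]$, which is bounded by $E_p^{m-\ell,n-\ell}(k-\ell)$; the last-visit and extreme-point versions produce $E_p^{0,n-\ell}(k-\ell)$ and $E_p^{m-\ell,0}(k-\ell)$ respectively via the analogous identities $L_{\ell,n}(v) = L_{0,n-\ell}(y)$ and $L_{m,\ell}(v) = L_{m-\ell,0}(y)$. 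The extreme-point bounds \eqref{eq:extremedecompup}--\eqref{eq:extremedecompdown} are then obtained by summing this product over the admissible range of $\ell$, determined by the constraint that the highest (resp.\ lowest) layer reached by a path from $v$ to $L_k(v)$ must satisfy $\ell \ge k\vee 0$, $\ell\le n$ (resp.\ $\ell\ge m$, $\ell\le k\wedge 0$). No deep obstacle arises; the main delicacy is bookkeeping the base-change of layer indexing under $v\mapsto y$ and checking in each of the six cases that the two sub-paths really are confined to the claimed sub-slab, both of which reduce immediately to the $t_0$ step-bound and the cocycle identity.
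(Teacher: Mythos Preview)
Your proposal is correct and follows essentially the same approach as the paper: the paper proves \eqref{eq:firstvisitdecompup} and \eqref{eq:extremedecompup} as representative cases via exactly the factorisation-through-$L_\ell(v)$ plus BK plus layer-shift argument you describe, and remarks that the remaining four inequalities are analogous. The only cosmetic difference is that for the extreme-point bound the paper telescopes via the increments $X_k^{m,\ell}-X_k^{m,\ell-1}$ rather than directly fixing the first vertex at which the maximum height is attained, but this amounts to the same thing.
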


\begin{figure}[t]
\vspace{-1em}
\centering
\includegraphics[width=0.73\textwidth]{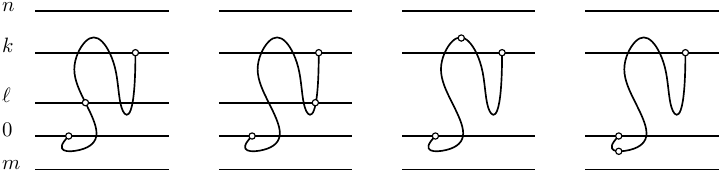}
\caption{Any simple path starting in level zero and ending in level $k\geq 0$ can be decomposed into two disjoint paths either by considering the first time it visits some intermediate level $0 \leq \ell \leq k$ (far left), the last time it visits such an intermediate level (centre left), the first time it attains its maximum height (centre right), or the first time it attains its minimum height (far right). Similar decompositions exist for paths ending in a level of negative height. Applying the BK inequality yields the estimates of \cref{lem:pathdecomposition}.}
\end{figure}

\vspace{-0.5em}

\begin{proof}
Let $v\in V$, $x\in [0,1]$ and  condition on $U_v=x$.
In order to give a representative sample of the proofs, we prove \eqref{eq:firstvisitdecompup} and \eqref{eq:extremedecompup}. The proofs of the remaining inequalities are similar. We begin with \eqref{eq:firstvisitdecompup}. Observe that, since $0\leq l \leq k$, if $u\in L_k(v)$ is connected to $v$ by an open simple path in $L_{m,n}(v)$ then this path must visit $L_\ell(v)$ for some first time, at some vertex $w \in L_\ell(v)$, and the part of this path up until this first visit to $L_\ell(v)$ is contained in $L_{m,\ell}(v)$.
Thus, we have the containment of events
\begin{equation}
\{u \xleftrightarrow{L_{m,n}(v)} v\} \subseteq \bigcup_{w\in L_\ell(v)} \left(\{v \xleftrightarrow{L_{m,\ell}(v)} w\} \circ \{ w \xleftrightarrow{L_{m,n}(v)} u\}\right).
\end{equation}
Applying the union bound and the BK inequality and summing over $w \in L_\ell(v)$ and $u\in L_k(v)=L_{k-\ell}(w)$, we obtain that
\begin{align}
\E_p\left[ X^{m,n}_k \mid U_v = x\right] &\leq \sum_{w \in L_\ell(v)} \P_p( v \xleftrightarrow{L_{m,\ell}(v)} w \mid U_v = x) \sum_{u\in L_{k}(v)} \P_p( w \xleftrightarrow{L_{m,n}(v)} u \mid U_v = x)
\nonumber
\\
&\leq 
\E_p\left[ X_\ell^{m,\ell}(v) \mid U_v =x \right]E_p^{m-\ell,n-\ell}(k-\ell) 
\end{align}
as claimed.

We now turn to \eqref{eq:extremedecompup}. Suppose that $u \in L_k(v)$ is connected to $v$ by an open path in $L_{m,\ell}(v)$ but not in $L_{m,\ell-1}(v)$. Then any open simple path from $v$ to $u$ in $L_{m,\ell}(v)$ must visit some vertex $ w \in L_{\ell}(v)$.
 A similar argument to above yields the containment of events
\begin{equation}
\{u \xleftrightarrow{L_{m,\ell}(v)} v\} \setminus \{u \xleftrightarrow{L_{m,\ell-1}(v)} v\}  \subseteq \bigcup_{w\in L_\ell(v)} \left(\{v \xleftrightarrow{L_{m,\ell}(v)} w\} \circ \{ w \xleftrightarrow{L_{m,\ell}(v)} u\}\right).
\end{equation}
Thus, applying the union bound and the BK inequality as above we obtain that
\begin{align}
\E_p\left[ X^{m,\ell}_k-X^{m,\ell-1}_k \mid U_v = x\right] 
&\leq 
\E_p\left[ X_\ell^{m,\ell}(v) \mid U_v =x \right]E_p^{m-\ell,0}(k-\ell).
\end{align}
Summing over $\ell$ completes the proof.
\end{proof}

\subsection{Expectation decay}
\label{subsec:expdecay}

In this section we apply similar arguments to those of \cref{subsec:probdecay} to study the exponential rate of growth/decay of the \emph{expected} number of points that are connected to in a slab. We define the interval
\[
I_h := \Bigl\{ p \in (0,1] : E_p^{-\infty,n}(k) < \infty \text{ for every $-\infty < k \leq n < \infty$}\Bigr\},
\]
and define $\tilde p_h = \sup I_h$.
Note that $\tilde p_h \geq p_t \geq p_c$ and that $(0,\tilde p_h) \subseteq I_h \subseteq (0,\tilde p_h]$.

\begin{lemma}[Expectation decay]
\label{lem:beta}
Let $p\in I_h$. 
Then the limits
\begin{align}
\label{eq:betadef}
\beta_p &:= - \lim_{k\to\infty} \frac{1}{k} \nlog \E_p\Big[ X_k^{-\infty,k}(v) \mid U_v =x \Big]
= 1 - \lim_{k\to\infty} \frac{1}{k} \nlog \E_p\Big[ X_{-k}^{-\infty,0}(v) \mid U_v =x \Big]
\end{align}
exist, are equal, and do not depend on $v\in V$ or $x\in [0,1]$. Furthermore, the estimates
\begin{align}
\label{eq:betalowerbound}
\E_p\Big[ X_k^{-\infty,k}(v) \mid U_v =x \Big] &\succeq_p \nexp\bigl[ -\beta_p k\bigr]
\intertext{and}
\E_p\Big[ X_{-k}^{-\infty,0}(v) \mid U_v =x \Big] &\succeq_p \nexp\bigl[ -(\beta_p-1) k \bigr]
\end{align}
hold for every $p\in (0,1]$, $k\geq0$, $v\in V$ and $x\in [0,1]$.
\end{lemma}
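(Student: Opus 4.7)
My plan is to follow the template of Lemma~\ref{lem:alpha}: extract the exponential rate $\beta_p$ via Fekete's lemma applied to a subadditive quantity, transfer the resulting bound to pointwise estimates, and then relate the upward quantity $X_k^{-\infty,k}$ to the downward quantity $X_{-k}^{-\infty,0}$ via the tilted mass-transport principle. The mass-transport step is precisely what produces the ``$1-$'' shift between the two formulas for $\beta_p$.

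The starting point is subadditivity. Applying the first-visit decomposition \eqref{eq:firstvisitdecompup} with $m=-\infty$, $n=k$, and $0\leq\ell\leq k$, and then taking the supremum over $v$ and $x$, yields
$$E_p^{-\infty,k}(k) \leq E_p^{-\infty,\ell}(\ell)\cdot E_p^{-\infty,k-\ell}(k-\ell)$$
for every $p\in I_h$, so that $a_k := \nlog E_p^{-\infty,k}(k)$ is subadditive. Fekete's lemma then gives $\lim_{k\to\infty} a_k/k = \inf_k a_k/k =: -\beta_p$ and $E_p^{-\infty,k}(k) \geq \nexp(-\beta_p k)$ for every $k\geq 0$. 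An adaptation of \cref{lem:probinfsup} to expectations (via the same FKG plus short-path construction, the opening of a bounded-length path contributing only a $p$-dependent multiplicative factor) then shows that
$$\E_p\bigl[X_k^{-\infty,k}(v)\mid U_v=x\bigr] \asymp_p E_p^{-\infty,k}(k) \asymp_p \nexp(-\beta_p k)$$
uniformly in $v\in V$ and $x\in[0,1]$. This simultaneously establishes that the limit in \eqref{eq:betadef} exists pointwise (with the same value $-\beta_p$) and provides the lower bound \eqref{eq:betalowerbound} for $\E_p[X_k^{-\infty,k}(v)\mid U_v=x]$.

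To obtain the second expression for $\beta_p$ and the companion lower bound on $\E_p[X_{-k}^{-\infty,0}(v)\mid U_v=x]$, I would apply the tilted mass-transport principle \eqref{eq:fullMTP} to the $\Gamma$-invariant function
$$f(v,u) = \mathbbm{1}\bigl[u\in L_k(v),\, v\xleftrightarrow{L_{-\infty,k}(v)} u\bigr].$$
Since $u\in L_k(v)$ forces $v\in L_{-k}(u)$, $L_{-\infty,k}(v)=L_{-\infty,0}(u)$, and $\Delta(v,u)\asymp \nexp(k)$ up to a factor of $\nexp(\pm 1)$, the identity \eqref{eq:fullMTP} reduces to
$$\E\bigl[X_k^{-\infty,k}(\rho)\bigr] \asymp \nexp(-k)\,\E\bigl[X_{-k}^{-\infty,0}(\rho)\bigr],$$
where the expectations are taken with $\rho\sim\mu$. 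Combined with the $\mu$-averaged version of the previous paragraph, this yields $\E[X_{-k}^{-\infty,0}(\rho)]\asymp_p \nexp((1-\beta_p)k)$; a second application of the sup-to-pointwise comparison (now for the downward quantity) transfers the estimate to arbitrary $v$ and $x$, giving both the existence of the second limit in \eqref{eq:betadef} and the second lower bound in the lemma. The main technical obstacle I expect is making the sup-to-pointwise comparison rigorous for both quantities: varying $U_v$ simultaneously shifts the boundary of the slab $L_{-\infty,k}(v)$ and the target layer $L_{\pm k}(v)$, so one has to enclose the slabs at any value of $U_v$ inside nearby slabs at a reference value while losing only a $p$-dependent constant factor.
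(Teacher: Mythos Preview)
Your proposal is correct and follows essentially the same route as the paper: subadditivity of $E_p^{-\infty,k}(k)$ via the first-visit decomposition \eqref{eq:firstvisitdecompup}, Fekete's lemma, the sup-to-inf comparison (which the paper isolates as \cref{lem:bestworst}, and whose proof indeed needs the path-decomposition inequalities on top of the FKG/short-path argument you sketch), and the tilted mass-transport relation \eqref{eq:roughMTP} between the upward and downward quantities. The only difference is that the paper also applies Fekete separately to the downward quantity $E_p^{-\infty,0}(-k)$ via the last-visit decomposition \eqref{eq:lastvisitdecompdown} to obtain a second candidate rate $\beta_p'$ and then uses mass transport to show $\beta_p=\beta_p'$, whereas you derive the downward limit directly from the upward one via mass transport---your route is slightly more economical.
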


Again, we stress that \eqref{eq:betadef} \emph{defines} the quantity $\beta_p$ for each $p\in I_h$. 

\begin{remark}
We believe that it is possible to prove that $\tilde p_h =p_h$ and that $\alpha_p \geq \beta_p >0$ for every $p\in (0,p_h)$. This could be thought of as a sharpness result for the heaviness transition. Since this result is not needed for the proofs of our main theorems we do not pursue it here.
\end{remark}

  As with \cref{lem:alpha}, the proof of \cref{lem:beta} will use Fekete's Lemma. The following Lemma, which plays a role analogous to \cref{lem:probinfsup}, allows us to compare supremal and infimal choices of $v\in V$ and $x\in [0,1]$. (This lemma is not needed in the case that the graph is transitive and has simple layers.)

\begin{lemma}
\label{lem:bestworst}
Let $p \in I_h$. 
Then 
\begin{align}
\inf_{v\in V, x\in [0,1]} \E_p\left[ X_k^{-\infty,k}(v) \mid U_v = x \right]  
&\succeq_p
 \sup_{v\in V, x\in [0,1]}\E_p\left[ X_{k+1}^{-\infty,k+1}(v) \mid U_v = x \right]
 \label{eq:bestworstup}
\intertext{for every $k\geq 0$ and}
\inf_{v\in V, x\in [0,1]} \E_p\left[ X_{-k}^{-\infty,0}(v) \mid U_v = x \right]  
&\succeq_p
 \sup_{v\in V, x\in [0,1]}\E_p\left[ X_{-k-1}^{-\infty,0}(v) \mid U_v = x \right]
 \label{eq:bestworstdown}
\end{align}
for every $k\geq 0$.
\end{lemma}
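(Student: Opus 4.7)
The proof follows the strategy of \cref{lem:probinfsup}: build a short path and apply Harris-FKG to compare expectations. I prove \eqref{eq:bestworstup}; \eqref{eq:bestworstdown} is analogous.

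Fix $v, v' \in V$ and $x, x' \in [0,1]$; the goal is $\E_p[X_k^{-\infty, k}(v) \mid U_v = x] \succeq_p \E_p[X_{k+1}^{-\infty, k+1}(v') \mid U_{v'} = x']$ with constants uniform in these parameters. By quasi-transitivity of $\Gamma$, there exist constants $r_0, C_0$ depending only on $G$ and $\Gamma$, a vertex $w \in V$, and a path $v = u_0, u_1, \ldots, u_r = w$ in $G$ of length $r \leq r_0$, such that $[w] = [v']$, $h := \nlog \Delta(v, w) \in [0, 1]$, and all intermediate vertices satisfy $\nlog \Delta(v, u_i) \in [-C_0, C_0]$. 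Conditioning on $U_v = x$ determines $U_w = y_w := (x - h) \bmod 1 \in [0, 1]$. Applying the cocycle identity and splitting into the cases $h \leq x$ and $h > x$, one checks that $L_{k+1}(w)$ coincides with either $L_{k+1}(v)$ or $L_{k+2}(v)$, and $L_{-\infty, k+1}(w)$ coincides correspondingly with $L_{-\infty, k+1}(v)$ or $L_{-\infty, k+2}(v)$.

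For $k$ larger than a structural threshold depending only on $C_0$, the intermediate vertices $u_i$ all lie in $L_{-\infty, k+1}(v) \subseteq L_{-\infty, k+2}(v)$, so concatenating the path $v \to w$ with any open $w$-to-$y$ connection for $y \in L_{k+1}(w)$ staying in $L_{-\infty, k+1}(w)$ produces an open $v$-to-$y$ connection staying in the corresponding region of $v$, with $y \in L_{k+1}(v) \cup L_{k+2}(v)$. Harris-FKG, applied using that both events are increasing and that $\bP_p(\text{path open}) \geq p^{r_0}$, yields
\[
\E_p\!\left[X_{k+1}^{-\infty, k+1}(v) + X_{k+2}^{-\infty, k+2}(v) \,\middle|\, U_v = x\right] \succeq_p \E_p\!\left[X_{k+1}^{-\infty, k+1}(w) \,\middle|\, U_w = y_w\right],
\]
which by $\Gamma$-invariance equals $\E_p[X_{k+1}^{-\infty, k+1}(v') \mid U_{v'} = y_w]$. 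Applying the first-visit decomposition \eqref{eq:firstvisitdecompup} (twice) bounds the left-hand side by a $p$-dependent constant times $\E_p[X_k^{-\infty, k}(v) \mid U_v = x]$, using $E_p^{-\infty, 1}(1), E_p^{-\infty, 2}(2) < \infty$ from $p \in I_h$. Small $k$ below the threshold are handled by the finiteness of $E_p^{-\infty, k}(k)$ for each fixed $k$ when $p \in I_h$, contributing only finitely many cases to be absorbed into the constants.

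It remains to convert the conditioning value $y_w$ to the prescribed $x'$ on the right. Since $\nlog \Delta$ takes values in a discrete subgroup of $\R$, the function $y \mapsto \E_p[X_{k+1}^{-\infty, k+1}(v') \mid U_{v'} = y]$ is piecewise constant on $[0, 1]$; applying the above path-building argument once more with slightly shifted targets $w'$ in $[v']$ shows that the ratio between values on adjacent pieces is bounded by a $p$-dependent constant, so finitely many such comparisons bridge $y_w$ to $x'$. The principal technical obstacle in the proof is precisely this mismatch between the continuous parameter $x' \in [0, 1]$ and the discrete range of $\nlog \Delta$: the level-shift between $k+1$ and $k+2$ in the path-building argument exists specifically to absorb it via the path-decomposition inequalities of \cref{lem:pathdecomposition}.
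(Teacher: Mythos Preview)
Your overall strategy---path-building with Harris--FKG followed by the first-visit decomposition---matches the paper's, and the comparison between different orbits is essentially correct. The gap is in the final paragraph, where you claim that $\nlog\Delta$ takes values in a \emph{discrete} subgroup of $\R$ and hence that $y\mapsto \E_p[X_{k+1}^{-\infty,k+1}(v')\mid U_{v'}=y]$ is piecewise constant. This is false in general: quasi-transitivity and the cocycle identity give only that $\{\log\Delta(u,v):u,v\in V\}$ is a \emph{finitely generated} subgroup of $\R$, which may well be dense (the paper singles out the discrete-layers case explicitly as a special one). When the subgroup is dense, the level sets $L_n(v')$ change on a dense set of values of $y$, the function is not piecewise constant, and ``finitely many comparisons'' cannot bridge $y_w$ to an arbitrary $x'$.

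The paper resolves this by separating the two comparisons. To pass from $\sup_x$ to $\inf_x$ for a \emph{fixed} vertex $v$, it interposes the deterministic slab $S_{t_0k,\,t_0(k+2)}(v)$, which does not depend on $U_v$ at all: one has the chain of inclusions
\[
\Bigl\{u\in L_{k+1}(v):u\xleftrightarrow{L_{-\infty,k+1}(v)}v\Bigr\}
\subseteq
\Bigl\{u\in S_{t_0k,t_0(k+2)}(v):u\xleftrightarrow{S_{-\infty,t_0(k+2)}(v)}v\Bigr\}
\subseteq
\Bigl\{u\in L_{k,k+2}(v):u\xleftrightarrow{L_{-\infty,k+2}(v)}v\Bigr\},
\]
valid for \emph{every} value of $U_v$. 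Since the middle set is $U$-independent, this converts the supremum over $x$ on the left directly into the infimum over $x$ on the right, after which the first-visit decomposition reduces the three-layer sum to $X_k^{-\infty,k}$ as you do. Only then does the paper compare different orbits, using exactly your path-plus-FKG argument; there the discreteness issue does not arise because there are only finitely many orbits. Replacing your final paragraph with this deterministic-slab sandwich fixes the proof.
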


\begin{proof} 
Let $p\in I_h$. 
We prove \eqref{eq:bestworstup}, the proof of \eqref{eq:bestworstdown} being similar. We begin by proving that
\begin{equation}
\label{eq:bestworstonevert}\inf_{x\in [0,1]} \E_p\left[ X_{k}^{-\infty,k}(v) \mid U_v = x \right]
\succeq_p \sup_{x\in [0,1]} \E_p\left[ X_{k+1}^{-\infty,k+1}(v) \mid U_v = x \right]
\end{equation}
for every $v\in V$ and $k\geq 0$. 
Observe that, whatever the value of $U_v$, we have the inclusions
\begin{equation}
\Bigl\{u \in L_{k+1}(v) : u \xleftrightarrow{L_{-\infty,k+1}(v)} v \Bigr\} \subseteq \Bigl\{u \in S_{t_0k,t_0(k+2)} : u \xleftrightarrow{ S_{-\infty, t_0(k+2)}(v)} v\Bigr\}
\end{equation}
and
\begin{equation}
\Bigl\{u \in S_{t_0k,t_0(k+2)} : u \xleftrightarrow{ S_{-\infty, t_0(k+2)}(v)} v\Bigr\}
\subseteq \Bigl\{u \in L_{k,k+2}(v) : u \xleftrightarrow{L_{-\infty,k+2}(v)} v \Bigr\}.
\end{equation}
It follows that
\begin{multline}
\inf_{x\in [0,1]}\E_p\left[ X_{k}^{-\infty,k+2}(v) + X_{k+1}^{-\infty,k+2}(v) + X_{k+2}^{-\infty,k+2}(v)\mid U_v = x\right] \\\geq \sup_{x\in [0,1]}\E_p\left[ X_{k+1}^{-\infty,k+1}(v) \mid U_v = x\right]
\end{multline}
for each $v\in V$ and $k\geq 0$. 
We can now deduce \eqref{eq:bestworstonevert} from this together with the estimate \eqref{eq:firstvisitdecompup} of \cref{lem:pathdecomposition}, which implies that
\begin{multline}
 \inf_{x\in [0,1]}\E_p\left[ X_{k}^{-\infty,k+2}(v) + X_{k+1}^{-\infty,k+2}(v) + X_{k+2}^{-\infty,k+2}(v) \mid U_v = x\right] \\\leq  \left[ E_p^{-\infty,2}(0)+E_p^{-\infty,2}(1) + E_p^{-\infty,2}(2) \right]\inf_{x\in [0,1]}\E_p\left[ X_{k}^{-\infty,k}(v)\mid U_v = x\right]. 
\end{multline}

Now, by a similar argument to that used in the proof of \cref{lem:probinfsup}, there exists $r_0$ such that for each two vertices $u$ and $v$ of $G$, there exists a path $u_0,\ldots,u_r$ in $G$ such that $r\leq r_0$,  $[u_0]=[u]$, $[u_r]=[v]$, and $\nlog \Delta(u_0,u_i)\geq 0$ for every $1 \leq i \leq r$. Let $k\geq r_0$. Since $u_r \in L_{m}$ for some $0\leq m \leq r_0$, it follows by the Harris-FKG inequality and the estimate \eqref{eq:firstvisitdecompup} of \cref{lem:pathdecomposition} that 
\begin{multline}
\E\left[X_k^{-\infty,k}(u) \mid U_u =x\right] \geq p^{r_0} \min_{m=0,\ldots,r_0}\E\left[X_{k-m}^{-\infty,k-m}(u_r) \mid U_u =x\right]\\
\geq p^{r_0} \left[\max_{m=0,\ldots,r_0} E_p^{-\infty,m}(m)\right]^{-1} \E\left[X_{k}^{-\infty,k}(u_r) \mid U_u =x\right],
\end{multline}
so that
\begin{equation}
\label{eq:smallvalues}
\inf_{x\in [0,1]}\E\left[X_k^{-\infty,k}(u) \mid U_u =x\right] \succeq_p \inf_{x\in [0,1]} \E\left[X_k^{-\infty,k}(v) \mid U_v =x\right]
\end{equation}
for every $u,v \in V$ and $k\geq r_0$. Small values of $k$ can then be handled by decreasing the implicit constant, so that in fact \eqref{eq:smallvalues} holds for every $k\geq 0$. 
The claimed inequality \eqref{eq:bestworstup} now follows from this estimate together with \eqref{eq:bestworstonevert}.
\end{proof}

\begin{proof}[Proof of \cref{lem:beta}]
It follows from the estimate \eqref{eq:firstvisitdecompup} of \cref{lem:pathdecomposition} that
\begin{equation}E^{-\infty,m+n}_p(m+n) \leq E^{-\infty,m}_p(m) E^{-\infty,n}_p(n)\end{equation}
and from the estimate \eqref{eq:lastvisitdecompdown} of \cref{lem:pathdecomposition} that
\begin{equation}E^{-\infty,0}_p(-m-n) \leq E^{-\infty,0}_p(-m) E^{-\infty,0}_p(-n)\end{equation}
for every $m,n\geq 0$.
  Applying Fekete's Lemma and using that $p\in I_h$ we deduce that the limits
\begin{equation}
\beta_p := -\lim_{n\to+\infty} \frac{1}{n}\nlog E_p^{-\infty,n}(n) = -\inf_{n\geq1} \frac{1}{n}\nlog E^{-\infty,n}_p(n)
\end{equation}
and
\begin{equation}
\beta_p' := 1 -\lim_{n\to +\infty} \frac{1}{n}\nlog E^{-\infty,0}_p(-n) = 1-\inf_{n\geq1} \frac{1}{n}\nlog E^{-\infty,0}_p(-n)
\end{equation}
both exist and are not equal to $-\infty$. 

Let $h_p$ and $h_p'$ be the error terms
\begin{align}
\label{eq:hpdef}
h_p(n) = \nlog E_p^{-\infty,n}(n) + \beta_p n \qquad \text{and} \qquad
h'_p(n) = \nlog E_p^{-\infty,0}(-n) + (\beta_p'-1) n.
\end{align}
It follows from the above discussion that 
$h_p(n)$ and $h'_p(n)$ are both subadditive, are both non-negative, and are both $o_p(n)$ as $n\to+\infty$. Moreover, \cref{lem:bestworst} implies that 
\begin{equation}
 \nexp\left[-\beta_p n + h_p(n+1) \right] \preceq_p \E_p\left[ X_n^{-\infty,n}(v) \mid U_v =x \right] \preceq_p \nexp\left[-\beta_p n + h_p(n) \right]
\label{eq:hplower}
 \end{equation}
for every $v\in V$, $x\in [0,1]$, and $n\geq 0$, and similarly that
\begin{equation}
\nexp\left[-(\beta'_p-1) n + h'_p(n+1) \right] \preceq_p \E_p\left[ X_{-n}^{-\infty,0}(v) \mid U_v =x \right] 
\preceq_p \nexp\left[-(\beta'_p-1) n + h'_p(n) \right]
\label{eq:hpprimelower}
\end{equation}
for every $v\in V$, $x\in [0,1]$, and $n\geq 0$. 
On the other hand, the tilted mass-transport principle implies that
\begin{equation}
\label{eq:betaprimeMTP}
\E_p\left[X_n^{-\infty,n}(\rho)\right] \asymp \nexp(-n) \E_p\left[X_{-n}^{-\infty,0}(\rho)\right].
\end{equation}
Since $h_p(n)$ and $h'_p(n)$ are both $o_p(n)$ as $n\to\infty$, comparing  \eqref{eq:hplower} and \eqref{eq:hpprimelower} in light of \eqref{eq:betaprimeMTP} yields that $\beta_p=\beta_p'$. The result then follows from \eqref{eq:hplower} and \eqref{eq:hpprimelower}.
\end{proof}

\subsection{Slab intersections in the tiltable phase}
\label{subsec:tiltableslab}

We now restrict attention to the tiltable phase $0<p<p_t$, in which a sharper analysis is possible. We begin by moving from half-space first moment estimates to full-space first moment estimates. 

\begin{lemma}
\label{lemma:gammabeta}
Let $p\in I_h$, and let $h_p$ and $h_p'$ be defined as in \eqref{eq:hpdef}. If  $\beta_p>1/2$, then 
\begin{align*}
\begin{rcases} k \geq 0\quad&\nexp\left[-\beta_p k \right] \\
k<0\quad& \nexp\left[(\beta_p-1) k \right]
\end{rcases} \preceq_p \E_p\left[ X_k^{-\infty,\infty}(v) \mid U_v = x\right] &\preceq_p \begin{cases} \nexp\left[-\beta_p k + h_p(k)\right] &\quad k \geq0\\
\nexp\left[(\beta_p-1) k + h_p'(-k)\right] &\quad k < 0
\end{cases}
\end{align*}
for every $v\in V$, $x\in [0,1]$, and $k \in \Z$.
In particular, $\chi_{p,\lambda}<\infty$ if and only if $\beta_p> \max\{\lambda,1-\lambda\}$, $p<p_c$ if and only if $\beta_p>1$, and $p<p_t$ if and only if $\beta_p>1/2$.
\end{lemma}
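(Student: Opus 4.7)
The lower bounds are immediate from Lemma~\ref{lem:beta} via the containments $X_k^{-\infty,\infty}(v) \geq X_k^{-\infty,k}(v)$ for $k\geq 0$ and $X_k^{-\infty,\infty}(v) \geq X_k^{-\infty,0}(v)$ for $k\leq 0$. For the upper bounds the plan is to apply the BK-based path-decomposition inequalities of Lemma~\ref{lem:pathdecomposition}, splitting a path from $v$ to a target in $L_k(v)$ at a carefully chosen extreme intermediate level so as to reduce to the single-half-space first-moment bounds of Lemma~\ref{lem:beta}.

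For $k\geq 0$, the natural choice is the extreme-point estimate \eqref{eq:extremedecompup} with $m=-\infty$, $n=\infty$. Substituting the two bounds from Lemma~\ref{lem:beta}---namely $\nexp[-\beta_p \ell + h_p(\ell)]$ for the up-factor $\E_p[X_\ell^{-\infty,\ell}(v)\mid U_v=x]$ and $\nexp[-(\beta_p-1)(\ell-k) + h'_p(\ell-k)]$ for the down-factor $E_p^{-\infty,0}(k-\ell)$---reindexing by $j=\ell-k$, and using subadditivity of $h_p$, one factors out the target prefactor $\nexp[-\beta_p k + h_p(k)]$ and is left with a single sum over $j$ with geometric rate $-(2\beta_p-1)$ and an $o_p(j)$ perturbation; this is precisely where the hypothesis $\beta_p > 1/2$ enters, and it is exactly what is needed to guarantee summability. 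For $k\leq 0$ the argument is much shorter: the last-visit decomposition \eqref{eq:lastvisitdecompdown} with $\ell=0$ yields
\[
\E_p\left[X_k^{-\infty,\infty}(v)\mid U_v=x\right] \leq \E_p\left[X_0^{-\infty,\infty}(v)\mid U_v=x\right]\cdot E_p^{-\infty,0}(k),
\]
whose first factor is $O_p(1)$ by the $k=0$ case of the upper bound just proven, and whose second factor is directly controlled by Lemma~\ref{lem:beta}, giving the required $\nexp[(\beta_p-1)k + h'_p(-k)]$.

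For the concluding "in particular" claims, slicing the definition $\chi_{p,\lambda}(v) = \sum_u \tau_p(v,u)\Delta^\lambda(v,u)$ by slabs gives $\chi_{p,\lambda}(v)\asymp \sum_{k\in\Z}\nexp(k\lambda)\,\E_p[X_k^{-\infty,\infty}(v)\mid U_v=x]$, where the constants absorb the bounded slab offsets. Plugging in the matched two-sided bounds from above shows this is finite precisely when both $\sum_{k\geq 0}\nexp[(\lambda-\beta_p)k]$ and $\sum_{k\leq 0}\nexp[(\lambda+\beta_p-1)k]$ converge, i.e.\ iff $\beta_p>\max\{\lambda,1-\lambda\}$. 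Specialising to $\lambda=0$ together with sharpness of the phase transition (which gives $p<p_c \iff \chi_p<\infty$) yields $p<p_c \iff \beta_p>1$; specialising to $\lambda=1/2$, using $p_t=\pcl{1/2}$ and Proposition~\ref{prop:tiltedmeanfieldlowerbound} (which rules out $\chi_{p_t,1/2}<\infty$), yields $p<p_t \iff \beta_p>1/2$.

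The main subtlety to navigate is that the conclusions have to be pointwise in $(v,x)$, not merely averaged over $\rho\sim\mu$. A tempting shortcut is to apply the tilted mass-transport principle to obtain $\E[X_k^{-\infty,\infty}(\rho)]\asymp \nexp(-k)\,\E[X_{-k}^{-\infty,\infty}(\rho)]$ and thereby read off the $k<0$ upper bound from the $k\geq 0$ one, but this identity only holds after averaging. The path-decomposition route sidesteps this issue for free, since Lemma~\ref{lem:pathdecomposition} and the sup-bounds in Lemma~\ref{lem:beta} are already uniform in $(v,x)$.
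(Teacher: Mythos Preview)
Your proof is correct and follows essentially the same approach as the paper. For $k\geq 0$ you use the extreme-point decomposition \eqref{eq:extremedecompup} exactly as the paper does, factoring out $\nexp[-\beta_p k + h_p(k)]$ and summing the remaining geometric series with rate $-(2\beta_p-1)$. The only difference is in the $k<0$ upper bound: the paper continues with the same inequality \eqref{eq:extremedecompup} (note that it applies for all $k$ since the sum runs over $\ell\geq k\vee 0$) and uses subadditivity of $h_p'$ to extract $\nexp[(\beta_p-1)k+h_p'(-k)]$, whereas you instead invoke \eqref{eq:lastvisitdecompdown} with $\ell=0$ and recycle the $k=0$ bound already established. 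Both routes are valid and yield the same estimate; yours is marginally shorter since it avoids repeating the summation. Your handling of the ``in particular'' claims via \eqref{eq:XtoChi}, sharpness, and Proposition~\ref{prop:tiltedmeanfieldlowerbound} also matches the paper.
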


\begin{proof}
The lower bound  follows from \cref{lem:beta}. To obtain the upper bound,
 we apply \cref{lem:beta} and the estimate \eqref{eq:extremedecompup} of \cref{lem:pathdecomposition} to deduce that
\begin{align}
\E_p\left[ X_k^{-\infty,\infty}(v) \mid U_v = x\right] & \leq \sum_{\ell \geq k \vee 0}
E_p^{-\infty,\ell}(\ell)E_p^{-\infty,0}(k-\ell)
\nonumber
\\
&=
 \sum_{\ell \geq k \vee 0} \nexp\left[ -\beta_p \ell + (\beta_p-1)(k-\ell) + h_p(\ell) + h_p'(\ell-k) \right].
 \end{align}
Changing variables to $r = \ell - (k\vee 0)$, using the subadditivity of $h_p$ and $h_p'$, and rearranging yields that
\begin{multline}
\E_p\left[ X_k^{-\infty,\infty}(v) \mid U_v = x\right]
 \\\leq \left(\sum_{r \geq 0} \nexp\left[ -(2\beta_p-1) r +  h_p(r)+h_p'(r) \right]\right) \cdot
\begin{cases} \nexp\left[ -\beta_p k + h_p(k)\right] & k\geq0\\
\nexp\left[ (\beta_p-1) k + h_p'(-k)\right] & k <0.
\end{cases}
\end{multline}
When $\beta_p>1/2$ the prefactor on the right is finite, and since it does not depend on $k$, we deduce the claimed upper bound. 
The other claims follow immediately from this estimate together with \cref{prop:tiltedmeanfieldlowerbound} by noting that, by definition of the involved quantities, 
\begin{equation}
\label{eq:XtoChi}
\chi_{p,\lambda}(v) \asymp_{\lambda} \sum_{k\in \Z} \E_p\left[ X_k^{-\infty,\infty}(v)\right]\nexp(\lambda k)
\end{equation}
for every $v\in V$, $\lambda \in \R$ and $p\in [0,1]$. 
\end{proof}


We next show that when $p<p_t$ the error terms $h_p$ and $h_p'$, along with the implicit error terms from \cref{lem:alpha},  are bounded from above. (We do not generally expect this to be the case when $p\geq p_t$.)

\begin{prop}[First moments in the tiltable phase]
\label{prop:tamedecay}
Let $0<p<p_t$. Then we have that
\begin{align}
\label{eq:tamefirstmoment}
\E_p\left[ X_{k}^{-\infty,\infty}(v) \mid U_v =x \right] \asymp_p \begin{cases} \nexp\left[-\beta_p k  \right] &k \geq0\\
\nexp\left[(\beta_p-1)k \right] & k < 0
\end{cases}
\end{align}
for every $v\in V$, $x\in [0,1]$, and $k \in \Z$. Moreover, we have that
\begin{align}
\P_p\left( v \xleftrightarrow{L_{0,\infty}(v)} L_k(v) \mid U_v = x \right) \asymp_p \P_p\left( v \xleftrightarrow{} L_k(v) \mid U_v = x \right) \asymp_p \nexp\left[-\beta_p k\right]
\label{eq:tameprobability}
\end{align}
for every $v\in V$, $x\in [0,1]$, and $k \geq 0$.
In particular, $\alpha_p=\beta_p$ for every $0<p<p_t$.
\end{prop}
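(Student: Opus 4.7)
The proposition strengthens \cref{lemma:gammabeta} under the stronger hypothesis $p<p_t$ (equivalently $\beta_p>1/2$), replacing the $o_p(k)$ error in the exponent by $O_p(1)$. The lower bound in \eqref{eq:tamefirstmoment} is immediate from \cref{lemma:gammabeta}, so the work lies in the matching upper bound. A useful preliminary step is to apply the tilted mass-transport principle to $f(u,v) = \mathbbm{1}[v \in L_k(u),\, v \leftrightarrow u \text{ in } L_{-\infty,k}(u)]$, yielding $\E_p[X_k^{-\infty,k}(\rho)] \asymp \nexp(-k)\,\E_p[X_{-k}^{-\infty,0}(\rho)]$. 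By the definitions \eqref{eq:hpdef} this translates into $h_p(k) = h_p'(k) + O_p(1)$, so that we may work with either error function interchangeably.

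The heart of the argument is the extreme-point decomposition \eqref{eq:extremedecompup}, which upon substituting the bounds of \cref{lemma:gammabeta} and changing variables to $r = \ell - k$ yields
\[
\E_p\left[X_k^{-\infty,\infty}(v) \mid U_v = x\right] \preceq_p \nexp(-\beta_p k) \sum_{r \geq 0} \nexp\bigl[-(2\beta_p-1)r + h_p(r+k) + h_p(r) + O(1)\bigr].
\]
The geometric weight $\nexp[-(2\beta_p-1)r]$ is summable precisely because $\beta_p > 1/2$; this is the place where the hypothesis $p < p_t$ enters essentially. A naive application of subadditivity $h_p(r+k) \leq h_p(r) + h_p(k)$ would only reproduce the \cref{lemma:gammabeta} bound with a factor $\nexp[h_p(k)]$. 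To eliminate this factor, I would combine \eqref{eq:extremedecompup} with its reflection \eqref{eq:extremedecompdown} and the MTP-symmetry $h_p = h_p' + O(1)$ established above to set up a bootstrap: supposing $h_p(k_n) \to \infty$ along a subsequence leads, via the decomposition applied at shifted scales, to a self-referential inequality for $\nexp[h_p(k_n)]$ that forces a strictly sublinear improvement each iteration and ultimately contradicts the supposed unboundedness. I expect this bootstrap to be the main technical obstacle, as it requires organizing the competing appearances of $h_p$ at scales $r$, $k$, and $r+k$ with sufficient care to force convergence.

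Once the first-moment estimate \eqref{eq:tamefirstmoment} is established, the upper bound in \eqref{eq:tameprobability} is immediate from the trivial inequality $\P_p(v \leftrightarrow L_k(v)) \leq \E_p[X_k^{-\infty,\infty}(v)]$. For the lower bound on $\P_p(v \xleftrightarrow{L_{0,\infty}(v)} L_k(v))$, I would first derive the analogous first-moment lower bound $\E_p[X_k^{0,\infty}(v)] \succeq_p \nexp(-\beta_p k)$ by running the Fekete-type argument of \cref{lem:beta} for the subadditive sequence $-\nlog E_p^{0,n}(n)$, and then combine this with an upper bound on the conditional expectation $\E_p[X_k^{0,\infty}(v) \mid v \xleftrightarrow{L_{0,\infty}} L_k(v)]$ coming from the now-proven first-moment bound (via a last-visit BK decomposition that compares this conditional expectation with $E_p^{-\infty, \infty}(0)$, which is finite for $p < p_t$). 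Finally, the identity $\alpha_p = \beta_p$ follows by comparing the matching bounds on $\P_p(v \xleftrightarrow{L_{0,\infty}(v)} L_k(v))$ with the definition of $\alpha_p$ from \cref{lem:alpha}.
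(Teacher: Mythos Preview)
Your proposal has a genuine gap at the key step, namely your ``bootstrap'' to prove $h_p(k)=O_p(1)$. All of the first-moment path-decomposition inequalities from \cref{lem:pathdecomposition} point in one direction only: they give \emph{upper} bounds on expectations in terms of products of expectations. You cannot extract a contradiction from a chain of such inequalities, because assuming $h_p(k_n)\to\infty$ simply makes the upper bounds looser, not tighter. There is no self-referential inequality of the kind you describe to be obtained from \eqref{eq:extremedecompup} and \eqref{eq:extremedecompdown} alone, and your sketch does not identify where a \emph{lower} bound with an $h_p$ correction would enter.

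The paper's argument supplies exactly this missing lower bound via a second-moment method. It uses the tree-graph inequality (\cref{lem:BKsecondmoment}) together with \cref{lemma:gammabeta} to bound $\E_p\bigl[(X_k^{0,k}(v))^2\bigr] \preceq_p \nexp\bigl[-\beta_p k + \max_{0\le \ell\le k} h_p(\ell)\bigr]$, and combines this with the first-moment lower bound $E_p^{0,k}(k)\succeq_p \nexp[-\beta_p k + h_p(k)]$ (obtained from \eqref{eq:lastvisitdecompup}) via Cauchy--Schwarz, $\P(Y>0)\ge (\E Y)^2/\E[Y^2]$, to get
\[
\P_p\bigl(v \xleftrightarrow{L_{0,\infty}(v)} L_k(v)\bigr) \succeq_p \nexp\Bigl[-\beta_p k + 2h_p(k) - \max_{0\le\ell\le k} h_p(\ell)\Bigr].
\]
The decisive comparison is then against the \emph{probability} upper bound $\preceq_p \nexp(-\alpha_p k)\le \nexp(-\beta_p k)$ from \cref{lem:alpha} and Markov's inequality; the point is that this Fekete upper bound on the probability carries no $h_p$ correction. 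Evaluating at $k$ where $h_p(k)=\max_{0\le\ell\le k}h_p(\ell)$ forces $h_p(k)=O_p(1)$. You never invoke a second moment, nor do you use $\alpha_p$ as the source of a correction-free upper bound, so your plan is missing the central mechanism.

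Your treatment of \eqref{eq:tameprobability} is also somewhat roundabout. Once the Cauchy--Schwarz argument above is in place, the probability lower bound is a byproduct, and the upper bound follows from Markov and \eqref{eq:tamefirstmoment}. Your MTP observation that $h_p=h_p'+O_p(1)$ is correct and is used in the paper, but it is a minor reduction, not the heart of the proof.
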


This immediately implies the following very useful corollary.

\begin{corollary}
\label{cor:lambdatoalpha}
\hspace{1cm}
\begin{enumerate}
\item 
$\alpha_{\pcl{\lambda}}\geq \max\{\lambda,1-\lambda\}$ for every $\lambda \in \R$.
\item If $\lambda \in \R$ is such that $\pcl{\lambda}<p_t$, then 
$
\alpha_{\pcl{\lambda}}=\beta_{\pcl{\lambda}}=\max\{\lambda,1-\lambda\}$. 
\end{enumerate}
\end{corollary}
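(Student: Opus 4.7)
The plan is to derive both statements by assembling \cref{lem:alphacontinuity}, \cref{lemma:gammabeta}, \cref{prop:tamedecay}, and \cref{prop:tiltedmeanfieldlowerbound}. A preliminary observation is that, as recorded at the start of \cref{sec:meanfieldsusceptibility}, $\lambda\mapsto \chi_{p,\lambda}(v)$ is convex and symmetric about $\lambda=1/2$, so it attains its minimum at $\lambda=1/2$. Consequently $\pcl{\lambda}\leq \pcl{1/2}=p_t$ for every $\lambda\in\R$, which is the input that lets us apply the tiltable-phase results near $\pcl{\lambda}$.

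For part (1), write $m=\max\{\lambda,1-\lambda\}$ and fix any $p<\pcl{\lambda}$. By the definition of $\pcl{\lambda}$ we have $\chi_{p,\lambda}(v)<\infty$, and since $p<\pcl{\lambda}\leq p_t$ we have $p\in(0,p_t)\subseteq I_h$ with $\beta_p>1/2$, so \cref{lemma:gammabeta} applies and forces $\beta_p>m$. Then \cref{prop:tamedecay} (applicable because $p<p_t$) gives $\alpha_p=\beta_p>m$. Letting $p\nearrow\pcl{\lambda}$ and invoking the left-continuity of $p\mapsto\alpha_p$ from \cref{lem:alphacontinuity} yields $\alpha_{\pcl{\lambda}}\geq m$, as desired.

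For part (2), assume $\pcl{\lambda}<p_t$. Then $\pcl{\lambda}\in(0,p_t)$, so \cref{prop:tamedecay} gives $\alpha_{\pcl{\lambda}}=\beta_{\pcl{\lambda}}$, and also $\beta_{\pcl{\lambda}}>1/2$ with $\pcl{\lambda}\in I_h$, so \cref{lemma:gammabeta} is applicable at the critical value itself. On the other hand, \cref{prop:tiltedmeanfieldlowerbound} gives $\chi_{\pcl{\lambda},\lambda}(v)=\infty$, and therefore \cref{lemma:gammabeta} forces $\beta_{\pcl{\lambda}}\leq m$. Combining this with the bound $\alpha_{\pcl{\lambda}}\geq m$ from part (1) yields $\alpha_{\pcl{\lambda}}=\beta_{\pcl{\lambda}}=m$.

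No step here is really an obstacle; the only care required is in verifying the hypotheses of \cref{lemma:gammabeta} and \cref{prop:tamedecay} at the boundary value $p=\pcl{\lambda}$, which in part (1) is circumvented by working with $p<\pcl{\lambda}$ and passing to the limit, and in part (2) is ensured by the strict inequality $\pcl{\lambda}<p_t$.
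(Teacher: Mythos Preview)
Your proof is correct and follows essentially the same route as the paper's own argument: for part (1) you use \cref{lemma:gammabeta} and \cref{prop:tamedecay} to get $\alpha_p=\beta_p>\max\{\lambda,1-\lambda\}$ for $p<\pcl{\lambda}$ and then pass to the limit via \cref{lem:alphacontinuity}, and for part (2) you combine this with the upper bound $\beta_{\pcl{\lambda}}\leq\max\{\lambda,1-\lambda\}$ coming from $\chi_{\pcl{\lambda},\lambda}=\infty$. The only difference is cosmetic: you make explicit the preliminary inequality $\pcl{\lambda}\leq p_t$ and the citation of \cref{prop:tiltedmeanfieldlowerbound}, both of which the paper leaves implicit (the latter being absorbed into the ``in particular'' clause of \cref{lemma:gammabeta}).
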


\begin{proof} 
Markov's inequality implies that $\alpha_p \geq \beta_p$ for every $p\in I_h$.
By \cref{lemma:gammabeta} and \cref{prop:tamedecay}, $\alpha_p \geq \beta_p > \max\{\lambda,1-\lambda\}$ for all $0<p<\pcl{\lambda}$. Thus, the bound $\alpha_{\pcl{\lambda}}\geq \max\{\lambda,1-\lambda\}$ follows by left-continuity of $\alpha$ (\cref{lem:alphacontinuity}). On the other hand, \cref{lemma:gammabeta} implies that $\beta_{\pcl{\lambda}}\leq \max\{\lambda,1-\lambda\}$, so that if $\pcl{\lambda}<p_t$ then $\alpha_{\pcl{\lambda}}=\beta_{\pcl{\lambda}}=\max\{\lambda,1-\lambda\}$ by \cref{prop:tamedecay}.
\end{proof}

We will require another simple inequality that follows by a standard application of the BK inequality. It is related to the tree-graph inequalities of Aizenman and Newman \cite{MR762034}.


\begin{lemma}
\label{lem:BKsecondmoment}
The estimate
\begin{align*}
\E_p\left[X^{m,n}_k(v) X^{m,n}_\ell(v) \mid U_v=x \right] \leq \sum_{i=m}
^n E^{m,n}_{p}(i) E^{m-i,n-i}_{p}(k-i) E^{m-i,n-i}_{p}(\ell-i) 
\end{align*}
holds for every $p\in [0,1]$, $v\in V$, $x\in [0,1]$, and $-\infty \leq m \leq k,\ell \leq n \leq \infty$.
\end{lemma}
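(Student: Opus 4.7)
The plan is to apply a three-path decomposition reducing $X^{m,n}_k X^{m,n}_\ell$ to a single-path sum via the BK inequality, in the spirit of the tree-graph inequalities of Aizenman and Newman \cite{MR762034}, with care taken to keep track of the slab and layer indexing.

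First I would condition on $U_v = x$, which determines the partition into layers $L_i(v)$ and hence the slab $L_{m,n}(v)$, and establish the event containment
\begin{equation*}
\bigl\{v \xleftrightarrow{L_{m,n}(v)} u_1\bigr\} \cap \bigl\{v \xleftrightarrow{L_{m,n}(v)} u_2\bigr\} \subseteq \bigcup_{w \in L_{m,n}(v)} \Bigl(\{v \xleftrightarrow{L_{m,n}(v)} w\} \circ \{w \xleftrightarrow{L_{m,n}(v)} u_1\} \circ \{w \xleftrightarrow{L_{m,n}(v)} u_2\}\Bigr)
\end{equation*}
by the standard branching argument: given simple open paths $P_1, P_2$ realizing the two connections, take $w$ to be the vertex where $P_2$ leaves $P_1$ for the last time; the $v$-to-$w$ portion of $P_1$, the $w$-to-$u_1$ portion of $P_1$, and the $w$-to-$u_2$ tail of $P_2$ are then pairwise edge-disjoint. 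Degenerate cases where $w$ coincides with one of $v, u_1, u_2$ are absorbed by the empty-path witness convention.

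Next I would apply the union bound together with the BK inequality (iterated to yield three-fold disjoint occurrence for increasing events) and stratify the branching vertex $w$ by its layer $i \in \{m, \dots, n\}$. The crucial layer-shift observation is that when $w \in L_i(v)$, the slab and layers reindex relative to $w$ as $L_{m,n}(v) = L_{m-i, n-i}(w)$, $L_k(v) = L_{k-i}(w)$, and $L_\ell(v) = L_{\ell-i}(w)$. Summing $u_1$ over $L_k(v)$ and $u_2$ over $L_\ell(v)$ then converts the contribution from each $w \in L_i(v)$ into
\begin{equation*}
\P_p\bigl(v \xleftrightarrow{L_{m,n}(v)} w \bigm| U_v = x\bigr) \cdot \E_p\bigl[X^{m-i,n-i}_{k-i}(w) \bigm| U_w\bigr] \cdot \E_p\bigl[X^{m-i,n-i}_{\ell-i}(w) \bigm| U_w\bigr],
\end{equation*}
and the last two factors are bounded by $E^{m-i,n-i}_p(k-i)$ and $E^{m-i,n-i}_p(\ell-i)$ directly from the definition \eqref{eq:Edef}.

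To conclude, the remaining sum $\sum_{w \in L_i(v)} \P_p(v \xleftrightarrow{L_{m,n}(v)} w \mid U_v = x)$ equals $\E_p[X_i^{m,n}(v) \mid U_v = x] \leq E_p^{m,n}(i)$, again by definition, and summing over $i$ from $m$ to $n$ yields the stated inequality. The only mild subtlety is the degenerate branching case in the decomposition, which is handled by the standard empty-path convention for witnesses; no other serious obstacle is anticipated, as the argument is essentially a routine tree-graph inequality adapted to the slab geometry.
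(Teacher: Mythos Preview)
Your proposal is correct and follows essentially the same approach as the paper: both use the standard tree-graph decomposition to find a branching vertex $z$ (your $w$) from which the three connection events occur disjointly, then apply the BK inequality and sum over the layer index of the branching vertex using the slab reindexing $L_{m,n}(v)=L_{m-i,n-i}(w)$. The paper's proof is simply a terser version of what you wrote.
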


\begin{proof}
If $u \in L_k(v)$ and $w\in L_\ell(v)$ are such that $v$ is connected to both $u$ and $w$ by open paths in $L_{m,n}(v)$, then there must exist a vertex $z \in L_{m,n}(v)$ (possibly equal to one of $u,v,$ or $w$) such that $\{v \xleftrightarrow{L_{m,n}(v)} z \}\circ \{z \xleftrightarrow{L_{m,n}(v)} u \}\circ \{z \xleftrightarrow{L_{m,n}(v)} w \}$ occurs. Applying the BK inequality and summing over the possible choices of $u,w,$ and $z$ yields the claimed inequality.
\end{proof}

We are now ready to prove \cref{prop:tamedecay}.

\begin{proof}[Proof of \cref{prop:tamedecay}]

We begin with \eqref{eq:tamefirstmoment}. By \cref{lem:beta,lemma:gammabeta} it suffices to show that $h_p(n)$ and $h'_p(n)$ are both $O_p(1)$. 
The estimates \eqref{eq:hplower}, \eqref{eq:hpprimelower}, and \eqref{eq:betaprimeMTP} imply that $h'_p(n) - h_p(n-1)$ is bounded from above by a $p$-dependent constant, and since both $h_p(n)$ and $h_p'(n)$ are non-negative it suffices to prove that $h_p(n)=O_p(1)$. 
Applying \cref{lem:BKsecondmoment} and \cref{lemma:gammabeta}, we obtain that
\begin{align}
\E_p\left[ \left(X_k^{0,k}(v)\right)^2 \mid U_v=x\right] &\leq \sum_{\ell=0}^k E_p^{-\infty,\infty}(k-\ell) \left(E_p^{-\infty,\infty}(\ell)\right)^2
\nonumber
\\
&\preceq_p \sum_{\ell=0}^k \nexp\left[-\beta_p (k-\ell) -2\beta_p \ell   + h_p(k-\ell)  + 2h_p(\ell) \right]
\nonumber
\\
&\leq \left(\sum_{\ell=0}^\infty \nexp\left[-\beta_p \ell + 2h_p(\ell) \right]\right) \cdot \nexp\left[ - \beta_p k + \max_{0\leq \ell \leq k} h_p(\ell)\right]
\end{align}
for all $v\in V$,  $x\in [0,1]$, and $k\geq 0$.
Since $h_p(\ell)=o_p(\ell)$, the sum in the prefactor on the last line is finite, and since it does not depend on $k$ we obtain that
\begin{equation}
\label{eq:hpsecondmoment}
\E_p\left[ \left(X_k^{0,k}(v)\right)^2 \mid U_v=x\right] \preceq_p \nexp\left[ - \beta_p k + \max_{0\leq \ell \leq k} h_p(\ell)\right]
\end{equation}
for all $v\in V$, $x\in [0,1]$, and $k\geq 0$. 
On the other hand, applying the last-visit path-decomposition inequality \eqref{eq:lastvisitdecompup} of \cref{lem:pathdecomposition} implies that
\begin{align}
\label{eq:hpfirstmoment}
E_p^{0,k}(k) \geq \frac{E_p^{-\infty,k}(k)}{E_p^{-\infty,\infty}(0)} \succeq_p \nexp\left[ -\beta_p k + h_p(k)\right].
\end{align}

Recall that the Cauchy-Schwarz inequality implies that $\P(Y>0) \geq \E[Y]^2/\E[Y^2]$ for any non-negative random variable $Y$. Applying this to $X_k^{0,k}(v)$ conditional on $U_v=x$ yields that
\begin{equation}
\P_p\big( v \xleftrightarrow{L_{0,\infty}(v)} L_k(v) \mid U_v =x \big)
\geq \E_p\left[ X_k^{0,k}(v) \mid U_v = x\right]^2
	\E_p\left[ \left(X_k^{0,k}(v)\right)^2 \mid U_v = x\right]^{-1}.
\end{equation}
Taking suprema over $v \in V$ and $x\in [0,1]$ and applying \eqref{eq:hpsecondmoment} and \eqref{eq:hpfirstmoment} we obtain that
\begin{equation}
\label{eq:tfmprob1}
\sup_{v\in V, x\in [0,1]} \P_p\big( v \xleftrightarrow{L_{0,\infty}(v)} L_k(v) \mid U_v =x \big) \succeq_p \nexp\left[-\beta_p k + 2h_p(k) - \max_{0\leq \ell \leq k} h_p(\ell) \right]
\end{equation}
for every $k\geq 0$.
On the other hand, since $\alpha_p \geq \beta_p$ by Markov's inequality, it follows from \cref{lem:alpha} that 
\begin{equation}
\label{eq:tfmprob2}
\P_p\big(v \xleftrightarrow{L_{0,\infty}(v)} L_k(v) \mid U_v = x\big) \preceq_p \nexp\left[ -\beta_p k\right]
\end{equation}
for every $v\in V$ and $x\in [0,1]$. 
Comparing \eqref{eq:tfmprob1} and \eqref{eq:tfmprob2} at those values of $k$ for which $h_p(k)=\max_{0\leq \ell \leq k} h_p(\ell)$ yields that $h_p(k)=O_p(1)$ as claimed. 
The estimates \eqref{eq:tameprobability} then follow from \eqref{eq:tfmprob1} (for the lower bounds) and Markov's inequality applied to \eqref{eq:tamefirstmoment} (for the upper bounds). 
\end{proof}

Applying \cref{prop:tamedecay} and \cref{lem:BKsecondmoment}, we immediately obtain the following. Similar estimates hold for $k,\ell \in \Z$, but we shall not require these.

\begin{prop}
\label{prop:tamesecondmoment}
 If $0<p<p_t$ is such that $\beta_p > 2/3$ then
\begin{equation}
\label{eq:tamesecondmoment}
\E_p\left[ \left(X_{-k}^{-\infty,\infty}(v)\right)\left(X_{-\ell}^{-\infty,\infty}(v)\right) \mid U_v =x \right] \preceq_p
\begin{cases} \nexp\left[-(\beta_p-1)(k \vee \ell)\right] & \beta_p > 1\\ 
k \wedge \ell &\beta_p =1\\
\nexp\left[ (1-\beta_p)(k+\ell)\right] &\beta_p < 1
\end{cases}
\end{equation}
for every $v\in V$, $x\in [0,1]$, and $k,\ell \geq 0$.
Similarly, if $0<p<p_t$ is such that $1/2<\beta_p \leq 2/3$ then
\begin{equation}
\E_p\left[ \left(X_{-k}^{-\infty,0}(v)\right)\left(X_{-\ell}^{-\infty,0}(v)\right) \mid U_v=x \right] \preceq_p \nexp\left[ (1-\beta_p)(k+\ell)\right]
\label{eq:tamesecondmoment2/3}
\end{equation}
for every $v\in V$, $x\in [0,1]$, and $k,\ell \geq 0$.
\end{prop}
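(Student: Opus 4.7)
The plan is to apply the second-moment inequality of Lemma \ref{lem:BKsecondmoment} and bound each factor in the resulting sum using the sharp first-moment estimate from Proposition \ref{prop:tamedecay}. By the symmetry $(k,\ell) \leftrightarrow (\ell,k)$ I may assume $k \leq \ell$ throughout. For the first inequality, taking $m=-\infty$, $n=\infty$ in Lemma \ref{lem:BKsecondmoment} gives
\[
\E_p\left[X_{-k}^{-\infty,\infty}(v) X_{-\ell}^{-\infty,\infty}(v)\mid U_v=x\right] \leq \sum_{i\in\Z} E_p^{-\infty,\infty}(i)\,E_p^{-\infty,\infty}(-k-i)\,E_p^{-\infty,\infty}(-\ell-i),
\]
and Proposition \ref{prop:tamedecay} estimates each factor as $\asymp_p \nexp(-\beta_p j)$ when $j\geq 0$ and $\asymp_p \nexp((\beta_p-1)j)$ when $j<0$.

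I would then split the sum into the four regions determined by the signs of the three arguments, namely $i<-\ell$, $-\ell \leq i < -k$, $-k \leq i \leq 0$, and $i>0$. Each region is a geometric sum in $i$, so evaluating the dominant term is routine. The point of the hypothesis $\beta_p>2/3$ is exactly that in the region $i>0$ the exponential rate equals $3\beta_p-2$, so convergence requires $\beta_p>2/3$; this region then contributes $\asymp \nexp((1-\beta_p)(k+\ell))$. The other three regions need only $\beta_p>1/3$ to converge, and contribute (up to constants) $\nexp(\beta_p k + (1-2\beta_p)\ell)$, $\nexp((1-\beta_p)\ell)$, and a third term that depends on the sign of $1-\beta_p$: it is $\nexp((1-\beta_p)(k+\ell))$ for $\beta_p<1$, equals the number of summands $\asymp k$ for $\beta_p=1$, and is $\nexp(-(\beta_p-1)\ell)$ for $\beta_p>1$. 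Collecting these four contributions in each of the three subcases $\beta_p\gtreqless 1$ and using $k\leq\ell$ to compare the relevant linear combinations yields precisely the right-hand side of \eqref{eq:tamesecondmoment}.

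For the second inequality, I apply Lemma \ref{lem:BKsecondmoment} instead with $m=-\infty$, $n=0$, and crudely bound $E_p^{-\infty,-i}(\cdot) \leq E_p^{-\infty,\infty}(\cdot)$ in each factor so Proposition \ref{prop:tamedecay} again applies. The sum is now restricted to $i\leq 0$, so the problematic fourth region $i>0$ drops out and convergence needs only $\beta_p>1/3$; in particular the hypothesis $1/2<\beta_p\leq 2/3$ is more than sufficient. The three surviving regions each contribute at most $\nexp((1-\beta_p)(k+\ell))$ by the same comparisons as above (this is where one uses $k\leq\ell$ and $\beta_p<1$ to absorb the first-region term $\nexp(\beta_p k+(1-2\beta_p)\ell)$ into $\nexp((1-\beta_p)(k+\ell))$), giving \eqref{eq:tamesecondmoment2/3}. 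The only real obstacle is the bookkeeping: verifying in each of the three subcases that every one of the four geometric-sum contributions is dominated by the claimed right-hand side; there is no conceptual difficulty beyond this.
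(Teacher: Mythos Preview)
Your proposal is correct and follows exactly the approach the paper takes: the paper's own proof is a single sentence stating that the result follows by applying \cref{prop:tamedecay} and \cref{lem:BKsecondmoment}, and you have correctly carried out the implied computation, including identifying that the region $i>0$ is what forces the hypothesis $\beta_p>2/3$ in the full-space case and that restricting to $n=0$ removes this region for the half-space estimate.
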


We remark that \cref{prop:tamesecondmoment} also easily yields the following interesting result on the quenched growth rate of infinite clusters in the supercritical tiltable regime.
This result is not required for the proofs of the main theorems.

\begin{corollary}[Growth of infinite clusters in the tiltable supercritical regime]
\label{cor:clustergrowth}
 If $0<p<p_t$ is such that $1/2<\beta_p<1$, then for every $v\in V$ we have that
\begin{equation}
\label{eq:quenchedgrowth}
\limsup_{k\to\infty} \frac{1}{k} \nlog X_{-k}^{-\infty,\infty}(v) = 1-\beta_p
\end{equation}
almost surely on the event that the cluster of $v$ is infinite. 
\end{corollary}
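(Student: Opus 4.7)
The plan is to prove matching upper and lower bounds on $\limsup_k k^{-1}\nlog X_{-k}^{-\infty,\infty}(v)$, with the main challenge being promoting a positive-probability lower bound to an almost-sure statement on $\{|K_v|=\infty\}$.

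For the upper bound, I would invoke the first-moment estimate of \cref{prop:tamedecay}, namely $\E_p[X_{-k}^{-\infty,\infty}(v)]\asymp_p\nexp[(1-\beta_p)k]$, and combine it with Markov's inequality to obtain $\P_p(X_{-k}^{-\infty,\infty}(v) \geq \nexp[(1-\beta_p+\epsilon)k]) \preceq_p \nexp[-\epsilon k]$ for every $\epsilon>0$. Since this bound is summable in $k$, the first Borel-Cantelli lemma gives $\limsup_k k^{-1}\nlog X_{-k}^{-\infty,\infty}(v) \leq 1-\beta_p+\epsilon$ almost surely, and sending $\epsilon\downarrow 0$ through a countable sequence yields the upper bound (on all of $\Omega$, not just on $\{|K_v|=\infty\}$).

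For the lower bound in probability, I would apply the Paley-Zygmund inequality to $X_{-k}^{-\infty,0}(v)$: the estimate $\E_p[X_{-k}^{-\infty,0}(v)]\succeq_p \nexp[(1-\beta_p)k]$ coming from \cref{lem:beta} combines with $\E_p[(X_{-k}^{-\infty,0}(v))^2]\preceq_p \nexp[2(1-\beta_p)k]$ (which follows from \eqref{eq:tamesecondmoment} applied with $k=\ell$ when $\beta_p>2/3$ and from \eqref{eq:tamesecondmoment2/3} otherwise) to yield constants $c_1,c_2>0$ depending on $G,\Gamma,p$ such that
\[
\P_p\!\left(X_{-k}^{-\infty,0}(v)\geq c_1\nexp[(1-\beta_p)k]\right)\geq c_2
\]
for every $k\geq 0$. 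Since $X_{-k}^{-\infty,\infty}(v) \geq X_{-k}^{-\infty,0}(v)$, the same lower bound holds for $X_{-k}^{-\infty,\infty}(v)$. As the event $\{X_{-k}^{-\infty,\infty}(v)\geq c_1\nexp[(1-\beta_p)k]\}$ forces $|K_v|\geq c_1\nexp[(1-\beta_p)k]\to\infty$, its intersection with $\{|K_v|<\infty\}$ has probability tending to zero as $k\to\infty$, and Fatou's lemma for sets then yields $\P_p(A(v)\cap\{|K_v|=\infty\})\geq c_2$, where
\[
A(v):=\bigl\{\limsup\nolimits_{k\to\infty}k^{-1}\nlog X_{-k}^{-\infty,\infty}(v)\geq 1-\beta_p\bigr\}.
\]

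The main obstacle is boosting this to an almost-sure statement on all of $\{|K_v|=\infty\}$. My plan is to exploit the cluster-invariance of $A(v)$: for any $w\in K_v\cap L_{-m}(v)$ with $m\in\Z$, the cocycle identity for $\Delta$ gives $L_{-k}(w)=L_{-k-m}(v)$, and $K_w=K_v$ then yields the exact identity $X_{-k}^{-\infty,\infty}(w)=X_{-k-m}^{-\infty,\infty}(v)$ for every $k\in\Z$; since the finite shift by $m$ disappears under $\limsup$, we conclude $A(v)=A(w)$ for every $w\in K_v$. By cluster indistinguishability for Bernoulli percolation on quasi-transitive graphs --- the Lyons-Schramm argument \cite{LS99} adapts to the nonunimodular setting by using the tilted mass-transport principle of \cref{subsec:MTP} in place of its unimodular counterpart --- any cluster-invariant event that is satisfied by some infinite cluster with positive probability must be satisfied by every infinite cluster almost surely. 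Applied to the cluster-invariant event $A$, this upgrades the positive-probability conclusion of the previous step to $\P_p(A(v)\mid |K_v|=\infty)=1$, which together with the upper bound yields the claimed almost-sure equality.
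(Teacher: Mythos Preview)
Your proof is correct and follows essentially the same route as the paper: Paley--Zygmund on $X_{-k}^{-\infty,0}(v)$ using \cref{prop:tamedecay} and \cref{prop:tamesecondmoment} for the positive-probability lower bound, Fatou's lemma, indistinguishability to upgrade to an almost-sure statement, and Markov plus Borel--Cantelli for the upper bound. The one point to correct is the citation for indistinguishability: the paper invokes the nonunimodular indistinguishability theorem of H\"aggstr\"om, Peres, and Schonmann \cite{HPS99}, which was proved precisely for this setting, whereas you assert that the Lyons--Schramm argument \cite{LS99} adapts via the tilted mass-transport principle---this adaptation is not immediate and is not what the literature provides, so you should cite \cite{HPS99} instead.
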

\begin{proof}
The Paley-Zygmund inequality \cite{paley1932note} implies that 
\begin{equation}\P\left(X_{-k}^{-\infty,0}(v) \geq \frac{1}{2}\E_p\left[X_{-k}^{-\infty,0}(v) \right]\right) 
\geq \frac{\E_p\left[X_{-k}^{-\infty,0}(v) \right]^2}{4\E_p\left[\left(X_{-k}^{-\infty,0}(v)\right)^2 \right]}.
\end{equation}
\cref{prop:tamedecay} and \cref{prop:tamesecondmoment} imply that the right hand side is bounded below by a positive constant depending on $p$. 
 Thus, it follows by Fatou's Lemma that 
 \begin{equation}
\limsup_{k\to\infty} \frac{1}{k} \nlog X_{-k}^{-\infty,\infty}(v) \geq 1-\beta_p
 \end{equation}
 with positive probability. The indistinguishability theorem of
 Haggstr\"om, Peres, and Schonmann \cite{HPS99} implies that in fact this inequality must hold almost surely on the event that the cluster of $v$ is infinite. 
  On the other hand, the reverse inequality holds almost surely by \cref{lem:beta}, Markov's inequality and the Borel-Cantelli lemma. 
\end{proof}


\subsection{The view from a peak in the subcritical phase}
\label{subsec:subcritical_peak}
 We say that a vertex $v$ is the \textbf{peak} (a.k.a.\ unique highest point) of its cluster $K(v)$ if $\nlog \Delta(v,u) < 0$ for every $u \in K(v)\setminus \{v\}$. In particular, each cluster has at most one peak. We write $\peak(v)$ for the peak of $v$'s cluster (when it exists) and $\sP_v$ for the event that $v$ is the peak of its cluster, i.e., $v=\peak(v)$. 

In this subsection, we apply \cref{prop:tamesecondmoment} to study the probability that a vertex is a peak and its cluster survives for $k$ levels in the subcritical regime. 
We begin with the following slight strengthening of the estimate \eqref{eq:tameprobability}. Note that the statement `$\peak(v)\in L_k(v)$' implicitly includes the statement that the cluster of $v$ has a peak.  

\begin{lemma}
 If $0<p<p_t$  then
\begin{align}
\label{eq:highpeaktame}
\P_p\left(\Peak(v) \in L_k(v) \mid U_v=x\right) \asymp_p \nexp\left[-\beta_p k\right]
\end{align}
for every $v\in V$, $x\in [0,1]$, and $k\geq0$. 
\end{lemma}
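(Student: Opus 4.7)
For the upper bound, I would use the containment $\{\Peak(v)\in L_k(v)\}\subseteq\{v\leftrightarrow L_k(v)\}$ together with \eqref{eq:tameprobability} of \cref{prop:tamedecay}, which gives the desired bound immediately.

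For the lower bound, the plan is to apply the tilted mass-transport principle to the diagonally invariant function $F(x,y)=\mathbbm{1}[x=\Peak(y),\,x\in L_k(y),\,x\leftrightarrow y]$, noting that $\sum_x F(x,y)=\mathbbm{1}[\Peak(y)\in L_k(y)]$ and that $\Delta(\rho,x)\asymp\nexp(-k)$ whenever $\rho\in L_k(x)$. This will yield, up to bounded offsets in the layer index, a relation of the form
$$\P_p(\Peak(\rho)\in L_k(\rho))\asymp_p\nexp[-k]\,\E_p\bigl[\mathbbm{1}[\sP_\rho]\,X_{-k}^{-\infty,\infty}(\rho)\bigr],$$
where $\sP_\rho$ denotes the event that $\rho$ is the peak of its own cluster. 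Since \cref{prop:tamedecay} gives $\E_p[X_{-k}^{-\infty,\infty}(\rho)]\asymp_p\nexp[(1-\beta_p)k]$, the proof reduces to establishing
$$\E_p\bigl[\mathbbm{1}[\sP_\rho]\,X_{-k}^{-\infty,\infty}(\rho)\bigr]\succeq_p \E_p\bigl[X_{-k}^{-\infty,\infty}(\rho)\bigr].$$

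To prove this inequality, I would condition on the downward cluster $K_\rho^\downarrow$, defined as the connected component of $\rho$ in the subgraph of $G[p]$ induced on $L_{-\infty,0}(\rho)$. On the event $\sP_\rho$ one has $K_\rho=K_\rho^\downarrow$, and conditional on $K_\rho^\downarrow=A$ with $A\cap L_0(\rho)=\{\rho\}$ the event $\sP_\rho$ reduces to the conditionally independent event that every edge from $A$ to $L_{0,\infty}(\rho)\setminus\{\rho\}$ is closed, occurring with conditional probability $(1-p)^{d^\uparrow(A)}$. Restricting to cluster realizations whose upward boundary $d^\uparrow(K_\rho^\downarrow)$ has bounded typical size (controllable via the second-moment estimates of \cref{prop:tamesecondmoment} applied to $|K_\rho^\downarrow\cap L_0(\rho)|$ and its upper-boundary counterpart) and combining with the first-moment asymptotics of $|K_\rho^\downarrow\cap L_{-k}(\rho)|$ should then give the required lower bound.

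The main obstacle is that $\sP_\rho$ is a decreasing event while $X_{-k}^{-\infty,\infty}(\rho)$ is increasing, so the Harris-FKG inequality furnishes only an upper bound on $\E_p[\mathbbm{1}[\sP_\rho]\,X_{-k}^{-\infty,\infty}(\rho)]$. The point of conditioning on $K_\rho^\downarrow$ is exactly to sidestep this obstruction: once the downward cluster is revealed, the peak constraint becomes a conditionally independent ``upward-edges-closed'' event whose effect on the downward cluster's first moment can be quantified explicitly, at which point the argument becomes a moment comparison rather than a monotonicity comparison.
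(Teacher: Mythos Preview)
Your upper bound is correct and matches the paper exactly.

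For the lower bound, the paper takes a direct route rather than going through the mass-transport identity. It conditions on the event $\{v\leftrightarrow L_k(v)\}$ and shows that the conditional probability of $\{\peak(v)\in L_{k+1}(v)\}$ is $\succeq_p 1$. To do this, it reveals the cluster $K_k$ of $v$ in the half-space $L_{-\infty,k}(v)$ (more precisely, via edges with an endpoint in $L_{-\infty,k-1}(v)$), picks a vertex $u\in Z_k:=K_k\cap L_k(v)$, and then \emph{constructs} a peak by opening a short path from $u$ to a vertex in $L_{k+1}(v)$ designed so that its endpoint is strictly highest along the path, while closing all other edges incident to $Z_k$ and to the path. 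This costs a factor $p^{n_0}(1-p)^{C(Z_k+n_0)}$, and Jensen's inequality combined with the bound $\E_p[Z_k\mid v\leftrightarrow L_k]\preceq_p 1$ (from \cref{prop:tamedecay,prop:tamesecondmoment}) finishes the argument. The mass-transport identity you wrote down is then derived as a \emph{consequence} of the lemma (see \eqref{eq:MTPPeakBasic}), not used to prove it.

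Your route via mass-transport is a legitimate alternative strategy, and your reduction to $\E_p\bigl[\mathbbm{1}[\sP_\rho]\,X_{-k}^{-\infty,\infty}(\rho)\bigr]\succeq_p \nexp[(1-\beta_p)k]$ is correct. The gap is in the last step. Conditioning on $K_\rho^\downarrow=A$, the event $\sP_\rho$ decomposes as (i) all edges from $A$ to $L_{1,\infty}(\rho)$ closed, which is the soft factor $(1-p)^{d^\uparrow(A)}$ you describe, \emph{and} (ii) the hard constraint that no vertex of $A\setminus\{\rho\}$ has $\log\Delta(\rho,\cdot)\geq 0$. The second-moment bounds from \cref{prop:tamesecondmoment} control the \emph{size} of $K_\rho^\downarrow\cap L_0(\rho)$ under the $X_{-k}$-biasing and thus handle (i), but they do not directly give you that the indicator of (ii) holds on a set carrying a positive fraction of the $X_{-k}$-mass. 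You cannot simply close edges within $L_0(\rho)$ to enforce (ii), since doing so may disconnect $\rho$ from $L_{-k}(\rho)$ and destroy the very contribution you are trying to preserve. The paper's capping construction sidesteps this precisely because it \emph{builds} the peak as a fresh vertex strictly above the revealed cluster, so uniqueness is automatic; by contrast, asserting that $\rho$ itself is the peak forces you to confront ties and overshoots inside $L_0(\rho)$, and that part of your sketch is where the real work is missing.
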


\begin{proof}
It suffices to prove the lower bound, as the upper bound follows from \cref{prop:tamedecay}. 
As in the proof of \cref{lem:probinfsup}, there exists $n_0 \geq 1$ such that for every vertex $v\in V$, there exists $n \leq n_0$ and a path $v=v_0,v_1,\ldots,v_{n}$ in $G$ such that $v_n \in L_1(v)$, $\nlog \Delta(v_i,v_n) >0$ for every $0 \leq i < n$, and the sequence $\nlog\Delta(v,v_i)$ is (weakly) increasing. 

 Fix $v\in V$, $x\in [0,1]$ and $k\geq 0$. 
 Let $H_k$ be the subgraph of $G$ spanned by those edges with at least one endpoint in $L_{-\infty,k-1}(v)$, and let $K_k$ be the cluster of $v$ in $H_k$. That is, $K_k$ is the connected component of $v$ in the subgraph of $H_k$ spanned by the open edges of $G[p]$. Let $Z_k=K_k \cap L_k$. 
  It follows from \cref{prop:tamedecay,prop:tamesecondmoment} that
\begin{equation}
\E_p\Bigl[ |Z_k| \mid U_v=x,\, v \leftrightarrow L_k(v) \Bigr] \leq \E_p\left[ X_k^{-\infty,\infty}(v) \mid U_v=x,\, v \leftrightarrow L_k(v) \right] \preceq_p 1
\end{equation}
for each $0<p<p_t$. 
Condition on $K_k$, and suppose that $Z_k \geq 1$. Pick a vertex $u \in Z_k$ and a path $u=u_0,\ldots,u_{n}$ with $n \leq n_0$ as above. If at least one edge connecting $u_i$ to $u_{i+1}$ is open for every $0\leq i < n$, and every other edge incident to $Z_k$ or $\{u_1,\ldots,u_{n_0}\}$ is either closed or lies in $K_k$, then $\peak(v) \in L_{k+1}(v)$. Lower bounding the conditional probability of this event yields that
\begin{equation}
\P_p\Bigl(\peak(v) \in L_{k+1}(v) \mid K_k, U_v=x\Bigr) \geq \mathbbm{1}\bigl(Z_k \geq 1 \bigr) p^{n_0} (1-p)^{C(Z_k + n_0)}.
\end{equation}
where $C = \max_{v\in V} \deg(v)$. Taking expectations and applying Jensen's inequality yields that
\begin{equation}
\P_p\Bigl(\peak(v) \in L_{k+1}(v) \mid U_v=x,\, v \leftrightarrow L_k(v)\Bigr) \succeq_p 1,
\end{equation}
and the claim follows from \cref{prop:tamedecay}.
\end{proof}

Applying the tilted mass-transport principle to \eqref{eq:highpeaktame} yields that
\begin{equation}
\label{eq:MTPPeakBasic}
\E_p\left[ X_{-k}^{-\infty,\infty}(\rho) \,;\, \sP_\rho \right] \asymp \nexp(k) \P_p\Bigl( \Peak(\rho) \in L_k \Bigr) \asymp_p \nexp\left[ -(\beta_p-1) k\right]
\end{equation}
for every $0<p<p_t$ and $k\geq 0$, and it follows by a straightforward finite-energy argument that there exists $v_0 \in V$ such that
\begin{equation}
\label{eq:somepeak}
\E_p\left[ X_{-k}^{-\infty,\infty}(v_0) \mid  U_{v_0}=x, \sP_{v_0} \right] \asymp_p \nexp\left[ -(\beta_p-1) k\right]
\end{equation}
for every $x\in [0,1]$ and $k\geq 0$. (This estimate might not hold for every $v\in V$. Indeed, some vertices may have the same height as all their neighbours, in which case they cannot be the peak of a non-singleton cluster.)
 The next lemma gives a similar analysis for connection \emph{probabilities} in the subcritical phase.

\begin{lemma}[Subcritical peak survival]
\label{lem:subcritpeaksurvival}
	If $p<p_c$ then there exists $v_0\in V$ such that
\begin{equation}
\label{eq:subcritpeaksurvivalprob}
\P_p\left(v_0 \leftrightarrow L_{-k}(v_0) \mid U_{v_0}=x \right) \asymp_p \P_p\left(\{v_0 \leftrightarrow L_{-k}(v_0)\} \cap \sP_{v_0} \mid U_{v_0}=x \right) \asymp_p 
\nexp\left[-(\beta_p-1)k \right]
\end{equation}
for every $x\in [0,1]$ and $k \geq 0$, and
\begin{equation}
\label{eq:subcritpeaksurvivalcorrelation}
\E_p\left[ X_{-\ell}^{-\infty,\infty}(v_0) \mid  U_{v_0}=x,\, v_0 \leftrightarrow L_{-k}(v_0),\, \sP_{v_0} \right] \preceq_p \begin{cases} 1 & \ell \leq k\\
\nexp\left[ -(\beta_p-1)(\ell-k) \right] & \ell > k
\end{cases}
\end{equation}
for every $x\in [0,1]$ and $\ell, k \geq 0$.
\end{lemma}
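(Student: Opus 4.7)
My plan is to take $v_0\in V$ to be exactly the vertex supplied by \eqref{eq:somepeak}, and to derive both parts of the lemma from three already-established ingredients: the sharp first moment estimate of \cref{prop:tamedecay}, the conditional-on-peak first moment estimate \eqref{eq:somepeak}, and the BK-based second moment estimate of \cref{prop:tamesecondmoment}. Crucially, since $p<p_c$ we have $\beta_p>1$ by \cref{lemma:gammabeta}, so we are in the regime where \cref{prop:tamesecondmoment} provides the best possible bound $\E_p[X_{-k}^{-\infty,\infty}(v)X_{-\ell}^{-\infty,\infty}(v)\mid U_v=x]\preceq_p \nexp[-(\beta_p-1)(k\vee \ell)]$. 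Note also that $\sP_{v_0}$ is measurable with respect to the percolation configuration alone and is therefore independent of $U_{v_0}$, so conditioning on $U_{v_0}=x$ does not interact with $\sP_{v_0}$.

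For \eqref{eq:subcritpeaksurvivalprob}, the upper bound on $\P_p(v_0\leftrightarrow L_{-k}(v_0)\mid U_{v_0}=x)$ is immediate from Markov's inequality applied to $X_{-k}^{-\infty,\infty}(v_0)$ together with \cref{prop:tamedecay}. For the matching lower bound on the smaller probability $\P_p(\{v_0\leftrightarrow L_{-k}(v_0)\}\cap \sP_{v_0}\mid U_{v_0}=x)$, I would run the standard Paley--Zygmund / Cauchy--Schwarz argument: since $\{X_{-k}^{-\infty,\infty}(v_0)>0\}=\{v_0\leftrightarrow L_{-k}(v_0)\}$, Cauchy--Schwarz gives
\[
\P_p\bigl(\sP_{v_0}\cap \{v_0\leftrightarrow L_{-k}(v_0)\}\mid U_{v_0}=x\bigr)\geq \frac{\E_p\!\left[X_{-k}^{-\infty,\infty}(v_0)\,;\,\sP_{v_0}\mid U_{v_0}=x\right]^2}{\E_p\!\left[\bigl(X_{-k}^{-\infty,\infty}(v_0)\bigr)^2\mid U_{v_0}=x\right]}.
\]
The numerator factors as $\P_p(\sP_{v_0})\cdot \E_p[X_{-k}^{-\infty,\infty}(v_0)\mid U_{v_0}=x,\sP_{v_0}]$ and is $\asymp_p \nexp[-(\beta_p-1)k]$ by \eqref{eq:somepeak}, while the denominator is $\preceq_p \nexp[-(\beta_p-1)k]$ by \cref{prop:tamesecondmoment} with $k=\ell$. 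The ratio therefore yields the required lower bound, and the three quantities in \eqref{eq:subcritpeaksurvivalprob} are sandwiched together.

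For \eqref{eq:subcritpeaksurvivalcorrelation}, I would write the conditional expectation as a ratio whose denominator $\P_p(\{v_0\leftrightarrow L_{-k}(v_0)\}\cap \sP_{v_0}\mid U_{v_0}=x)$ we have just controlled and is $\asymp_p \nexp[-(\beta_p-1)k]$. For the numerator, drop the $\sP_{v_0}$ indicator and use the pointwise bound $\mathbbm{1}(v_0\leftrightarrow L_{-k}(v_0))\leq X_{-k}^{-\infty,\infty}(v_0)$ to obtain
\[
\E_p\!\left[X_{-\ell}^{-\infty,\infty}(v_0)\,;\,v_0\leftrightarrow L_{-k}(v_0)\,;\,\sP_{v_0}\,\middle|\, U_{v_0}=x\right]\leq \E_p\!\left[X_{-\ell}^{-\infty,\infty}(v_0)X_{-k}^{-\infty,\infty}(v_0)\,\middle|\, U_{v_0}=x\right],
\]
which is $\preceq_p \nexp[-(\beta_p-1)(k\vee \ell)]$ by \cref{prop:tamesecondmoment}. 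Dividing by the denominator produces exactly the two-case bound: a constant when $\ell\leq k$ and $\nexp[-(\beta_p-1)(\ell-k)]$ when $\ell>k$.

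There is no real conceptual obstacle here; the main work was done earlier in obtaining the sharp second moment estimates in the tiltable phase and in producing a vertex $v_0$ for which \eqref{eq:somepeak} holds (via the finite-energy argument that feeds into the tilted mass-transport calculation \eqref{eq:MTPPeakBasic}). The one point requiring a little care is the observation that $\sP_{v_0}$ is independent of $U_{v_0}$, which is what allows us to pull $\P_p(\sP_{v_0})$ out as an $\asymp_p 1$ factor and apply \eqref{eq:somepeak} cleanly after conditioning on $U_{v_0}=x$.
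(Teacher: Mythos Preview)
Your proposal is correct and follows essentially the same approach as the paper's own proof: Markov's inequality with \cref{prop:tamedecay} for the upper bound of \eqref{eq:subcritpeaksurvivalprob}, Cauchy--Schwarz combining \eqref{eq:somepeak} with the second moment bound of \cref{prop:tamesecondmoment} for the lower bound, and then bounding the numerator of the conditional expectation in \eqref{eq:subcritpeaksurvivalcorrelation} by $\E_p[X_{-\ell}^{-\infty,\infty}(v_0)X_{-k}^{-\infty,\infty}(v_0)\mid U_{v_0}=x]$ and dividing by the already-established denominator. Your explicit remark that $\sP_{v_0}$ is independent of $U_{v_0}$ is a helpful clarification that the paper leaves implicit.
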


\begin{proof}
The upper bounds of \eqref{eq:subcritpeaksurvivalprob} follow from \cref{prop:tamedecay} and Markov's inequality. The lower bounds of \eqref{eq:subcritpeaksurvivalprob} follow from \cref{prop:tamesecondmoment}, the estimate \eqref{eq:somepeak}, and the Cauchy-Schwarz inequality. For \eqref{eq:subcritpeaksurvivalcorrelation}, we have by \cref{prop:tamesecondmoment} that
\begin{align*}
\E_p\left[ X_{-\ell}^{-\infty,\infty}(v_0) \mathbbm{1}\left(v_0 \leftrightarrow L_{-k}(v_0),\, \sP_{v_0}\right)  \mid U_{v_0}=x \right] &\leq \E_p\left[ X_{-\ell}^{-\infty,\infty}(v_0) X_{-k}^{-\infty,\infty}(v_0) \mid U_{v_0}=x \right]\\ &\preceq_p \nexp\left[-(\beta_p-1)(k\vee \ell) \right],
\end{align*}
so that the conditional expectation estimate \eqref{eq:subcritpeaksurvivalcorrelation} follows from \eqref{eq:subcritpeaksurvivalprob}.
\end{proof}

\cref{lem:subcritpeaksurvival} should be compared with analogous estimates for subcritical branching processes.

\section{The critical point is tiltable}
\label{sec:pc}

In this section we prove \cref{thm:pcpt}, which immediately implies \cref{thm:pcph,thm:pcpu}. We begin by highlighting the following special case of \cref{cor:lambdatoalpha}, which is similar to the observation powering the proof of \cite{Hutchcroft2016944}.

\begin{lemma}
\label{lem:alphapc} $\alpha_{p_c}\geq 1$.
\end{lemma}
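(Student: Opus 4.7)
The proof is essentially immediate from the earlier results, and I would just spell out the two-line derivation.

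The plan is to combine the identity $\pcl{0} = p_c$ with the lower bound on $\alpha_{\pcl{\lambda}}$ established in \cref{cor:lambdatoalpha}(1). Recall that $\chi_{p,0}(v) = \sum_{u} \tau_p(v,u)$ is just the ordinary susceptibility, so by sharpness of the phase transition (as recorded in the introduction and proved via \cite{aizenman1987sharpness, antunovic2008sharpness, duminil2015new}) we have $\pcl{0} = p_c$. Applying \cref{cor:lambdatoalpha}(1) at $\lambda = 0$ then gives
\[
\alpha_{p_c} = \alpha_{\pcl{0}} \geq \max\{0, 1-0\} = 1,
\]
which is exactly the claim. (Equivalently, one could apply the same corollary at $\lambda = 1$, using that $\pcl{1} = \pcl{0} = p_c$ by the $\lambda \mapsto 1-\lambda$ symmetry established at the start of \cref{sec:meanfieldsusceptibility}.)

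There is really no obstacle here; the content is in \cref{cor:lambdatoalpha}, whose proof in turn relied on the left-continuity of $\alpha_p$ (\cref{lem:alphacontinuity}) together with the Fekete-based definition of $\alpha_p$ and the analysis in the tiltable phase (\cref{prop:tamedecay}). The statement is separated out as its own lemma because it is this concrete inequality $\alpha_{p_c} \geq 1$ — i.e.\ the fact that the probability of crossing a thick slab at criticality decays at least as fast as $\nexp(-n)$ — that will be combined in the subsequent arguments with the lower bound on peak-survival probabilities from \cref{lem:subcritpeaksurvival} (suitably extended up to $p_c$) to produce the required contradiction showing $p_c < p_t$.
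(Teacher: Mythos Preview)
Your proof is correct and matches the paper's own proof essentially verbatim: both invoke sharpness of the phase transition to get $\pcl{0}=\pcl{1}=p_c$ and then apply \cref{cor:lambdatoalpha}(1) at $\lambda=0$ (or $\lambda=1$) to conclude $\alpha_{p_c}\geq 1$.
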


\begin{proof}
This follows immediately from the sharpness of the phase transition, which implies that $\pcl{0}=\pcl{1}=p_c$, together with \cref{cor:lambdatoalpha}.
\end{proof}

\cref{thm:pcpt} follows easily from \cref{prop:tiledAizBar} together with the following proposition, which the remainder of this section is devoted to proving.

\begin{prop}
\label{lem:bootstrap}
The estimate
	\begin{equation}
	\label{eq:bsstep6}
	\E_{p_c}\left[ \left(X_{k}^{-\infty,\infty}(v) \right)^{1-\eps} \right] 
	\preceq_{\eps}
	\begin{cases} \nexp\left[-k+o_\eps(k)\right] &k \geq 0\\
	\nexp\left[o_\eps(k)\right] &k \leq 0\\
	\end{cases}
	\end{equation}
	holds for every $v\in V$, $0<\eps \leq 1$ and $k\in \Z$.
\end{prop}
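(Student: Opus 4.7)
My plan is to prove \cref{lem:bootstrap} by a bootstrap argument whose sole a priori input is the Fekete bound $\alpha_{p_c}\geq 1$ from \cref{lem:alphapc}, which via \cref{lem:alpha} yields
\[
\P_{p_c}\!\left(v\xleftrightarrow{L_{0,\infty}(v)} L_k(v)\right)\preceq\nexp(-k)\qquad(k\geq 0,\ v\in V).
\]
The task is to promote this upper-half-space probability bound to full-space fractional-moment bounds at $p_c$, a setting in which the first moments $\E_{p_c}[X_k^{-\infty,\infty}(v)]$ may well be infinite, so that the estimates of \cref{prop:tamedecay,prop:tamesecondmoment} are unavailable. The entire argument must therefore be conducted at the level of fractional moments.

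I would first establish the downward half of \eqref{eq:bsstep6} (i.e., the case $k\leq 0$) by iterative self-improvement. For each $\gamma\geq 0$ and $\eps\in(0,1]$, let $\mathcal{A}(\gamma,\eps)$ denote the assertion that
\[
\E_{p_c}\!\left[(X_{-j}^{-\infty,\infty}(v))^{1-\eps}\right]\preceq_{\gamma,\eps}\nexp(\gamma j)
\]
for every $j\geq 0$ and $v\in V$. A trivial base case $\mathcal{A}(\gamma_0,\eps)$ follows from the deterministic inequality $X_{-j}^{-\infty,\infty}(v)\leq N_j$, where $N_j$ counts the vertices of $L_{-j}(v)$ lying within graph distance $O(j)$ of $v$, which is at most $(\max_v\deg(v))^{O(j)}$. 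The heart of the proof is a self-improvement step $\mathcal{A}(\gamma,\eps)\Rightarrow\mathcal{A}(\gamma',\eps')$ with $\gamma'<\gamma$ whenever $\gamma>0$: I decompose each open path from $v$ to $L_{-j}(v)$ at the first vertex $w\in L_\ell(v)$ attaining its maximum height $\ell\geq 0$, apply the BK inequality to split it into an upward segment from $v$ to $w$ (controlled by the a priori bound) and a downward segment from $w$ to $L_{-j}(v)$ of total drop $j+\ell$ (controlled via Hölder by $\mathcal{A}(\gamma,\eps)$). The resulting inequality features a geometric series in $\ell$ whose convergence forces the effective exponent to be strictly smaller, and iterating drives $\gamma\to 0$ for any fixed $\eps>0$ (after possibly shrinking $\eps$ slightly at each step).

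For the upward half of \eqref{eq:bsstep6} (the case $k\geq 0$), I would apply the tilted mass-transport principle to the $\Gamma$-diagonally invariant functional
\[
F(u,v)=\mathbbm{1}(u\leftrightarrow v)\,\mathbbm{1}(v\in L_k(u))\,X_k^{-\infty,\infty}(u)^{-\eps}.
\]
The key algebraic observation is that for $v\in K_\rho\cap L_{-k}(\rho)$ the identity $L_k(v)=L_0(\rho)$ (following from $L_m(v)=L_{m-k}(\rho)$) gives $X_k^{-\infty,\infty}(v)=X_0^{-\infty,\infty}(\rho)\geq 1$, so that the MTP collapses to the clean identity
\[
\E_{p_c}\!\left[(X_k^{-\infty,\infty}(\rho))^{1-\eps}\right]\asymp\nexp(-k)\,\E_{p_c}\!\left[X_{-k}^{-\infty,\infty}(\rho)\,X_0^{-\infty,\infty}(\rho)^{-\eps}\right].
\]
An application of Hölder's inequality to the right-hand side, together with the downward bound already established (applied with a slightly smaller value of $\eps$), delivers the required estimate $\E_{p_c}[(X_k^{-\infty,\infty}(\rho))^{1-\eps}]\preceq_\eps\nexp(-k+o_\eps(k))$. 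The principal obstacle is executing the self-improvement step cleanly: one must arrange the Hölder exponents so that the sum over $\ell$ converges, the new exponent $\gamma'$ is strictly smaller than $\gamma$, and the implicit constants stay bounded across iterations. Since the convergence $\gamma\to 0$ is obtained by a compactness/limit argument rather than as an explicit geometric contraction, the resulting $o_\eps(k)$ correction is ineffective, consistent with the author's remark that this part of the proof does not yield any quantitative rate.
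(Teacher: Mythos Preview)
Your plan has two genuine gaps, both in the downward half.

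\textbf{The base case is false.} There is no deterministic inequality $X_{-j}^{-\infty,\infty}(v)\leq N_j$ with $N_j$ counting vertices of $L_{-j}(v)$ within graph distance $O(j)$ of $v$. An open path from $v$ to a vertex of $L_{-j}(v)$ can have arbitrary length; even $X_0^{-\infty,\infty}(v)=|K_v\cap L_0(v)|$ is unbounded. At $p_c$ we have no a priori finiteness of any fractional moment $\E_{p_c}[(X_{-j}^{-\infty,\infty})^{1-\eps}]$, so there is no trivial $\mathcal A(\gamma_0,\eps)$ to start from.

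\textbf{The self-improvement step does not close.} In the extreme-point decomposition at the maximum height $\ell$, the ``upward segment'' is the quantity $E_{p_c}^{-\infty,\ell}(\ell;1-\delta)$: a \emph{fractional moment} of the number of vertices in $L_\ell(v)$ connected to $v$ inside the \emph{lower} half-space $L_{-\infty,\ell}(v)$. The only a priori input, $\alpha_{p_c}\geq 1$, is a \emph{probability} bound for connections through the \emph{upper} half-space $L_{0,\infty}(v)$. These are doubly mismatched (moment vs.\ probability; lower vs.\ upper half-space), and no amount of H\"older gymnastics converts one into the other. If instead you restrict the upward segment to $L_{0,\ell}(v)$ so that the a priori bound applies, the downward segment becomes a slab quantity $E_{p_c}^{-\ell,0}(-j-\ell;\cdot)$ rather than the full-space quantity you are trying to improve, and the iteration again fails to close.

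The paper bridges exactly this gap with two substantial ingredients you omit: the peak-comparison estimate (\cref{lem:peakcomparison}), proved via an exploration and Reimer's inequality, which converts the $\alpha_{p_c}\geq 1$ bound combined with the tilted mass-transport identity
\[
\E_{p_c}\bigl[X_{-k}^{-k,\infty}(\rho)\,;\,\sP_\rho(k)\bigr]\preceq \nexp(k)\,\P_{p_c}\bigl(\rho\xleftrightarrow{L_{0,\infty}}L_k\bigr)\preceq 1
\]
into a slab moment bound restricted to the (very likely) event $\{v\nleftrightarrow L_{\ell-1}(v)\}$; and the critical peak-survival estimate (\cref{lem:peaksurvival}), obtained by taking $p\uparrow p_c$ through the subcritical analysis of \cref{lem:subcritpeaksurvival} and using left-continuity of $\alpha_p$. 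Together these give the slab fractional-moment starting point $E_{p_c}^{-k-r,0}(-k;1-\eps)\preceq_\eps\nexp[o(k)+O(r)]$ (\cref{lem:startingpoint}), after which a bootstrap of the general shape you describe does succeed. The ineffectivity, incidentally, enters through \cref{lem:peaksurvival}, not through the bootstrap itself.

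Your upward half via the tilted mass-transport principle and H\"older is essentially correct and matches \cref{lem:HolderMTP}, but it is conditional on already having the downward half.
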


\begin{proof}[Proof of \cref{thm:pcpt} given \cref{lem:bootstrap}]
Suppose for contradiction that $p_c=p_c(\lambda)$ for some $\lambda \in (0,1/2)$, and fix one such choice of $\lambda$. 
Then we have that
\begin{multline}
\E_{\pcl{\lambda}}\left[|K_v|_{v,\lambda}^{3/4}\right] \preceq 
\E_{\pcl{\lambda}}\left[\left(\sum_{k\in \Z} \nexp\left[\lambda k\right] X_k^{-\infty,\infty}(v)\right)^{3/4}\right] \hspace{-0.6em}\\
\leq
\sum_{k \in \Z} \nexp\left[\frac{3}{4}\lambda k\right] \E_{\pcl{\lambda}}\left[\left(X_k^{-\infty,\infty}(v)\right)^{3/4}\right],
\end{multline}
so that applying \cref{lem:bootstrap} and the assumption that $p_c=\pcl{\lambda}$ we deduce that
\begin{align}
\E_{\pcl{\lambda}}\left[|K_v|_{v,\lambda}^{3/4}\right] &\preceq \sum_{k \geq 0} \nexp\left[\frac{3}{4} \lambda k - k +o(k)\right]
+
\sum_{k < 0} \nexp\left[\frac{3}{4}\lambda k  +o(k)\right] <\infty.
\end{align}
This contradicts \cref{prop:tiledAizBar}. 
\end{proof}

\paragraph{Proof overview.} Let us now briefly outline the strategy by which we will prove \cref{lem:bootstrap}. The argument is much simpler in the case that $\Gamma$ is transitive and $(G,\Gamma)$ has simple layers, as defined in \cref{subsec:layersdef}, and we restrict to this case for the purposes of this overview. We recall that, in this setting, the root $\rho$ and the uniform separating layers $(L_n)_{n\in \Z}$ can be taken to be deterministic, so that the percolation configuration $G[p]$ is the only source of randomness: to emphasize this fact we will 
return to using the notation $\bP_p$ and $\bE_p$ when dealing with this special case. 
 Recall that $\sP_v$ denotes the event that $v$ is the peak (unique highest point) of its cluster, 
and  define $\sP_v(k)$  to be the event that $v$ is the peak of the set of vertices that are connected to $v$ by an open path in $L_{-k,\infty}(v)$.
Similarly to \eqref{eq:MTPPeakBasic}, the tilted mass-transport principle implies that
\begin{equation}
\label{eq:PeakMTP_overview}
\bE_{p_c}\left[ X_{-k}^{-k,\infty}(v) \,;\, \sP_v(k) \right] \preceq \nexp(k) \bP_{p_c}\Bigl( v \xleftrightarrow{L_{0,\infty}(v)} L_k(v) \Bigr) \preceq \nexp\left[ -(\alpha_{p_c}-1) k\right] \preceq 1
\end{equation}
for every $k\geq 0$, where we have used \cref{lem:alpha} in the second inequality and \cref{lem:alphapc} in the third. To proceed, we would ideally like to remove the restriction to the event $\sP_\rho(k)$ from the left hand side.
Unfortunately, we did not find any way to do this directly. Instead, we first use an exploration argument with Reimer's inequality to relate the expectation on the left hand side to the expectation of a similar quantity restricted to the (very likely) event that $v$ is not connected to a high layer. This is done in \cref{subsec:peak_comparison}, where we obtain that (under the simplifying assumptions above)
\begin{equation}
\label{eq:simple_peak_comparison_overview}
\bE_{p_c}\left[X_{-k}^{-k-r,0}(v) \,; v \nxleftrightarrow{L_{-k-r,\infty}(v)} L_{\ell}(v) \right]\\
\leq \myfrac[0.4em]{\bE_{p_c}\left[ X_{-k-\ell}^{-k-\ell-r,0}(v) \,\mid \sP_v(k+\ell+r) \right]}{\bP_{p_c}\left(v \leftrightarrow L_{-\ell}(v) \mid \sP_v(k+\ell+r) \right)}
\end{equation}
for every $k,r,\ell \geq 0$. The bound in the general case is more complicated and is given in \cref{lem:peakcomparison}. 

In order to apply this bound, we then prove a lower bound on the denominator on the right hand side when $p=p_c$. This is done in \cref{lem:peaksurvival}, which extends \cref{lem:subcritpeaksurvival} to the critical case. Combining this estimate with \eqref{eq:PeakMTP_overview}  and \eqref{eq:simple_peak_comparison_overview}  yields that
\begin{equation}
\label{eq:simple_peak_comparison_overview2}
\bE_{p_c}\left[X_{-k}^{-k-r,0}(v) \,; v \nxleftrightarrow{L_{-k-r,\infty}(v)} L_{\ell}(v) \right]\\
\preceq_p \nexp\bigl[-\alpha_{p_c}k +O(r) + o(\ell)\bigr]
\end{equation}
for every $k,r,\ell \geq 0$. On the other hand, \cref{lem:probinfsup} and \cref{lem:alphapc} imply that
\begin{equation}
\label{eq:simple_peak_comparison_overview3}
\bP_{p_c}\left(v \xleftrightarrow{L_{-k-r,\infty}(v)} L_\ell(v) \right) \preceq \nexp\left[-\alpha_{p_c}\ell +O(r+k)\right] \preceq \nexp\left[-\ell +O(r+k)\right].
\end{equation}
Putting \eqref{eq:simple_peak_comparison_overview2} and \eqref{eq:simple_peak_comparison_overview3} together, we obtain via elementary analysis (\cref{lem:startingpoint}) that
\begin{equation}
\label{eq:simple_peak_comparison_overview4}
\bE_{p_c}\left[ \left(X_{-k}^{-k-r,0}(v)\right)^{1-\eps}\right]
\preceq_\eps \nexp\left[ o_\eps(k) +O(r)\right]
\end{equation}
for every $\eps>0$ and $k,r \geq 0$.

To finish the proof, we bootstrap from the slab fractional moment estimate \eqref{eq:simple_peak_comparison_overview4} to the full-space fractional moment estimate claimed in \cref{lem:bootstrap}. At an intuitive level, the ideas used to do this are similar to those used in \cref{subsec:decomps,subsec:expdecay,subsec:tiltableslab}. However, substantial technicalities arise since the BK inequality and the tilted mass-transport principle are much less well-suited to dealing with fractional moments than with first moments. We develop the tools used to carry out this analysis in \cref{subsec:toolkit}, and perform the analysis itself in \cref{subsec:bootstrap}.

\subsection{The peak-comparison estimate}
\label{subsec:peak_comparison}



The goal of this section is to implement the first step of the strategy outlined above, namely, to enlarge the event in the restricted expectation in \eqref{eq:simple_peak_comparison_overview}. Since the proof in the general case is rather technical, we begin by stating and proving the following special case. We define $\sP_v(k)$  to be the event that $v$ is the peak of the set of vertices that are connected to $v$ by an open path in $L_{-k,\infty}(v)$.

\begin{lemma}[Peak-comparison estimate, simplified]
\label{lem:peakcomparisonsimple}
Suppose that $\Gamma$ is transitive and that $(G,\Gamma)$ has simple layers. Then 
\begin{equation}  
\bE_p\left[X_{-k}^{-k-r,0}(v) \,; v \nxleftrightarrow{L_{-k-r,\infty}(v)} L_{\ell}(v) \right]
\leq \myfrac[0.4em]{ \bE_p\left[ X_{-k-\ell}^{-k-\ell-r,0}(v) \,\mid \sP_v(k+\ell+r) \right]}{\bP_p\left(v \leftrightarrow L_{-\ell}(v) \mid \sP_v(k+\ell+r) \right)}
\end{equation}
for every $v\in V$, $k,\ell,r\geq 0$, and $0<p \leq p_c$.
\end{lemma}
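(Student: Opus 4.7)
The plan is to cross-multiply the stated inequality into the equivalent form
\[
\bE_p\bigl[X_{-k}^{-k-r,0}(v);\, A\bigr] \cdot \bP_p\bigl(v \leftrightarrow L_{-\ell}(v),\, \sP_v(k+\ell+r)\bigr) \leq \bE_p\bigl[X_{-k-\ell}^{-k-\ell-r,0}(v);\, \sP_v(k+\ell+r)\bigr],
\]
where $A = \{v \nxleftrightarrow{L_{-k-r,\infty}(v)} L_\ell(v)\}$. This reformulation is legitimate because in simple layers every open path from $v$ to a vertex of $L_{-k-\ell}(v)$ must traverse $L_{-\ell}(v)$, so on $\sP_v(k+\ell+r)$ the event $\{X_{-k-\ell}^{-k-\ell-r,0}(v) > 0\}$ is already contained in $\{v \leftrightarrow L_{-\ell}(v)\}$, making the latter restriction automatic in the numerator.

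To prove the product inequality I would take two independent Bernoulli$(p)$ configurations $\omega$ and $\omega'$ and analyse the joint event on which $\omega$ realises $A$ and $\omega'$ realises $\sP_v(k+\ell+r) \cap \{v \leftrightarrow L_{-\ell}(v)\}$. A canonical exploration of $v$'s halfspace cluster under $\omega'$---which the peak condition confines to $L_{-k-\ell-r,0}(v)$---singles out a deterministic vertex $w \in L_{-\ell}(v)$ in this cluster while revealing $\omega'$ on every cluster edge and every boundary edge leaving the cluster. Using transitivity, fix an automorphism $\gamma_w \in \Gamma$ with $\gamma_w v = w$ and define the merged configuration $\tilde\omega$ to agree with $\omega'$ on the revealed edges and with $\gamma_w\omega$ on the remaining edges with both endpoints in $L_{-k-\ell-r,-\ell-1}(v)$. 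These two prescriptions act on disjoint edge sets, so $\tilde\omega$ inherits the Bernoulli$(p)$ distribution.

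The main verification is that whenever $\omega$ realises $A$ and $\omega'$ realises the denominator event, the merged $\tilde\omega$ still satisfies $\sP_v(k+\ell+r)$, and moreover the bijection $u \mapsto \gamma_w u$ from $L_{-k}(v)$ to $L_{-k-\ell}(v)$ sends every $u$ in $v$'s $\omega$-cluster in $L_{-k-r,0}(v)$ to a vertex $\gamma_w u \in L_{-k-\ell}(v)$ in $v$'s $\tilde\omega$-cluster in $L_{-k-\ell-r,0}(v)$: the $\omega'$-path from $v$ to $w$ (confined to $L_{-\ell,0}(v)$ by the peak condition) concatenates with the $\gamma_w$-translated $\omega$-path from $w$ to $\gamma_w u$ (confined to $L_{-k-\ell-r,-\ell}(v)$) to give the required open path. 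Taking expectations over the independent pair and using that $\tilde\omega$ is Bernoulli$(p)$ yields the cross-multiplied inequality, so $X_{-k}^{-k-r,0}(v)(\omega)\leq X_{-k-\ell}^{-k-\ell-r,0}(v)(\tilde\omega)$ on the joint event gives the lemma. The main technical obstacle will be arranging the exploration so that peak preservation holds: a priori the altered lower piece could open a new upward connection through a boundary edge into some previously unseen $\omega'$-cluster that rises above $L_0(v)$, and the exploration must therefore reveal the full boundary of $v$'s $\omega'$-cluster so that every such dangerous boundary edge is inherited from $\omega'$, where $\sP_v(k+\ell+r)$ forces it to be closed.
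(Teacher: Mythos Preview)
Your cross-multiplied reformulation is correct, but the coupling cannot simultaneously deliver peak preservation and path lifting; this is a genuine gap, not just a detail to be filled in.

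If you explore the \emph{full} $\omega'$-cluster of $v$ in $L_{-k-\ell-r,\infty}(v)$ together with its entire edge-boundary (as your last paragraph proposes), then every edge leaving this cluster is $\omega'$-closed and hence $\tilde\omega$-closed. This does preserve $\sP_v(k+\ell+r)$ for $\tilde\omega$, but it also freezes the $\tilde\omega$-cluster of $v$ to equal the $\omega'$-cluster. In particular, the very first edge of your translated $\gamma_w\omega$-path out of $w$ is a revealed boundary (or cluster) edge, takes its value from $\omega'$, and may well be closed in $\tilde\omega$. So $u\mapsto\gamma_w u$ need not land in the $\tilde\omega$-cluster, and the inequality $X_{-k}^{-k-r,0}(v)(\omega)\le X_{-k-\ell}^{-k-\ell-r,0}(v)(\tilde\omega)$ fails.

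If instead you explore only the ``upper'' cluster $K'$ (using edges not having both endpoints in $L_{-\infty,-\ell}(v)$, as the paper does), path lifting works because the translated path lives on unrevealed edges. But now peak preservation fails: the $\tilde\omega$-cluster can leave $K'$ from \emph{any} vertex $a\in K'\cap L_{-\ell}(v)$, not just $w$, and from there rise back through unrevealed edges in the upper half-space. Your hypothesis $A$ translates to a constraint on the $\gamma_w\omega$-cluster of $w$ only; it says nothing about the $\gamma_w\omega$-cluster of $a$ when $\gamma_w^{-1}a\in L_0(v)\setminus\{v\}$.

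This tension is exactly why the paper does not attempt a coupling. It explores only $K'$ and its closure $\overline{K}'$, observes that on $\{K'=A,\overline{K}'=B\}$ the peak event equals the complement of the increasing off-$B$ event $\sB(A,B)$, and then applies Reimer's inequality (in the van den Berg--Fiebig form, for an increasing event against an increasing-$\cap$-decreasing event) to get
\[
\bP_p\bigl(\{u\xleftrightarrow{L_{-k-\ell-r,-\ell}}w\}\cap\sE(u,B)\mid \sP_v(k+\ell+r),\,K'=A,\overline{K}'=B\bigr)\ \ge\ \bP_p\bigl(\{u\xleftrightarrow{L_{-k-\ell-r,-\ell}}w\}\cap\sE(u,B)\bigr).
\]
The decreasing piece $\sE(u,B)$ is precisely what absorbs the peak-preservation worry, and Reimer is what lets you carry it along without any explicit merged configuration.
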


(The condition that $p \leq p_c$ is not really necessary, but slightly simplifies some details.)

\begin{figure}[t]
\centering
\includegraphics[height=0.3\textwidth]{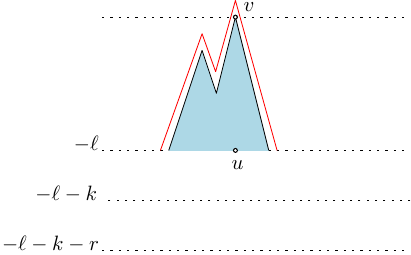} \hspace{1cm} \includegraphics[height=0.3\textwidth]{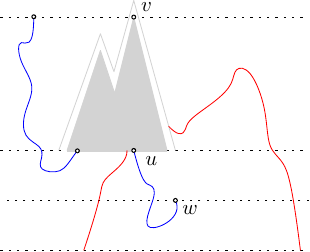}
\caption{Schematic illustration of the proof of the peak-comparison estimate in the simplified case. Left: We condition on the set of vertices that can be reached from $v$ by an open path contained in the complement of the set of edges with both endpoints in the lower half-space $L_{-\infty,-\ell}$, together with the open edges connecting them, and also condition on the event that $v$ is the peak of this cluster. The blue shaded region represents this cluster, while the red curve represents the edges in the boundary of the cluster that are known to be closed.
Right: Pick some vertex $u$ in the lower boundary of the revealed cluster, as well as some vertex $w$ in $L_{-\ell-k}$. If they all occur, then the events that $u$ is connected to $w$ in $L_{-\ell-k-r,-\ell}$, that $u$ is not connected to $L_0$ in $L_{-r-\ell-k,\infty}$ off the revealed set, and that $v$ is not the peak of its connected component in $L_{-\ell-k-r,\infty}$ must occur disjointly from each other (on the smaller probability space not including the status of edges we revealed in the first step, shown in grey). This puts us in a situation to apply Reimer's inequality, from which \cref{lem:peakcomparison} can be deduced.
}
\label{fig:Reimer}
\end{figure}

\begin{proof}[Proof of \cref{lem:peakcomparisonsimple}]
Fix $k,\ell,r\geq 0$, $0< p \leq p_c$ and $v\in V$. Since  $p \leq p_c$, every cluster is finite almost surely \cite{timar2006percolation,Hutchcroft2016944}.
 To lighten notation, we write $L_n=L_n(v)$ and $L_{m,n}=L_{m,n}(v)$. Let $A$ be a set of vertices in $L_{-\ell,\infty}$, and let $B$ be a set of edges none of which has both endpoints in $L_{-\infty,-\ell}$. Consider the event
\[
\sB(A,B) = \Bigl\{A \xleftrightarrow{L_{-r-k-\ell,\infty}} L_0 \text{ off $B$} \Bigr\},
\]
i.e., that $A$ is connected to $L_0$ by an open path in $L_{-r-k-\ell,\infty}$ that does not include any edges of the set $B$.
Let $u \in L_{-\ell}$, let $w \in L_{-\ell-k}=L_{-k}(u)$, and let $\sE(u,B)$ be the event 
\[\sE(u,B) = \big\{ u \xleftrightarrow{L_{-k-\ell-r,\infty}} L_0 \text{ off $B$}\big\}^c.\]
 We claim that 
\begin{equation}
 \{ u \xleftrightarrow{L_{-k-\ell-r,-\ell}} w \} 
\cap 
\sE(u,B)
\cap
\sB(A,B) \\= \left( \{ u \xleftrightarrow{L_{-k-\ell-r,-\ell}} w \} 
\cap 
\sE(u,B) \right) \circ \sB(A,B).
\end{equation}
Indeed, suppose that the event on the left-hand side holds. Then there exists a set of open edges  $W_1$ in $L_{-k-\ell-r,-\ell}$ (which is necessarily disjoint from $B$) that form a path connecting $u$ to $w$, and a set of open edges $W_2$ in $L_{-k-\ell-r,\infty}$ that is disjoint from $B$ and forms a path from $A$ to $L_0$. $W_2$ must be disjoint from $W_1$, since otherwise  $u$ would be connected to $L_0$ in $L_{-k-\ell-r,\infty}$ off of $B$ and the event $\cE(u,B)$ would not occur.  
 Thus, the union of the set $W_1$ and the set of all the closed edges touching cluster of $u$ is a finite witness for $\{ u \xleftrightarrow{L_{-k-\ell-r,-\ell}} w\} 
\cap 
\sE(u,B)$, while  the set $W_2$ is a finite witness for $\sB(A,B)$ disjoint from this set. This yields the claimed equality of events.
Applying Reimer's inequality, we obtain that 
\begin{multline}
\bP_p\left(
\{ u \xleftrightarrow{L_{-k-\ell-r,-\ell}} w \} 
\cap 
\sE(u,B)
\cap
\sB(A,B) \right) \leq
\\ \bP_p\left( 
 \{ u \xleftrightarrow{L_{-k-\ell-r,-\ell}} w \} 
\cap 
\sE(u,B) \right)
\bP_p\left(
\sB(A,B)\right).
\label{eq:Reimer_exploration_simplified}
\end{multline}

Let $K'$ be the set of vertices that are connected to $v$ by a path consisting of open edges none of which have both endpoints in $L_{-\infty,-\ell}$, and let $\overline{K}'$ be the set of edges have at least one endpoint in $K'$ and do not have both endpoints in $L_{-\infty,-\ell}$. (Note that $\overline{K}'$ is determined by $K'$.) 
Let $\cK'$ be the set of pairs $(A,B)$ such that $\peak(A)=v$, $A \cap L_{-\ell} \neq \emptyset$, and the event $\{ K' = A, \overline{K}' =B \}$ has positive probability, and fix a pair $(A,B) \in \cK'$. Observe that we have the equality of events
\begin{equation}\{ K' = A, \overline{K}' =B \} \cap \sP^c_v(k+\ell+r) = \{K'=A, \overline{K}'=B\} \cap \sB(A,B).\end{equation}
Moreover, the event $\{K'=A, \overline{K}'=B\}$ is independent of the event $\left\{u \xleftrightarrow{L_{-\ell-k-r,0}} w\right\} \cap \sE(u,B) \cap \sB(A,B)$, since the former event depends only on edges in $B$ while the latter depends only on edges outside of $B$. We deduce that if $\bP_p(K'=A, \overline{K}'=B,\, \sP^c_v(k+\ell+r))>0$ then
 \begin{multline}
\bP_p\left(\left\{u \xleftrightarrow{L_{-\ell-k-r,0}} w\right\} \cap \sE(u,B)
 \mid  K'=A, \overline{K}'=B,\, \sP^c_v(k+\ell+r) \right)\\
=\bP_p\left(\left\{u \xleftrightarrow{L_{-\ell-k-r,0}} w\right\} \cap \sE(u,B)
 \mid  K'=A, \overline{K}'=B,\, \sB(A,B) \right)\\
 =\bP_p\left(\left\{u \xleftrightarrow{L_{-\ell-k-r,0}} w\right\} \cap \sE(u,B)
 \mid \sB(A,B) \right),
 \end{multline}
 and hence by \eqref{eq:Reimer_exploration_simplified} that
 \begin{multline}
   \bP_p\left(\left\{u \xleftrightarrow{L_{-\ell-k-r,0}} w\right\} \cap \sE(u,B)
 \mid  K'=A, \overline{K}'=B,\, \sP^c_v(k+\ell+r) \right) \\\leq 
 \bP_p\left(\left\{u \xleftrightarrow{L_{-\ell-k-r,0}} w\right\} \cap \sE(u,B)
  \right).
 \end{multline}
 %
Taking complements, it follows that if $\bP_p(K'=A, \overline{K}'=B,\, \sP^c_v(k+\ell+r))>0$ then
 \begin{multline}
\bP_p\left(\left\{u \xleftrightarrow{L_{-\ell-k-r,0}} w\right\} \cap \sE(u,B)
 \mid K'=A, \overline{K}'=B,\, \sP_v(k+\ell+r) \right)\\  
\geq 
 \bP_p\left(\left\{u \xleftrightarrow{L_{-\ell-k-r,0}} w\right\} \cap \sE(u,B)
 \right).
 \label{eq:simplified_first_Reimer}
 \end{multline}
 On the other hand, the same inequality holds trivially if $\bP_p(K'=A, \overline{K}'=B,\, \sP^c_v(k+\ell+r))=0$ by the aforementioned independence of $\{u \xleftrightarrow{L_{-\ell-k-r,0}} w\} \cap \sE(u,B)$ and $\{K'=A, \overline{K}'=B\}$.

Let $u \in L_{-\ell} \cap A$. Summing over $w \in L_{-k-\ell}=L_{-k}(u)$ in the inequality \eqref{eq:simplified_first_Reimer}, we obtain that
\begin{align}
\bE_p\Big[  X_{-k-\ell}^{-k-\ell-r,0}(v)\mid  &K'=A, \overline{K}'=B,\, \sP_v(k+\ell+r) \Big]\nonumber\\
&\geq \bE_p\Big[  X_{-k}^{-k-r,0}(u)\mid  K'=A, \overline{K}'=B,\, \sP_v(k+\ell+r) \Big]\nonumber
\\
&\geq \bE_p\Big[  X_{-k}^{-k-r,0}(u) \mathbbm{1}\left[\sE(u,B)\right]\mid  K'=A, \overline{K}'=B,\, \sP_v(k+\ell+r) \Big]\nonumber
\\
&\geq   \bE_p\left[ X^{-k-r,0}_{-k}(u) \mathbbm{1}\left[\sE(u,B)\right]  \right]
\geq \bE_p\left[ X^{-k-r,0}_{-k}(u) \mathbbm{1}\left(u \nxleftrightarrow{L_{-k-r-\ell,\infty}} L_0\right)  \right]
 \label{eq:simplified1}
 \end{align}
where all inequalities other than the third are trivial. 
 It follows by transitivity that 
\begin{equation}
\bE_p\Big[  X_{-k-\ell}^{-k-\ell-r,0}(v)\mid  K'=A, \overline{K}'=B,\, \sP_v(k+\ell+r) \Big] \geq \bE_p\left[ X^{-k-r,0}_{-k}(v) \mathbbm{1}\left(v \nxleftrightarrow{L_{-k-r,\infty}} L_\ell\right)  \right]
\end{equation}
for every $(A,B) \in \cK'$. Summing over the possible values of $A$ and $B$, we obtain that
\begin{multline}
\bE_p\Big[  X_{-k-\ell}^{-k-\ell-r,0}(v)\mid \sP_v(k+\ell+r) \Big]\\ \geq \bE_p\left[ X^{-k-r,0}_{-k}(v) \mathbbm{1}\left(v \nxleftrightarrow{L_{-k-r,\infty}} L_\ell\right)  \right] \bP_p\bigl(v \leftrightarrow L_{-\ell}\mid \sP_v(k+\ell+r)\bigr),
\end{multline}
which is equivalent to the claim.
\end{proof}

We now generalize \cref{lem:peakcomparisonsimple} to the general case.  As in \cref{sec:Fekete}, this will involve implementing various finite-energy and index-shifting arguments to deal with the quasi-transitivity and the inhomogeneity of the uniform separating layers decomposition. The additional details required are  not very interesting, and the reader may wish to skip this proof on a first reading of the paper.

\begin{lemma}[Peak-comparison estimate]
\label{lem:peakcomparison}
There exists a constant $r_0$ such that
\begin{multline}  
\sup_{v \in V, x\in [0,1]} \E_p\left[ X^{-k-r,0}_{-k}(v) \mathbbm{1}\left(v \nxleftrightarrow{L_{-k-r-2,\infty}(v)} L_{\ell-1}(v)\right) \mid U_{v}=x \right] 
\\\preceq_p \inf_{v \in V, x \in [0,1]}
\myfrac{\sum_{i=0}^{r_0}\E_p\left[  X^{-k-r-r_0-\ell,0}_{-k-\ell-i}(v) \mid U_v=x,\,  \sP_v(k+\ell+r+r_0) \right]}{\P_p(v \leftrightarrow L_{-\ell} \mid U_v=x,\,\sP_v(k+\ell+r+r_0))}
\label{eq:startingpoint}
\end{multline}
for every $k,\ell,r\geq 0$ and $0<p \leq p_c$.
\end{lemma}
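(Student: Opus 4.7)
The plan is to follow the architecture of the simplified proof (Lemma~\ref{lem:peakcomparisonsimple} above), adapting it to the general quasi-transitive case by (i)~absorbing the random layer offsets $U_v$ into a finite set of bounded shifts, and (ii)~using finite-energy together with quasi-transitivity to convert between statements about specific boundary vertices $u$ produced by the exploration and statements about a single reference vertex $v$ in a prescribed orbit, at the cost of summing over at most $r_0$ auxiliary layers.

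I would begin by fixing $v\in V$ and $x\in[0,1]$, conditioning on $U_v=x$ throughout. Define $K'$ to be the set of vertices connected to $v$ by an open path using only edges with at least one endpoint in $L_{-\ell+1,\infty}(v)$, and let $\overline K'$ be the induced (deterministic function of $K'$) edge boundary. Let $\cK'$ be the set of admissible $(A,B)$ with $A\cap L_{-\ell}(v)\neq\emptyset$ and $v=\peak(A)$. Exactly as in Lemma~\ref{lem:peakcomparisonsimple}, for every such $(A,B)$ and every $u\in A\cap L_{-\ell}(v)$, Reimer's inequality applied to the disjoint occurrence of a path from $u$ to some $w$ in a deep layer (witness disjoint from $\overline{K}'$), the disconnection event $\sE(u,B)$, and the event $\sB(A,B)$ yields
\[
\bP_p\!\left(\{u\xleftrightarrow{L_{-k-r-\ell-r_0,-\ell+1}}w\}\cap\sE(u,B)\,\bigm|\,K'=A,\overline K'=B,\sP_v(k+\ell+r+r_0)\right)
\]
bounded below by the unconditional probability of the same event. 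Summing over $w$ in a deep layer of $u$, dropping the indicator of $\sE(u,B)$, and using that the ``not-connected-to-$L_{\ell-1}(v)$'' event implies the corresponding event $\sE(u,B)$ for every boundary vertex $u$ produces a bound analogous to~\eqref{eq:simplified1}.

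The two sources of inhomogeneity are then dealt with by finite-energy path arguments of exactly the type used in Lemmas~\ref{lem:probinfsup} and~\ref{lem:bestworst}. First, the vertex $u\in L_{-\ell}(v)\cap A$ produced by the exploration may lie in an arbitrary $\Gamma$-orbit, so we pay a bounded finite-energy cost (of order $p^{O(r_0)}$, hence the $\preceq_p$) to replace $u$ by a vertex $u'$ lying in the same orbit as our prescribed reference vertex and at normalized height differing from $u$ by at most $r_0$. This substitution is what produces the $+r_0$ offset in all the slab indices on the right-hand side as well as the sum $\sum_{i=0}^{r_0}$: the layer $L_{-k-\ell}(v)$ relative to $v$ may be any of $L_{-k-\ell-i}(u')$ for $i\in\{0,\ldots,r_0\}$ depending on the relative offset of $U_{u'}$ and $U_v$. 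Similarly, the ``$-2$'' shift on the LHS and the ``$\ell-1$'' in place of $\ell$ are introduced so that the path from $u$ to $L_{\ell-1}(v)$ constructed during the Reimer step lies entirely in the ``inner'' slab $L_{-k-r-\ell,\ell-1}(u)$, with one layer of slack on each side to absorb the fact that $\nlog\Delta$ jumps by amounts in $[-1,1]$ along edges and $U_{u'}\neq U_v$ in general.

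Finally, taking the supremum over $v,x$ on the left-hand side and the infimum over $v,x$ on the right-hand side follows because the finite-energy substitution above is valid for every choice of starting vertex and offset, and the unconditional probability $\bP_p(v\leftrightarrow L_{-\ell}(v)\mid U_v=x,\sP_v(k+\ell+r+r_0))$ in the denominator, having been extracted from the sum-over-$(A,B)$ step exactly as in the simplified proof, is likewise at the infimum over $v,x$ after the same finite-energy comparison. The main obstacle is purely bookkeeping: one must verify that after both substitutions the resulting slab in which the disjoint path is found remains contained in $L_{-k-\ell-r-r_0,0}(u')$, and that all layer indices align correctly after rewriting $L_\cdot(u')$ in terms of $L_\cdot(v)$; the $+r_0$ and $\pm 1,\pm 2$ shifts in the statement are precisely chosen to make this work uniformly in $v,x$.
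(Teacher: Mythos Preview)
Your outline is essentially the paper's own approach: Reimer's inequality applied to the exploration picture exactly as in the simplified lemma, plus a finite-energy orbit-change step of the \cref{lem:probinfsup}/\cref{lem:bestworst} type to move from the arbitrary boundary vertex $u$ to a vertex in a prescribed orbit, with the $\sum_{i=0}^{r_0}$ and the $\pm 1,\pm 2$ shifts absorbing the resulting layer-index slop. One point the paper makes explicit that you leave implicit: the orbit-changing path $\gamma$ from $u$ down to the reference vertex is included \emph{inside} the Reimer step (as part of the first witness, together with $\{u\xleftrightarrow{}w\}\cap\sE(u,B)$), and the fact that the $\sB(A,B)$-witness must also avoid $\gamma$ is deduced from $\sE(u,B)$ itself (if the witness touched $\gamma$ it would connect $u$ to the high slab off $B$); only afterwards is a finite-energy argument used to drop the $\{\gamma\text{ open}\}$ requirement. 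Your phrasing ``pay a bounded finite-energy cost to replace $u$ by $u'$'' is the right idea but does not by itself explain why this substitution is compatible with the disjointness in Reimer's inequality, so when you write it out, make sure the orbit-changing path enters at that point in the argument rather than as a post-hoc correction.
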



\begin{proof}[Proof of \cref{lem:peakcomparison}]
Fix $k,\ell,r\geq 0$, $0< p \leq p_c$ and a vertex $v_0\in V$.  As before, since $p \leq p_c$, every cluster is finite almost surely. Condition on the random root $\rho$ and the random variable $U$ used to define the uniform separating layers decomposition. To lighten notation, we write $\bP'_p$ for the associated conditional probabilities, and write $L_{m,n}=L_{m,n}(\rho)$. Let $A$ be a set of vertices and let $B$ be a set of edges none of which has both endpoints in $L_{-\infty,-\ell}(\rho)$. Similarly to the simplified setting, we consider the event
\[
\sB(A,B) = \Bigl\{A \xleftrightarrow{L_{-r-k-\ell,\infty}} S_{0,\infty}(\rho) \text{ off $B$} \Bigr\},
\]
i.e., that $B$ is connected to $S_{0,\infty}(\rho)=\{v\in V : \nlog \Delta(\rho,v)\geq 0\}$ by an open path in $L_{-r-k-\ell,\infty}$ that does not include any edges of the set $A$. Similarly to above, we let $K'$ be the set of vertices that are connected to $\rho$ by a path consisting of open edges none of which have both endpoints in $L_{-\infty,-\ell}$, let $\overline{K}'$ be the set of edges have at least one endpoint in $K'$ and do not have both endpoints in $L_{-\infty,-\ell}$, and let $\cK'$  be the set of pairs $(A,B)$ such that $\peak(A)=v$, $A \cap L_{-\ell} \neq \emptyset$, and the event $\{ K' = A, \overline{K}' =B \}$ has positive $\bP'_p$-probability.

Fix a pair $(A,B) \in \cK'$. Choose a vertex $u \in A \cap L_{-\ell}$. By a similar argument to that of \cref{lem:probinfsup}, there exists a constant $n_0$ and a path $u=u_0,u_1,\ldots,u_{n}$ such that $n \leq n_0$, $\nlog \Delta(u,u_i)$ is decreasing, $\nlog \Delta(u,u_n) \leq -1$, and $[u_n]=[v_0]$. In particular, $v:=u_n \in L_{-\ell-n_0,-\ell-1}$. Let $\gamma$ be the edge set of such a path. Let $w\in L_{-\ell-k-n_0,-\ell-k-1}$. We claim that if $r \geq n_0$ then we have the equality of events
\begin{multline}
\{ \gamma \text{ open}\} \cap \{ u \xleftrightarrow{L_{-k-\ell-r,-\ell}} w \} 
\cap 
\sE(u,B)
\cap
\sB(A,B) \\= \left( \{\gamma \text{ open}\} \cap \{ u \xleftrightarrow{L_{-k-\ell-r,-\ell}} w \} 
\cap 
\sE(u,B) \right) \circ \sB(A,B),
\end{multline}
where, similarly to before, we write
\[
\sE(u,B) = \Bigl\{ u \xleftrightarrow{L_{-k-\ell-r,\infty}} S_{0,\infty}(\rho) \text{ off $B$}\Bigr\}^c.
\]
Indeed, suppose that the event on the left-hand side holds. Then there exists a set of open edges  $W_1$ in $L_{-k-\ell-r,-\ell}$ (which is necessarily disjoint from $B$) that form a path connecting $u$ to $w$, and a set of open edges $W_2$ in $L_{-k-\ell-r,\infty}$ that is disjoint from $B$ and forms a path from $A$ to $S_{0,\infty}(\rho)$. The set $W_2$ must be disjoint from $W_1$ and $\gamma$, since otherwise  $u$ would be connected to $S_{0,\infty}(\rho)$ by an open path in $L_{-k-\ell-r,\infty}$ that does not use any edges of $B$ (this is where we use that $r \geq n_0$). Thus, the union of the set $W_1$ together with $\gamma$ and the set of all the closed touching the cluster of $u$ is a finite witness for $\{\gamma \text{ open}\}\cap \{ v \xleftrightarrow{L_{-k-\ell-r,-\ell}(\rho)} u\} 
\cap 
\sE(u,B)$, and the set $W_2$ is a finite witness for $\sB(A,B)$ disjoint from this set. This yields the claimed equality of events.
Applying Reimer's inequality, we deduce that 
\begin{multline}
\bP'_p\left(
\{\gamma \text{ open} \}\cap 
\{ u \xleftrightarrow{L_{-k-\ell-r,-\ell}} w \} 
\cap 
\sE(u,B)
\cap
\sB(A,B)\right) \leq
\\ \bP'_p\left(\{\gamma \text{ open} \}\cap 
 \{ u \xleftrightarrow{L_{-k-\ell-r,-\ell}} w \} 
\cap 
\sE(u,B) \right)
\bP'_p\left(
\sB(A,B) \right)
\label{eq:Reimer1974}
\end{multline}
if $r \geq n_0$.

Now, as before we have that
$\{ K' = B, \overline{K}' =A \} \cap \sP^c_\rho(k+\ell+r) = \{K'=A, \overline{K}'=B\} \cap \sB(A,B)$ and that the events $\{ K' = B, \overline{K}' =A \} $ and $\{\gamma$ open$\} \cap \{ u \xleftrightarrow{L_{-k-r-\ell,-\ell}} w \}  \cap \cE(u,B) \cap \sB(A,B)$ are independent. Thus, applying \eqref{eq:Reimer1974} and arguing as in the simplified case yields that if $r \geq n_0$ then
 \begin{multline}
 \label{eq:Reimer1975}
\bP'_p\left(\{\gamma \text{ open}\} \cap \left\{ u \xleftrightarrow{L_{-k-r-\ell,-\ell}} w \right\} \cap \sE(u,B)
 \mid  K'=A, \overline{K}'=B,\, \sP_\rho(k+\ell+r) \right)\\  
\geq 
 \bP'_p\left(\{\gamma \text{ open}\} \cap \left\{ u \xleftrightarrow{L_{-k-r-\ell,-\ell}} w \right\} \cap \sE(u,B)
 \right).
 \end{multline}
Next, we observe that if $r \geq n_0$ then
\begin{multline}
\label{eq:Reimer_Finite_Energy}
\bP'_p\left(\{\gamma \text{ open}\} \cap \left\{ u \xleftrightarrow{L_{-k-r-\ell,-\ell}} w \right\} \cap \sE(u,B)
  \right)
= \bP'_p\left(\{\gamma \text{ open}\} \cap \left\{ v \xleftrightarrow{L_{-k-r-\ell,-\ell}} w \right\} \cap \sE(v,B)
  \right)
 \\ \succeq_p 
\bP'_p\left( \left\{ v \xleftrightarrow{L_{-k-r-\ell,-\ell}} w \right\} \cap \sE(v,B)
  \right).
 \end{multline}
 The first equality is trivial, while the inequality on the third line
 can be proved via a finite-energy argument, outlined as follows: Since $\gamma$ has bounded length, at the cost of a $p$-dependent constant, we can force the path $\gamma$ to be open without affecting whether or not the event $\{v \xleftrightarrow{L_{-k-r-\ell,-\ell}} w\}\cap \sE(v,B)$ occurs. Indeed, simply open every edge in $\gamma$, and close every edge that is incident to but not contained in $\gamma$, is not in $B$, and does not have that both endpoints were already in the off-$B$ cluster of $v$ before we made this modification.

Consider the random variable
$Z = \#\bigl\{a \in L_{-k-1,-k+1}(v) : v \xleftrightarrow{L_{-k-r-\ell,-\ell}} a \bigr\}$. 
Combining the estimates \eqref{eq:Reimer1975} and \eqref{eq:Reimer_Finite_Energy} and summing over all choices of $w$ in the set $L_{-k-1,-k+1}(v),$ we deduce that, since $L_{-k-1,-k+1}(v) \subseteq L_{-\ell-k-n_0-1,-\ell-k}$,
\begin{multline}
\sum_{i=0}^{1+n_0} \bE'_p\left[  X^{-k-r-\ell,0}_{-k-\ell-i}(\rho) \mid  K'=A, \overline{K}'=B,\, \sP_\rho(k+\ell+r) \right]\\
\succeq_p   \bE'_p\left[ Z \mathbbm{1}(\sE(v,B))  \right]
 \label{eq:unsimplified2b}
 \geq 
 \bE'_p\left[ Z \mathbbm{1}\left(v \nxleftrightarrow{L_{-k-r-\ell,\infty}} S_{0,\infty}(\rho)\right)  \right]
 \end{multline}
 if $r \geq n_0$, where the second inequality is trivial. 
Meanwhile, it follows from the definitions that, since $[v]=[v_0]$ and $-n_0 \leq \nlog \Delta(u,v) \leq -1$,
\begin{multline}
\sup_{x\in [0,1]} \E_p\left[ X^{-k-r+n_0+1,0}_{-k}(v_0) \mathbbm{1}\left(v_0 \nxleftrightarrow{L_{-k-r+n_0-1,\infty}(v_0)} L_{\ell-1}(v_0)\right) \mid U_{v_0}=x \right] 
\\
\leq \inf_{x\in [0,1]} \E_p\left[ \sum_{i=-1}^1 X^{-k-r+n_0,1}_{-k+i}(v_0)  \mathbbm{1}\left(v_0 \nxleftrightarrow{L_{-k-r+n_0,\infty}(v_0)} L_{\ell}(v_0)\right) \mid U_{v_0}=x \right] 
\\
\leq \bE'_p\left[ Z \mathbbm{1}\left(v \nxleftrightarrow{L_{-k-r-\ell,\infty}} S_{0,\infty}(\rho)\right)  \right],
\label{eq:unsimplified3}
\end{multline}
where the second inequality follows since $L_{-k-r+n_0,1}(v) \subseteq L_{-k-r,0}(u) = L_{-k-r-\ell,-\ell}$, $L_{-k-r+n_0,\infty}(v) \subseteq L_{-k-r,\infty}(u)=L_{-k-r-\ell,\infty}$, and $L_{\ell,\infty}(v) \supseteq L_{\ell-1,\infty}(u) \supseteq S_{0,\infty}(\rho)$.

Putting together \eqref{eq:unsimplified2b} and \eqref{eq:unsimplified3} and taking $m =r -n_0-1$ we deduce that  if $m\geq 0$ then
 \begin{multline}
\sup_{x\in [0,1]} \E_p\left[ X^{-k-m,0}_{-k}(v_0) \mathbbm{1}\left(v_0 \nxleftrightarrow{L_{-k-m-2,\infty}(v_0)} L_{\ell-1}(v_0)\right) \mid U_{v_0}=x \right] 
\\\preceq_p
\sum_{i=0}^{1+n_0}\bE'_p\left[  X^{-k-m-n_0-1-\ell,0}_{-k-\ell-i}(\rho) \mid  K'=A, \overline{K}'=B,\, \sP_\rho(k+\ell+m+n_0+1) \right].
\label{eq:unsimplified2}
\end{multline}
Since $(A,B) \in \cK'$ was arbitrary, we may average over the possible choices of $A$ and $B$ to obtain that if $m\geq 0$ then
 \begin{multline}
\sup_{x\in [0,1]} \E_p\left[ X^{-k-m,0}_{-k}(v_0) \mathbbm{1}\left(v_0 \nxleftrightarrow{L_{-k-m-2,\infty}(v_0)} L_{\ell-1}(v_0)\right) \mid U_{v_0}=x \right] 
\\\preceq_p
\myfrac{\sum_{i=0}^{1+n_0}\bE'_p\left[  X^{-k-m-n_0-1-\ell,0}_{-k-\ell-i}(\rho) \mid  \sP_\rho(k+\ell+m+n_0+1) \right]}{\bP'_p(\rho \leftrightarrow L_{-\ell} \mid \sP_\rho(k+\ell+m+n_0+1))}.
\label{eq:unsimplified2}
\end{multline}
Since this estimate holds no matter the value of $\rho$ and $U$, it follows that
 \begin{multline}
\sup_{v \in V, x\in [0,1]} \E_p\left[ X^{-k-m,0}_{-k}(v_0) \mathbbm{1}\left(v_0 \nxleftrightarrow{L_{-k-m-2,\infty}(v_0)} L_{\ell-1}(v_0)\right) \mid U_{v_0}=x \right] 
\\\preceq_p \inf_{v \in V, x \in [0,1]}
\myfrac{\sum_{i=0}^{1+n_0}\E_p\left[  X^{-k-m-n_0-1-\ell,0}_{-k-\ell-i}(v) \mid U_v=x,\,  \sP_v(k+\ell+m+n_0+1) \right]}{\P_p(v \leftrightarrow L_{-\ell} \mid U_v=x,\,\sP_v(k+\ell+m+n_0+1))}
\end{multline}
for every $m \geq 0$. This is easily seen to imply the claim by taking $r_0=n_0+1$. \qedhere

\end{proof}

\subsection{The view from the peak at criticality}
\label{subsec:critical_peak}

In this subsection we implement the second step of the strategy sketched at the beginning of the section. That is, we prove a lower bound on the denominator appearing in the right hand side of \eqref{eq:simple_peak_comparison_overview}. This estimate extends \cref{lem:subcritpeaksurvival} to the critical case at the cost of an additional $o(k)$ error term in the exponential. Note that the proof does not give any explicit control of this error term.

\begin{lemma}
\label{lem:peaksurvival}
The estimate
\begin{equation}
\P_{p_c}\left(\rho \leftrightarrow L_{-k}(\rho),\, \sP_\rho \mid U_\rho = x \right) \succeq \nexp\left[-(\alpha_{p_c}-1)k +o(k) \right]
\end{equation}
holds for every $k\geq 0$ and $x\in [0,1]$.
\end{lemma}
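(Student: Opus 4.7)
The plan is to deduce the bound at $p_c$ from the subcritical bounds of \cref{lem:subcritpeaksurvival} via a limiting procedure that exploits the left-continuity of $\alpha_p$ (\cref{lem:alphacontinuity}) and the identity $\alpha_p=\beta_p$ for $p<p_c$ (\cref{prop:tamedecay}).

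First, I would extend the subcritical peak-survival estimate \cref{lem:subcritpeaksurvival} so that it holds uniformly over $v\in V$ (not just the special vertex $v_0$). The proof of \cref{lem:subcritpeaksurvival} only yields the estimate for one specific $v_0$ capable of being a peak, but a standard finite-energy/FKG argument mirroring the derivation of \eqref{eq:highpeaktame} from \eqref{eq:somepeak}—opening a bounded-length path from any vertex $v$ to a vertex in the orbit of $v_0$ at the appropriate modular height—upgrades this to
\begin{equation*}
\P_p\left(v \leftrightarrow L_{-k}(v),\,\sP_v \mid U_v=x\right) \succeq_p \nexp\bigl[-(\beta_p-1)k\bigr]
\end{equation*}
for every $0<p<p_c$, $v\in V$, $x\in[0,1]$ and $k\geq 0$. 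Combining this with \cref{lem:alphacontinuity} and \cref{prop:tamedecay}, for each $\eta>0$ there exists $p_\eta<p_c$ with $\beta_{p_\eta}\leq \alpha_{p_c}+\eta$ and a constant $c(\eta)>0$ such that
\begin{equation*}
\P_{p_\eta}\left(v \leftrightarrow L_{-k}(v),\,\sP_v \mid U_v=x\right) \geq c(\eta)\,\nexp\bigl[-(\alpha_{p_c}-1+\eta)k\bigr]
\end{equation*}
uniformly in $v$, $x$, $k$.

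Next, I would transfer this bound from $p_\eta$ to $p_c$. Under the standard monotone coupling of percolation configurations $\{\omega_p\}_{p\in[0,1]}$ via i.i.d.\ uniform edge labels, the clusters $K_p(v)$ are monotone in $p$ and $K_p(v)\nearrow K_{p_c}(v)$ almost surely as $p\nearrow p_c$; since $K_{p_c}(v)$ is finite almost surely \cite{Hutchcroft2016944}, the indicator of the (non-monotone) event $\{v\leftrightarrow L_{-k}(v)\}\cap \sP_v$ stabilizes as $p\nearrow p_c$ on almost every realization, so bounded convergence gives $\P_p\to\P_{p_c}$ pointwise in $k$. A quantitative version of this continuity comes from a sprinkling argument: the coupling-difference
\begin{equation*}
\P_{p_\eta}\left(v \leftrightarrow L_{-k}(v),\,\sP_v\right)-\P_{p_c}\left(v \leftrightarrow L_{-k}(v),\,\sP_v\right)
\end{equation*}
is bounded by the probability that the edges added between $p_\eta$ and $p_c$ connect $K_{p_\eta}(v)$ to some vertex of strictly positive modular height, and this error can be made arbitrarily small by taking $p_\eta$ close enough to $p_c$.

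To extract the $o(k)$ correction in the conclusion, I would take a diagonal sequence, letting $\eta=\eta(k)\to 0$ and $p_{\eta(k)}\nearrow p_c$ slowly enough that $-\log c(\eta(k))=o(k)$ while the sprinkling error is of order $\nexp[-(\alpha_{p_c}-1)k+o(k)]/2$. Combining all three ingredients and averaging over the random root $\rho$ then yields the claimed lower bound
\begin{equation*}
\P_{p_c}\left(\rho \leftrightarrow L_{-k}(\rho),\,\sP_\rho \mid U_\rho=x\right)\succeq \nexp\bigl[-(\alpha_{p_c}-1)k+o(k)\bigr].
\end{equation*}
The main obstacle is precisely the non-monotonicity of $\{v\leftrightarrow L_{-k}(v)\}\cap\sP_v$ in $p$, which prevents a clean stochastic-domination argument and forces the diagonal-limit approach; this is the source of the $o(k)$ slack and, as noted in the introduction, of the ineffectivity of the proof of \cref{thm:pcpt}, since \cref{lem:alphacontinuity} supplies only qualitative left-continuity of $\alpha_p$.
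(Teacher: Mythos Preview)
Your overall strategy---use the subcritical estimate at some $p_\eta<p_c$, couple the two configurations, and control the probability that the added edges destroy the peak property---matches the paper's. However, there is a genuine gap in your control of the ``sprinkling error''.

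You claim that the error can be made arbitrarily small by taking $p_\eta$ close to $p_c$, and then propose a diagonal sequence $\eta(k)\to 0$. But pointwise convergence (which is all that bounded convergence gives you) is insufficient: for the diagonal argument you would need to know \emph{how fast} the error decays as a function of both $\eta$ and $k$, and you have provided no such bound. Conditioned on $\{\rho\leftrightarrow L_{-k}\}\cap\sP_{p_\eta}$, the subcritical cluster reaches down $k$ layers, and the probability that a sprinkled edge somewhere along it connects to level $0$ in $G[p_c]$ need not vanish as $\eta\to 0$ uniformly in $k$. In fact the paper does \emph{not} show that this conditional error tends to zero; it shows only that it is at most $1-\nexp[-\eps k/2 - O_\eps(1)]$ for a fixed $\delta=\delta(\eps)$, which is just enough.

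The missing ingredient is the conditional expectation estimate \eqref{eq:subcritpeaksurvivalcorrelation} from \cref{lem:subcritpeaksurvival}, which says that conditioned on $\sP_{v_0}$ and $\{v_0\leftrightarrow L_{-k}\}$, the expected intersection of $K_{p_c-\delta}$ with each layer $L_{-\ell}$ is $O_\delta(1)$ for $\ell\leq k$ and decays exponentially for $\ell>k$. The paper conditions on $K_{p_c-\delta}$, applies FKG to write the preservation probability as a product over $u\in K_{p_c-\delta}$, bounds each factor either trivially (for $u$ in the top $k_\eps$ layers, of which there are $O_\eps(k_\eps)$ in conditional expectation) or by $\P_{p_c}(u\nleftrightarrow L_0)$ (for lower $u$, using that there is no percolation at $p_c$ to make this close to $1$), and then applies Jensen together with \eqref{eq:subcritpeaksurvivalcorrelation}. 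This yields $\P(\sP_{p_c}\mid \rho\leftrightarrow L_{-k},\,\sP_{p_c-\delta})\succeq_\eps\nexp[-\eps k/2]$, after which no diagonal argument is needed: for each $\eps>0$ one has the bound $\succeq_\eps\nexp[-(\alpha_{p_c}-1+\eps)k]$ directly.
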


As before, it follows by a simple finite-energy argument that there exists $v_0 \in V$ such that
\begin{equation}
\label{eq:peaksurvival_v0}
\P_{p_c}\left(v_0 \leftrightarrow L_{-k}(v_0),\, \sP_{v_0} \mid U_{v_0} = x \right) \succeq \nexp\left[-(\alpha_{p_c}-1)k +o(k) \right]
\end{equation}
for every $k\geq 0$ and $x\in [0,1]$.

\begin{figure}
\centering
\includegraphics[width=0.5\textwidth]{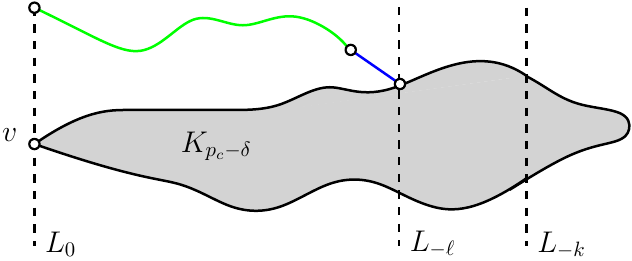}
\caption{Illustration of the proof of \cref{lem:peaksurvival}. If $v$ is the peak of its cluster $K_{p_c-\delta}$ in $G[p_c-\delta]$ (grey) but is not the peak of its cluster in $G[p_c]$, there must be a $(p_c-\delta)$-closed edge (blue) incident to $K_{p_c-\delta}$ that is $p_c$-open and whose other endpoint is connected to level zero by a $p_c$-open path (green) disjoint from $K_{p_c-\delta}$. Since there is no percolation at criticality, the conditional probability that any particular edge has this property tends to zero as the height of the edge tends to $-\infty$. On the other hand, \cref{lem:subcritpeaksurvival,prop:tamesecondmoment} imply that the expected number of points of $K_{p_c-\delta}$ in $L_{-\ell}$ conditioned on survival to level $-k$ is $O_\delta(1)$ when $\ell \leq k$ and is $\exp\bigl[-(\alpha_{p_c-\delta}-1)(\ell-k))+O_\delta(1)\bigr]$ when $\ell > k$. Together these facts imply that the conditional probability that $v$ is the peak of its cluster in $G[p_c]$ conditioned on the event that it is the peak of its cluster in $G[p_c-\delta]$ and that $K_{p_c-\delta}$ intersects $L_{-k}$ is $\exp\bigl[-o_\delta(k)\bigr]$. We can then conclude by applying \cref{lem:alphacontinuity} and \cref{lem:subcritpeaksurvival}.}
\end{figure}

\begin{proof}[Proof of \cref{lem:peaksurvival}]
It suffices to show that
$\P_{p_c}\left(\rho \leftrightarrow L_{-k}(\rho),\, \sP_\rho \right) \succeq_\eps \nexp\left[-(\alpha_{p_c}-1+\eps)k\right]$
for every $\eps>0$ and $k\geq 0$. 
Let $\eps>0$ and, applying \cref{lem:alphacontinuity}, let $\delta=\delta_\eps >0$ be sufficiently small that $\alpha_{p_c-\delta} \leq \alpha_{p_c}+\epsilon/2$.
As in the proof of \cref{prop:tiltedmeanfieldlowerbound}, we couple $G[p_c-\delta]$ and $G[p_c]$ by letting every edge of $G$ be \textbf{open} with probability $p_c-\delta$ and, independently, \textbf{blue} with probability $\delta/(1-p_c-\delta)$. We write $\P$ and $\E$ for probabilities and expectations with respect to these random variables, together with an independent random root $\rho$ and independent random labels $U$. We write $K_{p_c-\delta}$ and $K_{p_c}$ for the connected component of $\rho$ in $G[p_c-\delta]$ and $G[p_c]$ respectively. We also write $\sP_{p_c}$ for the event that $\rho$ is the peak of $K_{p_c}$ and $\sP_{p_c-\delta}$ for the event that $\rho$ is the peak of $\sP_{p_c-\delta}$. When writing connectivity events, we use subscripts below arrows to denote connectivity in $G[p_c-\delta]$ and $G[p_c]$ as appropriate.

\cref{lem:subcritpeaksurvival} yields that
\begin{equation}
\label{eq:peaksurvival1}
\P\left(\rho \xleftrightarrow[p_c-\delta]{} L_{-k}(\rho), \sP_{p_c-\delta} \right) \succeq_\eps \nexp\left[-\big(\alpha_{p_c-\delta}-1\big)k\right],
\end{equation}
and combined with \cref{prop:tamesecondmoment} this implies that there exists a constant $C_\eps$ such that
\begin{equation}
\label{eq:peaksurvival2}
\E\left[ \Bigl|K_{p_c-\delta}(\rho) \cap L_{-\ell,0}(\rho)\Bigl| \; \mid \sP_\rho,\, \rho \xleftrightarrow[p_c-\delta]{} L_{-k}(\rho)\right] \leq  \begin{cases}
C_\eps & k\geq \ell\\
C_\eps \nexp\left[ -(\alpha_{p_c-\delta}-1)(\ell-k) \right] & \ell > k
\end{cases}
\end{equation}
for every  $k,\ell \geq 0$. Since there is no infinite cluster at $p_c$ almost surely \cite{timar2006percolation,Hutchcroft2016944}, we have that
\begin{equation}
\lim_{k\to\infty} \sup_{v\in V, x\in [0,1]}\P_{p_c}(v \leftrightarrow L_k(v) \mid U_v = x) =0,
\end{equation}
and so we may take $k_\eps <\infty$ sufficiently large that
\begin{equation}
\inf_{v\in V, x\in [0,1]} \P_{p_c}(v \nleftrightarrow L_{k,\infty}(v) \mid U_v = x) =
\inf_{v\in V, x\in [0,1]} \P_{p_c}(v \nleftrightarrow L_{k}(v) \mid U_v = x) \geq \nexp\left[-\frac{\eps}{2C_\eps}\right]
\end{equation}
for every $k\geq k_\eps$.

If $\rho$ is the peak of $K_{p_c-\delta}$ but not of $K_{p_c}$, then there must exist a vertex $u\in K_{p_c-\delta}$ that is connected to $L_{0,\infty}(\rho) \setminus K_{p_c-\delta}$ in $G[p_c]$ off of $K_{p_c-\delta}$ (i.e., without using any edge of the cluster $K_{p_c-\delta}$). Applying the Harris-FKG to the inequality to the intersection of the complements of these events, we obtain that 
\begin{multline}
\P\Big(\sP_{p_c} \mid \rho,\, K_{p_c-\delta}  \Big)\\ 
\geq \mathbbm{1}\left(\sP_{p_c-\delta}\right)\prod_{u \in K_{p_c-\delta}} \P\Big( u \text{ is not connected to $L_{0,\infty}(\rho)\setminus K_{p_c-\delta}$ in $G[p_c]$ off of $K_{p_c-\delta}$} \mid \rho,\, K_{p_c-\delta}\Big).
\label{eq:PeakFKG}
\end{multline}
Now, for each $u \in K_{p_c-\delta}$, we can bound from below the conditional probability appearing in the product on the right hand side of \eqref{eq:PeakFKG} either by the conditional probability that $u$ does not have any blue neighbours, which is at least $[(1-p_c)/(1-p_c-\delta)]^{\deg(u)}\geq (1-p_c)^{\deg(u)}$, or by the probability that $u$ is not connected to $L_{0,\infty}(v)$. Choosing which of these bounds to apply according to whether $v \in L_{-k_\eps,0}$ or $L_{-\infty,-k_\eps}$, we obtain that
\begin{multline}
\P\Big(\sP_{p_c} \mid \rho,\, K_{p_c-\delta}, U  \Big)\\ 
\geq
\prod_{u \in K_{p_c-\delta} \cap L_{-k_\eps+1,0}(\rho)}\left(1-p_c\right)^{\deg(u)}
\prod_{u \in K_{p_c-\delta} \cap L_{-\infty,-k_\eps}(\rho)} \inf_{x\in [0,1]} \P_{p_c}\left( u \nleftrightarrow L_{0,\infty}(v) \mid U_u = x\right)
\vspace{-0.2em}
\end{multline}
and hence by definition of $k_\eps$ that there exists a constant $C$ such that
\begin{equation}
\P\Big(\sP_{p_c} \mid \rho,\, K_{p_c-\delta}, U  \Big)
\geq 
\mathbbm{1}(\sP_{p_c-\delta})\cdot \nexp\left[-C|K_{p_c-\delta} \cap L_{-k_\eps+1,0}(\rho)|-\frac{\eps}{2C_\eps}|K_{p_c-\delta} \cap L_{-\infty,-k_\eps}|\right].
\end{equation}
Taking expectations over $K_{p_c-\delta}$ and $\rho$, using Jensen's inequality and applying the estimate \eqref{eq:peaksurvival2} we obtain that
\begin{equation}
\P\Big(\sP_{p_c} \mid \rho \xleftrightarrow[p_c-\delta]{} L_{-k}(\rho),\, \sP_{p_c-\delta}, U_\rho =x  \Big)
\\\geq   
\nexp\left[-C C_\eps k_\eps -\frac{\eps}{2}(k-k_\eps) - \frac{\eps}{2} C'_\eps \right] \succeq_\eps \nexp\left[ -\frac{\eps}{2}k \right],
\end{equation}
for every $x\in [0,1]$, 
where $C'_\eps\!=\sum_{k\geq 0} \nexp\left[ -(\alpha_{p_c-\delta}-1)k\right]$. 
The result follows by combining this inequality with~\eqref{eq:peaksurvival1}.
\end{proof}



\subsection{Fractional moment estimates I: A bootstrapping toolkit}
\label{subsec:toolkit}

It remains to use the estimates proven in \cref{subsec:peak_comparison,subsec:critical_peak} to prove \cref{lem:bootstrap}. In this subsection, we develop some basic tools that will be used in this proof, which can be thought of as nonlinear versions of the tools used in \cref{sec:Fekete}. 
The first tool in our kit is a version of the equality $\E_p[X^{m,n}_k(\rho)] \asymp \nexp(-k)\E_p[X^{m-k,n-k}_{-k}(\rho)]$ that holds for fractional moments.

\newcommand{\HolderXa}{\left(X_{-k}^{m-k,n-k}(\rho)\right)}
\newcommand{\HolderXb}{\left(X_{k}^{m,n}(\rho)\right)}
\newcommand{\HolderXc}{\left(X_{0}^{m,n}(\rho)\right)}

\begin{lemma}[H\"older-MTP estimate]
\label{lem:HolderMTP}
The estimate
\begin{align}
\E_{p}\left[\HolderXb^{1-\eps} \right] \preceq \nexp\left[-(1-\eps)k \right]\, \E_p\left[ \HolderXa^{1-\delta}\right]^{1-\eps}
\E_{p}\left[\,\HolderXc^{(1-\eps)}\,\right]^\delta
\end{align}
holds for every $p\in [0,1]$, $k\in \Z$, $0\leq \delta\leq \eps \leq 1$, and $-\infty \leq m \leq n \leq \infty$.
\end{lemma}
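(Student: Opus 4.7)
The plan is to combine a single application of the tilted mass-transport principle with a Hölder-plus-Jensen estimate. Writing $X = X_k^{m,n}(\rho)$, $Y = X_{-k}^{m-k,n-k}(\rho)$, and $Z = X_0^{m,n}(\rho)$ for brevity, we may assume without loss of generality that $m \leq 0 \leq n$, since otherwise $\rho \notin L_{m,n}(\rho)$ and all three quantities vanish identically. Under this assumption $\rho \in L_0(\rho) \cap L_{m,n}(\rho)$, so $Z \geq 1$ and $Z^{-\delta}$ is well defined.

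The heart of the argument is a weighted mass-transport identity. I apply the tilted MTP \eqref{eq:fullMTP} to the $\Gamma$-diagonally invariant function
\[
F(u, v) = \mathbbm{1}\!\left[v \in L_k(u),\, u \xleftrightarrow{L_{m,n}(u)} v\right]\, X_0^{m,n}(u)^{-\delta},
\]
for which $\sum_v F(\rho, v) = X \cdot Z^{-\delta}$. On the swapped side, $\mathbbm{1}[\rho \in L_k(v)]$ is almost surely equivalent to $v \in L_{-k}(\rho)$, and on this event the translation identities $L_0(v) = L_{-k}(\rho)$ and $L_{m,n}(v) = L_{m-k,n-k}(\rho)$ hold exactly, with $\Delta(\rho,v) \asymp \nexp(-k)$. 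Crucially, when the connectivity indicator holds, $v$ and $\rho$ lie in a common cluster of the slab $L_{m-k,n-k}(\rho)$, so the count $X_0^{m,n}(v)$ of vertices of $L_0(v) = L_{-k}(\rho)$ in $v$'s slab-cluster coincides with $X_{-k}^{m-k,n-k}(\rho) = Y$. The number of such vertices $v$ is itself $Y$, contributing an additional factor, and we obtain
\[
\E_p\!\left[X Z^{-\delta}\right] \asymp \nexp(-k)\, \E_p\!\left[Y^{1-\delta}\right].
\]

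The second step is pure analysis. Decomposing $X^{1-\eps} = \left(X Z^{-\delta}\right)^{1-\eps} Z^{(1-\eps)\delta}$ and applying Hölder's inequality with conjugate exponents $1/(1-\eps)$ and $1/\eps$ gives
\[
\E_p\!\left[X^{1-\eps}\right] \leq \E_p\!\left[X Z^{-\delta}\right]^{1-\eps} \E_p\!\left[Z^{(1-\eps)\delta/\eps}\right]^{\eps}.
\]
Since $\delta \leq \eps$, the exponent $(1-\eps)\delta/\eps$ lies in $[0, 1-\eps]$, so Jensen's inequality applied to the concave map $x \mapsto x^{\delta/\eps}$ on $[0,\infty)$ yields $\E_p\!\left[Z^{(1-\eps)\delta/\eps}\right]^{\eps} \leq \E_p\!\left[Z^{1-\eps}\right]^{\delta}$. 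Combining with the weighted MTP identity produces the claimed bound.

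The main conceptual point is the choice of weight $X_0^{m,n}(u)^{-\delta}$ inside $F$: it is engineered so that after the MTP swap the cluster-count identity $X_0^{m,n}(v) = Y$ converts it into a $Y^{-\delta}$ factor, producing the $Y^{1-\delta}$ that appears on the right-hand side of the target estimate. Everything else is routine. The degenerate cases are handled separately: $\eps = 0$ forces $\delta = 0$ and reduces to the ordinary MTP identity $\E_p[X] \asymp \nexp(-k)\, \E_p[Y]$, while $\eps = 1$ is immediate since the left-hand side is at most $1$ and the factor $\E_p[Z^0]^{\delta}$ equals $1$ because $Z \geq 1$.
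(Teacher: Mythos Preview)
Your proof is correct and follows essentially the same approach as the paper: apply the tilted mass-transport principle to a function weighted by $X_0^{m,n}(\cdot)^{-\delta}$ to obtain $\E_p[XZ^{-\delta}] \asymp \nexp(-k)\,\E_p[Y^{1-\delta}]$, then combine H\"older's inequality with Jensen. The only cosmetic difference is that the paper transports in the opposite direction (using $v \in L_{-k}(u)$ rather than $v \in L_k(u)$), which yields the same identity; one minor slip is that when $m\leq 0\leq n$ fails only $X$ and $Z$ (not necessarily $Y$) must vanish, but since $X=0$ already makes the left-hand side zero, the reduction is valid.
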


The proof will apply the following form of H\"older's inequality, which we state here for clarity: If $X$ is a non-negative random variable and $Y$ is a positive random variable on the same probability space, then
\begin{equation}
\E\left[X^{1/\ell}\right] \leq \E \left[\frac{X}{Y}\right]^\frac{1}{\ell}\E\left[Y^{1/(\ell-1)}\right]^{\frac{\ell-1}{\ell}}
\end{equation}
for every $\ell>1$.

\begin{proof}[Proof of \cref{lem:HolderMTP}]

The claim is trivial when $\eps=1$ so we may assume that $0\leq \eps < 1$. 
Applying the tilted mass-transport principle to the function
\begin{equation}
f\bigl(u,v,G[p],U\bigr) := \left(X_{-k}^{m-k,n-k}(u)\right)^{-\delta}\mathbbm{1}\Bigl(v \in L_{-k}(u) \text{ and } u \xleftrightarrow{L_{m-k,n-k}(u)} v \Bigr)
\end{equation}
yields that
\begin{align}
\E_{p}\left[ \HolderXa^{1-\delta}\right]
&\asymp \nexp(k)\,  \E_{p}\left[ \HolderXb \HolderXc^{-\delta}\right] \end{align}
for every $0\leq \delta <1 $, $-\infty \leq m \leq n \leq \infty$ and $k\in \Z$.
On the other hand, H\"older's inequality with $\ell=1/(1-\eps)$ implies that
\begin{equation}
\E_{p}\left[\HolderXb^{1-\eps}\right] \leq \E_{p}\left[\,\HolderXb\left(X_0^{m,n}(\rho)\right)^{-\delta}\,\right]^{1-\eps} \E_{p}\left[\,\HolderXc^{(1-\eps)\delta/\eps}\,\right]^\eps
\end{equation}
for every $0\leq \delta,\eps <1$, $-\infty \leq m \leq n \leq \infty$ and $k\in \Z$.
Combining these we obtain that
\begin{align}
\E_{p}\left[\HolderXb^{1-\eps} \right] &\preceq
\nexp\left[-(1-\eps)k\right]\, \E_p\left[ \HolderXa^{1-\delta}\right]^{1-\eps}
\E_{p}\left[\,\HolderXc^{(1-\eps)\delta/\eps}\,\right]^\eps
\end{align}
for every $1>\delta,\eps>0$, $-\infty \leq m \leq n \leq \infty$ and $k\in \Z$. The claim follows by Jensen's inequality when $0\leq \delta \leq \eps <1$.
\end{proof}

Next, we have the following version of the extreme value path decomposition inequality (item $3$ of \cref{lem:pathdecomposition}) that holds for fractional moments. For each $-\infty \leq m \leq n \leq \infty$, $-\infty< k < \infty$, and $0\leq \eps \leq 1$ we define
\begin{equation}
\label{eq:fractional_E_def}
E_p^{m,n}(k;1-\eps) := \sup_{v\in V, x\in [0,1]}\E_p\left[ \left(X^{m,n}_k(v)\right)^{1-\eps} \mid U_v=x\right].
\end{equation}

\begin{lemma}[Up and down estimates]
\label{lem:fractionalupanddown}
The estimates
\begin{align}
\label{eq:upthendown}
E_p^{m,n}(k; (1-\eps)(1-\delta) ) &\leq \sum_{\ell = k\vee 0}^n E_p^{m,\ell}(\ell ; 1-\delta ) \left(E_p^{m-\ell,0}(k-\ell; 1-\eps) \right)^{(1-\delta)}\\
\intertext{and}
E_p^{m,n}(k; (1-\eps)(1-\delta) ) &\leq \sum_{\ell =m }^{k\wedge 0} E_p^{\ell,n}(\ell ; 1-\delta ) \left(E_p^{0,n-\ell}(k-\ell; 1-\eps) \right)^{(1-\delta)}
\label{eq:downthenup}
\end{align}
hold for every $p\in [0,1]$, $0\leq \eps,\delta \leq 1$ and $-\infty \leq m \leq k \leq n \leq \infty$
\end{lemma}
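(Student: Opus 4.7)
The plan is to adapt the extreme-point path-decomposition argument used to prove \eqref{eq:extremedecompup} in \cref{lem:pathdecomposition}, replacing the union bound at the final step with the pointwise subadditivity $(\sum_i a_i)^\eta \leq \sum_i a_i^\eta$ for $\eta \in [0,1]$ and $a_i \geq 0$. I will treat only the ``up'' estimate \eqref{eq:upthendown}; the ``down'' estimate \eqref{eq:downthenup} follows by the same argument with the roles of top and bottom interchanged, decomposing paths instead by their minimum height. Setting $\theta = (1-\eps)(1-\delta)$ and writing $\tilde X_\ell := X_k^{m,\ell}(v) - X_k^{m,\ell-1}(v)$ for the number of vertices in $L_k(v)$ reached from $v$ by an open path in $L_{m,\ell}(v)$ but not in $L_{m,\ell-1}(v)$, the telescoping identity $X_k^{m,n}(v) = \sum_{\ell = k\vee 0}^n \tilde X_\ell$ together with subadditivity of $t \mapsto t^\theta$ gives $X_k^{m,n}(v)^\theta \leq \sum_\ell \tilde X_\ell^\theta$, reducing matters to bounding each $\E_p[\tilde X_\ell^\theta \mid U_v = x]$ term-by-term by the corresponding summand on the right-hand side of \eqref{eq:upthendown}.

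For each fixed $\ell$, I would begin from the BK-style containment of events appearing in the proof of \eqref{eq:extremedecompup}: grouping each contributing $u \in L_k(v)$ by a canonical choice of witness vertex $w(u) \in L_\ell(v)$ on an $L_{m,\ell}(v)$-path from $v$ to $u$, one obtains the pointwise bound
\[
\tilde X_\ell \leq \sum_{w \in L_\ell(v)} \mathbbm 1\bigl(v \xleftrightarrow{L_{m,\ell}(v)} w\bigr)\, X_{k-\ell}^{m-\ell,0}(w).
\]
Applying subadditivity of $t \mapsto t^{1-\delta}$ to the inner sum (which is permissible because $1-\delta \in [0,1]$) and then Jensen's inequality to move a factor of $1-\eps$ through the outer expectation, I get
\[
\E_p\bigl[\tilde X_\ell^\theta \mid U_v = x\bigr] \leq \Bigl(\sum_{w \in L_\ell(v)} \E_p\bigl[\mathbbm 1(v \xleftrightarrow{L_{m,\ell}(v)} w)\, X_{k-\ell}^{m-\ell,0}(w)^{1-\delta} \bigm| U_v = x\bigr] \Bigr)^{1-\eps}.
\]
The proof would then be completed by going back to the disjoint-occurrence decomposition $\{v \leftrightarrow w\} \circ \{w \leftrightarrow u\}$ inside each summand, applying the BK inequality to decouple the $v\leftrightarrow w$ part from the $u$-sum, and finally using Hölder's inequality with conjugate exponents $1/\delta$ and $1/(1-\delta)$ so that the resummed $v\leftrightarrow w$ contribution produces the factor $E_p^{m,\ell}(\ell;1-\delta)$ and the $u$-sum produces the factor $E_p^{m-\ell,0}(k-\ell;1-\eps)^{1-\delta}$.

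The principal obstacle will be this last decoupling step. The indicator $\mathbbm 1\bigl(v \xleftrightarrow{L_{m,\ell}(v)} w\bigr)$ and the count $X_{k-\ell}^{m-\ell,0}(w)^{1-\delta}$ are both increasing functions of the configuration on $L_{m,\ell}(v)$, so Harris--FKG gives the \emph{reverse} of the inequality one wants and a naive product bound cannot be used. Overcoming this requires refining the pointwise estimate above to the finer form $\tilde X_\ell \leq \sum_{w,u} \mathbbm 1\bigl(\{v \xleftrightarrow{L_{m,\ell}(v)} w\}\circ\{w \xleftrightarrow{L_{m,\ell}(v)} u\}\bigr)$, applying BK (or Reimer's) at the level of probabilities \emph{before} raising to fractional powers, and then distributing the fractional exponents via Hölder; it is precisely the balance between the $\delta$- and $(1-\delta)$-Hölder exponents that produces the asymmetric form $E_p^{m,\ell}(\ell;1-\delta) \cdot E_p^{m-\ell,0}(k-\ell;1-\eps)^{1-\delta}$ in the final estimate.
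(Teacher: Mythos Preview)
Your telescoping reduction to bounding each increment $\E_p[\tilde X_\ell^{(1-\eps)(1-\delta)}\mid U_v=x]$ is exactly right, and you have correctly isolated the decoupling of the ``$v\to w$'' and ``$w\to u$'' parts as the crux. However, the resolution you sketch---``apply BK at the level of probabilities before raising to fractional powers, then distribute via H\"older''---does not lead to the target bound. Once you pass to the finer form $\tilde X_\ell \leq \sum_{w,u}\mathbbm 1(\{v\leftrightarrow w\}\circ\{w\leftrightarrow u\})$ and apply BK, you obtain products of \emph{probabilities} $\P(v\leftrightarrow w)\P(w\leftrightarrow u)$; there is no way to reassemble these, via H\"older or otherwise, into the fractional moments $E_p^{m,\ell}(\ell;1-\delta)$ and $E_p^{m-\ell,0}(k-\ell;1-\eps)$ that appear on the right-hand side. (Indeed, even in your intermediate display the exponents $1-\delta$ and $1-\eps$ are in the wrong positions relative to the target: you have a $(1-\delta)$-moment inside and a $(1-\eps)$-power outside, whereas the claim requires a $(1-\delta)$-moment of $X_\ell^{m,\ell}$ multiplied by a $(1-\eps)$-moment raised to the $1-\delta$.) The positive correlation you flag is a genuine obstruction, and BK does not supply a workaround at the level of fractional moments.

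The paper bypasses this entirely with an \emph{exploration argument}. One first reveals the cluster $K_0'$ of $v$ in $L_{m,\ell}(v)$ using only edges that do not have both endpoints in $L_\ell(v)$, records $N=|K_0'\cap L_\ell(v)|$, and then sequentially explores from the $N$ boundary vertices $w_1,\ldots,w_N$ into the unrevealed region. The point is that, conditionally on $K_0',\ldots,K_{i-1}'$, the cluster $K_i'$ of $w_i$ off the already-explored set is stochastically dominated by a fresh independent cluster of $w_i$ in $L_{m,\ell}(v)$. This domination lets you bound each $\E_p[|K_i'\cap L_k(v)|^{1-\eps}\mid K_0',\ldots,K_{i-1}']$ by $E_p^{m-\ell,0}(k-\ell;1-\eps)$ uniformly. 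Applying subadditivity of $t\mapsto t^{1-\eps}$ inside and Jensen for $t\mapsto t^{1-\delta}$ outside then gives
\[
\E_p\bigl[\tilde X_\ell^{(1-\eps)(1-\delta)}\mid K_0'\bigr]\leq \bigl(N\cdot E_p^{m-\ell,0}(k-\ell;1-\eps)\bigr)^{1-\delta},
\]
and averaging over $K_0'$ with $\E_p[N^{1-\delta}]\leq E_p^{m,\ell}(\ell;1-\delta)$ finishes the job. The conditioning is what replaces the missing negative-correlation input; it is the idea your sketch lacks.
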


\begin{proof}[Proof of \cref{lem:fractionalupanddown}]
We prove \eqref{eq:upthendown}, \eqref{eq:downthenup} being similar. 
 It suffices to show that
\begin{equation}
\E_p \left[ \left (X_{k}^{m,\ell}(v)-X_k^{m,\ell-1}(v)\right)^{(1-\eps)(1-\delta)} \mid U_v=x \right] \\ 
\leq E_p^{m,\ell}(\ell;1-\delta)E_p^{m-\ell,0}(k-\ell;1-\eps)^{1-\delta}
\end{equation}
for every $\ell \geq m$, $v\in V$ and $x\in [0,1]$; the claim follows from this inequality by summing over $\ell$ (and using the fact that concave functions are subadditive).
Fix $v\in V$, $x\in [0,1]$, and $m\leq k \leq \ell$. Condition on $U_v=x$ and let $K'_0$ be the set of vertices that are connected to $v$ by an open path in $L_{m,\ell}(v)$ none of whose edges have both endpoints in $L_{\ell}(v)$. Condition on $K'_0$, let $N=|K'_0 \cap L_\ell(v)|$, and let $w_1,\ldots,w_N$ be an enumeration of $K'_0 \cap L_{\ell}(v)$. For each $1\leq i \leq N$, let $K_i'$ be the set of vertices that are connected to $w_i$ by an open path in $L_{m,\ell}(v) \setminus (\bigcup_{j=0}^{i-1} K_i')$. Note that if $u \in L_{k}(v)$ is connected to $v$ by an open path in $L_{m,\ell}(v)$ but not in $L_{m,\ell-1}(v)$, then there exists a unique $1 \leq i \leq N$ such that $v \in K'_i$. 
Moreover, for each $1\leq i \leq N$, the conditional law of $K_i'$ given $U$ and $K'_0,\ldots,K'_{i-1}$ is stochastically dominated by the cluster of $w_i$ in $L_{m,\ell}(v)$ in an independent copy of Bernoulli-$p$ percolation, conditioned only on $U$. Using this observation together with Jensen's inequality we obtain that
\begin{multline}
\E_p \Big[ \left (X_{k}^{m,\ell}(v)-X_k^{m,\ell-1}(v)\right)^{(1-\eps)(1-\delta)} \mid K_0',\, U_v=x \Big]\\
=\E_p\left[ \Big(\sum_{i=1}^N |K_i' \cap L_k(v)|\Big)^{(1-\eps)(1-\delta)} \mid K_0',\, U_v =x\right]\\
\hspace{6cm}\leq \E_p\left[ \sum_{i=1}^N |K_i' \cap L_k(v)|^{1-\eps} \mid K_0',\, U_v =x\right]^{1-\delta}\\
\leq E_p^{m-\ell,0}(k-\ell;1-\eps)^{1-\delta} N^{1-\delta},
\end{multline}
and taking expectations over $K_0'$ yields the result.
\end{proof}

Finally we have the following simple pair of estimates.

\begin{lemma}
\label{lem:bootstrapping_etc}
\hspace{1cm}
\begin{enumerate}
	\item
The estimate
\begin{align}
\label{eq:supaverage}
E_p^{m,n}(k;1-\eps) &\preceq  \E_p\left[ \left(X^{m-1,n+1}_{k-1}(\rho)\right)^{1-\eps}+\left(X^{m-1,n+1}_{k}(\rho)\right)^{1-\eps}+\left(X^{m-1,n+1}_{k+1}(\rho)\right)^{1-\eps}  \right],
\end{align}
holds for every $p\in [0,1]$ and $0\leq \eps \leq 1$, and $-\infty \leq m \leq k \leq n \leq \infty$.
\item
The estimate
\begin{equation}
\label{eq:FKGshift}
E_p^{m+r,n+r}(k+r;1-\eps) \leq \nexp\left[ O_p(|r|)\right] E_p^{m,n}(k;1-\eps) 
\end{equation}
holds for every $p\in [0,1]$, $0\leq \eps \leq 1$, and $r\in \Z$, and every $-\infty \leq m \leq k \leq n \leq \infty$ such that $m \leq 0 \leq n$ and $m+r \leq 0 \leq n+r$.
\end{enumerate}
\end{lemma}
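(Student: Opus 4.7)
For part (1), the strategy is to use quasi-transitivity to reduce the supremum over $v\in V$ to a supremum over a finite set of orbit representatives, and then to exploit the fact that the slabs $L_k(v_0)$ are \emph{deterministic} functions of $U_{v_0}$ whereas $G[p]$ is independent of $U$. Fix $v\in V$ and $x\in[0,1]$, and let $v_0\in\cO$ be the orbit representative of $v$; by $\Gamma$-invariance, $\E_p[(X^{m,n}_k(v))^{1-\eps}\mid U_v=x] = \E_p[(X^{m,n}_k(v_0))^{1-\eps}\mid U_{v_0}=x]$. Unwinding the definitions, for any $x,y\in[0,1]$ one has the deterministic containments
\[L_k(v_0)\bigr|_{U_{v_0}=x}\;\subseteq\;\bigcup_{i=-1}^{1} L_{k+i}(v_0)\bigr|_{U_{v_0}=y}\qquad\text{and}\qquad L_{m,n}(v_0)\bigr|_{U_{v_0}=x}\;\subseteq\; L_{m-1,n+1}(v_0)\bigr|_{U_{v_0}=y},\]
since slabs centred at $v_0$ with index shifted by one cover any shift in $U_{v_0}$ of at most one unit. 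Since $G[p]$ is independent of $U$, it follows that
\[X^{m,n}_k(v_0)\bigr|_{U_{v_0}=x}\leq \sum_{i=-1}^{1} X^{m-1,n+1}_{k+i}(v_0)\bigr|_{U_{v_0}=y}\]
pointwise on configurations of $G[p]$. Applying the subadditivity inequality $(a+b+c)^{1-\eps}\leq a^{1-\eps}+b^{1-\eps}+c^{1-\eps}$, integrating over $y\in[0,1]$, and then using that $\min_{[v_0]\in\cO}\mu([v_0])>0$ by quasi-transitivity to pass from $v_0$ to $\rho$ gives the claim.

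For part (2), the strategy is to prove the bound for $|r|=1$ with constants depending only on $G$, $\Gamma$, and $p$, and then iterate $|r|$ times. Given $v\in V$, $x\in[0,1]$, and $r\in\{-1,+1\}$, a standard quasi-transitivity argument in the spirit of \cref{lem:probinfsup} produces a vertex $w\in V$ and a path $\gamma$ from $v$ to $w$ of bounded length (depending only on $G$ and $\Gamma$) such that $[w]=[v]$, $\nlog\Delta(v,w)$ differs from $-r$ by an $O(1)$ amount, and $\gamma$ stays within bounded slabs around $v$. The cocycle identity then identifies $L_{k+r}(v)$ and $L_{m+r,n+r}(v)$ with $L_{k+i}(w)$ and $L_{m+j_1,n+j_2}(w)$ for some $i,j_1,j_2\in\{-1,0,1\}$, up to a bounded shift in $U_w$. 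Using Harris--FKG to force $\gamma$ to be open (which costs only a $p$-dependent constant since $\gamma$ has bounded length), followed by an application of part (1) to absorb the bounded discrepancy in the slab indices and to pass from $U_w=\text{(shifted)}$ to the sup over $v'\in V$, $x'\in[0,1]$, gives the $|r|=1$ case. Iterating produces the factor $\nexp[O_p(|r|)]$. The hypothesis $m\leq 0\leq n$ and $m+r\leq 0\leq n+r$ ensures that the slabs in question always contain the vertex $v$ and thus $\gamma$, so that the finite-energy step stays inside the relevant regions.

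The main obstacle is the inhomogeneity of the uniform separating layer decomposition. Because $\nlog\Delta$ generally takes non-integer values, there is no \emph{exact} identification between $X^{m+r,n+r}_{k+r}(v)$ and $X^{m,n}_k(w)$ for any choice of $w$; one must settle for an approximate identification with a bounded shift in the indices, and appeal to part (1) to handle the discrepancy. This is precisely the reason we prove part (1) first and why the constant in the exponent of part (2) depends on $p$.
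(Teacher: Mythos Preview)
Your sketch is correct and follows essentially the same approach as the paper, which simply notes that part~(1) is proved like \cref{lem:bestworst} and part~(2) like \cref{lem:probinfsup}. Two small clean-ups for part~(2): once you truncate the path from \cref{lem:probinfsup} at its first visit to $L_r(v)$ (possible since heights change by at most one per step), you get $w\in L_r(v)$ exactly, so the identity $L_j(w)=L_{j+r}(v)$ holds on the nose and no appeal to part~(1) is needed to ``absorb discrepancies''; also, the condition $[w]=[v]$ is unnecessary since $E_p^{m,n}(k;1-\eps)$ is a supremum over all vertices, and the sign in ``$\nlog\Delta(v,w)$ differs from $-r$'' should be $+r$.
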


\begin{proof}
\eqref{eq:supaverage} follows by a similar proof to \cref{lem:bestworst}. \eqref{eq:FKGshift} follows by a similar argument to the proof of \cref{lem:probinfsup}.
\end{proof}

\subsection{Fractional moment estimates II: Completing the proof}
\label{subsec:bootstrap}
We now have all the ingredients in place to complete the proof of \cref{lem:bootstrap}. We begin with the following simple consequence of \cref{lem:peakcomparison,lem:peaksurvival}, which is similar to \cref{lem:bootstrap} except that it gives an estimate inside a slab rather than in the full space.

\begin{lemma}
\label{lem:startingpoint}
The estimate
\begin{equation}
E_{p_c}^{-k-r,0}(-k;1-\eps)
\preceq_\eps \nexp\left[ o(k) +O(r)\right]
\label{eq:startingpoint}
\end{equation}
holds for every $k,r\geq 0$ and $\eps\in (0,1]$.
\end{lemma}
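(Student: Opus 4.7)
The plan is to combine the peak-comparison estimate \cref{lem:peakcomparison} and the critical peak-survival estimate \cref{lem:peaksurvival} to establish a first-moment bound for $X_{-k}^{-k-r,0}(v)$ restricted to clusters that do not rise too high, and then convert this to the desired fractional-moment bound via a partition-and-H\"older argument using the probability upper bounds from \cref{lem:alpha,lem:probinfsup}.

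First I would use \cref{lem:peakcomparison} together with the tilted mass-transport principle, \cref{lem:alpha}, \cref{lem:alphapc} (giving $\alpha_{p_c}\geq 1$), and \cref{lem:peaksurvival} to establish the auxiliary first-moment estimate
\begin{equation*}
\sup_{v,x}\bE_{p_c}\!\left[ X_{-k}^{-k-r,0}(v);\, v \nxleftrightarrow{L_{-k-r,\infty}(v)} L_\ell(v) \,\middle|\, U_v = x\right] \preceq \nexp\!\left[ -\alpha_{p_c}k + O(r) + o(\ell) \right] \qquad (\ast)
\end{equation*}
for all $k,r,\ell\geq 0$. The numerator on the right-hand side of \cref{lem:peakcomparison} is controlled by applying the tilted mass-transport principle to $\mathbbm{1}(v\in L_{-m}(u),\, u\leftrightarrow v \text{ in slab},\, \sP_u(N))$, which produces a factor of $\nexp(m)\bP_{p_c}(v\leftrightarrow L_m(v))$, and hence is at most $\nexp[-(\alpha_{p_c}-1)m + O(r)]$ by \cref{lem:alpha} and $\alpha_{p_c}\geq 1$; the denominator is bounded below by \cref{lem:peaksurvival} after dividing through by the unconditional peak probability $\bP(\sP_v(N))$, which is bounded below by a positive constant by finite energy.

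Next, I would combine $(\ast)$ with the probability upper bound
\begin{equation*}
\sup_{v,x}\bP_{p_c}\!\left(v \xleftrightarrow{L_{-k-r,\infty}(v)} L_\ell(v) \,\middle|\, U_v = x\right) \preceq \nexp\!\left[-\alpha_{p_c}\ell + O(k+r)\right] \qquad (\ast\ast)
\end{equation*}
(which follows from \cref{lem:alpha,lem:probinfsup}) as follows. Partition
\[
\bE_{p_c}\!\left[\bigl(X_{-k}^{-k-r,0}(v)\bigr)^{1-\eps} \,\middle|\, U_v=x\right] = \sum_{j\geq 0}\bE_{p_c}\!\left[X^{1-\eps};\, v\leftrightarrow L_j,\, v\nleftrightarrow L_{j+1} \,\middle|\, U_v=x\right]
\]
with the connectivity events taking place in $L_{-k-r,\infty}(v)$, and apply H\"older's inequality with exponents $1/(1-\eps)$ and $1/\eps$ to bound each summand by $\bigl(\bE_{p_c}[X;\, v\nleftrightarrow L_{j+1}]\bigr)^{1-\eps}\bP_{p_c}(v\leftrightarrow L_j)^{\eps}$. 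Substituting $(\ast)$ and $(\ast\ast)$ and using $\alpha_{p_c}\geq 1$, the coefficient of $k$ in the resulting exponent is $-(1-\eps)\alpha_{p_c}+\eps C$, where $C$ is the constant hidden in the $O(k+r)$ of $(\ast\ast)$; for $\eps$ small enough that this is negative, the geometric-in-$j$ series converges, yielding $\bE_{p_c}[X^{1-\eps}]\preceq_\eps \nexp[-c_\eps k + O(r)]$, which is stronger than the claim. For the remaining $\eps\in(0,1]$, use the pointwise inequality $x^{1-\eps}\leq x^{1-\eps_0}$ on $\{0\}\cup[1,\infty)$ (which contains the range of the integer-valued $X$) whenever $\eps\geq \eps_0$, reducing to a value of $\eps$ already handled.

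The main obstacle will be the careful verification of $(\ast)$: although conceptually a direct combination of \cref{lem:peakcomparison,lem:peaksurvival} with the tilted mass-transport principle and $\alpha_{p_c}\geq 1$, this requires meticulous bookkeeping of the $O(r)$, $O(k+r)$, and $o(\ell)$ error terms through several chained estimates, and one must in particular ensure that the $o(\ell)$ term arising from \cref{lem:peaksurvival} is uniform in $k$ and $r$. Once $(\ast)$ and $(\ast\ast)$ are in hand, the partition-and-H\"older argument together with the monotonicity reduction is entirely elementary, as the name of \cref{lem:startingpoint} already suggests.
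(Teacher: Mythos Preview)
Your auxiliary bound $(\ast)$ is misstated, and the error is not a harmless typo. Your own derivation sketch yields numerator $\preceq \nexp[-(\alpha_{p_c}-1)m + O(r)]$ with $m\approx k+\ell$ (the tilted mass-transport step produces a factor $\nexp(m)$, turning $\nexp[-\alpha_{p_c}m]$ into $\nexp[-(\alpha_{p_c}-1)m]$), and denominator $\succeq \nexp[-(\alpha_{p_c}-1)\ell + o(\ell)]$ from \cref{lem:peaksurvival}. The ratio is therefore $\preceq \nexp[-(\alpha_{p_c}-1)k + O(r) + o(\ell)]$, \emph{not} $\nexp[-\alpha_{p_c}k + O(r) + o(\ell)]$; this is exactly the paper's estimate \eqref{eq:maladjusted}.

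With the corrected $(\ast)$, your partition-and-H\"older argument fails. Substituting into $\sum_j \bigl(\E[X; v\nleftrightarrow L_{j+1}]\bigr)^{1-\eps}\P(v\leftrightarrow L_j)^\eps$, the coefficient of $k$ in the exponent becomes $-(1-\eps)(\alpha_{p_c}-1)+\eps C$. At this stage we only know $\alpha_{p_c}\geq 1$ (indeed, once $p_c<p_t$ is proven one has $\alpha_{p_c}=1$), so when $\alpha_{p_c}=1$ this coefficient equals $\eps C>0$, yielding only $\preceq_\eps \nexp[\eps Ck + O(r)]$. Your monotonicity reduction to a fixed small $\eps_0$ then still leaves a linear term $\eps_0 Ck$ in the exponent, which is not $o(k)$.

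The paper avoids this by first passing to a \emph{tail} bound: by Markov and the union bound,
\[
\P_{p_c}(X\geq n) \leq \tfrac{1}{n}\,\E_{p_c}\bigl[X;\, v\nleftrightarrow L_\ell\bigr] + \P_{p_c}(v\leftrightarrow L_\ell),
\]
and then choosing $\ell \approx \nlog n + Ck$. The point is that here $\P(v\leftrightarrow L_\ell)$ enters with exponent $1$ rather than $\eps$, so the $-\alpha_{p_c}\ell$ decay in $(\ast\ast)$ can absorb the $O(k+r)$ error once $C$ is large. Only after this does one convert to a fractional moment by summing $n^{-\eps}\P(X\geq n)$, which produces an $\eps$-independent $o(k)$ in the exponent as required.
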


We stress that the implicit bounds inside the exponential on the right hand side of \eqref{eq:startingpoint} do not depend on the choice of $\eps \in (0,1]$.

\begin{proof}[Proof of \cref{lem:startingpoint}]
First note that $\sP_\rho(k+\ell+r)$ contains the event that none of the edges incident to $\rho$ are open. Thus, its probability is bounded below by a positive $p$-dependent constant and  we have that
\begin{equation}
\E_p\left[ X_{-k-\ell-i}^{-k-\ell-r,0}(\rho) \mid \sP_\rho(k+\ell+r)\right]
\asymp_p
\E_p\left[ X_{-k-\ell-i}^{-k-\ell-r,0}(\rho) \,;\, \sP_\rho(k+\ell+r)\right].
\end{equation}
for every $p\in [0,1)$ and $k,\ell,r,i \geq 0$. Let $r_0$ be the constant from \cref{lem:peakcomparison}. Applying the tilted mass-transport principle as in \eqref{eq:PeakMTP_overview} together with \cref{lem:probinfsup}, we deduce that
\begin{multline}
\sum_{i=0}^{r_0} \E_{p_c}\left[ X_{-k-\ell-i}^{-k-\ell-r-r_0,0}(\rho)\,;\;\sP_\rho(k+\ell+r+r_0) \right]
\preceq \sum_{i=0}^{r_0}  \P_{p_c}\left(\rho \xleftrightarrow{L_{-r+i-r_0}(\rho)} L_{k+\ell+i,\infty}(\rho)\right) \\
\preceq \nexp\left[ O(r) - (\alpha_{p_c}-1) (k+\ell) \right],
\end{multline}
for every $r \geq 1$ and $k,\ell \geq 0$. 
Applying \cref{lem:peakcomparison} and letting $v_0$ be as in \eqref{eq:peaksurvival_v0}, we obtain that
\begin{multline} 
 \sup_{v\in V, x\in [0,1]} \E_{p_c}\left[X_{-k}^{-k-r,0}(v)\mathbbm{1}\left( v \nxleftrightarrow{L_{-r-k-2,\infty}(v)} L_{\ell-1}(v) \right) \mid U_{v} = x \right]\\
\preceq 
\myfrac{\sum_{i=0}^{r_0} \E_{p_c}\left[ X_{-k-\ell-i}^{-k-\ell-r-r_0,0}(v_0)\,;\;\sP_{v_0}(k+\ell+r+r_0) \right]}{\P_{p_c}\left(v_0 \leftrightarrow L_{-\ell}(v_0) \mid \sP_{v_0} \right)}
\\\preceq
\nexp\left[O(r) -(\alpha_{p_c}-1)k+ o(\ell) \right],
\label{eq:maladjusted}
\end{multline}
for every $r\geq 1$ and $k,\ell\geq 0$. By adjusting the implicit constants if necessary, we may take the bound \eqref{eq:maladjusted} to hold for every $r,k,\ell\geq 0$.

Now, applying the union bound and Markov's inequality yields that
\begin{multline}
\P_{p_c}\left(X_{-k}^{-k-r,0}(v) \geq n \mid U_{v}=x \right)
 \\\leq \frac{1}{n}\E_p\left[X_{-k,}^{-k-r,0}(v) \mathbbm{1}\left( v \nxleftrightarrow{L_{-r-k-2,\infty}(v)} L_{\ell-1}(v) \right) \mid U_{v} = x \right] + \P_{p_c}\left( v \xleftrightarrow{L_{-r-k-2,\infty}(v)} L_{\ell-1}(v) \mid U_{v}=x \right)\\
\preceq \frac{1}{n}\nexp\left[ O(r) -(\alpha_{p_c}-1)k+ o(\ell) \right] +  \nexp\left[O(k+r) -\alpha_{p_c}\ell\right] 
\end{multline}
for every $v\in V$, $x\in [0,1]$, and $r,k,\ell \geq 0$, where we have used \eqref{eq:maladjusted} and \cref{lem:probinfsup} respectively to bound the two terms on the second line.
Taking $\ell = \lceil \log n + C k \rceil$ for a sufficiently large constant $C$, we deduce that
\begin{multline}
\sup_{v\in V,x\in[0,1]}\P_{p_c}\left(X_{-k}^{-k-r,0}(v) \geq n \mid U_{v}=x \right)\\
\preceq \frac{1}{n}\, \nexp\left[ O(r) -(\alpha_{p_c}-1)k+ o(k) + o(\log n)\right] +  \frac{1}{n} \,\nexp\left[O(r)\right] 
\end{multline}
and hence that
\begin{multline}
\sup_{v\in V,x\in[0,1]}\P_{p_c}\left(X_{-k}^{-k-r,0}(v) \geq n \mid U_{v}=x \right)\\
\preceq_\eps \frac{1}{n^{1-\eps/2}}\, \nexp\left[ O(r) -(\alpha_{p_c}-1)k+ o(k) \right] +  \frac{1}{n} \,\nexp\left[O(r)\right] 
\end{multline}
for every $r,k \geq 0$ and $\eps \in (0,1]$.
Multiplying both sides by $n^{-\eps}$ and summing over $n$ completes the proof.
\end{proof}

In the remainder of the section, we apply the tools developed in \cref{subsec:toolkit} to bootstrap from the slab estimate of \cref{lem:startingpoint} to the full-space estimate \cref{lem:bootstrap}.
The next lemma provides in particular an `upwards' version of the estimate \eqref{eq:startingpoint}.

\begin{lemma}
\label{lem:midway}
The estimates
\begin{align}
E_{p_c}^{0,k}(k;1-\eps) &\preceq_{\eps} \nexp\left[-k+o_\eps(k) \right] && \text{and}
\label{eq:midway1}
\\
E_{p_c}^{-\infty,r}(0;1-\eps) &\preceq_\eps \nexp\left[ O(r)\right]
\label{eq:midway2}
\end{align}
hold for every $0<\eps\leq 1$ and $k,r\geq 0$.
\end{lemma}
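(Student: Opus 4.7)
The two estimates are proved by bootstrapping from Lemma \ref{lem:startingpoint} (the slab estimate) using the toolkit of \cref{subsec:toolkit}. For both, we work first at the level of averaged expectations indexed by $\rho$ and then use \eqref{eq:supaverage} to pass to the supremum $E_{p_c}^{m,n}(k;1-\eps)$.

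For \eqref{eq:midway1}, a direct application of \cref{lem:HolderMTP} with $m=0$, $n=k$ only yields a decay of the form $\nexp[-(1-\eps)k + o_\eps(k)]$: the base factor $E_{p_c}^{0,k}(0;1-\eps)$ can only be bounded by $\nexp[O(k)]$ via shifted Lemma \ref{lem:startingpoint}, and the naive H\"older-MTP loses a factor of $\eps$ in the exponent that cannot be recovered by iteration alone. Instead, we start from the sharper tilted mass-transport identity obtained by applying the MTP directly to
\[
F(u,v) = \mathbbm{1}\!\left[v \in L_k(u),\ u \xleftrightarrow{L_{0,k}(u)} v\right] \left(X_k^{0,k}(u)\right)^{-\eps},
\]
which (using that on this event $v$ and $u$ share the same cluster in $L_{-k,0}(\rho)$, so $X_k^{0,k}(v) = X_0^{-k,0}(\rho)$) yields
\[
\E_{p_c}\!\left[(X_k^{0,k}(\rho))^{1-\eps}\right] \asymp \nexp(-k)\, \E_{p_c}\!\left[X_{-k}^{-k,0}(\rho) \cdot (X_0^{-k,0}(\rho))^{-\eps}\right].
\]
The MTP factor $\nexp(-k)$ is now sharp; the remaining task is to show that the expectation on the right is $\nexp[o_\eps(k)]$. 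We exploit that $X_0^{-k,0}(\rho) \geq 1$ whenever $X_{-k}^{-k,0}(\rho) \geq 1$ (since $\rho$ itself lies in $L_0(\rho)$) and split the expectation according to the size of $X_0^{-k,0}(\rho)$, applying a H\"older argument that trades moments of $X_{-k}^{-k,0}(\rho)$ against the penalty $(X_0^{-k,0}(\rho))^{-\eps}$. The $(1-\delta)$-moment bound from Lemma \ref{lem:startingpoint} ($\preceq_\delta \nexp[o_\delta(k)]$) then closes this step.

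For \eqref{eq:midway2}, we work with $E_{p_c}^{-r', r}(0; 1-\eps)$ for finite $r'$ and take $r' \to \infty$ by monotone convergence. Apply the fractional up-then-down decomposition \eqref{eq:upthendown} with $m=-r'$, $n=r$, $k=0$:
\[
E_{p_c}^{-r', r}(0; (1-\eps)(1-\delta)) \leq \sum_{\ell=0}^r E_{p_c}^{-r',\ell}(\ell; 1-\delta)\,\bigl(E_{p_c}^{-r'-\ell,0}(-\ell; 1-\eps)\bigr)^{1-\delta}.
\]
The second factor is bounded directly by Lemma \ref{lem:startingpoint} as $\nexp[o(\ell) + O(r')]$. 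For the first, we apply the shift \eqref{eq:FKGshift} to relate $E_{p_c}^{-r',\ell}(\ell;1-\delta)$ to a bottom-layer moment that Lemma \ref{lem:startingpoint} controls, with the crucial observation that the leading $\nexp[O(r')]$ contributions from both factors can be arranged to cancel (or be absorbed) by choosing the parameters $\delta,\eps$ appropriately. This leaves a bound of the form $\nexp[O(r)]$ uniform in $r'$, from which the claim follows by monotone convergence.

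The main obstacle is the sharp exponent $-k$ in \eqref{eq:midway1}: the standard H\"older-MTP loses an $\eps k$ in the leading order, so we must leverage the full MTP identity rather than its H\"older-applied form, and then control the mixed factor $X_{-k}^{-k,0}(\rho)(X_0^{-k,0}(\rho))^{-\eps}$ -- a product whose expectation cannot be bounded by first moments (which are infinite at $p_c$) but instead must be estimated through fractional moments, exploiting $\alpha_{p_c}\geq 1$ only indirectly via Lemma \ref{lem:startingpoint}. Uniformity in $r'$ for \eqref{eq:midway2} is a secondary but analogous technical difficulty.
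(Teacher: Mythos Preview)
Your plan for \eqref{eq:midway1} rests on a false premise. You assert that \cref{lem:HolderMTP} ``loses a factor of $\eps$ in the exponent that cannot be recovered by iteration alone,'' but the paper proceeds exactly via \cref{lem:HolderMTP} and does recover the sharp exponent. The trick is twofold. First, choose $\delta = (1\wedge C^{-1})\eps$ where $C$ is the constant from \eqref{eq:FKGshift}; after shifting both factors into the form covered by \cref{lem:startingpoint} one obtains
\[
\E_{p_c}\!\left[\bigl(X_k^{-r,k+r}(\rho)\bigr)^{1-\eps}\right] \preceq_\eps \nexp\bigl[-(1-2\eps)k + o(k) + O(r)\bigr].
\]
Second --- and this is the step you are missing --- the left-hand side is monotone decreasing in $\eps$ (the $X_k$ are integer-valued), so the bound at any smaller $\eps'$ applies as well; letting $\eps'\downarrow 0$ upgrades the exponent to $-k + o_\eps(k)$. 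This yields the general-$r$ estimate $E_{p_c}^{-r,k+r}(k;1-\eps)\preceq_\eps\nexp[-k+o_\eps(k)+O(r)]$, of which \eqref{eq:midway1} is the case $r=0$.

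Your proposed alternative for \eqref{eq:midway1} has a genuine gap. The MTP identity you write is correct, but you never explain how to show
\[
\E_{p_c}\!\left[X_{-k}^{-k,0}(\rho)\,\bigl(X_0^{-k,0}(\rho)\bigr)^{-\eps}\right] \preceq_\eps \nexp[o_\eps(k)].
\]
Any H\"older splitting that isolates a pure moment of $X_{-k}^{-k,0}(\rho)$ produces either a first (or higher) moment --- which is not controlled at this point of the argument, since we do not yet know $p_c<p_t$ --- or else returns the original expression via the inverse MTP. Splitting on the size of $X_0^{-k,0}$ does not help for the same reason: bounding $\E[X_{-k}^{-k,0};\,X_0^{-k,0}\geq 2^j]$ still requires a first moment or a correlation estimate you do not have.

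Your plan for \eqref{eq:midway2} fails for a different reason. After applying \eqref{eq:upthendown} with finite lower cutoff $-r'$, both factors carry $+O(r')$ in the exponent from \cref{lem:startingpoint} and \eqref{eq:FKGshift}. These do not ``cancel''; they are upper bounds with the same sign and compound to a bound that blows up as $r'\to\infty$. The paper instead applies \eqref{eq:downthenup} directly with $m=-\infty$: the summation runs over the minimum level $\ell=-k$, and each first factor $E_{p_c}^{-k,r}(-k;1-\eps)$ involves only the finite slab $[-k,r]$, so no auxiliary $r'$ ever appears. The second factor $E_{p_c}^{0,k+r}(k;1-\eps)$ is then bounded using the just-proven general-$r$ version of \eqref{eq:midway1}, which supplies the exponential decay in $k$ needed for the infinite sum to converge to $\nexp[O(r)]$.
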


Again, we stress that the implicit constant inside the exponential on the right hand side of \eqref{eq:midway2} does not depend on the choice of $\eps \in (0,1]$.

\begin{proof}
Let $0<\eps \leq 1$, and let $\delta = ( 1 \wedge C^{-1}) \eps \leq \eps$, where $C=C_{p_c}$ is the implicit constant prefactor of $|r|$ in \eqref{eq:FKGshift}. Applying \cref{lem:HolderMTP}, we have that
\begin{align}
\E_{p_c}\left[\left(X_k^{-r,k+r}(\rho)\right)^{1-\eps}\right]
&\preceq \nexp\left[-(1-\eps)k\right] E_{p_c}^{-k-r,r}(-k;1-\delta)^{1-\eps} E_{p_c}^{-r,k+r}(0;1-\eps)^\delta.
\end{align}
Applying the estimate \eqref{eq:FKGshift} from \cref{lem:bootstrapping_etc} to both terms on the right-hand side we obtain that
\begin{multline}
\E_{p_c}\left[\left(X_k^{-r,k+r}(\rho)\right)^{1-\eps}\right]
\\\preceq_\eps 
 \nexp\left[-(1-\eps)k+(1-\eps)Cr + \delta C(k+r)\right] E_{p_c}^{-k-2r,0}(-k-r;1-\delta)^{1-\eps+\delta},
\end{multline}
and applying \cref{lem:startingpoint} yields that
\begin{align}
\E_{p_c}\left[\left(X_k^{-r,k+r}(\rho)\right)^{1-\eps}\right]
&\preceq_{\eps} 
\nexp\left[-(1-2\eps)k  + o(k) + O(r)\right].
\label{eq:eps_decreasing}
\end{align}
Since $0<\eps \leq 1$ was arbitrary and the left hand side of \eqref{eq:eps_decreasing} is a decreasing function of $\eps$, it follows that in fact
\begin{equation}
\E_{p_c}\left[\left(X_k^{-r,k+r}(\rho)\right)^{1-\eps}\right]
\preceq_{\eps} 
\nexp\left[-k  + o_{\eps}(k) + O(r)\right],
\end{equation}
and applying \eqref{eq:supaverage} we deduce that
\begin{equation}
E_{p_c}^{-r,k+r}(k;1-\eps)
\preceq_{\eps} 
\nexp\left[-k  + o_{\eps}(k) + O(r)\right]
\label{eq:kawaii}
\end{equation}
for every $0 < \eps \leq 1$ and $k,r \geq 0$. 
In particular, this yields the estimate \eqref{eq:midway1}. 
Applying the estimates \eqref{eq:upthendown}, \eqref{eq:FKGshift}, and \eqref{eq:kawaii} we obtain that
\begin{align}
E_{p_c}^{-\infty,r}\left(0;(1-\eps)^2\right)  &\leq 
\sum_{k \geq0} E_{p_c}^{-k,r}(-k; 1-\eps) E_{p_c}^{0,k+r}(k; 1-\eps)^{ 1-\eps}
\\
&\leq  \sum_{k \geq0} \nexp[O(r)] E_{p_c}^{-k-r,0}(-k-r; 1-\eps) E_{p_c}^{0,k+r}(k; 1-\eps)^{ 1-\eps}
\\
 &\leq \sum_{k\geq0} \nexp\left[
-  (1-\eps) k + o_\eps(k+r) + O(r)
\right]
\preceq_\eps \nexp\left[O(r)\right].
\end{align}
The estimate \eqref{eq:midway2} follows since $0<\eps\leq 1$ was arbitrary.
\end{proof}

Next, we prove a half-space version of \cref{lem:bootstrap} by interpolating between the estimates of \cref{lem:startingpoint} and of \cref{lem:midway}.
Once \cref{lem:3/4way} is proven, it will remain only to improve these half-space estimates to full-space estimates.

\begin{lemma}
\label{lem:3/4way}
The estimates
\begin{align}
E_{p_c}^{-\infty,0}(-k;1-\eps) &\preceq_\eps \nexp\left[ o_\eps(k)\right] && \text{and}\\
E_{p_c}^{0,\infty}(k;1-\eps) &\preceq_\eps \nexp\left[ -k + o_\eps(k)\right]&&
\end{align}
hold for every $0<\eps\leq 1$ and $k\geq 0$.
\end{lemma}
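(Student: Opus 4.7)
Plan: I would prove the two estimates of Lemma \ref{lem:3/4way} jointly, since they are intimately coupled through the Hölder–MTP identity: given control on the ``coming down'' quantity $E_{p_c}^{-\infty,0}(-k;1-\eps)$, Lemma \ref{lem:HolderMTP} converts this into control on the ``going up'' quantity $E_{p_c}^{-\infty,\ell}(\ell;1-\delta)$ of the desired form $\nexp[-\ell+o_\delta(\ell)]$, which via the up-then-down decomposition of Lemma \ref{lem:fractionalupanddown} gives the second estimate; conversely, the second estimate plugs into the down-then-up decomposition to give the first.

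For the first (``coming down'') estimate, the plan is to apply the Hölder–MTP identity with $k' = -k$, $m = -\infty$, $n = 0$, using midway \eqref{eq:midway2} with $r = 0$ to bound the $\E_{p_c}[(X_0^{-\infty,0}(\rho))^{1-\eps}]^\delta$ factor by a constant:
\begin{equation*}
\E_{p_c}\bigl[(X_{-k}^{-\infty,0}(\rho))^{1-\eps}\bigr] \preceq \nexp[(1-\eps)k]\cdot E_{p_c}^{-\infty,k}(k;1-\delta)^{1-\eps}.
\end{equation*}
This reduces matters to bounding $E_{p_c}^{-\infty,k}(k;1-\delta)$ by $\nexp[-k+o_\delta(k)]$, which in turn I would obtain via the down-then-up decomposition (eq:downthenup) applied with $m=-\infty$, $n=k$, $k'=k$: this expresses the quantity as a sum over $\ell \leq 0$ whose ``top of slab'' factors $E_{p_c}^{0,k-\ell}(k-\ell;1-\delta)$ are bounded by midway \eqref{eq:midway1} and whose ``bottom of slab'' factors $E_{p_c}^{\ell,k}(\ell;1-\gamma)$ are handled by a further application of Hölder–MTP to convert them into top-of-slab quantities, closing the loop via Lemma \ref{lem:startingpoint} for the residual correction.

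For the second (``going up'') estimate, once the first is in hand, I would apply the up-then-down decomposition with $m=-\infty$, $n=\infty$, $k'=k$:
\begin{equation*}
E_{p_c}^{-\infty,\infty}\bigl(k;(1-\eps)(1-\delta)\bigr) \leq \sum_{\ell\geq k} E_{p_c}^{-\infty,\ell}(\ell;1-\delta)\bigl(E_{p_c}^{-\infty,0}(k-\ell;1-\eps)\bigr)^{1-\delta},
\end{equation*}
bounding $E_{p_c}^{-\infty,\ell}(\ell;1-\delta) \preceq_\delta \nexp[-\ell+o_\delta(\ell)]$ by the argument just sketched, and bounding $E_{p_c}^{-\infty,0}(k-\ell;1-\eps) \preceq_\eps \nexp[o_\eps(\ell-k)]$ by the first estimate. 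The sum is then a geometric series in $\ell-k$ giving $\nexp[-k+o_{\eps,\delta}(k)]$, and the desired bound follows from the monotonicity $E_{p_c}^{0,\infty}(k;1-\eps)\leq E_{p_c}^{-\infty,\infty}(k;1-\eps)$.

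The principal obstacle is the coupling between the two estimates: neither can be proven in isolation, since the Hölder–MTP step in the proof of each requires control provided by the other. The bootstrap must therefore be arranged carefully, proving the estimates at a slightly degraded exponent $(1-\eps)(1-\delta)$ and then using the trick from the proof of Lemma \ref{lem:midway} (the paragraph containing \eqref{eq:kawaii}) where one exploits monotonicity of $\eps \mapsto E^{m,n}(k;1-\eps)$ together with the arbitrariness of $\eps$ to iteratively sharpen the exponent and absorb the $\delta$-losses into $o_\eps(k)$.
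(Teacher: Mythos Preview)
Your bootstrap intuition is right, but the route you propose differs from the paper's and, as written, has a gap at the step you gloss over as ``closing the loop via Lemma \ref{lem:startingpoint}.''

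The paper does \emph{not} use H\"older--MTP in this lemma at all; that tool is reserved for the subsequent proof of Proposition \ref{lem:bootstrap}. Instead, the paper runs the bootstrap directly on the decompositions of Lemma \ref{lem:fractionalupanddown}. For the direction $\cE_2 \Rightarrow \cE_1$, applying \eqref{eq:downthenup} with $m=-\infty$, $n=0$, target $-k$ produces the factor $E_{p_c}^{-\ell,0}(-\ell;1-\delta)$ with upper limit $0$, which is \emph{exactly} what Lemma \ref{lem:startingpoint} controls (with $r=0$), giving $\nexp[o(\ell)]$ with no circularity. Your route instead first converts $E^{-\infty,0}(-k;\cdot)$ to $E^{-\infty,k}(k;\cdot)$ via H\"older--MTP and \emph{then} decomposes; this produces factors $E_{p_c}^{\ell,k}(\ell;\cdot)$ whose upper limit is $k$, not $0$. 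These are not controlled by Lemma \ref{lem:startingpoint}, and attempting to shift them into range via \eqref{eq:FKGshift} incurs an $\nexp[O(k)]$ cost that destroys the bound. Applying H\"older--MTP to them, as you suggest, yields going-up quantities like $E^{0,k+|\ell|}(|\ell|;\cdot)$ that are again not what \eqref{eq:midway1} or \eqref{eq:startingpoint} control without a further $O(k)$ loss. So the ``residual correction'' does not actually close.

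For the other direction $\cE_1 \Rightarrow \cE_2$, the paper uses a device you do not mention: Lyapunov interpolation. The factor $E_{p_c}^{-\infty,0}(k-\ell;1-(1-\eta)\eps)$ arising from \eqref{eq:upthendown} is split as a weighted geometric mean of the bootstrap-controlled piece at exponent $1-\eps$ and a crude piece at exponent $1-\eps/2$ controlled by \eqref{eq:midway2} plus \eqref{eq:FKGshift}. The fixed weight $\eta$ (chosen against the constant in \eqref{eq:FKGshift}) keeps the crude piece's contribution to the exponent at most $\frac{1}{2}(\ell-k)$, while still yielding a definite multiplicative gain $(1-\eta)$ in the range of $\eps$ established. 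Your proposal to ``iteratively sharpen the exponent and absorb the $\delta$-losses'' via monotonicity alone does not supply this mechanism: with only H\"older--MTP (which requires $\delta\leq\eps$, i.e.\ reduces to a \emph{higher} moment), each step of your bootstrap asks for the other estimate at an exponent at least as hard as the one you started from, and there is no visible source of progress.
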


The proof will apply Lyapunov's interpolation inequality (a special case of H\"older's inequality), which we state here in full generality for clarity: If $X$ is a non-negative random variable, then 
\begin{equation}
\E\left[ X^{(1-\lambda)a + \lambda b} \right] \leq \E\Big[ X^a \Big]^{1-\lambda}\E\left[X^b\right]^\lambda
\end{equation}
for every $a,b\geq0$, and $\lambda \in [0,1]$.

\begin{proof}[Proof of \cref{lem:3/4way}]
Define
\begin{align*}
\cE_1 &= \left\{\eps \in (0,1] : E_{p_c}^{-\infty,0}(-k;1-\eps) \preceq_{\eps} \nexp\left[o_\eps(k)\right] \right\} && \text{ and}\\
\cE_2 &= \left\{\eps \in (0,1] : E_{p_c}^{0,\infty}(k;1-\eps) \preceq_{\eps} \nexp\left[-k+o_\eps(k)\right] \right\}.
\end{align*}
We wish to show that $\cE_1=\cE_2=(0,1]$.
	This is done via a bootstrapping procedure. Let $C_1$ and $C_2$ be the implicit constant prefactors of $r$ and $|r|$ from \eqref{eq:midway2} and \eqref{eq:FKGshift} respectively, let $C=C_1+C_2$, and let $\eta=\min\{1/2,1/4C\}$. It holds trivially that $1 \in \cE_1$, since 
	 \begin{equation}E_{p_c}^{-\infty,0}(-k,0)
	 =\sup_{v\in V,x\in [0,1]} \P_p\left(v\xleftrightarrow{L_{-\infty,0}(v)} L_{-k}(v)\right) \leq 1.
	 \end{equation}
	 We claim that the following hold:
	 \begin{enumerate}
	 		  \item
	 If $\eps \in \cE_1$ then $\eps'\in \cE_2$ for every $(1-\eta)\eps < \eps' \leq 1$, and
	  \item
	  If $\eps \in \cE_2 \setminus \{1\}$ then $\eps' \in \cE_1$ for every $\eps < \eps' \leq 1$.
	 \end{enumerate}
	 Once each of these are established, it will follow by induction that $((1-\eta)^{i},1] \subseteq \cE_1 \cap \cE_2$ for every $i\geq 0$, and hence that $\cE_1 = \cE_2 = (0,1]$ as desired.

	 For item 1, let $0<\eps\leq 1$ be such that $\eps\in \cE_1$, let $(1-\eta)\eps < \eps' \leq 1$ and let $0<\delta\leq 1$ be such that $\eps'=1-(1-\delta)(1-(1-\eta)\eps)$. Then \eqref{eq:upthendown} implies that
	 \begin{align}
	 E_{p_c}^{0,\infty}(k;1-\eps')&\leq \sum_{\ell\geq k} E_{p_c}^{0,\ell}(\ell;1-\delta) E_{p_c}^{-\ell,0}(k-\ell;1-(1-\eta)\eps)^{1-\delta}\\
	 &\leq \sum_{\ell\geq k} E_{p_c}^{0,\ell}(\ell;1-\delta) E_{p_c}^{-\infty,0}(k-\ell;1-(1-\eta)\eps)^{1-\delta}.
	 \end{align}
Since $1-(1-\eta)\eps = (1-2\eta)(1-\eps)+2\eta(1-\eps/2)$, we may apply Lyapunov's inequality to the second term in the sum to deduce that
	 \begin{equation}
	 E_{p_c}^{0,\infty}(k;1-\eps')\\
	 \leq \sum_{\ell\geq k} E_{p_c}^{0,\ell}(\ell;1-\delta)
	 E_{p_c}^{-\infty,0}(k-\ell; 1-\eps)^{(1-\delta)(1-2\eta)}
	  E_{p_c}^{-\infty,0}(k-\ell;1-\eps/2)^{2\eta (1-\delta)}.
	 \end{equation}
Applying \eqref{eq:midway1} to control the first term in the sum, the assumption that $\eps \in \cE_1$ to control the second term, and \eqref{eq:midway2} and \eqref{eq:FKGshift} to control the third term, we obtain that
	 \begin{equation}
	 E_{p_c}^{0,\infty}(k;1-\eps')\\
	 \preceq_{\eps,\delta} \sum_{\ell\geq k} \nexp\left[-\ell+o_\delta(\ell) + o_\eps(k-\ell) + 2\eta(1-\delta) C(\ell-k) \right],
		 \end{equation}
and our choice of $\eta$ yields that
	 \begin{align}
	 E_{p_c}^{0,\infty}(k;1-\eps') 
	 &\preceq_{\eps,\delta} \sum_{\ell \geq k} \nexp\left[-\ell + \frac{1}{2}(\ell-k) + o_{\eps}(\ell-k) +o_\delta(\ell)  \right]
\nonumber
\\
	 &\asymp_{\eps,\delta}\sum_{r\geq 0} \nexp\left[-k - \frac{1}{2}r + o_{\eps}(r) +o_\delta(k+r)  \right]\preceq_{\eps,\delta} \nexp\left[-k+o_{\eps,\delta}(k)\right].
	 \end{align}
Since $\delta$ was chosen as a function of $\eps$ and $\eps'$, it follows that $\eps'\in \cE_2$ as claimed.

For item 2, let $\eps \in \cE_2$, let $\eps <\eps' \leq 1$, and let $0<\delta\leq 1$ be such that $\eps'=1-(1-\eps)(1-\delta)$. Then \eqref{eq:downthenup} implies that
	 \begin{align}
	 E_{p_c}^{-\infty,0}(-k;1-\eps') &\leq \sum_{\ell\geq k} E_{p_c}^{-\ell,0}(-\ell;1-\delta) E_{p_c}^{0,\ell}(\ell-k;1-\eps)^{1-\delta}
\nonumber
	 \\
	 &\leq 
	 \sum_{\ell\geq k} E_{p_c}^{-\ell,0}(-\ell;1-\delta) E_{p_c}^{0,\infty}(\ell-k;1-\eps)^{1-\delta}
	 \end{align}
	 for $k \geq 0$.
	 Applying \cref{lem:startingpoint} to control the first term in the sum and the assumption that $\eps \in \cE_2$ to control the second, we obtain that
 \begin{align}
	 E_{p_c}^{-\infty,0}(-k;1-\eps') 
	 &\preceq_{\eps,\delta}
	 \sum_{\ell\geq k} \nexp\left[ o_\delta(\ell)-(1-\delta)(\ell-k) + o_\eps(\ell-k) \right]\preceq_{\eps,\delta} \nexp\left[ o_{\eps,\delta}(k) \right].
	 \end{align}
	Since $\delta$ was chosen as a function of $\eps$ and $\eps'$, it follows that $\eps'\in \cE_1$ as claimed.
	\end{proof}

	We are now ready to complete the proof of \cref{lem:bootstrap}.

\begin{proof}[Proof of \cref{lem:bootstrap}]
Let $0<\eps\leq1$ and let $0<\delta\leq \eps$. 
Applying \cref{lem:HolderMTP} and the estimate \eqref{eq:FKGshift} of \cref{lem:bootstrapping_etc} yields that
\begin{multline}
\E_{p_c}\left[ \left(X_{-k}^{-k-r,\infty}(\rho)\right)^{1-\eps}\right] \preceq 
\nexp\left[(1-\eps)k\right]E_{p_c}^{-r,\infty}(k;1-\delta)^{1-\eps}E_{p_c}^{-k-r,\infty}(0; 1-\eps)^\delta\\
\preceq \nexp\left[(1-\eps)k + \delta O(k+r) + O(r)\right]E_{p_c}^{0,\infty}(k+r;1-\delta)^{1-\eps}E_{p_c}^{0,\infty}(k+r;1-\eps)^{\delta}
\end{multline}
Applying \cref{lem:3/4way}, we obtain that
\begin{align}
\E_{p_c}\left[ \left(X_{-k}^{-k-r,\infty}(\rho)\right)^{1-\eps}\right] 
&\preceq_{\eps,\delta} \nexp\left[ \delta O(k+r) + O(r) + o_{\eps,\delta}(k+r) \right],
\end{align}
and since $0<\delta \leq \eps$ was arbitrary we obtain that
\begin{align}
\E_{p_c}\left[ \left(X_{-k}^{-k-r,\infty}(\rho)\right)^{1-\eps}\right] 
&\preceq_{\eps} \nexp\left[ o_\eps(k) + O(r) \right].
\end{align}
We then obtain from this and \eqref{eq:supaverage} that
\begin{equation}
\label{eq:5/6way1}
E_{p_c}^{-k,\infty}(-k;1-\eps) \preceq_\eps \nexp\left[ o_\eps(k)\right]
\end{equation}
for every $0<\eps \leq 1$ and $k\geq 0$.
A similar proof yields that the analogous `upwards' bound
\begin{equation}
\label{eq:5/6way2}
E_{p_c}^{-\infty,k}(k;1-\eps) \preceq_\eps \nexp\left[ -k + o_\eps(k)\right]
\end{equation}
holds for every $0<\eps \leq 1$ and $k\geq 0$.

 We now apply \eqref{eq:5/6way1} and \cref{lem:3/4way} together with \eqref{eq:downthenup} to deduce that
\begin{align}
E_{p_c}^{-\infty,\infty}\bigl(-k;(1-\eps)^2\bigr)&\preceq_\eps \sum_{\ell \geq k} E_{p_c}^{-\ell,\infty}(-\ell;1-\eps)E_{p_c}^{0,\infty}(\ell-k;1-\eps)^{1-\eps}
\nonumber
\\
&\preceq_\eps
\sum_{\ell\geq k} \nexp\left[ -(1-\eps)(\ell-k) + o_\eps(\ell) + o_\eps(\ell-k) \right]
\preceq_\eps \nexp\left[o_\eps(k)\right]
\end{align}
for every $k\geq 0$ and $0<\eps \leq 1$. 
 Similarly, applying \eqref{eq:5/6way2} and \cref{lem:3/4way} together with \eqref{eq:upthendown} yields that
\begin{align}
E_{p_c}^{-\infty,\infty}\bigl(k;(1-\eps)^2\bigr)&\preceq_\eps \sum_{\ell \geq k} E_{p_c}^{-\infty,\ell}(\ell;1-\eps)E_{p_c}^{-\infty,0}(k-\ell;1-\eps)^{1-\eps}\\
&\preceq_\eps
\sum_{\ell\geq k} \nexp\left[ -\ell + o_\eps(\ell) + o_\eps(\ell-k) \right]
\preceq_\eps \nexp\left[-k+o_\eps(k)\right]
\end{align}
for every $k\geq 0$ and $0<\eps \leq 1$. 
	Since $0<\eps\leq 1$ was arbitrary, this concludes the proof of \cref{lem:bootstrap}, and thus also the proof of our main results, \cref{thm:pcpt,thm:pcph,thm:pcpu}.
\end{proof}

\section{Critical exponents and the triangle condition}
\label{sec:exponents}


In this section we prove \cref{thm:triangle,thm:criticalexponents}. We begin by deducing \cref{thm:triangle} from \cref{thm:pcpt}.

\begin{lemma}
\label{lemma:diamondimpliestriangle}
Let $G$ be a locally finite graph and suppose that $\Gamma \subseteq \operatorname{Aut}(G)$ is transitive and nonunimodular.
Then 
\[\nabla_p(v) \leq \Big(\sup_{u\in V} \chi_{p,\lambda}(u)\Big)^3\] for every $v\in V$, $p\in [0,1]$ and $\lambda \in \R$. In particular, $\nabla_p<\infty$ for every $0\leq p < p_t$.
\end{lemma}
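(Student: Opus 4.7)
The plan is to insert modular-function factors into the sum defining $\nabla_p(v)$ using the cocycle identity, and then read off the claimed bound as a simple row-sum estimate for a cube of a non-negative kernel.

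Concretely, the cocycle identity $\Delta(v,x)\Delta(x,y)\Delta(y,v)=1$ (item 2 of \cref{lem:modularsymmetries}) implies that for every $\lambda \in \R$,
\[
\nabla_p(v) = \sum_{x,y\in V} \bigl[\tau_p(v,x)\Delta^\lambda(v,x)\bigr] \cdot \bigl[\tau_p(x,y)\Delta^\lambda(x,y)\bigr] \cdot \bigl[\tau_p(y,v)\Delta^\lambda(y,v)\bigr].
\]
I will therefore introduce the non-negative kernel $A(u,w) := \tau_p(u,w)\Delta^\lambda(u,w)$, whose row sums are given exactly by $\sum_w A(u,w) = \chi_{p,\lambda}(u)$. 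Writing $A^k$ for the $k$-fold convolution, the previous display is just the statement $\nabla_p(v) = A^3(v,v)$.

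The key point is then the elementary fact that if $A$ is non-negative and satisfies $\sum_w A(u,w) \le M$ for every $u$, then $\sum_w A^k(u,w) \le M^k$ for every $u$ and every $k \ge 1$; this follows by induction on $k$, since
\[
\sum_w A^{k+1}(u,w) = \sum_z A(u,z) \sum_w A^k(z,w) \le M \sum_z A(u,z) \le M^{k+1}.
\]
Taking $M = \sup_u \chi_{p,\lambda}(u)$ and discarding all off-diagonal contributions gives
\[
\nabla_p(v) = A^3(v,v) \le \sum_w A^3(v,w) \le \Bigl(\sup_u \chi_{p,\lambda}(u)\Bigr)^3,
\]
which is the desired bound.

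For the `in particular' statement, if $0 \le p < p_t$ then by definition of $p_t$ there exists $\lambda \in \R$ with $\chi_{p,\lambda}(v) < \infty$ for some $v$. Since $\Gamma$ is transitive and both $\tau_p$ and $\Delta$ are $\Gamma$-diagonally invariant, the function $u \mapsto \chi_{p,\lambda}(u)$ is constant on $V$, so $\sup_u \chi_{p,\lambda}(u) < \infty$ and the previous paragraph yields $\nabla_p(v) < \infty$. There is no real obstacle in this argument; the cocycle identity does all the work by allowing us to exchange the plain triangle $\tau_p \cdot \tau_p \cdot \tau_p$ for a non-negative `weighted' triangle whose entries' row sums admit an honest one-dimensional bound.
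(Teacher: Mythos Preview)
Your proof is correct and essentially identical to the paper's: both use the cocycle identity to write $\nabla_p(v)=A^3(v,v)$ for the kernel $A(u,w)=\tau_p(u,w)\Delta^\lambda(u,w)$ and then bound this diagonal entry by the cube of the maximal row sum $\sup_u \chi_{p,\lambda}(u)$. The only cosmetic difference is that the paper first relaxes the closed triangle to an open path $\sum_{x,y,z}\tau_p(v,x)\tau_p(x,y)\tau_p(y,z)\Delta^\lambda(v,z)$ (using $\Delta(v,v)=1$) and then applies the cocycle identity, whereas you apply the cocycle identity first and relax afterwards.
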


\begin{proof}
Since $\Delta(v,v)=1$, we have the   trivial inequality
\begin{equation}
\nabla_p(v) = \sum_{x,y \in V} \tau_p(v,x) \tau_p(x,y) \tau_p(y,v) \leq \sum_{x,y,z \in V} \tau_p(v,x) \tau_p(x,y) \tau_p(y,z) \Delta^\lambda(v,z)
\end{equation}
and using the cocycle identity $\Delta(v,z)=\Delta(v,x)\Delta(x,y)\Delta(y,z)$  yields that
\begin{equation} \nabla_p(v) \leq \sum_{x\in V} \tau_p(v,x)\Delta(v,x)^{\lambda} \sum_{y\in V} \tau_p(x,y)\Delta(x,y)^{\lambda} \sum_{z\in V} \tau_p(y,z) \Delta(y,z)^{\lambda} \leq \Big(\sup_{u\in V} \chi_{p,\lambda}(u)\Big)^3
\end{equation}
as claimed.
\end{proof}

\begin{remark}
In \cite[Theorem 2.9]{Hutchcroft2019Hyperbolic}, we prove that the matrix of connection probabilities $T_p(u,v)=\bP_p(u \leftrightarrow v)$ defines a bounded operator on $L^2(V)$ for every $p<p_t$. The consequences of this property are developed at length in \cite{Hutchcroft2019Hyperbolic} and \cite{1901.10363}. 
\end{remark}

From here, it remains to derive \cref{thm:criticalexponents} from \cref{thm:triangle} together with the estimates we derived in \cref{sec:Fekete}.
As stated in the introduction, and observed by Schonmann \cite{MR1833805}, most aspects of the proofs of \cite{MR762034,aizenman1987sharpness,MR1127713,MR923855,MR2551766,MR2779397} generalise unproblematically to quasi-transitive graphs satisfying the triangle condition at $p_c$. In the interest of space, we do not go through these parts of the proofs here. The reader is encouraged to consult the original papers, as well as \cite{grimmett2010percolation,heydenreich2015progress}. However, there are four points that require more serious attention, two of which have already been addressed in the literature and two of which we address  here. First, we have the two that have already been addressed:
\begin{enumerate}
\item Barsky and Aizenman's proof of the upper bounds of \eqref{exponent:volume} and \eqref{exponent:theta} use the \emph{open triangle condition} rather than the triangle condition as we have stated it. They showed that the two conditions are equivalent in the case of $\Z^d$ using Fourier analysis, but this proof does not generalise to other transitive graphs. Fortunately, however, the two conditions were shown to be equivalent at $p_c$ for all transitive graphs by Kozma \cite{MR2779397}, who used the theory of unbounded operators. 
 Kozma's proof is easily generalised to the quasi-transitive case. 

\item Kozma and Nachmias's \cite{MR2551766} proof of the intrinsic radius upper bound in \eqref{exponent:intradius} assumes both unimodularity (implicitly in their proof of Lemma 3.2) and polynomial growth (in the deduction of Theorem 1.2, part $(i)$ from Lemma 3.2). However, the use of both assumptions are confined to the proof of a single estimate (Theorem 1.2, part $(i)$). Fortunately, Sapozhnikov \cite{sapozhnikov2010upper} found a very short and simple proof of this estimate that works for any bounded degree graph satisfying the upper bound of \eqref{exponent:susceptibility} uniformly over all its vertices, thus  rendering this problem unproblematic.   
\end{enumerate}

Secondly, we have the two issues that we must address ourselves. 

\begin{enumerate}
	\item Aizenman and Newman's  \cite{MR762034} proof of the upper bound of \eqref{exponent:susceptibility} and Nguyen's proof of the lower bound of \eqref{exponent:gap} both rely on the differential inequality
\begin{equation}
\label{eq:meanfield_diffineq_overview}
\frac{d}{dp}\chi_p \succeq \chi_p^2 \qquad \text{ as $p\uparrow p_c$}.
\end{equation}
	Aizenman and Newman's derivation of this inequality from the triangle condition contains two steps that do not generalize to our setting:
\begin{enumerate}
\item 	There is an implicit use of the (unimodular form of the) mass-transport principle in \cite[Eq. 6.4]{MR762034}.  (The fact that it does so was observed by Schonmann \cite{MR1833805}; see the discussion around eq.\ 3.14 of that paper.)  
\item The argument of \cite[Lemma 6.3]{MR762034} relies on some topological features of $\Z^d$ and does not generalize to arbitrary quasi-transitive graphs. (Indeed, it does not work on a tree.) 
\end{enumerate}

	\item Kozma and Nachmias's \cite{MR2551766} proof of the lower bound of \eqref{exponent:intradius} uses the asymptotics for the two-point function for critical high-dimensional percolation, due to Hara, van der Hofstad, and Slade \cite{MR1959796} and Hara \cite{MR2393990}. As such, it does not generalise to other quasi-transitive graphs satisfying the triangle condition. Moreover, the lower bound of \eqref{exponent:radius} is \emph{false} for $\Z^d$, where the correct high-dimensional exponent for the extrinsic diameter is $2$ instead of $1$ (see \cite{MR2748397} for an explanation of this disparity).
\end{enumerate}


Thus, in order to deduce \cref{thm:criticalexponents} from \cref{thm:triangle}, it suffices to prove the upper bound of \eqref{exponent:susceptibility} and the lower bounds of \eqref{exponent:gap}, \eqref{exponent:radius}, and \eqref{exponent:intradius}. While it is certainly possible to adapt the original proofs to our setting (and in particular to prove a differential inequality of the form \eqref{eq:meanfield_diffineq_overview}), this requires some work, and we are fortunate that we may instead apply \cref{thm:pcpt} and the results of \cref{sec:Fekete} to give very quick proofs in the nonunimodular setting. 

%
%
%


We begin with the upper bound of \eqref{exponent:susceptibility}. We in fact prove the following generalization of the result to the tilted susceptibility.

\begin{thm}
\label{thm:tiltedsusceptibilityexponent}
Let $G$ be a connected, locally finite graph, and let $\Gamma$ be a quasi-transitive nonunimodular subgroup of $\Aut(G)$. Then for every $\lambda \in \R$ such that $p_c(\lambda)<p_t$ we have that
\[
\chi_{p,\lambda}(v) \asymp_\lambda (p_c(\lambda)-p)^{-1} \qquad \text{ as $p \uparrow p_c(\lambda)$}
\]
for every $v\in V$.
\end{thm}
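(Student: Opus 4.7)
The lower bound $\chi_{p,\lambda}(v) \succeq_\lambda (\pcl{\lambda}-p)^{-1}$ is exactly \cref{prop:tiltedmeanfieldlowerbound}, so the task is to prove the matching upper bound. The plan is to adapt the classical Aizenman--Newman derivation of the mean-field susceptibility upper bound to the tilted setting, using the tilted mass-transport principle in place of ordinary mass transport and the cocycle identity in place of translation-invariance.

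The key preparatory observation is that the triangle diagram $\nabla_p$ stays bounded uniformly on $[0,\pcl{\lambda}]$: \cref{lemma:diamondimpliestriangle} applied with parameter $1/2$ gives $\nabla_p(v) \leq (\chi_{p,1/2}^*)^3$, and since $\pcl{\lambda}<p_t=\pcl{1/2}$ by hypothesis while $\chi_{p,1/2}^*$ is monotone in $p$, the right-hand side is bounded uniformly on $[0,\pcl{\lambda}]$. Analogous uniform bounds on the tilted triangle-type diagrams that will appear as BK correction terms can be obtained by symmetrising via the tilted MTP and the cocycle identity and then applying the same estimate.

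The plan is then to derive a differential inequality
\[
\frac{d}{dp}\chi_{p,\lambda}(v) \geq c_\lambda\, \chi_{p,\lambda}(v)^2
\]
valid for all $p$ sufficiently close to $\pcl{\lambda}$, which integrated from $p$ up to $\pcl{\lambda}$ (using $\chi_{\pcl{\lambda},\lambda}(v)=\infty$ from \cref{prop:tiltedmeanfieldlowerbound}) yields $\chi_{p,\lambda}(v)\leq c_\lambda^{-1}(\pcl{\lambda}-p)^{-1}$. Starting from Russo's formula, the standard BK pivotal-edge argument gives a lower bound of the form
\[
\bP_p\big(\{x,y\}\text{ pivotal for } v \leftrightarrow u\big) \geq (1-p)\,\tau_p(v,x)\tau_p(y,u) - E(v,x,y,u),
\]
where $E$ is a union of triangle-shaped events. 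The cocycle identity $\Delta^\lambda(v,u)=\Delta^\lambda(v,y)\Delta^\lambda(y,u)$ gives $\sum_u \Delta^\lambda(v,u)\tau_p(y,u) = \Delta^\lambda(v,y)\chi_{p,\lambda}(y)$, which by quasi-transitivity is $\asymp \Delta^\lambda(v,y)\chi_{p,\lambda}(v)$; a crude positive lower bound on $\sum_{y\sim x}\Delta^\lambda(x,y)$ assembles the main term into $c_\lambda\chi_{p,\lambda}(v)^2$, and summing the corrections gives $O(\chi_{p,\lambda}(v)^2\, R(p))$ with $R(p)$ bounded on $[0,\pcl{\lambda}]$ by the preparatory triangle estimate.

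The main obstacle will be verifying that the triangle-shaped BK correction terms are genuinely controlled by bounded tilted triangle diagrams in the nonunimodular setting. In the classical unimodular case these corrections take an obviously symmetric triangular form, but here a cocycle manipulation together with an application of the tilted MTP and the $\lambda\leftrightarrow 1-\lambda$ symmetry \eqref{eq:chisymmetry} is needed to bring them into a form to which \cref{lemma:diamondimpliestriangle} applies. The assumption $\pcl{\lambda}<p_t$ is exactly what makes this work, by guaranteeing $\chi_{p,1/2}^*<\infty$ uniformly on $[0,\pcl{\lambda}]$.
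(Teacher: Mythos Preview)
Your approach is genuinely different from the paper's, and the paper explicitly comments on why the Aizenman--Newman differential-inequality route is delicate here (see the discussion surrounding \eqref{eq:meanfield_diffineq_overview}). Rather than proving any differential inequality, the paper's proof is direct and very short: by symmetry take $\lambda>1/2$; \cref{cor:lambdatoalpha} with the hypothesis $\pcl{\lambda}<p_t$ gives $\beta_{\pcl{\lambda}}=\lambda$, and \cref{prop:tamedecay} then yields
\[
\E_{\pcl{\lambda}}\bigl[X_k^{-\infty,\infty}(v)\bigr]\preceq_\lambda \nexp\bigl[-\lambda k\bigr]\ (k\ge0),\qquad \nexp\bigl[(\lambda-1)k\bigr]\ (k<0).
\]
Combining this with the general pointwise bound $\log\tau_{p'}(x,y)\le\frac{\log p'}{\log p}\log\tau_p(x,y)$ (Grimmett, Theorem~2.38) shifts the exponent by $c\eps$ at $p=\pcl{\lambda}-\eps$, and summing via \eqref{eq:XtoChi} gives $\chi_{\pcl{\lambda}-\eps,\lambda}\preceq_\lambda\eps^{-1}$ immediately. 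No triangle diagram enters at all.

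Your proposal has a real gap at the step you yourself flag as the obstacle. Writing the BK correction as $O\bigl(\chi_{p,\lambda}(v)^2\,R(p)\bigr)$ with $R(p)$ merely \emph{bounded} on $[0,\pcl{\lambda}]$ is not enough: to extract $\frac{d}{dp}\chi_{p,\lambda}\ge c_\lambda\chi_{p,\lambda}^2$ you need $R(p)$ strictly smaller than the constant in front of the main term. In the classical argument this smallness comes from the \emph{open} triangle condition (that the triangle with endpoints forced far apart is small), which is not the same as $\nabla_p<\infty$; bridging the two on general transitive graphs is Kozma's theorem, and even then one still needs a device (Aizenman--Newman's Lemma~6.3 on $\Z^d$, which the paper notes fails on trees) to restrict to pivotal edges at large distance from the origin so that the relevant open triangle is genuinely small. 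Your sketch does not supply a tilted analogue of either ingredient. The paper remarks that this route can be made to work ``with some work'', but chose the Section~5 machinery precisely to sidestep these difficulties.
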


\begin{proof}[Proof of \cref{thm:tiltedsusceptibilityexponent}]
By symmetry, it suffices to consider the case $\lambda > 1/2$. The lower bound is provided by \cref{prop:tiltedmeanfieldlowerbound}. For the upper bound, \cref{cor:lambdatoalpha} and the assumption that $p_c(\lambda)<p_t$ guarantee that $\beta_p=\lambda$, and we have by \cref{prop:tamedecay} that
\begin{equation}
\E_{\pcl{\lambda}}\left[X_k^{-\infty,\infty}(v)\right] \preceq_\lambda \begin{cases} \nexp\left[-\lambda k  \right] &k \geq0\\
\nexp\left[(\lambda-1)k \right] & k < 0,
\end{cases}
\end{equation}
and hence in particular that
$\tau_p(x,y) \preceq_\lambda \Delta^\lambda(x,y) \wedge \Delta^\lambda(y,x)$ 
for every $0\leq p \leq p_c(\lambda)$ and $x,y \in V$. Using the inequality 
\begin{equation}
\log \tau_{p'}(x,y) \leq \frac{\log p'}{\log p}\log \tau_p(x,y),
\end{equation}
which holds in any graph \cite[Theorem 2.38]{grimmett2010percolation}, we deduce by calculus that there exists a constant $c=c_\lambda$ such that
\begin{equation}
\E_{\pcl{\lambda}-\eps}\left[X_k^{-\infty,\infty}(v)\right] \preceq_\lambda \begin{cases} \nexp\left[-(\lambda +c\eps) k  \right] &k \geq0\\
\nexp\left[(\lambda-1+c\eps)k \right] & k < 0.
\end{cases}
\end{equation}
The result follows from this together with \eqref{eq:XtoChi}.
\end{proof}



\begin{corollary}
Let $G$ be a connected, locally finite graph, and let $\Gamma$ be a quasi-transitive nonunimodular subgroup of $\Aut(G)$. Then 
\begin{equation}
\chi^{(k+1)}_p(v)/\chi^{(k)}_p(v) \succeq (p_c-p)^{-2} \qquad \text{ as $p \uparrow p_c$}
\end{equation}
for every $v\in V$ and $k\geq 1$.
\end{corollary}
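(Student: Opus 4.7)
Plan. The plan is to derive matching two-sided polynomial asymptotics $\chi_p^{(k)}(v) \asymp_k (p_c-p)^{-(2k-1)}$ for every $k \geq 1$, from which the desired ratio bound follows trivially by division. The key inputs are \cref{thm:triangle}, which gives the triangle condition at $p_c$, and \cref{thm:tiltedsusceptibilityexponent} with $\lambda = 0$ (valid because $p_c(0) = p_c < p_t$ by \cref{thm:pcpt}), which yields $\chi_p(v) \asymp (p_c-p)^{-1}$ as $p \uparrow p_c$.

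First I would establish the \emph{upper} bound $\chi_p^{(k)}(v) \preceq_k (p_c-p)^{-(2k-1)}$ by induction on $k$, with the base case $k=1$ being \cref{thm:tiltedsusceptibilityexponent}. For the inductive step, one uses the standard BK tree-graph decomposition of the event $\{v \leftrightarrow u_1,\ldots,v \leftrightarrow u_k\}$ around a ``hub'' vertex $x$ at which the disjoint connections bifurcate, giving
\[
\bP_p(v\leftrightarrow u_1,\ldots,v\leftrightarrow u_k) \leq \sum_{x} \tau_p(v,x)\prod_{i=1}^k \tau_p(x,u_i) + [\text{similar terms}],
\]
and after summing over $u_1,\ldots,u_k$ and bounding the resulting sums using both the susceptibility bound and the triangle condition $\nabla_p(v) \leq \nabla_{p_c}(v) < \infty$ (monotonicity plus \cref{thm:triangle}), one obtains $\chi_p^{(k+1)}(v) \preceq_k \chi_p^{(k)}(v)\chi_p(v)^2 \preceq_k (p_c-p)^{-(2k+1)}$, closing the induction.

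Second, for the matching \emph{lower} bound $\chi_p^{(k+1)}(v) \succeq_k (p_c-p)^{-(2k+1)}$, I would apply the generalized Aizenman--Barsky tail estimate $\bP_{p_c}(|K_v| \geq n) \succeq n^{-1/2}$, which holds on every quasi-transitive graph (this is the lower bound of \eqref{exponent:volume}, already attributed to Aizenman--Barsky and proved in full generality by Antunovi\'c--Veseli\'c). To transfer this off-critical, I would combine monotonicity in $p$ with the upper bound on connection probabilities available from \cref{thm:pcpt} and \cref{prop:tamedecay}: concretely, these allow one to prove a uniform bound $\bP_p(|K_v| \geq n) \succeq n^{-1/2}$ valid for all $n \leq C(p_c-p)^{-2}$, by showing that the truncation ``cost'' incurred when passing from $p_c$ to $p$ is controlled by the correlation length $\xi_p \asymp (p_c-p)^{-1}$. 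Integrating $n^{k}\cdot n^{-1/2}\cdot n^{-1}\,dn$ up to $n \asymp (p_c-p)^{-2}$ then yields $\chi_p^{(k+1)}(v) \succeq (p_c-p)^{-(2k+1)}$.

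The main obstacle is the off-critical tail lower bound; this is the step that is ``specific to the nonunimodular setting'' in the sense advertised by the paper, since we have finer control of subcritical connection probabilities via the layer decomposition (in particular, \cref{prop:tamedecay} and \cref{prop:tamesecondmoment}) than is available for general quasi-transitive graphs satisfying the triangle condition. The remaining steps -- the tree-graph upper bound and the final division -- are routine adaptations of the Aizenman--Newman machinery that, as discussed in the surrounding text, go through essentially unchanged in the quasi-transitive setting.
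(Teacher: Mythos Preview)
Your approach is genuinely different from the paper's, and the lower-bound step has a real gap.

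The paper's proof is much shorter. By H\"older's inequality the ratio $\chi_p^{(k+1)}/\chi_p^{(k)}$ is nondecreasing in $k$, so it suffices to treat $k=1$. The paper then invokes the Durrett--Nguyen differential inequality
\[
\frac{d}{dp}\chi_p(v) \preceq \sqrt{\chi_p(v)\,\chi_p^{(2)}(v)},
\]
which holds on any quasi-transitive graph with $p_c<1$. Since $\chi_p(v)\asymp(p_c-p)^{-1}$ (this is where \cref{thm:tiltedsusceptibilityexponent} enters), the mean-value theorem yields, for each $\eps>0$, a point $p\in[p_c-C\eps,\,p_c-\eps]$ at which $\frac{d}{dp}\chi_p(v)\geq\eps^{-2}$; plugging this back into Durrett--Nguyen gives $\chi_p^{(2)}(v)\succeq\eps^{-3}$ at that point, and monotonicity of $\chi_p^{(2)}$ in $p$ finishes. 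No layer-decomposition estimates are used beyond those already absorbed into the susceptibility exponent, and the triangle condition is not invoked at all.

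Your plan instead tries to establish the full two-sided asymptotics $\chi_p^{(k)}\asymp_k(p_c-p)^{-(2k-1)}$ and divide. The tree-graph upper bound is fine (and, incidentally, does not need the triangle condition: it is a pure BK consequence giving $\chi_p^{(k)}\leq C_k\chi_p^{2k-1}$). The problem is the lower bound. You need an off-critical tail estimate $\bP_p(|K_v|\geq n)\succeq n^{-1/2}$ valid uniformly for $n\leq C(p_c-p)^{-2}$, and this is not established anywhere in the paper. Deriving it from \cref{prop:tamedecay,prop:tamesecondmoment} as you suggest is not straightforward, because the implicit constants in those propositions depend on $p$ (through, e.g., the sum $\sum_{r\geq0}\nexp[-(2\beta_p-1)r+h_p(r)+h'_p(r)]$ and the bound $h_p=O_p(1)$ appearing in the proof of \cref{prop:tamedecay}), and the paper never shows these are uniform as $p\uparrow p_c$. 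Your ``truncation cost controlled by the correlation length'' sketch does not address this, and monotonicity in $p$ goes the wrong way for a lower bound. The H\"older-plus-Durrett--Nguyen route sidesteps all of this and is the intended argument.
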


\begin{proof}
Holder's inequality implies that the left hand side is increasing in $k$, so that it suffices to consider the case $k=1$. An inequality of Durrett and Nguyen \cite[Section 5]{durrett1985thermodynamic}, which holds for all quasi-transitive graphs with $p_c<1$, states that
\begin{equation}
\label{eq:DNDiffIneq}
\frac{d}{dp} \chi_p(v) \preceq \sqrt{\chi_p(v) \chi_p^{(2)}(v)}
\end{equation}
for every $v\in V$ and $p_c/2 \leq p < p_c$. On the other hand, our assumptions together with \cref{thm:tiltedsusceptibilityexponent} yield that $\chi_p(v) \asymp (p_c-p)^{-1}$ for every $v\in V$ and $p_c/2 \leq p < p_c$, and it follows by the mean-value theorem that there exists a constant $C$ such that for every $v\in V$ and $\eps>0$ there exists $p \in [p_c-C\eps,p_c-\eps]$ such that
 $\frac{d}{dp} \chi_p(v) \geq \eps^{-2}$. Thus, it follows from \eqref{eq:DNDiffIneq} that 
  for every $v\in V$ and $\eps>0$ there exists $p \in [p_c-C\eps,p_c-\eps]$ such that
  \begin{equation}
\chi_p^{(2)}(v) \succeq \eps^{-2} \chi_p^2(v) \succeq \eps^{-3}.
  \end{equation}
  The claim follows since $\chi_p^{(2)}(v)$ is an increasing function of $p$.
\end{proof}

It remains only to prove the lower bounds of \eqref{exponent:intradius} and \eqref{exponent:radius}. A more general derivation of this lower bound is given in \cite[Section 3]{1901.10363}.

\begin{prop}
\label{prop:extrad}
Let $G$ be a connected, locally finite, quasi-transitive graph, and suppose that $\Aut(G)$ has a quasi-transitive nonunimodular subgroup. Then
\begin{equation}
\label{eq:extrad}
\bP_{p_c}\left(
\operatorname{rad}_\mathrm{int}(K_v) \geq n 
\right) \geq
\bP_{p_c}\left(
\operatorname{rad}(K_v) \geq n 
\right) \succeq n^{-1}
\end{equation}
for every $v\in V$ and $n\geq 1$. 
\end{prop}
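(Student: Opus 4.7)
The plan is to prove the non-trivial inequality $\bP_{p_c}(\operatorname{rad}(K_v) \geq n) \succeq n^{-1}$ via a Paley-Zygmund argument applied to the slab intersection count $X_{-k}^{-\infty,\infty}(v)$ with $k$ chosen of order $n$. The first inequality $\operatorname{rad}_\mathrm{int}(K_v) \geq \operatorname{rad}(K_v)$ is trivial, since $G[p]$ is a subgraph of $G$ so the chemical distance dominates the extrinsic one.

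The key point is that $p_c < p_t$ by \cref{thm:pcpt}, which unlocks the tiltable-phase first and second moment estimates of \cref{sec:Fekete} at $p = p_c$. \cref{lem:alphapc} gives $\alpha_{p_c} \geq 1$, while the definition of $p_c$ together with \cref{lemma:gammabeta} forces $\beta_{p_c} \leq 1$, and \cref{prop:tamedecay} implies $\alpha_{p_c} = \beta_{p_c}$; altogether $\alpha_{p_c} = \beta_{p_c} = 1$. Plugging $\beta_{p_c} = 1$ into \cref{prop:tamedecay} gives the first moment estimate
\[ \E_{p_c}\!\left[ X_{-k}^{-\infty,\infty}(v) \mid U_v = x \right] \asymp 1, \]
and since $\beta_{p_c} = 1 > 2/3$, \cref{prop:tamesecondmoment} provides the matching second moment estimate
\[ \E_{p_c}\!\left[\bigl(X_{-k}^{-\infty,\infty}(v)\bigr)^2 \mid U_v = x \right] \preceq k, \]
both uniformly in $v \in V$, $x \in [0,1]$, and $k \geq 0$.

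From here the proof runs on autopilot. The Paley-Zygmund inequality gives
\[ \P_{p_c}\!\left( v \leftrightarrow L_{-k}(v) \mid U_v = x \right) \geq \frac{\E_{p_c}\!\left[X_{-k}^{-\infty,\infty}(v) \mid U_v = x\right]^2}{\E_{p_c}\!\left[\bigl(X_{-k}^{-\infty,\infty}(v)\bigr)^2 \mid U_v = x\right]} \succeq \frac{1}{k}, \]
and integrating over $x$ yields $\P_{p_c}(v \leftrightarrow L_{-k}(v)) \succeq 1/k$. Any vertex $x \in L_{-k}(v)$ satisfies $\nlog \Delta(v,x) \leq -(k-1)$ (since $U_v \leq 1$), and $|\nlog \Delta|$ changes by at most $1$ along each edge of $G$ (by the cocycle identity), so any such $x$ lies at graph distance at least $k-1$ from $v$. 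Thus the event $\{v \leftrightarrow L_{-k}(v)\}$ forces $\operatorname{rad}(K_v) \geq k-1$, and taking $k = n+1$ yields $\bP_{p_c}(\operatorname{rad}(K_v) \geq n) \succeq 1/n$ as required.

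There is no serious obstacle: all the analytic work has been done in \cref{sec:Fekete,sec:pc}. The only thing to notice is that at $p_c$ one has precisely $\beta_{p_c} = 1$, which is the borderline case in which the first moment of $X_{-k}^{-\infty,\infty}(v)$ stays of order $1$ while the second moment grows only linearly---exactly the configuration where Paley-Zygmund gives the desired $1/k$ decay. (In the subcritical regime $\beta_p > 1$ the first moment would decay exponentially, and in the tiltable supercritical regime $\beta_p < 1$ the probability $\P_p(v \leftrightarrow L_{-k}(v))$ would be bounded away from zero.)
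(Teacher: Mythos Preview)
Your proof is correct and follows essentially the same approach as the paper: both establish $\beta_{p_c}=1$ from \cref{thm:pcpt} and \cref{cor:lambdatoalpha}, then apply the Cauchy--Schwarz/Paley--Zygmund inequality to $X_{-k}^{-\infty,\infty}$ using the first and second moment estimates of \cref{prop:tamedecay,prop:tamesecondmoment}. Your treatment is in fact slightly more careful than the paper's in tracking the index shift (taking $k=n+1$ rather than $k=n$) and in spelling out why connection to $L_{-k}(v)$ forces large extrinsic radius.
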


\begin{proof}
The first inequality is trivial.
Let $\Gamma \subseteq \Aut(G)$ be quasi-transitive and nonunimodular, and let $(X_n^{-\infty,\infty})_{n\in \Z}=(X_n^{-\infty,\infty}(\rho))_{n\in \Z}$ be as in \cref{sec:Fekete}.  \cref{thm:pcpt} and \cref{cor:lambdatoalpha} imply that $\beta_{p_c}=1$, and \cref{prop:tamesecondmoment} implies that
\begin{equation}
\P_{p_c}\left(\operatorname{rad}(K_\rho) \geq n \right) \geq \P_{p_c}\left(X_{-n}^{\infty,\infty} > 0\right) \geq \frac{\E_{p_c}\left[X_{-n}^{-\infty,\infty}\right]^2}{\E_{p_c}\left[\left(X_{-n}^{-\infty,\infty}\right)^2\right]} \succeq \frac{1}{n},
\end{equation}
where the first inequality follows by definition of the uniform separating layer decomposition. The claim for general $v\in V$ follows by quasi-transitivity and the Harris-FKG inequality.
\end{proof}

\section{Remarks, examples, and open problems}
\label{sec:closing}

\subsection{Different automorphism groups on the tree}
\label{subsec:examplestree}

Let $T$ be the $d$-regular tree. The most obvious choice of a nonunimodular transitive subgroup of $T$ is the group $\Gamma$ consisting of those automorphisms of $T$ that fix some given end $\xi$ of $T$. We can trivially compute that
$\alpha_p = \beta_p = \log_{d-1}(1/p)$,
and it follows that
\begin{equation} p_c(G,\Gamma,\lambda) = (d-1)^{-\max\{\lambda,1-\lambda\}}\end{equation}
for every $\lambda \in \R$.
In particular, 
$p_\mathrm{t}(G,\Gamma)=p_{1/2}(G,\Gamma)=(d-1)^{-1/2}$.
Moreover, for $p<\pcl{\lambda}$ we can compute the tilted susceptibility to be
\begin{equation}
\chi_{p,\lambda} = \frac{1-p^2}{(1-(d-1)^{1-\lambda}p)(1-(d-1)^\lambda p)}.
\end{equation}
(This can be done either by a direct counting argument or by solving a system of linear equations as is done in the next example, below.)
Thus, we see that for $\lambda\neq 1/2$,
$\chi_{\pcl{\lambda}-\eps,\lambda}$ grows like $\eps^{-1}$ as $\eps\to0$, as stated in \cref{thm:tiltedsusceptibilityexponent}, while at $\lambda=1/2$ we have instead that
%
\begin{equation}\chi_{p_{1/2}-\eps,1/2}=\frac{d-2}{d-1} \eps^{-2}.\end{equation}
This shows that \cref{thm:tiltedsusceptibilityexponent} cannot be extended in general to the case $\lambda=1/2$. 

However, fixing an end is not the only way to get a nonunimodular automorphism group of $T$. Indeed, suppose that $d = 4$. We define a $(1,1,2)$-\textbf{orientation} of $T$ to be a (partial) orientation of the edge set of $T$ such that every vertex has one oriented edge emanating from it, two oriented edges pointing into it, and one unoriented edge incident to it. Fix one such orientation of $T$, and let $\Gamma'$ be the group of automorphisms of $T$ that preserve the orientation. It turns out that the behaviours of various tilted quantities are very different with respect to the two different groups $\Gamma$ and $\Gamma'$. 
 
We define the \textbf{height difference} $h(u,v)$ between two vertices $u$ and $v$ in $T$ to be the the number of oriented edges that are crossed in the forward direction minus the number of oriented edges that are crossed in the reverse direction in the unique simple path from $u$ to $v$ in $T$, and observe that the modular function can be expressed as 
$\Delta_{\Gamma'}(u,v)=2^{h(u,v)}$.

\newcommand{\chiup}{\chi_{p,\lambda}^+}
\newcommand{\chiacross}{\chi_{p,\lambda}^0}
\newcommand{\chidown}{\chi_{p,\lambda}^-}


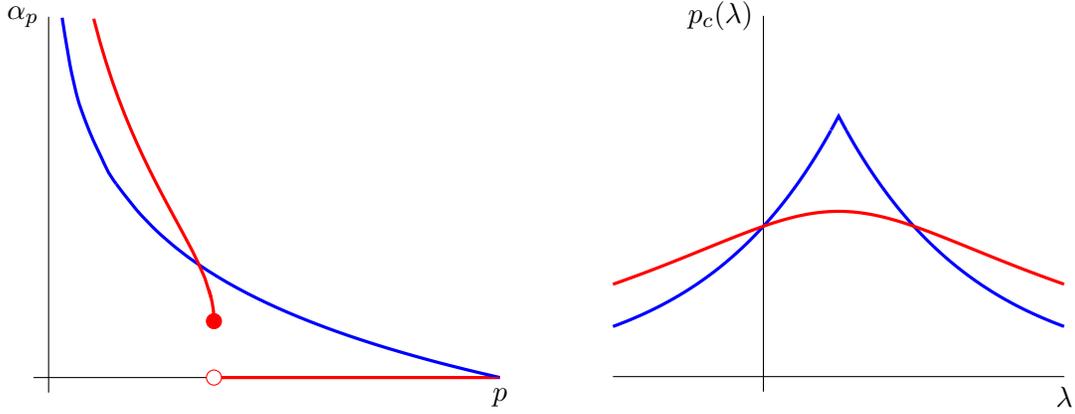
\begin{figure}
\centering
\begin{tikzpicture}
      \draw[-,thin] (-0.2,0) -- (6*0.366406859,0);
      \draw[-,red,very thick] (6*0.366406859,0) -- (6,0) node[below,black] {$p$};
      \draw[-,thin] (0,-0.2) -- (0,4.8) node[left,black] {$\alpha_p$};
      \draw[scale=6,domain=0.03:0.15,smooth,very thick,variable=\x,blue,samples=8] plot ({\x},{-0.25*ln(\x)/ln(3)});
      \draw[scale=6,domain=0.15:1,smooth,very thick,variable=\x,blue,samples=20] plot ({\x},{-0.25*ln(\x)/ln(3)});
      \draw[scale=6,domain=0.1:0.36,very thick,variable=\x,red,samples=20] plot ({\x},{0.25*log2((3*pow(\x,2)-\x+1+pow(9*pow(\x,4)-6*pow(\x,3)-pow(\x,2)-2*\x+1,0.5))/(2*\x))});
      \draw[scale=6,domain=0.355:0.3664,very thick,variable=\x,red,samples=30] plot ({\x},{0.25*log2((3*pow(\x,2)-\x+1+pow(9*pow(\x,4)-6*pow(\x,3)-pow(\x,2)-2*\x+1,0.5))/(2*\x))});
      \draw[scale=6,domain=0.366:0.3665,very thick,variable=\x,red,samples=30] plot ({\x},{0.25*log2((3*pow(\x,2)-\x+1+pow(max(9*pow(\x,4)-6*pow(\x,3)-pow(\x,2)-2*\x+1,0),0.5))/(2*\x))});
      \draw[-,red] (6*0.366406859,0) -- (6,0);
      \draw[red,fill=white] (6*0.366406859,0) circle [radius=0.1]; 
      \draw[red,fill=red] (6*0.366406859,0.5*6*0.25) circle [radius=0.1]; 
\end{tikzpicture}
    \hspace{1cm}
\begin{tikzpicture}
      \draw[-,thin] (-2,0) -- (4,0) node[below] {$\lambda$};
      \draw[-,thin] (0,-0.2) -- (0,4.8) node[left,black] {$\pcl{\lambda}$};
      \draw[scale=2,domain=-1:0.45,smooth,very thick,variable=\x,blue,samples=100] plot ({\x},{3*pow(3,-max(\x,1-\x))});
      \draw[scale=2,domain=0.45:0.55,very thick,variable=\x,blue,samples=3] plot ({\x},{3*pow(3,-max(\x,1-\x))});
      \draw[scale=2,domain=0.55:2,smooth,very thick,variable=\x,blue,samples=100] plot ({\x},{3*pow(3,-max(\x,1-\x))});
      \draw[scale=2,domain=-1:2,smooth,very thick,variable=\x,red,samples=100] plot ({\x},{0.5*(pow(2,\x)+pow(2,1-\x)+1-pow(max(0,pow(pow(2,\x)+pow(2,1-\x)+1,2)-12),0.5))});
    \end{tikzpicture}
    \caption{Comparison of $\alpha_p$ and $\pcl{\lambda}$ for the $4$-regular tree with respect to the automorphism group fixing an end (blue) and the automorphism group fixing a $(1,1,2)$-orientation (red). Note that the second figure is formed by reflecting the first around the line $\alpha_p=1/2$ and then rotating; this follows from \cref{cor:lambdatoalpha} and the fact that, in both examples, $p_c(\lambda)<p_t$ for every $\lambda \neq 1/2$. The two curves in the left figure intersect at $(p_c,1)$: this intersection must occur since $\pcl{0}=\pcl{1}=p_c$ does not depend on the choice of the automorphism group, and $\alpha_{p_c}=1$ for any choice of automorphism group.}
\end{figure}


We now compute $\chi_{p,\lambda}$ and $\pcl{\lambda}$ for every $p\in(0,1)$ and $\lambda \in \R$. 
Let $v$ be a vertex of $T$, and let $e^\uparrow$, $e^\rightarrow$, and $e^\downarrow$ be, respectively, an oriented edge emanating from $v$, an unoriented edge incident to $v$, and an oriented edge pointing into $v$. Let 
\begin{align*}
\chiup &= \sum_{u\in V} \P_p(u \leftrightarrow v \text{ off }e^\uparrow\;) \Delta^\lambda(v,u),\\
\chiacross &= \sum_{u\in V} \P_p(u \leftrightarrow v \text{ off }e^\rightarrow) \Delta^\lambda(v,u), \qquad \text{ and}\\
\chidown &= \sum_{u\in V} \P_p(u \leftrightarrow v \text{ off }e^\downarrow\;) \Delta^\lambda(v,u).
\end{align*}
Considering the four parts of the cluster of a vertex that are connected to the vertex via the four different edges incident to the origin leads to the expression
\begin{equation}\chi_{p,\lambda}=1+2^{1-\lambda} p \chiup + p \chiacross +  2^\lambda p\chidown, \end{equation}
and similar reasoning allows us to write down the system of linear equations
\begin{align}
\begin{array}{rrrrrrrrr}
\chiup &= &1 &+ &2\cdot 2^{-\lambda}p \chiup &+ &p \chiacross  &&\\
\chiacross &= &1&+ &2\cdot 2^{-\lambda} p \chiup &&  &+ &2^\lambda p \chidown\phantom{.}  \\
\chidown &= &1 &+ &2^{-\lambda}p \chiup  &+ &p\chiacross  &+  &2^\lambda p \chidown.
\end{array}
\end{align}
Solving these equations leads to the expressions
\begin{align}
\pcl{\lambda} &= \frac{2^\lambda+2^{1-\lambda}+1 - \sqrt{(2^\lambda+2^{1-\lambda}+1)^2-12}}{6} & \lambda \in \R \\
\intertext{and}
\chi_{p,\lambda} &= \frac{1-p^2}{1 - (2^\lambda+2^{1-\lambda}+1)p+3p^2} & \lambda \in \R,\, 0 \leq p<\pcl{\lambda}. \label{eq:orientedtreechi}
\intertext{In particular, we have that $p_c(T,\Gamma',\lambda) < p_\mathrm{t}(T,\Gamma')$ for every $\lambda \neq 1/2$, as we predict to hold in general (\cref{conj:pclambda}). Moreover, we have that $p_t(T,\Gamma')<p_t(T,\Gamma)$, so that the triangle condition holds at $p_t(T,\Gamma')$. The denominator of \eqref{eq:orientedtreechi} never has a double root, so that, in contrast to the previous example,}
 \chi_{\pcl{\lambda}-\eps,\lambda} &\asymp_{\lambda} \eps^{-1} & \lambda \in \R,\, \eps > 0 \end{align}
 for every $\lambda \in \R$. 
Using \cref{cor:lambdatoalpha} and taking the inverse of our expression for $\pcl{\lambda}$, we also obtain that
\begin{align}\alpha_p &= \log_2\left(\frac{3p^2-p+1+\sqrt{9p^4-6p^3-p^2-2p+1}}{2p}\right) & p \leq p_t. \end{align}

This example also has the following surprising property.

\begin{prop}
Let $T$ be a $4$-regular tree and let $\Gamma$ be the group of automorphisms fixing some specified $(1,1,2)$-orientation of $T$ as above. Then
$p_t(T,\Gamma)=p_h(T,\Gamma)$, and $\alpha_{p}=0$ for all $p>p_t$.
\end{prop}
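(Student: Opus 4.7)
The plan is to prove both assertions simultaneously via a multi-type branching random walk (BRW) analysis of the open cluster of $v$ restricted to the upper half-space $H := L_{0,\infty}(v)$. Since $T$ is a tree, so is $H$, and $H$-percolation is encoded by a multi-type Galton--Watson BRW: I classify each non-root vertex of $H$ by its arrival-edge type from its parent (up-child \textbf{UC}, across-child \textbf{AC}, or down-child \textbf{DC}) and track its height $h(v,\cdot)\in\Z$ as a position. From the combinatorics of the $(1,1,2)$-orientation, the $z$-weighted mean offspring matrix is $pM_H(z)$ with
\[
M_H(z) = \begin{pmatrix} z & 1 & z^{-1} \\ z & 0 & 2z^{-1} \\ 0 & 1 & 2z^{-1} \end{pmatrix}.
\]
The function $s\mapsto\log\rho(M_H(e^s))$ is convex (a standard fact for positive matrices whose entries are Laurent monomials in $e^s$), and a direct computation with the characteristic polynomial shows that $\rho(M_H(z))$ attains its minimum $1/p_t$ on $(0,\infty)$ at $z=\sqrt{2}$; the key algebraic input is the same quadratic $p(1+p)z^2-(1+2p^2+3p^3)z+2p(1+p)=0$ whose discriminant underlies the explicit formulas just computed, which vanishes at $p=p_t$ with common root $z=\sqrt{2}$. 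Consequently $\min_{z>0}\rho(pM_H(z))=p/p_t$, which exceeds $1$ precisely when $p>p_t$.

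The next step is to apply standard multi-type BRW theory (Biggins' speed theorem). For $p>p_t$, the Laplace exponent $\Lambda(s):=\log\rho(pM_H(e^s))$ is strictly positive for every $s\in\R$, and since $\Lambda(0)=\log(3p)>0$ the branching process is supercritical with survival probability $\theta_p>0$. The strict positivity of $\Lambda$ on $(0,\infty)$ implies that the asymptotic speed $v^*=\inf_{s>0}\Lambda(s)/s$ of the rightmost particle is strictly positive, so on the survival event the maximal height attained by generation $k$ grows linearly at rate $v^*>0$. Because $H$ is a tree, any cluster vertex at height $\geq n$ forces the unique path in the cluster from $v$ to that vertex to hit every intermediate level $L_m(v)$ with $0\leq m\leq n$; hence on survival the $H$-cluster meets every $L_n(v)$, giving
\[
\bP_p\bigl(v \xleftrightarrow{H} L_n(v)\bigr) \geq \theta_p > 0 \quad \text{for every } n\geq 1,
\]
from which $\alpha_p=0$ follows.

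The identity $p_t=p_h$ then follows with only the observation that $p_t\leq p_h$, already recorded in the introduction via Tim\'ar's characterisation \eqref{eq:Timarph}. For the reverse, the previous paragraph shows that for every $p>p_t$ the cluster of $v$ has unbounded positive height with probability at least $\theta_p$, hence is heavy. Invoking the indistinguishability theorem of H\"aggstr\"om--Peres--Schonmann \cite{HPS99} then upgrades this to the almost-sure existence of a heavy cluster whenever $p>p_t$, so $p_h\leq p_t$.

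The main technical nuisance to be dealt with in a careful write-up is a boundary effect at level $0$: vertices of $H$ lying in $L_0(v)$ have only one $H$-child rather than three, since their descending edges exit the half-space. However, any infinite upward ray in the cluster visits $L_0(v)$ only finitely often, and this irregularity can be absorbed into a modified initial distribution on types without altering the asymptotic offspring matrix $M_H$ or the speed computation. Once this is handled, both conclusions reduce to the single algebraic fact $\min_{z>0}\rho(pM_H(z))>1$ for $p>p_t$, which is essentially a restatement of the discriminant analysis above.
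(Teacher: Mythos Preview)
Your branching-random-walk approach is genuinely different from the paper's and is attractive: for the identity $p_t=p_h$ it works. Running the \emph{full-tree} multi-type BRW (no half-space restriction), your computation that $\min_{z>0}\rho(M_H(z))=1/p_t$ is correct (the characteristic polynomial factors as $(\lambda+1)(\lambda^2-(z+2z^{-1}+1)\lambda+3)$, and the larger root is minimized at $z=\sqrt2$, recovering exactly the formula for $p_t$). Biggins' theorem then gives positive speed of the maximum for $p>p_t$, so the cluster of $v$ has unbounded height with positive probability; ergodicity (or HPS indistinguishability) upgrades this to the almost-sure existence of a heavy cluster, and $p_h\le p_t$ follows. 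This replaces the paper's more hands-on argument, which couples with a slightly subcritical configuration, uses the second-moment bounds of \cref{prop:tamedecay,prop:tamesecondmoment} to find exponentially many vertices at depth $-k$, and then exploits the specific tree geometry (the down--across--up paths) to manufacture many independent trials.

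There is, however, a real gap in your treatment of $\alpha_p=0$. The boundary at level $0$ in the half-space $H$ is \emph{not} an initial-condition effect: it recurs every time the process returns to level $0$, so it cannot be ``absorbed into a modified initial distribution on types''. Concretely, you establish supercriticality via $\Lambda(0)=\log(3p)>0$, which is the mean offspring of the \emph{free} BRW; the $H$-restricted process has strictly smaller mean offspring at level $0$, so neither survival nor positive speed of the $H$-BRW follows from what you have written. Equivalently, positive speed of the maximum in the full-tree BRW gives $\bP_p(v\leftrightarrow L_n)\ge\theta_p$, but this does not by itself yield $\bP_p(v\xleftrightarrow{L_{0,\infty}(v)}L_n)\ge c$, since the geodesic from $v$ to a high vertex may dip below level $0$. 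To close this gap within your framework you would need to invoke branching-random-walk-with-barrier results (survival of the BRW killed below $0$ holds precisely when the free speed is strictly positive), which do apply here but are considerably more than an initial-distribution tweak. The paper sidesteps this entirely: once $p_h=p_t$ is known, it quotes Tim\'ar's theorem that infinitely many heavy clusters force some finite-width slab to percolate, which (together with $p_u=1$ on the tree) immediately gives $\alpha_p=0$ for all $p>p_h$.
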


\begin{proof}
Let $p>p_t$. Since $\alpha_p$ is strictly increasing when it positive, we have that $\alpha_p<1/2$. Let $\eps>0$ be sufficiently small that 
$1-\alpha_{p_t-\eps}>\alpha_{p}$.
Let $\rho$ be a fixed root vertex of $T$, and let $X^{-k,0}_{-k}$ be the number of vertices $v$ with $\Delta(\rho,v)=2^{-k}$ that are connected to $\rho$ by a path in the slab $\{u\in V : 2^{-k} \leq \Delta(\rho,u) \leq 1 \}$ that is open in $T[p_t-\eps]$. 
 By \cref{prop:tamedecay,prop:tamesecondmoment}, there exist positive constants $c_1$ and $c_2$, depending on $\eps$, such that
\begin{equation}
\P_{p_t-\eps}\left(X^{-k,0}_{-k} \geq c_1 2^{(1-\alpha_{p_t-\eps}) k}\right) \geq c_2
\end{equation}
for every $k\geq0$. 

For each vertex $v$ with $\Delta(\rho,v)=2^{-k}$ that is connected to $\rho$ in the slab $\{u \in V :2^{-k} \leq \Delta(\rho,u) \leq 1\}$, there are two paths of length three $\eta_1(v)$ and $\eta_2(v)$ that first go down one level, then across the unoriented edge, and then back up one level. Observe that for any two such vertices $u$ and $v$, the four associated paths are mutually disjoint, and their endpoints are in four distinct connected components of $L_{-k,\infty}$. This leads to the inequality
\begin{align}
\P\left(\rho \leftrightarrow L_n \text{ in } T[p] \mid X^{-k,0}_{-k}\right) \geq 1-\left[1-p^3 \P\left(\rho \xleftrightarrow{L_{0,\infty}} L_{n+k} \text{ in $T[p]$}\right)\right]^{2 X^{-k,0}_k},
\end{align}
from which we deduce that
\begin{equation}
\P\left(\rho \leftrightarrow L_n \text{ in } T[p] \right) \geq \P\left(X^{-k,0}_{-k} \geq c 2^{(1-\alpha_{p_t-\eps})k}\right)\left(1-\left[1-p^3 2^{-\alpha_p(n+k)+o(n+k)}\right]^{c2^{(1-\alpha_{p_t-\eps})k}}\right).
\end{equation}
Taking the limit as $k\to\infty$ we obtain that
\begin{equation}
\P_p(\rho \leftrightarrow L_n) \geq \limsup_{k\to\infty} \P\left(X^{-k,0}_{-k} \geq c 2^{(1-\alpha_{p_t-\eps})k}\right).
\end{equation}
Since this bound is positive and does not depend on $n$, we deduce that $p>p_h$. Since $p>p_t$ was arbitrary, it follows that $p_t=p_h$.

It remains to show that $\alpha_p=0$ for $p>p_h$. A result of Tim\'ar  \cite[Theorem 5.5]{timar2006percolation} states that if $G$ is a connected, locally finite graph, $\Gamma$ is a transitive nonunimodular subgroup of $G$, and $G[p]$ has infinitely many heavy clusters almost surely, then there exists a slab such that the open subgraph of the slab contains an infinite cluster almost surely. This implies that if $p_h<p<p_u$ then $\alpha_p=0$, and since $p_u=1$ in our example this concludes the proof. 
\end{proof}

In our opinion, once $p_c<p_u$ is established, the most interesting future direction for research on percolation on nonamenable graphs is to understand the nature of the uniqueness/nonuniqueness transition. Solving \cref{Q1,Q2}, below, would be a good start towards such an understanding. 

\begin{question}
\label{Q1}
What are the asymptotics of the connection probabilities  
\[\bP_{p_h}\left(v \leftrightarrow L_k(v)\right) \quad \text{ and } \quad \bP_{p_h}\left(v \xleftrightarrow{L_{0,\infty}(v)} L_k(v)\right)\]
in this example? Do the same asymptotics hold for other pairs $(G,\Gamma)$ such that the triangle condition holds at $p_h$? Do they bound from below the corresponding connection probabilities for all pairs $(G,\Gamma)$ where $G$ is connected and locally finite and $\Gamma$ is quasi-transitive and nonunimodular?
\end{question}


\begin{question}
\label{Q2}
Let $T_k$ be a $k$-regular tree, let $d\geq 1$ and consider the group of automorphisms $\Gamma$ of $T_k \times \Z^d$ that fix some specified end $\xi$ of $T$. 
\begin{enumerate}
	\item Is $p_t=p_h$? 
	\item Is $\nabla_{p_t}<\infty$? Is $\nabla_{p_h}<\infty$?
	\item What are the asymptotics of the connection probabilities  
\[\bP_{p_h}\left(v \leftrightarrow L_k(v)\right) \quad \text{ and } \quad \bP_{p_h}\left(v \xleftrightarrow{L_{0,\infty}(v)} L_k(v)\right)?\]
	\item
What is the behaviour of $\chi_{p_{1/2}-\eps,1/2}$ as $\eps\to0$? 
\item Is $\alpha_p$ continuous in $p$? 
\item For which of these questions does the answer depend on $d$? 
\end{enumerate}
\end{question}

\subsection{An example with $p_h<p_u<1$}
\label{subsec:examplesphpu1}

In this section, we construct an example of a connected, locally finite graph $G$ and a transitive, nonunimodular group $\Gamma \subseteq \Aut(G)$ such that $p_c(G)<p_h(G,\Gamma)<p_u(G)<1$. Although we expect many graphs to have this property, it had not previously been proven to hold in any example (see \cite{timar2006percolation}). With a little further work, the example can be modified to obtain an example with $\Gamma=\Aut(G)$.

First, note that the proof of \cref{prop:tiltedmeanfieldlowerbound} also yields the following anisotropic version of that proposition: Let $G=(V,E)$ be a connected, locally finite graph, let $\Gamma\subseteq \Aut(G)$ quasi-transitive, and let $\cO_E$ be a set of orbit representatives for the action of $\Gamma$ on $E$. If $\mathbf{p}:\cO_E\to[0,1]$ and $\eps:\cO_E\to [0,1]$ are such that $p_e+\eps_e \in [0,1]$ for every $e\in E$, then
\begin{equation}
\label{eq:inhomomeanfield}
\max_{v\in V}\chi_{\mathbf{p}+\mathbf{\eps},\lambda}(v) \leq \sum_{i\geq0} \left(\max_{e\in \cO_E} \frac{\mathbf{\eps}_e }{1-\mathbf{p}_e}\right)^{i}\left(\max_{v\in V} \sum_{u\sim v}\Delta^\lambda (v,u)\right)^{i} \left(\max_{v\in V} \chi_{\mathbf{p},\lambda}(v)\right)^{i+1}. 
\end{equation}
(Indeed, simply take each edge $e$ to be open with probability $p_e$ and blue with probability $\eps_e/(1-p_e)$ and run the proof as before.) In particular, this inequality holds even if $\mathbf{p}_e=0$ for some $e\in E$. 

Let $T$ be a $4$-regular tree. As in the previous subsection, let $\Gamma$ be the group of automorphisms of $T$ fixing some specified end $\xi$ of $T$, and let $\Gamma'$ be the group of automorphisms of $T$ fixing some specified $(1,1,2)$-orientation of $T$. The groups $\Gamma \times \Aut(\Z)$ and $\Gamma'\times \Aut(\Z)$ act as automorphisms groups of $T\times \Z$ by acting separately on each coordinate.

In the previous subsection we established that $p_t(T,\Gamma')=p_h(T,\Gamma') < p_t(T,\Gamma)$. Let $p_h(T,\Gamma')<p<p_t(T,\Gamma)$. It follows from \eqref{eq:inhomomeanfield} that there exists $\eps>0$ such that anisotropic percolation on $T\times \Z$ in which tree-edges are open with probability $p$ and $\Z$ edges are open with probability $\eps$ has finite $\Gamma$-tilted susceptibility for $\lambda=1/2$, and therefore has no $\Gamma$-heavy clusters. On the other hand, it clearly contains $\Gamma'$-heavy clusters since $p>p_h(T,\Gamma')$. 
We can mimic this anisotropic percolation by replacing tree edges with multiple edges in parallel: If $T^k$ is obtained from the $4$-regular tree by replacing each edge with $k$ parallel edges, then if $k$ is sufficiently large the have that 
\begin{equation}
p_h\left(T^k\times \Z,\; \Gamma'\times \Aut(\Z)\right) < p_t\left(T^k\times \Z,\; \Gamma \times \Aut(\Z)\right) \leq p_u\left(T^k\times \Z\right).\end{equation}
On the other hand, we know that $p_u(T^k\times \Z)<1$ for every $k\geq 1$ \cite{MR1064560,MR1622785,MR2286517}, and so we have obtained an example for which $p_c<p_h<p_u<1$. 


\subsection{Further questions and conjectures}
\label{subsec:questions}

We tried for some time but were unable to prove the following conjecture. The corresponding statement for self-avoiding walks is true and is proven in \cite{1709.10515}.

\begin{conjecture}
\label{conj:pclambda}
Let $G$ be a connected, locally finite graph, and let $\Gamma \subseteq \Aut(G)$ be quasi-transitive and nonunimodular. Then the function $\lambda \mapsto \pcl{\lambda}$ is continuous and strictly increasing on $(-\infty,1/2]$. 
\end{conjecture}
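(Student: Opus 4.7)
The plan is to reduce the conjecture to the single equality $\alpha_{p_t}=1/2$, which I expect to be the main obstacle. The strategy is to show that $p\mapsto \alpha_p$ is a continuous, strictly decreasing bijection from $(0,p_t]$ onto $[1/2,\infty)$, so that \cref{cor:lambdatoalpha}(2) allows one to write $\pcl{\lambda}=\alpha^{-1}(1-\lambda)$ for every $\lambda\leq 1/2$; this automatically yields continuity and strict monotonicity on $(-\infty,1/2]$, and the symmetry $\pcl{\lambda}=\pcl{1-\lambda}$ extends the conclusion to $\lambda>1/2$. Three of the four properties required of $\alpha$ are essentially already in the paper: strict decrease of $\alpha_p$ on its positivity set (the remark following \cref{lem:beta}, via \cite[Theorem~2.38]{grimmett2010percolation}); continuity on the open interval $(0,p_t)$ (from \cref{lem:alphacontinuity} combined with \cref{prop:tamedecay} identifying $\alpha_p$ with $\beta_p$ there, and the right-continuity of $\beta_p$ quoted after \cref{lem:beta}); and divergence $\alpha_p\to\infty$ as $p\downarrow 0$ (a crude Peierls estimate bounding $\P_p(v\xleftrightarrow{L_{0,\infty}(v)} L_n(v))$ by $(Dp)^n/(1-Dp)$ for small $p$, where $D$ is the maximum degree). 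Left-continuity of $\alpha_p$ at $p_t$ is also from \cref{lem:alphacontinuity}.

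The heart of the matter is $\alpha_{p_t}=1/2$. The lower bound $\alpha_{p_t}\geq 1/2$ is immediate from \cref{cor:lambdatoalpha}(1) with $\lambda=1/2$ (since $\pcl{1/2}=p_t$), so the real difficulty is the reverse. I would argue by contradiction: if $\alpha_{p_t}=1/2+\eta$ for some $\eta>0$, then by left-continuity $\beta_p=\alpha_p\geq 1/2+\eta/2$ for all $p$ in some interval $(p_t-\delta,p_t)$, and \cref{lemma:gammabeta} forces $\chi_{p,\lambda}<\infty$ throughout this interval for every $\lambda$ in the non-degenerate range $I=(1/2-\eta/2,1/2]$. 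Sending $p\uparrow p_t$ gives $\pcl{\lambda}\geq p_t$, and combined with $\pcl{\lambda}\leq p_t$ this yields $\pcl{\lambda}=p_t$ for every $\lambda\in I$, which is what must be contradicted.

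My plan for the contradiction is to adapt the fractional-moment bootstrap of \cref{sec:pc} from $p_c$ to $p_t$. At $p_c$, \cref{lem:bootstrap} used the input $\alpha_{p_c}\geq 1$ (\cref{lem:alphapc}) to derive $\E_{p_c}[(X_k^{-\infty,\infty}(\rho))^{1-\eps}]\preceq_{\eps}\nexp[-k+o_\eps(k)]$ for $k\geq 0$, from which finiteness of $\E_{p_c}[|K_v|_{v,\lambda}^{3/4}]$ for $\lambda\in(0,1)$ contradicted \cref{prop:tiledAizBar}. At $p_t$, the analogous input $\alpha_{p_t}\geq 1/2+\eta$ should, by a parallel chain of path-decomposition, H\"older-MTP, and bootstrapping arguments, yield $\E_{p_t}[(X_k^{-\infty,\infty}(\rho))^{1-\eps}]\preceq_{\eps}\nexp[-(1/2+\eta/2)k+o_\eps(k)]$ for $k\geq 0$ and symmetrically for $k<0$. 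Weighting by $\nexp(\lambda k)^{1-\eps}$ and summing over $k\in\Z$ would produce finiteness of $\E_{p_t}[|K_v|_{v,\lambda}^{1-\eps}]$ for every $\lambda\in I$ and every sufficiently small $\eps$, directly contradicting \cref{prop:tiledAizBar} applied at $\pcl{\lambda}=p_t$.

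The principal obstacle in executing this plan is the peak-survival estimate (\cref{lem:peaksurvival}), whose proof at $p_c$ relied crucially on the fact from \cite{Hutchcroft2016944} that critical percolation has no infinite cluster almost surely. The analogue needed at $p_t$ is that $G[p_t]$ has no \emph{heavy} infinite cluster almost surely, which I expect to follow from \cref{prop:tiledAizBar} applied at $\lambda=1/2$ via Tim\'ar's characterization of heaviness \eqref{eq:Timarph}, since the existence of such a cluster with positive probability would put a positive lower bound on slab-survival probabilities incompatible with even the $(1+\eps)/2$-moments of $|K_v|_{v,1/2}$ being the first divergent ones. The peak-comparison estimate (\cref{lem:peakcomparison}) is purely combinatorial and should adapt verbatim, and the rest of the \cref{sec:pc} machinery should then go through with straightforward bookkeeping changes replacing the exponent $\alpha_{p_c}\geq 1$ by $\alpha_{p_t}\geq 1/2+\eta$ throughout.
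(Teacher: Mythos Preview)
This statement is presented in the paper as an \emph{open conjecture}: the authors write that they ``tried for some time but were unable to prove'' it, and the only partial result given is the remark immediately following it, that $\lambda\mapsto\pcl{\lambda}$ is continuous and strictly increasing on $\{\lambda\in(-\infty,1/2):\pcl{\lambda}<p_t\}$. So there is no proof in the paper to compare against; your proposal is a research sketch toward an open problem.

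Your reduction to the single equality $\alpha_{p_t}=1/2$ is correct and is essentially what the paper's partial result already encodes. The genuine content of your proposal is the plan to prove $\alpha_{p_t}\leq 1/2$ by porting the \cref{sec:pc} bootstrap from $p_c$ to $p_t$, and here there are real gaps that you flag but do not close. The most serious is the peak-survival step. In the proof of \cref{lem:peaksurvival} one approximates $p_c$ from below by $p_c-\delta$, which is \emph{subcritical}; the cluster $K_{p_c-\delta}$ is almost surely finite, and the FKG product \eqref{eq:PeakFKG} is a finite product whose expectation is controlled via \cref{lem:subcritpeaksurvival} and the input $\P_{p_c}(v\leftrightarrow L_k(v))\to 0$. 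Porting this to $p_t$ means approximating by $p_t-\delta$, which is \emph{supercritical}: the cluster $K_{p_t-\delta}$ is infinite with positive probability, the product \eqref{eq:PeakFKG} becomes infinite, and its convergence requires summability of $\P_{p_t}(u\leftrightarrow L_{0,\infty}(\rho))$ over an infinite light cluster. You would need full-space decay $\P_{p_t}(v\leftrightarrow L_k(v))\to 0$ at $p_t$ itself, but \cref{prop:tamedecay} only gives this for $p<p_t$, and your suggested route via ``no heavy cluster at $p_t$'' is not substantiated: \cref{prop:tiledAizBar} at $\lambda=1/2$ does not obviously preclude a heavy cluster, and the paper's examples in \cref{subsec:examplestree} show $p_t=p_h$ can occur. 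Separately, the second-moment estimates of \cref{prop:tamesecondmoment} change form for $\beta_p\leq 2/3$ (only half-space bounds are available), which affects the analogue of \cref{lem:subcritpeaksurvival} near $p_t$.

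In short: your outline is a reasonable line of attack, and the reduction is right, but the peak-survival analogue at $p_t$ is precisely where the authors' own attempt presumably stalled, and your sketch does not overcome it.
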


Note that it follows from the results of \cref{sec:Fekete} that $\lambda \mapsto p_c(\lambda)$ is continuous and strictly increasing on the set $\{ \lambda \in (-\infty,1/2) : p_c(\lambda)<p_t\}$.

Let us also state the following conjecture on the equality or inequality of $p_h$ and $p_u$. This conjecture is suggested by the work of Tim\'ar \cite{timar2006percolation}, which establishes the `only if' part of the conjecture. We expect this conjecture to be harder to prove than \cref{conj:pcpu}.

\begin{conjecture}
Let $G$ be a connected, locally finite graph, and let $\Gamma \subseteq \Aut(G)$ be quasi-transitive and nonunimodular. Then $p_h(G,\Gamma)<p_u(G)$ if and only if there exists $u\in V$ and $t_1,\ldots,t_n\in \R$ such that the quasi-transitive subgraph of $G$ induced by $\{v\in V: \Delta_\Gamma(u,v) \in \{t_1,\ldots,t_n\} \}$ is nonamenable.
\end{conjecture}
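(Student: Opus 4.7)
The plan is to focus on the nontrivial direction, since the forward implication ($p_h < p_u \Rightarrow$ nonamenable slab) is already due to Tim\'ar: fixing $p \in (p_h,p_u)$, the indistinguishability theorem and Tim\'ar's slab characterization \eqref{eq:Timarph} force infinitely many heavy clusters each meeting a fixed slab in an infinite connected subgraph, and the Gandolfi--Keane--Newman extension of Burton--Keane applied to an amenable quasi-transitive slab would contradict this. Thus assume there exists a slab $S = \{v \in V : \Delta_\Gamma(u,v)\in \{t_1,\ldots,t_n\}\}$ whose induced subgraph $H := G[S]$ is nonamenable and quasi-transitive; the goal is to deduce $p_h(G,\Gamma) < p_u(G)$.

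The upper bound $p_h(G,\Gamma) \le p_c(H)$ is immediate: Bernoulli-$p$ percolation on $H$ is the restriction of $G[p]$ to edges inside $S$, so any infinite $H[p]$-cluster is an infinite subset of $S$ and hence produces a heavy cluster of $G[p]$ by Tim\'ar's characterization. Since $H$ is nonamenable and quasi-transitive, $p_c(H) < 1$. The core difficulty is therefore the strict separation $p_c(H) < p_u(G)$: once this is in hand, any $p \in (p_c(H), p_u(G))$ witnesses $p_h(G,\Gamma) < p_u(G)$.

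To attack $p_c(H) < p_u(G)$, one would first establish $p_c(H) < p_u(H)$ and then carry out a transfer step. For the first inequality, split into cases. If $\Aut(H)$ has a quasi-transitive nonunimodular subgroup, then the main theorem of this paper (\cref{thm:pcpu}) applies directly to $H$ and gives $p_c(H) < p_u(H)$. Otherwise every quasi-transitive subgroup of $\Aut(H)$ is unimodular, and $p_c(H) < p_u(H)$ would follow from the unimodular case of \cref{conj:pcpu}. Granting this, for any $p \in (p_c(H),p_u(H))$ the percolation $H[p]$ has infinitely many infinite clusters, each heavy in $G$. The transfer step is to show that for some such $p$ these slab clusters are not all glued into a single infinite cluster of $G[p]$ via open paths through $G\setminus S$. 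A natural plan is to take $p$ close to $p_c(H)$, note that percolation on $G\setminus S$ is then deep in the subcritical or tiltable regime (so by \cref{prop:tamedecay,prop:tamesecondmoment} excursions out of $S$ are short and light), and combine the tilted mass-transport principle with cluster indistinguishability to forbid almost-sure coalescence of the slab clusters; a quantitative version of the inequalities of \cref{sec:Fekete} applied to the complement of the slab should allow the expected number of mergers per slab cluster to be made strictly less than one.

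The main obstacle is twofold. First, producing $p_c(H) < p_u(H)$ in the case that $H$ carries only unimodular quasi-transitive automorphism subgroups reduces precisely to the unimodular case of \cref{conj:pcpu}, which is open and is not covered by any currently known non-perturbative method beyond the planar and positive $\ell^2$-Betti-number cases. Second, even granting such an input, the transfer step looks genuinely delicate: one must quantitatively control how frequently infinitely many supercritical slab clusters can be linked through light subcritical excursions into $G\setminus S$, and no black-box reduction is available in the literature. Together these points explain the author's remark that this conjecture is expected to be strictly harder than \cref{conj:pcpu}.
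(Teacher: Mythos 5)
This statement is labelled a \emph{conjecture} in the paper, and the paper offers no proof of it: only the ``only if'' direction is asserted to be known (via Tim\'ar), and the author explicitly expects the full conjecture to be harder than \cref{conj:pcpu}. So there is no argument in the paper to compare yours against, and your proposal does not close the gap either — it is a conditional plan, and you concede as much. Concretely, two of your steps are genuinely open. First, the case split for $p_c(H)<p_u(H)$ falls back on the unimodular case of \cref{conj:pcpu} whenever $\Aut(H)$ admits no quasi-transitive nonunimodular subgroup; that case is open, so this branch is an assumption, not a lemma. Second, the transfer step $p_c(H)<p_u(G)$ is not merely ``delicate'': the specific heuristic you offer is unjustified. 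There is no a priori relation placing $p$ slightly above $p_c(H)$ inside the subcritical or tiltable phase of $G$ (one can have $p_c(H)>p_t(G)$, and a priori even $p_c(H)\geq p_u(G)$, in which case the bound $p_h\leq p_c(H)$ is vacuous for the conjecture), so \cref{prop:tamedecay,prop:tamesecondmoment} cannot be invoked to control excursions into $G\setminus S$ at that parameter. Note also that if $p_c(H)\geq p_u(G)$ the conjecture could still hold via heavy clusters produced by a mechanism other than slab percolation, so reducing everything to $p_c(H)<p_u(G)$ is itself a (plausible but unproven) strengthening.

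Two smaller points. Your sketch of the forward direction is looser than what Tim\'ar's results actually give: a heavy cluster of $G[p]$ meets some slab in an infinite \emph{set}, not necessarily in an infinite connected component of the induced slab percolation, so applying Gandolfi--Keane--Newman to the slab requires the stronger statement (\cite[Theorem 5.5]{timar2006percolation}-type) that the slab percolation itself has an infinite cluster, plus an argument producing infinitely many of them. The inequality $p_h(G,\Gamma)\leq p_c(H)$ is fine, since an infinite subset of a slab is heavy. In summary: the easy direction and the reduction to $p_c(H)<p_u(G)$ are reasonable, but the two load-bearing steps remain open problems, so this is a research programme rather than a proof.
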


Another natural questions concerns the dependency of $p_h$ on the group $\Gamma$. 

\begin{question}
Let $G$ be a connected, locally finite graph. 
If $\Gamma \subseteq \Gamma' \subseteq \Aut(G)$ are quasi-transitive subgroups of automorphisms of $G$, is it the case that  $p_h(G,\Gamma)>p_h(G,\Gamma')$ if and only if $\Delta_{\Gamma}\neq \Delta_{\Gamma'}$?
\end{question}




\subsection*{Acknowledgments}
This work took place mostly while the author was a PhD student at the University of British Columbia, during which time he was supported by a Microsoft Research PhD Fellowship. We thank Nicolas Curien, Hugo Duminil-Copin, Matan Harel, Aran Raoufi, and Yinon Spinka for useful discussions, particularly during a visit by the author to the IHES in March 2017. We also thank Omer Angel, Gady Kozma, Russ Lyons, and Asaf Nachmias for several helpful discussions, and also thank Russ Lyons for catching some typos in a previous version. Finally, we thank the three anonymous referees and Pengfei Tang for their close reading and detailed comments, which have greatly improved the paper.

\addcontentsline{toc}{section}{Glossary of notation}

\section*{Glossary of notation}
\label{sec:Glossary}

Our use of asymptotic notation is described at the end of \cref{sec:background}.
\begin{enumerate}[leftmargin=3cm]
\item[$p_c$, $p_u$, $p_h$] The critical probability, uniqueness threshold, and heaviness transition. Defined in \cref{sec:Intro} and \cref{subsec:intro_heavy} respectively.
\item[$\bP_p, \bE_p$] Probabilities and expectations taken with respect to the law of Bernoulli-$p$ bond percolation. Defined in \cref{subsec:intro_criticalexponents}.
\item[$\P_p, \E_p$] Probabilities and expectations taken with respect to the joint law of Bernoulli-$p$ bond percolation, the random root $\rho$. Later in the paper, this notation is used for the probabilty space that also includes the random variable $U$ used to define the uniform separating layers decomposition. Defined in \cref{subsec:MTP,subsec:layersdef}.
\item[$\nabla_p(v)$] The triangle diagram. Defined in \cref{subsec:intro_criticalexponents}.
\item[$\Delta$] The modular function. Defined in \cref{sec:Intro} (transitive case) and \cref{subsec:MTP} (general case).
\item[{$[v]$}] The orbit under $\Gamma$ of the vertex $v$. Defined in \cref{subsec:MTP}.
\item[$|A|_{v,\lambda}$, $|K_v|_{v,\lambda}$] The tilted volume $|A|_{v,\lambda} := \sum_{a \in A} \Delta^\lambda(v,a)$ of the set of vertices $A$, where the set $A$ is often taken to be the cluster $K_v$. Defined in \cref{subsec:intro_tilted}.
\item[$\chi_p(v)$, $\chi_{p,\lambda}(v)$] The susceptibility and tilted susceptibility. Defined in \cref{subsec:intro_tilted}.
\item[$p_c(\lambda)$, $p_t$]  The critical parameter associated to the tilted susceptibility and the tiltability threshold, respectively. Defined in \cref{subsec:intro_tilted}.
\item[$A \circ B$] The disjoint occurrence of $A$ and $B$. Defined in \cref{subsec:background_BK}.
\item[$M_{p,\lambda,h}$, $\chi_{p,\lambda,h}$] The tilted magnetization and truncated tilted susceptibility. Defined in \cref{sec:AizBar}.
\item[$\bP_{p,\lambda,h}$, $\bE_{p,\lambda,h}$] Probabilities and expectations taken with respect to the joint law of Bernoulli-$p$ bond percolation and the tilted ghost field. Defined in \cref{sec:AizBar}.
\item[$\P_{p,\lambda,h}$, $\E_{p,\lambda,h}$] Probabilities and expectations taken with respect to the joint law of Bernoulli-$p$ bond percolation, the tilted ghost field, and the random root $\rho$. Defined in \cref{sec:AizBar}.
\item[$\nexp$, $\nlog$] The normalized exponential and logarithm. Defined in \cref{subsec:layersdef}.
\item[$S_{s,t}(v)$] The slab $S_{s,t}(v)=\{u \in V : s \leq \log \Delta(v,u) \leq t\}$. Defined in \cref{subsec:layersdef}.
\item[$L_n(v)$, $L_{m,n}(v)$] Layers and slabs in the uniform separating layers decomposition. Defined in \cref{subsec:layersdef}.
\item[$U$] The random variable used to define the uniform separating layers decomposition. Defined in \cref{subsec:layersdef}.
\item[$X^{m,n}_k(v)$] The number of points in $L_k(v)$ connected to $v$ by an open path in $L_{m,n}(v)$. Defined in \cref{subsec:tiltable_overview}.
\item[$\alpha_p$] The exponential decay rate associated to the probability of crossing a large slab in the upward direction. Defined in \cref{subsec:probdecay}.
\item[$\beta_p$] The exponential decay rate associated to the expected number of points at the top of a large slab. Defined in \cref{subsec:expdecay}.
\item[$E_p^{m,n}(k)$] The supremum $\sup_{v\in V,x\in [0,1]} \E_p\left[X^{m,n}_k(v) \mid U_v=x\right]$. Defined in \cref{subsec:decomps}.
\item[$E_p^{m,n}(k;1-\eps)$] The supremum $\sup_{v\in V,x\in [0,1]} \E_p\left[\left(X^{m,n}_k(v)\right)^{1-\eps} \mid U_v=x\right]$. Defined in \cref{subsec:toolkit}.
\item[$\sP_v$, $\sP_v(k)$] The event that $v$ is the peak (unique highest point) of its cluster in either the whole space or in the slab $L_{-k,0}$, respectively. Defined in \cref{subsec:subcritical_peak} and \cref{subsec:peak_comparison}, respectively.
\end{enumerate}

\addcontentsline{toc}{section}{References}

\footnotesize{
  \bibliographystyle{abbrv}
  \bibliography{unimodularthesis}
  }

\end{document}